\definecolor{red}{rgb}{1,0,0}
\definecolor{green}{rgb}{0,1,0}
\definecolor{blue}{rgb}{0,0,1}
\definecolor{refkey}{gray}{.625}
\definecolor{labelkey}{gray}{.625}
\def\F{\mathbb F}
\def\Aut{\mathrm{Aut}}
\def\GL{\mathrm{GL}}
\def\Gal{\mathrm{Gal}}
\def\dim{\mathrm{dim}}
\def\tr{\mathrm{Tr}}
\newcommand{\Nqfrac}[1]{\mathrm{Q}{(#1)}}
\def\Ker{\mathrm{ker}}
\theoremstyle{plain}
\newtheorem{thm}{\protect\theoremname}[section]
\theoremstyle{plain}
\newtheorem{prop}[thm]{\protect\propositionname}
\theoremstyle{plain}
\newtheorem{cor}[thm]{\protect\corollaryname}
\theoremstyle{plain}
\newtheorem{lem}[thm]{\protect\lemmaname}
\theoremstyle{defn}
\newtheorem{example}[thm]{\protect\examplename}
\theoremstyle{defn}
\newtheorem{defn}[thm]{\protect\definitionname}
\theoremstyle{plain}
\newtheorem{remark}[thm]{\protect\remarkname}
\theoremstyle{defn}
\newtheorem{notation}[thm]{\protect\notationname}
  \providecommand{\corollaryname}{Corollary}
  \providecommand{\examplename}{Example}
  \providecommand{\lemmaname}{Lemma}
  \providecommand{\propositionname}{Proposition}
  \providecommand{\theoremname}{Theorem}
  \providecommand{\definitionname}{Definition}
  \providecommand{\remarkname}{Remark}
  \providecommand{\notationname}{Notation}
  \newcommand{\be}{%
  \begingroup
  \eqnarray%
   \@ifstar{\nonumber}{}%
  }
\newcommand{\flag}[1]{( #1 )}
\newcommand{\coveringdeg}{\mathrm{cdeg}}
\newcommand{\matrixdeg}{\mathrm{mdeg}}
\newcommand{\Ttorsiontree}{\mathcal{T}}
\newcommand{\matrixtree}{\mathcal{M}}
\newcommand{\set}[1]{\{#1\}}
\newcommand{\Lbar}{\overline{L}}
\newcommand{\Fq}{\F_q}
\newcommand{\phiTu}[1]{\phi_T^{(#1)}}
\newcommand{\phinoTu}[1]{\phi^{(#1)}}
\newcommand{\psinoTu}[1]{\psi^{(#1)}}
\newcommand{\thetau}[1]{\theta^{(#1)}}
\newcommand{\lambdau}[1]{\lambda^{(#1)}}
\newcommand{\betau}[2]{\beta^{(#1)}{(#2)}}
\newcommand{\gammau}[2]{\gamma^{(#1)}{(#2)}}
\newcommand{\kappau}[2]{\kappa^{(#1)}{(#2)}}
\newcommand{\betausingle}[1]{\beta^{(#1)} }
\newcommand{\gammausingle}[1]{\gamma^{(#1)} }
\newcommand{\kappausingle}[1]{\kappa^{(#1)} }
 \newcommand{ \hatbetau}[1]{ \hat{\beta}^{(#1)} }
\newcommand{ \hatgammau}[1]{ \hat{\gamma}^{(#1)} }
\newcommand{ \hatkappau}[2]{ \hat{\kappa}^{(#1)}(#2) }
\newcommand{ \hatxiu}[1]{ \hat{\xi}^{(#1)} }
\newcommand{ \hatetau}[1]{ \hat{\eta}^{(#1)} }
\newcommand{\Rconjugacyclass}[1]{[R_{#1}]}
\newcommand{\xiu}[2]{\xi^{(#1)}{(#2)}}
\newcommand{\etau}[2]{\eta^{(#1)}{(#2)}}
\newcommand{\xiusingle}[1]{\xi^{(#1)} }
\newcommand{\etausingle}[1]{\eta^{(#1)} }
\newcommand{\equalbyreason}[1]{\xlongequal[]{\mbox{#1}}}
\newcommand{\NODE}{\mathfrak{N}}
\newcommand{\Trank}{\mathrm{Trd}}
\newcommand{\tohandle}[1]{\textcolor{red}{#1}}
\newcommand{\corepoly}[1]{\psi_T^{(#1)}}
\newcommand{\Knaught}{K_0}
\newcommand{\Knaughtbar }{\overline{K_0}}
\newcommand{\phignaught}{\Phi_T^{(g)}}
\newcommand{\phignaughtnoT}{\Phi^{(g)}}
 \newcommand{\BasicT}{\Phi^{(G)}_T}
 \newcommand{\Basic}{\Phi^{(G)}}
\newcommand{\OV}[1]{\mathcal{O}_{\mathcal{V}_{#1}}}
\newcommand{\iotag}{{\iota_g}}
\newcommand{\minimalpoly}[1]{\mathfrak{m}_{#1}}
\newcommand{\Exp}{\mathrm{Exp} }
\title[Drinfeld Modular Curves and Nilpotent Upper-Triangular Matrices]{Drinfeld Modular Curves Subordinate to Conjugacy Classes of Nilpotent Upper-Triangular Matrices}
\thanks{ Research partially supported by NSFC grants 12071241(Chen),
	12071247/12101616(Hu), and 11961049(Zhang).}
\thanks{$^*$ The corresponding author.}
\author{Zhuo Chen$^*$}
\address{Department of Mathematical Sciences, Tsinghua University, Beijing, China}
\email{\href{chenzhuo@tsinghua.edu.cn}{chenzhuo@tsinghua.edu.cn}}
\author{Chuangqiang Hu}
\address{Yanqi Lake Beijing Institute of Mathematical Sciences and Applications, Beijing, China}
\email{\href{huchq@bimsa.cn}{huchq@bimsa.cn}}
\author{Tao Zhang}
\address{College of Mathmatics and Information Science, Henan Normal University, Xinxiang, China}
\email{\href{zhangtao@htu.edu.cn}{zhangtao@htu.edu.cn}}
\author{Xiaopeng Zheng}
\address{School of Mathematical Sciences, University of Chinese Academy of Sciences, Beijing, China}
\email{\href{zhengxiaopeng@amss.ac.cn}{zhengxiaopeng@amss.ac.cn}}
\begin{document}
\begin{abstract} 
We introduce normalized Drinfeld modular curves that parameterize rank $m$ Drinfeld modules compatible with a  $T$-torsion structure arising from a given conjugacy class of nilpotent upper-triangular $n\times n$ matrices with  rank $\geqslant n-m$ over a finite field $\Fq$. This creates a deep link connecting the classification of nilpotent upper-triangular matrices and the decomposition of Drinfeld modular curves. The conjugacy classes of nilpotent upper-triangular matrices  one-to-one corresponds to certain $T$-torsion flags, and form a tree structure. As a result, the associated Drinfeld modular curves are organized in the same tree. This generalizes the tower structure introduced by Bassa, Beelen, Garcia, Stichtenoth, and others. Additionally,
we prove the geometric irreducibility of   $(3,2)$-type normalized Drinfeld modular curves, and characterize their associated function fields.	
	
\end{abstract}

\maketitle{}

\textbf{Key words:} ~Drinfeld module; Drinfeld modular curve; Conjugacy class; Torsion flag.

	{\textbf{AMS subject classification:} ~11G09, 11R58, 14H05, 11G20.}

\tableofcontents
\section{Introduction}
\bigskip \subsection{Background} 
  In the 1970's,      Drinfeld introduced the notion of elliptic modules which are now known as Drinfeld modules \cite{Drinfeld1974}, and during the last fifty years, its   theory   is  one of the most important developments in function field arithmetic. Roughly speaking, Drinfeld modules   are the positive function field 	analogues of elliptic curves, and   the modular
theory of elliptic curves is transported to the function field case. 
Let us provide a brief non-technical explanation of what are  Drinfeld modular curves in
 function field theory,  and in particular, towers of such curves  that we are talking about.



Let $\Fq$ denote the finite field of order $q$.  A tower   of curves over $\Fq $  consists of a  family of curves $C_n$ together with a sequence of successive surjective maps:
\[
\begin{tikzcd}
C_1&C_2\arrow[l,"p_1"']&\arrow[l,"p_2"']\cdots&C_{n-1}\arrow[l,]&C_n\arrow[l,"p_{n-1}"']&
\cdots\arrow[l,"p_n"']
\end{tikzcd}
\]
such that all $C_n$ and $p_n$ are defined over $\Fq $ and
the genus   $g(C_n)\to \infty$  as $n\to \infty$. The tower is called \textbf{recursive} by an absolutely irreducible polynomial $f(x,y)\in\Fq (x)[y]$ (see \cite{H.Stichtenoth2009}*{Sections 3.6 and 7.2}) if
\begin{enumerate}
	\item  The initial curve $C_1$ is the projective line with coordinate ${x}_1$; and 	
	\item For every $n\geqslant 2$,
	$C_n$ is the nonsingular projective model of an affine   curve defined by
	\[f({x}_1,{x}_2)=f({x}_2,{x}_3)=\cdots=f({x}_{n-1},{x}_n)=0. \]
\end{enumerate}




Suppose that $m=j+k\geqslant 2$ is a positive integer,  where $ j$ and $ k $ are coprime positive integers. Let $a$ and $b$ be   non-negative integers such that $ak-bj=1$.  Consider the  tower  $\mathcal{F}$ (over $\F_{q^m}$, for $m\geqslant 2$), respectively  $\mathcal{H}$,  arising from the recursive polynomial
\begin{equation}\label{Eq:TBBGSF}
{\mathcal{F}}(u,v)=\tr_j\left(\frac{v}{u^{q^k}}\right)+\tr_k\left(\frac{v^{q^j}}{u}\right)-1,\end{equation}
respectively
\begin{equation}\label{Eq:TBBGSH}
{\mathcal{H}}(u,v)=\frac{\tr_j(v)-a}{\tr_j(u)^{q^k}-a}-\frac{\tr_k(v)^{q^j}-b}{\tr_k(u)-b},
\end{equation}
where $\tr_l(x):=\sum_{i=0}^{l-1}x^{q^i}$.

 Bassa, Beelen, Garcia, and Stichtenoth \cites{Bassa2015,A.Bassa2014}  presented a  framework of  recursive tower (BBGS tower for short) of curves over non-prime fields      and outlined a modular interpretation of the defining equations. Later, by Anbar, Bassa, and Beelen \cite{Nurdagul2017},   details of the particular tower $\mathcal{H}$ in \eqref{Eq:TBBGSH}  is elaborated and proved to be modular (with $(m,j,k)=(3,2,1)$), following the  techniques of   \cite{Elkies2001}   developed by Elkies.
For  general $(m,j,k)$, our recent work \cite{MR4203564}
  pointed out explicit modular explanations  of the BBGS towers $\mathcal{F}$ and $\mathcal{H}$ in, respectively,  \eqref{Eq:TBBGSF} and \eqref{Eq:TBBGSH}. It is   shown \emph{op.cit.} that these include both the
  Garcia-Stichtenoth and BBGS towers   as special cases.

 Here is a quick review of the mechanism that produces  BBGS towers.  Roughly speaking, a Drinfeld modular curve   (with $ T^n $-level structure) parameterizes   Drinfeld modules associated with  $T^n$-torsion points which form an     $\mathbb{F}_q[T]$-module isomorphic to $ \mathbb{F}_q[T]/T^n $.   In practice,  we  are   interested in     sparse  Drinfeld modules of rank $m(\geqslant 2)$, i.e., those of the form\footnote{Modular curves of this sort were
	studied systematically by E.-U.Gekeler in   \cite{Gekeler2019}.}
\begin{equation*}
\phi_{T}=g_m \tau^m +  g_j \tau^j + 1\,.
\end{equation*}     The  rank $1$ torsion modules $ \mathbb{F}_q[T]/T^n $ are connected  naturally:
\begin{equation}
\label{Eqt:rank1torsion}
0\leftarrow \Fq[T]/T\leftarrow \Fq[T]/T^2\leftarrow \Fq[T]/T^3\leftarrow \cdots \cdots.
\end{equation}
This sequence interprets the point-wise covering maps of the corresponding BBGS tower as   a chain of Drinfeld modular curves.

\bigskip \subsection{Motivation}

A significant difference between  Drinfeld modules and elliptic curves is that  \textit{the rank of a torsion structure} of an elliptic curve must be $2$, whereas that of a Drinfeld module could be greater than or equal to $ 2$.
Thereby, it is natural to ask whether or not one can define modular curves to parameterize
Drinfeld modules together with higher rank torsions, and if so, it is also tempting to find other ways organizing these modular curves   to generalize the tower  structure as  reviewed above.
  Besides the rank $1$ torsions as in \eqref{Eqt:rank1torsion}, we further consider the relations between torsion modules of different ranks. 
  Our new point of view is that  there are torsion structures with higher rank $m(\geqslant 2)$ for Drinfeld modules which have not been particularly noted in    the literature.

  Along the lines of towers of modular curves and the tips from our last work \cite{MR4203564}, the overall aim of this paper is to develop a more general framework   of Drinfeld modular curves in connection with certain level structure, and to   explain
  the immense relevance of such structure from both   conceptual and
  practical points of view.
  In fact,  we reveal a family of Drinfeld modular curves which are organized in an elegant manner ---   a  hierarchical topology tree which we call the  \textbf{$T$-torsion tree} (of certain rank); see Section \ref{Sec:basic}. It turns out that,
    being the key ingredient,     the  $T$-torsion tree is closely related to the classification problem of nilpotent upper-triangular matrices  over  $\Fq$   (hence we will   use directly many  results    from matrix classification studies   \cites{MR577913, MR123605,MR113948,MR470098,MR1190401,MR1260823,MR1448353}).

As demonstrated in \cite{Bassa2015}, the defining equation $\mathcal{F}$ in \eqref{Eq:TBBGSF} of the BBGS tower comes from a factor of \begin{equation} \label{BBGSXYcurve} \mathcal{M}(u,v):=\frac{v^{q^{k+j}}-v}{v^{q^j}}-\frac{u^{q^{k+j}}-u}{u^{q^{k+j}-q^j+1}}. \end{equation} Note that the equation $\mathcal{M}(u,v)=0$ is not irreducible (when regarded as a curve).  So it is natural to explore other irreducible factors of $\mathcal{M}$. Our work essentially focuses on the  $(m,j,k)=(3,2,1)$-case. In fact, we will transform $\mathcal{M}$ to a polynomial $\kappausingle{u}$ which we call the modular polynomial\footnote{Our modular polynomial is the reduced version of the one in  \cite{MR1331765}.}, and $\mathcal{F}$ an irreducible factor of it (see Section \ref{Sec:32flagclassfunctionfield}). It turns out that all irreducible factors of $\kappausingle{u}$ correspond to Drinfeld modular curves that are compatible with specific $T$-torsion structures  (our notion is that they are subordinate to nodes of the $T$-torsion tree).    In this sense, the tree-based approach is a generalized form of the BBGS tower, revealing the geometric structure of other towers as well  (including the tower constructed by Anbar, Bassa, and Beelen \cite{Nurdagul2017}).





\bigskip \subsection{Contents} 
Now let us describe in more detail the structure and results of the current paper. 

In Section \ref{Sec:Pre-NUTMtree}, we
  recall some basic definitions and results about the space  $\mathbf{U}^o_n$ of nilpotent upper-triangular matrices
      (with size $(n\times n)$ and over $\Fq$),
   and the space $\mathbf{U}^o_{n }/_\sim $ of their conjugacy classes.  In particular, the tree   of   nilpotent upper-triangular matrices (denoted by $\matrixtree $)  is discussed in detail.  In short, the root node of $\matrixtree$ is the trivial class   $\Rconjugacyclass{1}= [(0)]  $ of level $1$;  level-$n$ nodes are    conjugacy classes  in $\mathbf{U}^o_{n }/_\sim $  for    $n=2,3,\cdots$; and each child node $[N^\prime]$ of level $(n+1)$ connects to its parent node $[N]$ of level $n$ where $N\in \mathbf{U}^o_{n }$ is the left-top-truncation of $N^\prime\in  \mathbf{U}^o_{n+1 }$. A level $n$ node of $\matrixtree$ is denoted by $[R_{i_1,\cdots,i_n}]$ where  the index   $(i_1,\cdots, i_n) $ is particularly designed to trace its position in $\matrixtree$.

    In Figure \ref{fig:NUTMtree},  we have drawn the 24 nodes of $\matrixtree$ with levels $n=1,2,3,4$ according to the result of \cite{MR1448353}.  As the level $n$ gets higher,  the number of nodes increases rapidly. For example, on level $n=5$, $\matrixtree$ has 61 nodes.

     In Section \ref{Sec:basic}, we introduce $T$-torsion flags, $T$-torsion flag classes, and the tree    formed by $T$-torsion flag classes. Let $A$ be the polynomial ring $\Fq [T]$. Then $S^1:=\Fq [\frac{1}{T}]/ \Fq$ and $      {S^m}:=(S^1)^{\oplus m}$ (for some fixed integer $m\geqslant 2$ which eventually coincides with the rank of Drinfeld modules we need) are  $A$-modules in an obvious sense.  	A level $n$  {$T$-torsion  flag}   in $S^m$ is a flag  of subspaces $\mathcal{V}=\flag{  V_1\subset V_2\subset \cdots\subset V_n}$ in ${S^m}$  	such that each $V_i$ is of dimension $i$  and satisfies   $TV_i\subseteq V_{i-1}$  ($TV_1=0$ in particular). In other words, each $V_i$ is also an $A$-module. We say that two $T$-torsion $n$-flags      in $S^m$ are    {isomorphic} if there is an $A$-module isomorphism between them mapping the relevant subspaces bijectively.   The set of isomorphic $T$-torsion $n$-flag classes  gives rise to a tree $\Ttorsiontree^m $ that we call the \textbf{ $T$-torsion  tree} --- 
     The (level $1$) root node    of $\Ttorsiontree^m$ is of the form $[\Fq v]$ where $v $ is a nonzero vector in  $S^m$ with $Tv=0$; each (level $(n+1)$) node $[\mathcal{V}']$ where $\mathcal{V}'=\flag{  V_1\subset V_2\subset \cdots\subset V_n\subset V_{n+1}}$   connects to its parent (level $n$) node  $[\mathcal{V}]$ with  $\mathcal{V}=\flag{  V_1\subset V_2\subset \cdots\subset V_n}$. Similar to that of $\matrixtree$, a level $n$ node of $\Ttorsiontree^m$ is denoted by $\NODE_{i_1,\cdots,i_n}$ with the same pattern of  index.

   We find that the   tree  $\matrixtree$ of nilpotent upper-triangular matrices and the tree of $T$-torsion flags $\Ttorsiontree^m$  are related in a simple way ---     A node $[R_{i_1,\cdots,i_n}]$  of $\matrixtree$ induces a node $\NODE_{i_1,\cdots,i_n}$ of $\Ttorsiontree^m$ if and only if the   matrix rank of $R_{i_1,\cdots,i_n}$ is equal to or greater than $   (n-m) $ (Proposition \ref{Prop:Nnm-Sn}).
      In other words, $\Ttorsiontree^m $ can be obtained, topologically, by removing those level $n$ nodes of   $\matrixtree$ whose rank is less than $(n-m)$. In general, both $\matrixtree$ and $\Ttorsiontree^m$ have infinite levels and nodes. But up to    level $4$, they are relatively clear.  We have drawn   $\Ttorsiontree^m$ up to level $4$ in Figure   \ref{fig:Ttree} which has   24 nodes when $m\geqslant 4$. If $m=3$, it has 23 nodes (by deleting the last one). If $m=2$, there are only 16 nodes left.

In Section \ref{Sec:Standardcorrespondence}, we establish a key tool to study Drinfeld modular curves, namely, the standard correspondence between  $n$-arrays in $\Lbar^*$ and $n$-flags in $\Lbar$.  Here we let $L$ be a field over $\Fq$ together with an homomorphism of algebras $\iota : A \to L$, and $\Lbar$ the algebraic closure of $L$. We shall refer to such $L$ as an $A$-field. By saying the standard correspondence, we mean a pair of   bijections	$$\Lambda: ~\set{n\mbox{-flags in }\Lbar}\rightleftarrows (\Lbar^*)^{\times n}:~\Theta $$ (one being the inverse to the other). Thus, we are able to transfer a geometric object, namely an $n$-flag 	$\mathcal{V}  =\flag{ V_1\subset V_2\subset \cdots\subset V_n}$ in $\Lbar$, to a sequence of numbers $\Theta(\mathcal{V})=(u_1,\cdots, u_n)$ (in $\Lbar^*$). This correspondence is indeed  very useful  as we will use  the data   $(u_1,\cdots, u_n) $   to  parameterize Drinfeld modules.

 After these purely mathematical preliminaries, we focus in Section \ref{Sec:Drinfeldmodulescurves}   on  Drinfeld modules and Drinfeld modular curves,   the core of our paper.   This will be
particularly useful to fix our notations and also to make the article sufficiently self-contained.  Roughly speaking, a Drinfeld module $ \phi $ over $L$ (an $A$-field) is identically a twisted polynomial $\phi_T$ over $L$, i.e.,
\begin{equation*}
\phi_T =   g_m \tau^m +\cdots + g_2 \tau^2 + g_1 \tau + g_0\,,
\end{equation*}
where $g_i\in L$, $g_0=\iota(T)$, and $ g_ m$  is not zero for some integer $m>0$ (called the rank of $\phi$). So $\phi_T$ acts on $L$ canonically, and on $\Lbar$ as well. Then, we can talk about    \textbf{$\phi_T$-torsion $n$-flags} in $\Lbar$, which are level $n$ flags of the form $\mathcal{V}=(V_1\subset V_2\subset \cdots\subset V_n)$    such that each $V_i\subset \Lbar$ is of dimension $i$  and satisfies   $\phi_T (V_i)\subseteq V_{i-1}$  ($\phi_T (V_1)=0$ in particular).

 If $T$ is coprime to the kernel of $\iota$, then a well-known theorem guarantees that the $T$-divisible group
 $$
 S^{\phi}:=  \varinjlim_n \Ker(\phi_{T^n})\subset \Lbar$$ is isomorphic to  $ {S^m}$ (as an $A$-module; see Lemma \ref{Lem:moduleiso}) which we considered earlier.  Thereby,  any $\phi_T$-torsion $n$-flag $\mathcal{V}$ in $\Lbar$   indeed lives in $S^{\phi}$, and via the said isomorphism, $\mathcal{V}$ is   mapped to a $T$-torsion $n$-flag in  $S^m$. Now, we consider  the central   object of study in this paper --- \textbf{pairs  $(\phi, \mathcal{V})$} where
 \begin{itemize}
 	\item[$\bullet$] $\phi$ is a  $(m,j)$-type normalized
 	Drinfeld module:   
 	$
 		\phi_{T} =  -\tau^m + g \tau ^j + 1
 	$, with $ m$ and $ j$ being coprime;
 \item[$\bullet$] $\mathcal{V}$ is a  $\phi_T$-torsion $n$-flag  $\mathcal{V} $ in $\Lbar$ which is  defined  over $L$ (Definition \ref{Def:definedoverL})
 \end{itemize}
 such that the image of $\mathcal{V}$ in $S^m$ represents a given node $\NODE_{i_1,\cdots, i_n }$ of the $T$-torsion  tree $\Ttorsiontree^m$, namely a $T$-torsion $n$-flag class. The role of $\mathcal{V}$ is to encode some ``level structure'' as in the classical studies of elliptic curves.

  Sections \ref{Sec:Drfeldparameters} and \ref{Sec:mjtypeDrinfeldmodularcurves}    provide  descriptions of two types of Drinfeld modular curves,  as new concepts introduced in this paper. (1) An algebraic curve whose $L$-points   parameterize the above 
pairs $(\phi, \mathcal{V})$    is called  the \textbf{normalized Drinfeld modular curve subordinate to} $\NODE_{i_1,\cdots, i_n }\in \Ttorsiontree^m$. Let us use $\dot{X}^{(m,j)}_{ {i_1,\cdots, i_n }  }$  to denote it. (2) Two   pairs  $(\phi,\mathcal{V} )$ and $(\tilde{\phi},\tilde{\mathcal{V}}  )$ are said to be equivalent if they are isomorphic by the multiplication of   some  $\chi\in \Lbar ^*$;  an algebraic curve
whose $L$-points  parameterize      equivalent classes of pairs
$ (\phi, \mathcal{V} )$  is called the    \textbf{Drinfeld modular curve  subordinate to} $\NODE_{i_1,\cdots, i_n }$  (with notation $ {X}^{(m,j)}_{ {i_1,\cdots, i_n }  }$).
 For more details, see Definitions \ref{Def:normalizedDrinfeldmodularcurve} and \ref{Def:Drinfeldmodularcurve} respectively.

Indeed, the two definitions as described above  are  inspired by the towers  of curves studied in \cites{Nurdagul2017,Bassa2015,A.Bassa2014,MR4203564} that we   recalled. In Examples \ref{Ex:1} and \ref{Ex:2}, we consider some known Drinfeld modular curves and explain what special nodes they are subordinate to from our new point of view. 
 If for all nodes $\NODE $ of the tree $\Ttorsiontree^m$, we find the associated modular curves ($ \dot{X}^{(m,j)}_{ {i_1,\cdots, i_n }   }$ or $ {X}^{(m,j)}_{ {i_1,\cdots, i_n }  }$), then they are lined up according to the topology of   $\Ttorsiontree^m$.
 In general, along any sequence of parent-child-linked nodes of $\Ttorsiontree^m$, the resulting (normalized or reduced) Drinfeld modular curves   comprise a tower, and each curve  subordinate to a child node $\NODE_{i_1,\cdots, i_n,i_{n+1} }$ projects to the curve subordinate to its parent node $\NODE_{i_1,\cdots, i_n }$. Our Theorem \ref{prop:coveringdegree1} tells the extension degree of function fields of the Drinfeld modular curves subordinate to   $\NODE_{i_1,\cdots, i_n,i_{n+1} } $ over that of   $\NODE_{i_1,\cdots, i_n  } $:
 	$$[{ \dot{\mathcal{F}} }^{(m,j)}_{ {i_1,\cdots, i_n,i_{n+1} } }:  { \dot{\mathcal{F}} }^{(m,j)}_{ {i_1,\cdots, i_n  } }]=[{ \mathcal{F} }^{(m,j)}_{ {i_1,\cdots, i_n,i_{n+1} }  }:  { \mathcal{F} }^{(m,j)}_{ {i_1,\cdots, i_n  } }]=\coveringdeg_{\NODE_{i_1,\cdots, i_n,i_{n+1} }   }.$$
   Here $\coveringdeg_{\NODE_{i_1,\cdots, i_n,i_{n+1} }   }$ stands for the covering degree of the node $\NODE_{i_1,\cdots, i_n,i_{n+1} } $, an integer only depending on the  linear structure of $T$-torsion flags --- see Definition \ref{Defn:coveringdegree}.
  More properties of this kind of towers will be investigated in the future.

Section \ref{Sec:32flagclassfunctionfield}    is devoted to study $(3,2)$-type Drinfeld modular curves. To this end, we consider the special $A$-field $\Knaughtbar  $, the algebraic closure of $\Knaught:={\Fq(G)}$ where $G$ is a formal variable.   Moreover, we fix a special $(3,2)$-type normalized Drinfeld module  
 $$\BasicT =  -\tau^3 + G \tau ^2 + 1   .$$ 	Let    $\NODE_{i_1,\cdots, i_n}$ be a $T$-torsion flag class (in $ S^3$), namely a node of $\Ttorsiontree^3$. Take  
 any $\BasicT$-torsion flag   $\mathcal{V} = (V_1\subset\ldots \subset V_n)$ (in $\Knaughtbar $) which is    {subordinate to}   $\NODE_{i_1,\cdots, i_n}$.  We shall define   two important objects:
 \begin{itemize}
 	\item[$\bullet$] A function field $\mathcal{F}_{{\mathcal{V}}} :=  \Fq(u_1,\ldots,u_n)$, where  $(u_1,\ldots,u_n) = \Lambda({\mathcal{V}})\in (\Knaughtbar ^*)^{\times n}$ is given by the standard correspondence mentioned above.  
  We  call       $\mathcal{F}_\mathcal{V}$ the ($(3,2)$-type) \textbf{function field}   of   $\mathcal{V}$     or the associated flag class  $\NODE_{i_1,\cdots, i_n}$.  In fact, we can prove that $\mathcal{F}_\mathcal{V}$  is, up to isomorphisms, solely determined by   the   $T$-torsion $n$-flag class $\NODE_{i_1,\cdots, i_n}$ to which $\mathcal{V}$ is subordinate  (see Proposition \ref{prop:isomorphismFVbetaV});
 	\item[$\bullet$] A polynomial $\minimalpoly{\mathcal{V}}\in \Knaughtbar  [X]$ which we call the ($(3,2)$-type) \textbf{minimal polynomial}  of   $\mathcal{V}$     or the associated flag class  $\NODE_{i_1,\cdots, i_n}$. Although its definition is kind of tricky, see Equation \eqref{beta}, $\minimalpoly{\mathcal{V}}$ turns out to be   in $\mathcal{F}_{\mathcal{V}_{n-1}}[X]$ (where $\mathcal{V}_{n-1}$ is the parent flag of $\mathcal{V}$) and the minimal polynomial that generates the function field $\mathcal{F}_{\mathcal{V}}$ 	
 	(see Proposition  \ref{Prop:main}) over $\mathcal{F}_{\mathcal{V}_{n-1}}$.

 \end{itemize}
  
The two Theorems \ref{Thm:thehardisomorphism} and \ref{Thm:NodeToFactor} in  Section \ref{Sec:32flagclassfunctionfield}, also the main results of this paper, characterize $(3,2)$-type normalized Drinfeld modular curves from two perspectives:
\begin{itemize}
	\item[(1)] (Theorems \ref{Thm:thehardisomorphism} ---) \textit{Given a $T$-torsion flag class   $\NODE_{i_1,\cdots, i_n}$ (in $ S^3$), the function field    of the $(3,2)$-type normalized Drinfeld modular curve $\dot{X}^{(3,2)}_{i_1,\cdots,i_{n}  }$ over $\Fq$ 		is isomorphic to {the $(3,2)$-type function field} of   $\NODE_{i_1,\cdots, i_n}$.} In other words,  one can choose any $\BasicT$-torsion flag   $\mathcal{V} = (V_1\subset\ldots \subset V_n)$    {subordinate to}   $\NODE_{i_1,\cdots, i_n}$, and then we have 	
	$$\dot{\mathcal{F}}^{(3,2)} _{i_1,\cdots,i_{n}  }\cong \mathcal{F}_{\mathcal{V}}.$$
	Alternatively, the affine curve $\dot{X}^{(3,2)}_{i_1,\cdots,i_{n}  }$    
	over $\Fq$  can be expressed
	in variables $(G,X_1,\ldots,X_n)$ with defining equations
	$$
	\minimalpoly{\mathcal{V}_1} (X_1)=0,\quad \minimalpoly{\mathcal{V}_2} (X_2)=0, \quad \cdots,\minimalpoly{\mathcal{V}_{n-1}} (X_{n-1})=0,\quad \minimalpoly{\mathcal{V}_n} (X_n)=0.
	$$  
Here $\mathcal{V}_j:= (V_1\subset\ldots \subset V_j)$ ($j=1,\cdots,n-1$) are ancestors of $\mathcal{V}=\mathcal{V}_n$, and 	 
  $\minimalpoly{\mathcal{V}_j}\in \mathcal{F}_{{\mathcal{V}_{j-1}}}[X]$ are the associated minimal polynomials.

	\item[(2)] (Theorem \ref{Thm:NodeToFactor} ---) Suppose that   $\NODE_{i_1,\cdots, i_n}$ has totally $k$ child nodes $\NODE_{i_1,\cdots, i_n,1}$, $\NODE_{i_1,\cdots, i_n,2}$, $\cdots$, and $\NODE_{i_1,\cdots, i_n,k}$. Choose arbitrarily child $\BasicT$-torsion flags of $ \mathcal{V}$, say $ {\mathcal{V}_{n+1}^{(1)}}$, $\cdots$, and $ {\mathcal{V}_{n+1}^{(k)}}$, which are  subordinate to $\NODE_{i_1,\cdots, i_n,1}$, $\NODE_{i_1,\cdots, i_n,2}$, $\cdots$, and $\NODE_{i_1,\cdots, i_n,k}$, respectively. Then the   minimal polynomials	$\minimalpoly{\mathcal{V}_{n+1}^{(i)}} $ ($i=1,\cdots, k$) are distinct, and irreducible factors of	\begin{eqnarray*} 
	\kappau{u_n}{X}&:=&X^{q^2+q+1}+ \frac{1-u_n^{q^2+q+1}}{u_n^{q^2+q}}    X^{q+1}-1  \ \\&&(\in  \mathcal{F}_{\mathcal{V}}[X], \mbox{ to be called the \textbf{modular polynomial}} )
	\end{eqnarray*} i.e.,	
	\begin{equation}\label{Eqt:kappadecompose0}
	\kappa^{(u_n)} =\minimalpoly{\mathcal{V}_{n+1}^{(1)}}  ~\minimalpoly{\mathcal{V}_{n+1}^{(2)}} ~ \cdots ~\minimalpoly{\mathcal{V}_{n+1}^{(k)}} .
	\end{equation}
	As a direct consequence,  \textit{monic irreducible factors of $\kappa^{(u_n)}$ and child nodes of $\NODE_{i_1,\cdots, i_n}$ are in   one-to-one correspondence.}
		
\end{itemize}

  In conclusion, the process of finding the modular curves involves determining the minimal polynomials node by node within the $T$-torsion tree. However, the computation of these minimal polynomials is a complex task and heavily relies on the nodes to which they are subordinate. Therefore, in Section \ref{Sec:Normalized32things}, we provide explicit formulas of the minimal polynomials subordinate to the first 23 nodes of $\Ttorsiontree^3$ (up to level $n=4$), thus presenting the corresponding function fields of these normalized Drinfeld modular curves in an explicit manner. To save pages,     details of the verification process are not included in this paper. More technical details and further discussions will be presented in our future work. Additionally, we strive to generalize the main results in Section \ref{Sec:32flagclassfunctionfield} to general $(m,j)$-type Drinfeld modules. However, it should be noted that there are significant difficulties that need to be resolved, as mentioned in Remark \ref{Rmk:32tomj}.

    In the classical elliptic curve theory and Drinfeld modular curves, tower structures are present. However, the tree structure proposed in this paper is unique to Drinfeld modular curves and is not found in existing elliptic curve theory. Our novel approach utilizing the $T$-torsion tree not only enhances the classic tower of torsion sequence structure but also integrates different torsion structures internally. This theory of Drinfeld modular curves has potential applications in AG codes. Construction and computation of AG codes typically rely on explicit expressions of curves. Particularly, the asymptotic property requires a sequence of curves, and our method offers multiple choices to fulfill this requirement.


\bigskip \subsection*{Notations}   Throughout the paper, $\Fq $ stands for the finite field of cardinality $q$.
We also use the short hand symbol
$$\Nqfrac{l}:=\frac{q^l-1}{q-1}=\sum_{i=0}^{l-1}{q^i}.$$

\begin{compactenum}
	\item   $\#S  $ --- the cardinality of a set $S$.
	
	\item $\mathbf{U}_n$, $\mathbf{U}^*_n$, $\mathbf{U}^o_n$ --- the set of upper-triangular $(n\times n)$ matrices (over $\Fq$),   nonsingular upper-triangular $(n\times n)$ matrices, and    nilpotent upper-triangular $(n\times n)$ matrices, respectively; see Section \ref{subSec:UTM}.
	
	\item  $\mathbf{U}^o_{n }/_\sim$ ---  the set of conjugacy $n$-classes; see Section \ref{subSec:UTM}.
	
	\item $ \Rconjugacyclass{i_1,\cdots, i_n}$ --- the conjugacy class with standard matrix $R_{i_1,\cdots, i_n}\in \mathbf{U}^o_n$. The parent matrix of $R_{i_1,\cdots, i_n}$ is $R_{i_1,\cdots, i_{n-1}}$; see Section \ref{subSec:UTM}.
	
	\item $\Exp (N)$ --- the exponent of $N\in \mathbf{U}^o_n$; see Section \ref{subSec:UTM}.
	
	\item $p_{n+1}$ --- the restriction map $\mathbf{U}^o_{n+1 } \to   \mathbf{U}^o_{n }$, or $\mathbf{U}^o_{n+1 }/_\sim
	\to \mathbf{U}^o_{n }/_\sim$; see Section \ref{subSec:UTM}.
	
	\item $\matrixtree$ ---	the tree of nilpotent upper-triangular matrices   (over $\Fq $); see Section \ref{Sec:matrixtreeupto4}.
	
	\item $\mathbf{U}^!_n$ --- the subgroup of $\mathbf{U}^*_n$ which consists of matrices of the form
	$\begin{pmatrix}
	{I}_{ n-1 }  & b   \\
	0  & k
	\end{pmatrix}$, where $I_{n-1}$ stands for the $(n-1)\times (n-1)$-unit matrix; see Section \ref{Sec:matrixtreeupto4}.
	
	
	\item $A$ ---  the polynomial ring $\Fq [T]$ over $\Fq $; see Section \ref{subSec:Ttorsionflags}.
	\item ${S^1}$, $S^m$ --- the $A$-modules ${S^1}:= \varinjlim_n (\frac{1}{T^n}A)/A  = \Fq [\frac{1}{T}]/ \Fq$ and $S^m: = (S^1)^{\oplus m}$; 
	see Section  \ref{subSec:Ttorsionflags}.
	
	\item $\Ttorsiontree^m $ --- the $T$-torsion tree of ${S^m}$; see Section \ref{subSec:Ttorsionflags}.
	
	\item $\mathcal{V}$ --- a flag in $S^m$ (or $\bar{L}$); see Section \ref{subSec:Ttorsionflags}  or \ref{Sec:flagsnotations}.
	
	\item $ \Trank (\mathcal{V} )$ --- the $T$-order  of $\mathcal{V}$; see Section \ref{subSec:Ttorsionflags}.
	
	\item $\Exp (\mathcal{V} )$ --- the exponent of $\mathcal{V} $; see Section \ref{subSec:Ttorsionflags}.
	
	\item  $(\nu_1,\ldots,\nu_n)$ ---   an ordered-basis of $\mathcal{V} $; see Section  \ref{subSec:Ttorsionflags}.
	
	\item 	$\mathbf{U}^o_{n,m}$ --- the set of  upper-triangular nilpotent $(n\times n)$ matrices with rank
	$ \geqslant n-m$; see Section \ref{subSec:Ttorsionflags}.
	
	\item $\NODE_{i_1,\cdots, i_n}$ --- a level $n$ node of the $T$-torsion tree $\Ttorsiontree^m$; see Section \ref{subSec:treeflag}.
	
	
	
	\item   $ L $, $ \Lbar  $ --- a field containing $ \Fq  $, and its  algebraic closure, respectively; see Section \ref{subSec:31}.
	
	\item  $ L\set{\tau} $  --- the twisted polynomial ring; see Section \ref{subSec:31}.
	
	\item $\Lambda$ and $\Theta$ --- the standard correspondence; see Section \ref{Sec:flagsnotations}. 
	
	\item $ \mathcal{G}(L,\phi;\NODE_{i_1,\cdots, i_n} )$ --- the set of all $\phi_T$-torsion $n$-flags   which are  defined  over $L$ and subordinate to    $ \NODE_{i_1,\cdots, i_n}$; see Section \ref{subSec:torsionflagdm}.
	
	\item $  \mathcal{D}_L^{(m,j)} $ --- the set of   $(m,j)$-type normalized  Drinfeld modules; see Section \ref{Sec:Drfeldparameters}.
	
	
	\item $\dot{X}^{(m,j)}_{i_1,\cdots,i_{n}  }$, $X^{(m,j)}_{i_1,\cdots,i_{n}  }$ --- the normalized Drinfeld modular curve  and, respectively, the Drinfeld modular curve   (subordinate to $\NODE_{i_1,\cdots, i_n}$); see Section \ref{Sec:Drfeldparameters}.
	
	\item
	$\dot{\mathcal{F}}^{(m,j)}_{i_1,\cdots,i_{n}}$, $ {\mathcal{F}}^{(m,j)}_{i_1,\cdots, i_n}$ --- the function fields of $\dot{X}^{(m,j)}_{i_1,\cdots,i_{n}  }$ and ${X}^{(m,j)}_{i_1,\cdots,i_{n}  }$, respectively; see Section \ref{Sec:Drfeldparameters}.
	
	\item $\Phi_{T}^{(G)}$ --- a special $(3,2)$-type normalized Drinfeld module; see Section \ref{Sec:32flagclassfunctionfield}. 
	
	\item $\kappau{u}{X}$ --- the modular polynomial; see Section \ref{Section6.1}.
	
	\item $\mathcal{F}_{\mathcal{V}}$ --- the function field of $\mathcal{V}$; see Section \ref{subSec:L0characteristics}.
	
	\item  $\minimalpoly{\mathcal{V}}$ --- the minimal polynomial of $\mathcal{V}$; see Section \ref{subSec:L0characteristics}.
	
	\item $\mathcal{O}_{\mathcal{V}}$ --- the integral closure of $\mathbb{F}_q[G]$ in $\mathcal{F}_{\mathcal{V}}$; see Section \ref{Section6.5}.
	
	\item $\mathcal{C}_{\mathcal{V}}$ --- the affine curve over $\mathbb{F}_q$ associated to $\mathcal{F}_{\mathcal{V}}$; see Section \ref{Section6.5}.
\end{compactenum}



\section{Preliminaries -- the tree of  nilpotent upper-triangular matrices}\label{Sec:Pre-NUTMtree}
\bigskip\subsection{Upper-triangular matrices  and their conjugacy classes}\label{subSec:UTM}
Let $\mathbf{U}_n$, $\mathbf{U}^*_n$,
and $\mathbf{U}^o_n$ be, respectively, the set of upper-triangular, nonsingular upper-triangular,   
and  nilpotent upper-triangular $(n\times n)$ matrices (over $\Fq$). The group $\mathbf{U}^*_n$ acts on $\mathbf{U}^o_n$ by conjugation:
$$
B.N:=B^{-1}NB,\qquad B\in \mathbf{U}^*_n, N\in \mathbf{U}^o_n.
$$

An orbit of the $\mathbf{U}^*_n$-conjugation action on  $\mathbf{U}^o_n$ is called a \textbf{conjugacy ($n$-)class}.
In the sequel, we use $ \mathbf{U}^o_{n }/_\sim$ to  denote  the set of conjugacy $n$-classes. An element in $\mathbf{U}^o_{n }/_\sim $ will be denoted by   $[N  ]$ if it is represented by a matrix $N \in  \mathbf{U}^o_{n }$.

The \textbf{rank} of a conjugacy $n$-class $ [N  ]$ refers to the rank of $N $ as a matrix.
The \textbf{exponent} of $ [N  ]$, denoted by   $\Exp (N)$, is the number $e\geqslant 1$ uniquely determined by the conditions $ N  ^{e-1}\neq 0$ and $ N ^{e} = 0$.

By taking the  left-top $(n\times n)$-block, there is a restriction map
$p_{n+1}: \mathbf{U}^o_{n+1 } \to   \mathbf{U}^o_{n }$. For $N' \in \mathbf{U}^o_{n+1 }$, we will call $N=p_{n+1}N' \in \mathbf{U}^o_{n  }$ \textbf{the parent matrix} of $ N' $, and call $N' $ \textbf{a child matrix} of $N$. Clearly,  the restriction $p_{n+1}$ also maps conjugacy $(n+1)$-classes   to $n$-classes --- For each   $ [N' ]\in  \mathbf{U}^o_{n+1 }/_\sim $, $p_{n+1}[N' ]$ is the $n$-class $[p_{n+1}N' ]\in \mathbf{U}^o_{n }/_\sim $.  We shall call  $[p_{n+1}N']$    \textbf{the parent class} of $[N']$, and refer to $[N'] $ as   \textbf{a child class}   of $[p_{n+1}N']$. The map $p_{n+1}: [N'] \to  [ p_{n+1}N']$ is indeed surjective.

\begin{defn}\label{Defn:conjugacyclassdirectson} If a  conjugacy $(n+1)$-class  $[N']$ can be represented by a matrix $N' \in \mathbf{U}^o_{n+1}$ whose elements in the last column are all zeros, then    we say that $[N']$ is  \textbf{trivially extended} (from its parent). 
	
\end{defn}

The    classification problem of upper-triangular matrices up to conjugation has been studied by many works. We refer to the papers \cites{MR577913, MR123605,MR113948,MR470098,MR1190401,MR1260823}  for the general theories. Here we directly use the result of \cite{MR1448353} and list a total of 24    conjugacy $n$-classes for $n=1$, $2$, $3$, and $4$. These conjugacy classes are denoted by $ \Rconjugacyclass{i_1,\cdots, i_n}$ with standard matrices $R_{i_1,\cdots, i_n}\in \mathbf{U}^o_n$, where  $R_{i_1,\cdots, i_n}$ are indexed so that the parent matrix of $R_{i_1,\cdots, i_n}$ is $R_{i_1,\cdots, i_{n-1}}$.
In  Section \ref{Sec:listofmatrices} of the appendix, one can find all such standard matrices $R_{i_1,\cdots, i_4}\in \mathbf{U}^o_4$. 
For example, we have the matrix
\begin{equation}
	\label{Eqt:R1121example}
	R_{1,1,2,1}=\left( \begin{array}{cccc}0 & 1 &0 &0 \\0 & 0 & 0&1 \\0 & 0 & 0&1\\0 &0&0&0 \end{array}\right), \end{equation}
and from  $R_{1,1,2,1}$ one can find its parent $$R_{1,1,2}=\left( \begin{array}{ccc}0 & 1 &0  \\0 & 0 & 0 \\0 & 0 & 0  \end{array}\right), $$ grandparent $R_{1,1}=\left( \begin{array}{cc}0 & 1    \\0 & 0    \end{array}\right)$, and so on.

\bigskip\subsection{The   tree of nilpotent upper-triangular matrices}\label{Sec:matrixtreeupto4}
\begin{defn}
	The \textbf{tree of nilpotent upper-triangular matrices}  over $\Fq $, denoted by $\matrixtree $,  is the rooted  tree   that consists of the following data:
	\begin{itemize}
		\item the level $1$ node   $\Rconjugacyclass{1}= [(0)]\in \mathbf{U}^o_{1 }/_\sim  $, also called the root of $\matrixtree$;
		\item all  conjugacy classes    in $\mathbf{U}^o_{n }/_\sim $ as level $n$ nodes, for    $n=2,3,\cdots$;
		\item all edges connecting parent conjugacy classes to their children.
	\end{itemize}
\end{defn}

This tree has infinitely many levels and nodes; see   Figure \ref{fig:NUTMtree}    (up to  its fourth level).

\tikzstyle{level 1}=[level distance=2cm, sibling distance=9.5cm]
\tikzstyle{level 2}=[level distance=4cm, sibling distance=3.5cm]
\tikzstyle{level 3}=[level distance=4cm, sibling distance=1.1cm]
\tikzstyle{bag} = [text width=4em, text centered]
\tikzstyle{end} = [circle, minimum width=3pt,fill, inner sep=0pt]

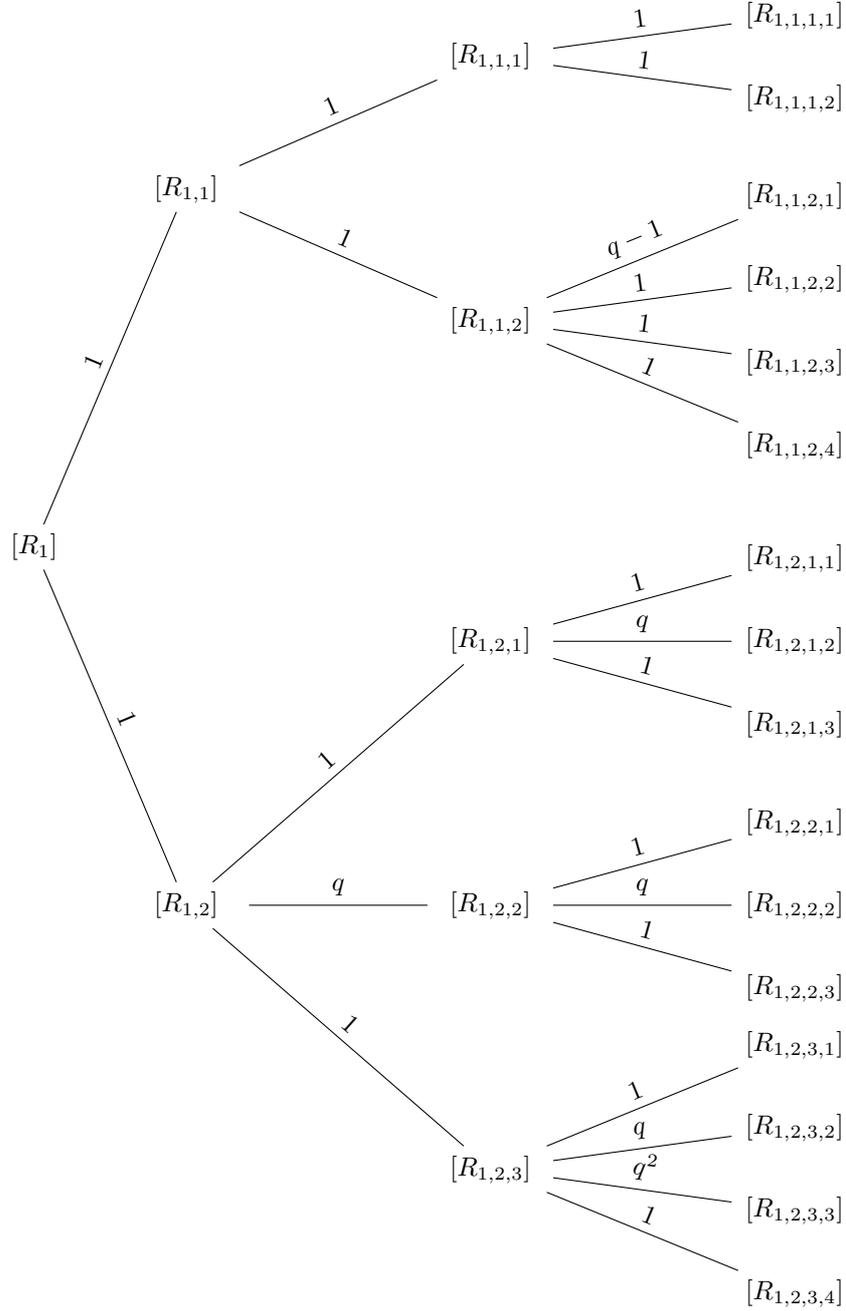
\begin{figure}[htbp]\caption{\textbf{The  tree $\matrixtree$  of nilpotent upper-triangular matrices up to its fourth level}. (The number above each edge indicates the matrix degree of the corresponding child class,  see Definition \ref{Defn:matrixdegree}. Among all  child classes of a parent class, the  one with the largest   last index is the  trivially extended conjugacy class; see Definition \ref{Defn:conjugacyclassdirectson}.)}\label{fig:NUTMtree} 
	\begin{tikzpicture}[grow=right, sloped]
		\node[bag]{$\Rconjugacyclass{1}$ }
		child{
			node[bag]{$\Rconjugacyclass{1,2}$ }
			child{
				node[bag]{$\Rconjugacyclass{1,2,3}$ }
				child{
					node[bag]{$\Rconjugacyclass{1,2,3,4}$ }
					edge from parent
					node[above] {$ 1$}
					node[below]{}
				}
				child{
					node[bag]{$\Rconjugacyclass{1,2,3,3}$ }
					edge from parent
					node[above] {$ q^2$}
					node[below]{}
				}
				child{
					node[bag]{$\Rconjugacyclass{1,2,3,2}$ }
					edge from parent
					node[above] {$ q$}
					node[below]{}
				}
				child{
					node[bag]{$\Rconjugacyclass{1,2,3,1}$ }
					edge from parent
					node[above] {$ 1$}
					node[below]{}
				}
				edge from parent
				node[above] {$ 1$}
				node[below]{}
			}
			child{
				node[bag]{$\Rconjugacyclass{1,2,2}$ }
				child{
					node[bag]{$\Rconjugacyclass{1,2,2,3}$ }
					edge from parent
					node[above] {$ 1$}
					node[below]{}
				}
				child{
					node[bag]{$\Rconjugacyclass{1,2,2,2}$ }
					edge from parent
					node[above] {$ q$}
					node[below]{}
				}
				child{
					node[bag]{$\Rconjugacyclass{1,2,2,1}$ }
					edge from parent
					node[above] {$ 1$}
					node[below]{}
				}
				edge from parent
				node[above] {$q$}
				node[below]{}
			}
			child{
				node[bag]{$\Rconjugacyclass{1,2,1}$ }
				child{
					node[bag]{$\Rconjugacyclass{1,2,1,3}$ }
					edge from parent
					node[above] {$ 1$}
					node[below]{}
				}
				child{
					node[bag]{$\Rconjugacyclass{1,2,1,2}$ }
					edge from parent
					node[above] {$ q$}
					node[below]{}
				}
				child{
					node[bag]{$\Rconjugacyclass{1,2,1,1}$ }
					edge from parent
					node[above] {$ 1$}
					node[below]{}
				}
				edge from parent
				node[above] {$ 1$}
				node[below]{}
			}
			edge from parent
			node[above] {$ 1$}
			node[below]{}
		}
		child{
			node[bag]{$\Rconjugacyclass{1,1}$ }
			child{
				node[bag]{$\Rconjugacyclass{1,1,2}$ }
				child{
					node[bag]{$\Rconjugacyclass{1,1,2,4}$ }
					edge from parent
					node[above] {$ 1$}
					node[below]{}
				}
				child{
					node[bag]{$\Rconjugacyclass{1,1,2,3}$ }
					edge from parent
					node[above] {$ 1$}
					node[below]{}
				}
				child{
					node[bag]{$\Rconjugacyclass{1,1,2,2}$ }
					edge from parent
					node[above] {$ 1$}
					node[below]{}
				}
				child{
					node[bag]{$\Rconjugacyclass{1,1,2,1}$ }
					edge from parent
					node[above] {$ q-1$}
					node[below]{}
				}
				edge from parent
				node[above] {$ 1$}
				node[below]{}
			}
			child{
				node[bag]{$\Rconjugacyclass{1,1,1}$ }
				child{
					node[bag]{$\Rconjugacyclass{1,1,1,2}$ }
					edge from parent
					node[above] {$ 1$}
					node[below]{}
				}
				child{
					node[bag]{$\Rconjugacyclass{1,1,1,1}$ }
					edge from parent
					node[above] {$ 1$}
					node[below]{}
				}
				edge from parent
				node[above] {$ 1$}
				node[below]{}
			}
			edge from parent
			node[above] {$ 1$}
			node[below]{}
		};
	\end{tikzpicture}
\end{figure}  


\bigskip\subsection{Matrix degrees of conjugacy classes}\label{subSec:matrixdegrees}

Denote by  $\mathbf{U}^!_n$ the subgroup of $\mathbf{U}^*_n$ which consists of matrices of the form
$$\begin{pmatrix}
	{I}_{ n-1 }  & b   \\
	0  & k
\end{pmatrix}
$$  where $I_{n-1}$ stands for the $(n-1)\times (n-1)$-unit matrix,  $k\in \Fq $ is nonzero, and $b\in \Fq ^{n\times 1} $ is arbitrary.

\begin{defn}\label{Defn:matrixdegree} Let  $[N']$ be a  conjugacy $(n+1)$-class and $   [N]       $  its parent. Consider the set
	$$
	B({N} ;[N']):=\left\{  \begin{pmatrix}
	{N}   & b   \\
	0 & 0
	\end{pmatrix}
	, b\in \Fq ^{n\times 1}  \right\} \cap [N'].
	$$
	Denote by $ {B}({N};[N'])/_\sim $ the quotient space of $B({N} ;[N'])$ modulo  the conjugation action by $\mathbf{U}^!_{n+1}$. Then
	the  \textbf{matrix degree} of $[N']$ is defined by
	$$\matrixdeg_{[N']}:=\# {B}({N} ;[N'])/_\sim.
	$$

\end{defn}

 By some standard linear algebra arguments, one can   prove that the number $\matrixdeg_{   [N']}$ does not depend on the choice of $N$ in $[N]=p_{n+1}[N']$. Hence, the matrix degree $\matrixdeg_{ [N']}$ is well-defined.


The following fact is also easy to verify.
\begin{prop}\label{Prop:conjugacyclassdirectchild}For any   trivially extended   conjugacy $(n+1)$-class $[N']$,   we have   $\matrixdeg_{[N'] } =1$.
\end{prop}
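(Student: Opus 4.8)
The plan is to unwind the definitions and reduce the claim $\matrixdeg_{[N']}=1$ to the statement that a single $\mathbf{U}^!_{n+1}$-orbit exhausts the relevant intersection set. Recall that $[N']$ being trivially extended means it has a representative $N'\in\mathbf{U}^o_{n+1}$ whose last column is zero, i.e. $N'=\begin{pmatrix} N & 0\\ 0 & 0\end{pmatrix}$ where $N\in\mathbf{U}^o_n$ is (a representative of) the parent class. First I would observe that with this choice of $N$, the zero vector $b=0$ already lies in $B(N;[N'])$, since $\begin{pmatrix} N & 0\\ 0 & 0\end{pmatrix}$ is literally a representative of $[N']$. So $B(N;[N'])$ is nonempty and $\matrixdeg_{[N']}=\#B(N;[N'])/_\sim\geqslant 1$; the content is the reverse inequality.

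Next I would compute how $\mathbf{U}^!_{n+1}$ acts on matrices of block form $\begin{pmatrix} N & b\\ 0 & 0\end{pmatrix}$. Writing a general element of $\mathbf{U}^!_{n+1}$ as $B=\begin{pmatrix} I_n & c\\ 0 & k\end{pmatrix}$ with $k\in\Fq^*$, $c\in\Fq^{n\times 1}$, one has $B^{-1}=\begin{pmatrix} I_n & -k^{-1}c\\ 0 & k^{-1}\end{pmatrix}$, and a direct block multiplication gives
\[
B^{-1}\begin{pmatrix} N & b\\ 0 & 0\end{pmatrix} B=\begin{pmatrix} N & k^{-1}(Nc+b)\\ 0 & 0\end{pmatrix}.
\]
Thus the conjugation action on the "$b$-coordinate" is the affine action $b\mapsto k^{-1}(b+Nc)$, fixing $N$. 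The orbit of $b=0$ is therefore $\{k^{-1}Nc : k\in\Fq^*,\ c\in\Fq^{n\times 1}\}=\mathrm{Im}(N)$ (the column space of $N$), since scaling by $k^{-1}$ does not enlarge the $\Fq$-subspace $\mathrm{Im}(N)$. More generally the orbit of any $b$ is the coset $k^{-1}b+\mathrm{Im}(N)$ ranging over $k$, which is contained in $\Fq b+\mathrm{Im}(N)$.

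It then remains to show that every $b$ with $\begin{pmatrix} N & b\\ 0 & 0\end{pmatrix}\in[N']$ actually lies in $\mathrm{Im}(N)$, so that $B(N;[N'])/_\sim$ is the single orbit of $b=0$. Here I would use a rank (or exponent) invariant: $[N']$ is the class of $\begin{pmatrix} N & 0\\ 0 & 0\end{pmatrix}$, which has rank $=\mathrm{rank}(N)$, whereas $\begin{pmatrix} N & b\\ 0 & 0\end{pmatrix}$ has rank $\mathrm{rank}(N)$ if $b\in\mathrm{Im}(N)$ and $\mathrm{rank}(N)+1$ if $b\notin\mathrm{Im}(N)$ (adjoining a column to $N$, padded by a zero row, increases the rank by exactly $1$ precisely when the column is not in the column space). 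Since rank is a conjugacy invariant and membership in $[N']$ forces equal rank, we must have $b\in\mathrm{Im}(N)$. Combined with the orbit computation above, every element of $B(N;[N'])$ is $\mathbf{U}^!_{n+1}$-conjugate to $\begin{pmatrix} N & 0\\ 0 & 0\end{pmatrix}$, hence $\#B(N;[N'])/_\sim=1$, as claimed. The only mildly delicate point — the "main obstacle", though it is routine — is verifying the rank jump statement cleanly; alternatively one can phrase the whole argument using the fact (already recalled in the paper) that $\matrixdeg_{[N']}$ is independent of the representative $N$ of the parent, which lets one fix the convenient block-diagonal $N'$ from the start and avoids any ambiguity.
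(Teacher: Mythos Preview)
Your argument is correct, with one harmless computational slip: with $B=\begin{pmatrix} I_n & c\\ 0 & k\end{pmatrix}$ one actually gets
\[
B^{-1}\begin{pmatrix} N & b\\ 0 & 0\end{pmatrix}B=\begin{pmatrix} N & kb+Nc\\ 0 & 0\end{pmatrix},
\]
so the action on the $b$-coordinate is $b\mapsto kb+Nc$ rather than $b\mapsto k^{-1}(Nc+b)$. This does not change anything: the $\mathbf{U}^!_{n+1}$-orbit of $b=0$ is still exactly $\mathrm{Im}(N)$, and the rest of your argument (rank forces $b\in\mathrm{Im}(N)$, hence a single orbit) goes through unchanged.

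Your route differs from the paper's in the key step. The paper does not invoke rank at all: given $\tilde N'=\begin{pmatrix} N & b\\ 0 & 0\end{pmatrix}\in[N']$, it takes a $\mathbf{U}^*_{n+1}$-conjugator $A=\begin{pmatrix} C & a\\ 0 & k\end{pmatrix}$ realizing $\tilde N'=AN'A^{-1}$, reads off from the top-left block that $CNC^{-1}=N$ and hence directly that $b=-k^{-1}Na\in\mathrm{Im}(N)$, and then observes that the element $\begin{pmatrix} I_n & a\\ 0 & k\end{pmatrix}\in\mathbf{U}^!_{n+1}$ already conjugates $N'$ to $\tilde N'$. Your use of rank as a conjugacy invariant is a clean substitute for this direct extraction and is arguably more conceptual; the paper's version has the minor advantage of explicitly exhibiting the $\mathbf{U}^!_{n+1}$-conjugator rather than just its existence.
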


\begin{proof}
	It suffices to prove that for all $\tilde{N}' \in B(N;[N'])$, $\tilde{N}'$ and $N'=\begin{pmatrix}
		{N}   & 0   \\
		0 & 0
	\end{pmatrix}$ are conjugations by $\mathbf{U}_{n+1}^!$. By definition, there exists $A \in \mathbf{U}_{n+1}^*$, such that $\tilde{N}'=AN'A^{-1}$ which is of the form $\tilde{N}'=\begin{pmatrix}
		{N}   & b   \\
		0 & 0
	\end{pmatrix}$. Let us assume that $A=\begin{pmatrix}
		C   & a   \\
		0 & k
	\end{pmatrix}$, where $C \in \mathbf{U}_{n}^*$, $a\in \F_{q}^{n \times 1}$, $k \in \F_{q}^*$. A simple computation gives $b=-k^{-1}Na$. Therefore,  we have the desired relation
	$$\tilde{N}'=\begin{pmatrix}
		{N}   & -k^{-1}Na   \\
		0 & 0
	\end{pmatrix}=\begin{pmatrix}
		I   & a   \\
		0 & k
	\end{pmatrix}N'\begin{pmatrix}
		I   & a   \\
		0 & k
	\end{pmatrix}^{-1}.$$
\end{proof}
\begin{prop}Assume that a   conjugacy $(n+1)$-class $[N]$ is of rank $r$ and has totally $i$ child classes $[N^{(j)}]$, $j=1,\cdots, i$.  Then we have
	\begin{equation}\nonumber
		\sum_{j=1}^{i }\matrixdeg_{    [N^{(j)} ]}=\Nqfrac{n-r} +1=\frac{q^{n-r}-1}{q-1}+1.	
	\end{equation}
	
\end{prop}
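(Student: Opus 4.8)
The plan is to count the relevant orbit space by a direct dimension/cardinality argument, taking the sum over all child classes as a count of the full relevant space. Fix a representative $N \in \mathbf{U}^o_n$ of the parent class $[N]$, with $\mathrm{rank}(N) = r$. The key observation is that the sets $B(N;[N^{(j)}])$, as $[N^{(j)}]$ ranges over all child classes of $[N]$, partition the set
$$
B(N) := \left\{ \begin{pmatrix} N & b \\ 0 & 0 \end{pmatrix} : b \in \Fq^{n\times 1} \right\},
$$
since every matrix of this form is nilpotent upper-triangular of size $n+1$ with parent $N$, hence lies in exactly one child class of $[N]$. Therefore $\sum_{j} \matrixdeg_{[N^{(j)}]} = \# \left( B(N)/_\sim \right)$, where $/_\sim$ denotes the action of $\mathbf{U}^!_{n+1}$ on $B(N)$ by conjugation. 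So the entire proposition reduces to showing that this quotient has exactly $\Nqfrac{n-r} + 1$ elements.

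Next I would compute the $\mathbf{U}^!_{n+1}$-action explicitly. Writing a general element of $\mathbf{U}^!_{n+1}$ as $\begin{pmatrix} I_n & a \\ 0 & k \end{pmatrix}$ with $a \in \Fq^{n\times 1}$, $k \in \Fq^*$, a short matrix computation (essentially the one already carried out in the proof of Proposition \ref{Prop:conjugacyclassdirectchild}) shows that conjugating $\begin{pmatrix} N & b \\ 0 & 0 \end{pmatrix}$ sends $b \mapsto k^{-1}(b + Na)$. Hence the orbit of $b$ is the set $\{ k^{-1}(b + Na) : a \in \Fq^n,\ k \in \Fq^* \}$. So two columns $b, b'$ are equivalent precisely when $b' \in \Fq^* \cdot (b + \mathrm{Im}\,N)$, i.e. when they represent the same line (or both the zero element) in the quotient vector space $W := \Fq^n / \mathrm{Im}\,N$. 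Since $N$ has rank $r$, $\dim_{\Fq} W = n - r$. The orbit of $b = 0$ (together with everything in $\mathrm{Im}\,N$) is a single orbit — the ``trivially extended'' class — and the remaining orbits are in bijection with the lines through the origin in $W$, of which there are $\Nqfrac{n-r} = \frac{q^{n-r}-1}{q-1}$. Adding the one extra orbit for the zero class gives
$$
\# \left( B(N)/_\sim \right) = \Nqfrac{n-r} + 1,
$$
as claimed.

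Finally I would note that the count is independent of the choice of representative $N$ in $[N]$: if $N$ is replaced by $C^{-1}NC$ for $C \in \mathbf{U}^*_n$, then conjugating by $\begin{pmatrix} C & 0 \\ 0 & 1 \end{pmatrix} \in \mathbf{U}^!_{n+1}$ (or more simply, by the already-established well-definedness of $\matrixdeg$ from Definition \ref{Defn:matrixdegree}) matches up the two partitions and their orbit spaces. This closes the argument. The only mildly delicate point — the place I would be most careful — is the bookkeeping of which columns $b$ give the trivially extended child versus the genuinely new ones: one must check that $b \in \mathrm{Im}\,N$ is exactly the condition for $\begin{pmatrix} N & b \\ 0 & 0 \end{pmatrix}$ to be $\mathbf{U}^!_{n+1}$-conjugate to $\begin{pmatrix} N & 0 \\ 0 & 0 \end{pmatrix}$, which is immediate from the orbit description $b \mapsto k^{-1}(b+Na)$ but deserves an explicit line. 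Everything else is a routine linear-algebra count of lines in an $(n-r)$-dimensional $\Fq$-vector space.
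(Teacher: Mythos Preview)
Your proof is correct and follows essentially the same route as the paper: both identify $\sum_j \matrixdeg_{[N^{(j)}]}$ with the number of $\mathbf{U}^!_{n+1}$-orbits on the full set $B(N)$, compute the conjugation action on the last column as $b \mapsto kb + Na$ (your sign/inverse conventions differ harmlessly), and then count orbits as lines in $\Fq^n/\mathrm{Im}\,N$ plus the zero class. One small slip: the matrix $\begin{pmatrix} C & 0 \\ 0 & 1 \end{pmatrix}$ is \emph{not} in $\mathbf{U}^!_{n+1}$ (that group has $I_n$ in the top-left block), but your parenthetical appeal to the already-established well-definedness of $\matrixdeg$ handles this point anyway.
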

\begin{proof}
	By definition, we have
	$$\bigcup_{j=1}^{i}B(N,[N^{(j)}])=\left\{  \begin{pmatrix}
		{N}   & b   \\
		0 & 0
	\end{pmatrix}
	, b\in \Fq ^{n\times 1}  \right\}.$$
	It is easy to verify that two matrices
	$\begin{pmatrix}
		{N}   & b   \\
		0 & 0
	\end{pmatrix}$ and  $\begin{pmatrix}
		{N}   & \tilde{b}   \\
		0 & 0
	\end{pmatrix}$ are conjugations by $\mathbf{U}^{!}_{n+1}$  if and only if there exist   $a\in \F_q^{n \times 1}$ and $k \in \F_{q}^*$   such that $b=k\tilde{b}+Na$. The number $\sum_{j=1}^{i }\matrixdeg_{    [N^{(j)} ]}$ is exactly the number of orbits by the obvious   $\F_{q}^*\times \langle N\rangle$-action on $\F_q^{n\times 1}$, where $\langle N\rangle$ is the linear space spanned by  column vectors of $N$, and considered as an abelian group.  A small amount of linear algebra analysis gives the desired number of orbits:
	\begin{eqnarray*}
		\# (\F_q^{n\times 1}/\F_{q}^*\times \langle N\rangle\mbox{-action})=
		\frac{\#(\F_{q}^{n\times 1 }/\langle N \rangle)-1}{\#\F_q^*}+1
		= \frac{q^{n-r}-1}{q-1}+1.
	\end{eqnarray*}

\end{proof}

\section{The tree of $T$-torsion flags}\label{Sec:basic}

\bigskip\subsection{$T$-torsion flags and    $T$-torsion flag classes}\label{subSec:Ttorsionflags}

Throughout this paper,  the notation $A$ stands for the polynomial ring $\Fq [T]$.
Define an $A$-module  $${S^1}:= \varinjlim_n (\frac{1}{T^n}A)/A  \cong \Fq [\frac{1}{T}]/ \Fq \cong \varinjlim_n (A/T^nA).$$ 

Let $m\geqslant 2$ be a positive integer.
In  this section, we focus on  the $A$-module \begin{equation}
\label{Eqt:Sm}
{S^m}:=(S^1)^{\oplus m} \cong
(\Fq [\frac{1}{T}])^{\oplus m}/ \Fq^{\oplus m}
 \end{equation} which is also viewed as an (infinite dimensional) $\Fq $-vector space.  {The $T$-action on $S^m$ is     a surjective $\Fq $-endomorphism, and $\Ker T   $ is spanned by $m$ vectors, namely   $(\frac{1}{T},0,\ldots, 0)$, $(0,\frac{1}{T}, \cdots, 0)$, $\cdots$, and $(0,\ldots, 0,\frac{1}{T})$ in $S^m$.


Let $n$ be a positive integer. 
By an \textbf{$n$-flag} in ${S^m}$   we mean  a sequence of subspaces ordered by inclusions: \begin{equation*}
\mathcal{V}  =\flag{ V_1\subset V_2\subset \cdots\subset V_n}
\end{equation*}
where each $V_i\subset {S^m}$   
is of dimension $i$ (over $\Fq$). Now regarding the $A$-module structure, or $T$-action, on ${S^m}$, we define a special type of $n$-flags:

\begin{defn}\label{defn:flagandcodimension}
	A \textbf{$T$-torsion $n$-flag}   in $S^m$  
	is a flag   $\mathcal{V}=\flag{  V_1\subset V_2\subset \cdots\subset V_n}$ in ${S^m}$ 	such that     $TV_i\subseteq V_{i-1}$ holds for all $i=2,\cdots, n$ and   $TV_1=\set{0}$.
	
\end{defn}So, each $V_i$
is an $A$-submodule in $S^m$ and the $T$-action on $V_i$ is nilpotent.
 The number $n$ will be referred to as the \textbf{level} of the flag $\mathcal{V}$.
 The $T$-\textbf{order} of $\mathcal{V}$, denoted by $ \Trank (\mathcal{V} )$, refers to the rank of $V_n$ as an $A$-module, i.e., the minimum number of generators. Alternatively, we can define
$$  \Trank (\mathcal{V} ):=n-\dim_{\Fq } (  {TV_n}) .$$

  The \textbf{exponent} of $\mathcal{V} $, denoted by $\Exp (\mathcal{V} )$, is the number $e\geqslant 1$ uniquely determined by the conditions $T^{e-1}(V_n)\neq 0$ and $T^{e} (V_n)= 0$.

 \begin{defn}
	Let $ \mathcal{V}   $ be a $T$-torsion $n$-flag.
	A \textbf{child} of   $\mathcal{V} $ is
	a $T$-torsion $(n+1)$-flag    $ {\mathcal{V}'} $
	whose first $n$-subspaces coincide with those of $ \mathcal{V}   $, i.e.,
	${\mathcal{V}'} =\flag{ V_1\subset V_2\subset \cdots\subset V_{n }\subset V_{n+1}}  $.   We also say that   $ \mathcal{V}  $ is   \textbf{the parent}   of $ {\mathcal{V}'}  $.
\end{defn}

 Let us introduce an equivalence relation in the set of $n$-flags in $ {S^m}$:
\begin{defn}\label{isomorphism}
	Two $T$-torsion $n$-flags $\mathcal{V} =\flag{V_1\subset V_2\subset \cdots\subset V_n}$ and $\mathcal{W}=\flag{ W_1\subset W_2\subset \cdots\subset W_n}$    in $S^m$ are said to be \textbf{isomorphic} if there exists an $\Fq $-linear isomorphism
	$\iota: V_n\rightarrow W_n$
such that
	\begin{enumerate}
		\item $\iota(V_i)=W_i$ for $i=1,\ldots, n$; and
		\item $\iota\circ T=T\circ \iota$.
	\end{enumerate}
 A   \textbf{$T$-torsion ($n$-)flag class} in $S^m$ is an isomorphic class of such $T$-torsion $n$-flags.

  \end{defn}

In the sequel, a $T$-torsion flag class  is denoted by   $ [\mathcal{V} ]$  if it is represented by the $T$-torsion  flag $\mathcal{V} $.
 We also     define the  $T$-\textbf{order} of the torsion flag class $[\mathcal{V} ]$ to be that of
$ \mathcal{V}$ as it does not depend on the choice of $\mathcal{V}  $. Similarly, the \textbf{exponent} of $[\mathcal{V} ]$ is defined to be that of $\mathcal{V} $.

\begin{example}\label{Example:NODE1}
	Any  $T$-torsion $1$-flag $\mathcal{V} $ is merely   a   $1$-dimensional $\Fq $-subspace $V_1\subset \Ker T$. It is clear that any two $T$-torsion $1$-flags are isomorphic. Hence there is a unique $T$-torsion $1$-flag class which we denote  by $ \NODE_{1}$.
\end{example}

Given an $n$-flag $\mathcal{V} =\flag{V_1\subset V_2\subset\cdots\subset V_n}$, one is able to
find a basis $  (\nu_1,\ldots,\nu_n)$ of $V_n$  such that $V_i=\mathrm{Span}_{\Fq }\{\nu_1,\ldots,\nu_i\}$. In such a situation we call $(\nu_1,\ldots,\nu_n)$   an \textbf{ordered basis} of $\mathcal{V} $. So the flag $\mathcal{V} $   can also be realized by the array of vectors $(\nu_1,\ldots,\nu_n)$, and we simply write $$\mathcal{V} =(\nu_1<\cdots<\nu_n)$$ for this relation.

\begin{defn}
	A $T$-torsion $n$-flag class $[\mathcal{V} ]$ is called  \textbf{$T$-vanishing} if it is represented by a $T$-torsion flag $\mathcal{V} =(\nu_1<\cdots<\nu_n )$ satisfying  $T\nu_n=0$.
\end{defn}


Let $\mathcal{V} =\flag{V_1\subset V_2\subset\cdots\subset V_n}=(\nu_1<\cdots<\nu_n)$ be a $T$-torsion  flag.
 The $\Fq $-linear endomorphism  $T:~V_n\to V_n$ is represented  by a nilpotent matrix
$N\in \mathbf{U}^o_n$ with respect to $(\nu_1,\ldots,\nu_n)$,  i.e.,
\begin{equation}\label{Eq:FromTtoN} T(\nu_1,\ldots,\nu_n)=(\nu_1\cdots,\nu_n)N.\end{equation}
Of course, $N$ depends on the choice of $(\nu_1,\ldots,\nu_n)$. However, the conjugacy class of $N$ (up to upper-triangular similarity) is independent:
\begin{lem}For any two ordered-bases $(\nu_1,\ldots,\nu_n)$ and $(\tilde{\nu}_1,\ldots,\tilde{\nu}_n)$ of a $T$-torsion  flag $\mathcal{V} $,  the corresponding matrices $N$ and $\tilde{N }$ of $T$ are  $\mathbf{U}^*_n$- similar.
\end{lem}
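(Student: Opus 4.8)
The plan is to show that a change of ordered basis of $\mathcal{V}$ corresponds to an upper-triangular base-change matrix, and then that conjugating $N$ by such a matrix is exactly the $\mathbf{U}^*_n$-similarity relation. First I would observe that since both $(\nu_1,\ldots,\nu_n)$ and $(\tilde\nu_1,\ldots,\tilde\nu_n)$ are ordered bases of the \emph{same} flag $\mathcal{V}=\flag{V_1\subset\cdots\subset V_n}$, we have $V_i=\mathrm{Span}_{\Fq}\{\nu_1,\ldots,\nu_i\}=\mathrm{Span}_{\Fq}\{\tilde\nu_1,\ldots,\tilde\nu_i\}$ for every $i$. Writing the transition matrix $B\in\GL_n(\Fq)$ by $(\tilde\nu_1,\ldots,\tilde\nu_n)=(\nu_1,\ldots,\nu_n)B$, the condition $\tilde\nu_i\in V_i=\mathrm{Span}_{\Fq}\{\nu_1,\ldots,\nu_i\}$ says precisely that the $i$-th column of $B$ has zero entries below the $i$-th row; hence $B$ is upper-triangular, and being invertible, $B\in\mathbf{U}^*_n$.

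Next I would compare the two matrix representations of the single $\Fq$-linear map $T\colon V_n\to V_n$. From \eqref{Eq:FromTtoN} we have $T(\nu_1,\ldots,\nu_n)=(\nu_1,\ldots,\nu_n)N$ and likewise $T(\tilde\nu_1,\ldots,\tilde\nu_n)=(\tilde\nu_1,\ldots,\tilde\nu_n)\tilde N$. Substituting $(\tilde\nu_1,\ldots,\tilde\nu_n)=(\nu_1,\ldots,\nu_n)B$ into the latter and using linearity of $T$ gives $(\nu_1,\ldots,\nu_n)NB=T(\nu_1,\ldots,\nu_n)B=T(\tilde\nu_1,\ldots,\tilde\nu_n)=(\nu_1,\ldots,\nu_n)B\tilde N$. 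Since $(\nu_1,\ldots,\nu_n)$ is a basis, we may cancel it to obtain $NB=B\tilde N$, i.e. $\tilde N=B^{-1}NB$. Thus $\tilde N$ and $N$ are conjugate by the element $B\in\mathbf{U}^*_n$, which is exactly the assertion that they are $\mathbf{U}^*_n$-similar. (One should also note $\tilde N\in\mathbf{U}^o_n$ automatically, since it represents the nilpotent map $T|_{V_n}$ in an ordered basis adapted to the flag, but this is not strictly needed for the statement.)

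There is essentially no obstacle here; the only point requiring a moment's care is the direction of the triangularity argument — verifying that the base-change matrix between two \emph{flag-adapted} bases is upper-triangular rather than lower-triangular, which comes down to the bookkeeping convention that $(\tilde\nu_1,\ldots,\tilde\nu_n)=(\nu_1,\ldots,\nu_n)B$ means the $j$-th new basis vector is $\sum_i \nu_i B_{ij}$, so membership of $\tilde\nu_j$ in $V_j$ forces $B_{ij}=0$ for $i>j$. The rest is a one-line cancellation in the free module on the basis $(\nu_1,\ldots,\nu_n)$.
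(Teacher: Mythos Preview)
Your proof is correct. The paper in fact does not supply a proof of this lemma at all, treating it as a standard linear-algebra fact; your argument is exactly the routine verification that the authors have in mind.
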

This lemma tells us that the $T$-action on a $T$-torsion flag $\mathcal{V} $   corresponds to an upper-triangular matrix  $N\in \mathbf{U}^o_n$ whose conjugacy class is uniquely determined by $\mathcal{V}$.
 Passing to $T$-torsion  flag classes, we have a criterion.

\begin{prop}\label{Prop:Nnm-Sn}
	Let $\mathbf{U}^o_{n,m}$ be the set of  upper-triangular nilpotent $(n\times n)$ matrices with rank
	$ \geqslant n-m$. 	There exists a one-to-one correspondence between the set of $T$-torsion $n$-flag classes   and the set    of conjugacy classes of $\mathbf{U}^o_{n,m}$:
	$$
	 [\mathcal{V} ]~\ \mapsto~\  [N], ~\mbox{ where $\mathcal{V} $ and $N$ are related by   \eqref{Eq:FromTtoN}.}
	$$
	Under this correspondence, the $T$-order  of $ [\mathcal{V}]  $ equals $(n- \mathrm{rank}N) $.
\end{prop}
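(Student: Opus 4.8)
The plan is to establish the correspondence in two directions and check it respects the relevant invariants. First I would fix, for each $T$-torsion $n$-flag class $[\mathcal{V}]$, an ordered basis $(\nu_1,\ldots,\nu_n)$ and the associated nilpotent matrix $N\in\mathbf{U}^o_n$ via \eqref{Eq:FromTtoN}. The preceding lemma already tells us that the $\mathbf{U}^*_n$-conjugacy class $[N]$ is independent of the chosen ordered basis, so the assignment $[\mathcal{V}]\mapsto[N]$ is well defined as soon as we verify it does not depend on the choice of representative flag $\mathcal{V}$ within its isomorphism class — but that is immediate, since an isomorphism $\iota:\mathcal{V}\to\mathcal{W}$ carries an ordered basis of $\mathcal{V}$ to an ordered basis of $\mathcal{W}$ and, because $\iota$ commutes with $T$, the matrix representing $T$ is literally the same. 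So the map $[\mathcal{V}]\mapsto[N]$ lands in $\mathbf{U}^o_n/_\sim$; I then need to check the image lies in the subset of classes of rank $\geqslant n-m$, which is the place where the ambient module $S^m$ enters.

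For the rank condition I would argue as follows. Since $V_n$ is an $A$-submodule of $S^m$ on which $T$ acts nilpotently, $V_n$ is a finite-length $A$-module that embeds into $S^m=(S^1)^{\oplus m}$; because $S^1$ is the injective hull (equivalently, the Pr\"ufer-type module $\varinjlim A/T^nA$) and in particular has one-dimensional socle, the socle of $S^m$ is $m$-dimensional, so $\dim_{\Fq}(\Ker T|_{V_n})=\dim_{\Fq}(V_n\cap\Ker_{S^m}T)\leqslant m$. By rank–nullity for the $\Fq$-linear map $T:V_n\to V_n$, $\mathrm{rank}\,N=n-\dim_{\Fq}\Ker(T|_{V_n})\geqslant n-m$, so indeed $[N]\in\mathbf{U}^o_{n,m}/_\sim$. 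Conversely, given $[N]\in\mathbf{U}^o_{n,m}/_\sim$, I build a $T$-torsion flag realizing it: let $W=\Fq^n$ with $T$ acting by $N$, so $W$ is a nilpotent $A$-module with $\dim_{\Fq}\Ker(T|_W)=n-\mathrm{rank}\,N\leqslant m$; hence the socle of $W$ has dimension $\leqslant m$, and since $S^m$ is injective with $m$-dimensional socle there is an $A$-module embedding $W\hookrightarrow S^m$. The standard basis $e_1,\ldots,e_n$ of $\Fq^n$ — which for $N$ upper triangular satisfies $\mathrm{Span}\{e_1,\ldots,e_i\}$ being $T$-stable — transports to an ordered basis $(\nu_1<\cdots<\nu_n)$ of a $T$-torsion $n$-flag $\mathcal{V}$ in $S^m$ whose associated matrix is exactly $N$. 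This shows surjectivity of $[\mathcal{V}]\mapsto[N]$.

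For injectivity, suppose $[\mathcal{V}]$ and $[\mathcal{W}]$ map to the same $[N]$, i.e.\ there are ordered bases of $\mathcal{V}$ and of $\mathcal{W}$ giving matrices $N_{\mathcal{V}}$ and $N_{\mathcal{W}}$ with $N_{\mathcal{W}}=B^{-1}N_{\mathcal{V}}B$ for some $B\in\mathbf{U}^*_n$. Changing the ordered basis of $\mathcal{W}$ by the upper-triangular matrix $B$ replaces $N_{\mathcal{W}}$ by $N_{\mathcal{V}}$ while keeping $(\nu_1<\cdots<\nu_i)$ spanning $W_i$ — this is exactly why one restricts to $\mathbf{U}^*_n$ rather than $\GL_n$. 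Then the $\Fq$-linear map sending the (new) ordered basis of $\mathcal{W}$ to that of $\mathcal{V}$ carries $V_i$ to $W_i$ and commutes with $T$ (both being represented by the same matrix $N_{\mathcal{V}}$), so it is an isomorphism of $T$-torsion flags, giving $[\mathcal{V}]=[\mathcal{W}]$. Finally the $T$-order statement is just the alternative definition $\Trank(\mathcal{V})=n-\dim_{\Fq}(TV_n)=n-\mathrm{rank}\,N$, using that $\dim_{\Fq}(TV_n)=\mathrm{rank}(T|_{V_n})=\mathrm{rank}\,N$. The main obstacle I anticipate is the injectivity step, specifically justifying cleanly that an equality of conjugacy classes under $\mathbf{U}^*_n$ (not the full general linear group) is precisely what guarantees the flags — not merely the modules $V_n$, $W_n$ — are isomorphic; this is where the flag condition and the upper-triangular restriction must be used together, and it should be spelled out carefully rather than treated as routine.
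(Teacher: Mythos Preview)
Your proposal is correct and complete. In fact you prove more than the paper does: the paper's argument only treats surjectivity explicitly, asserting that ``it suffices to prove that any matrix $X\in\mathbf{U}^o_{n,m}$ can be realized as a $T$-torsion $n$-flag,'' and leaving well-definedness, injectivity, and the rank bound $\mathrm{rank}\,N\geqslant n-m$ to the reader, whereas you spell out each of these carefully (and your worry about injectivity is well placed --- it is exactly where the restriction to $\mathbf{U}^*_n$-conjugacy, rather than $\GL_n$-conjugacy, is used).

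The genuine methodological difference is in the surjectivity step. The paper constructs the embedding $W\hookrightarrow S^m$ by hand: it passes to a Jordan (cyclic) basis $(c^{(1)}_1,\ldots,c^{(l)}_{k_l})$ of $\Fq^n$ for the $N$-action, then writes down explicit lifts $\tilde c^{(i)}_j=(0,\ldots,0,\frac{1}{T^j},0,\ldots,0)\in S^m$ and transports back via the change-of-basis matrix. You replace this explicit computation with the structural observation that $S^1$ is the injective hull of $A/TA$, so $S^m$ is injective with $m$-dimensional socle, and any nilpotent $A$-module with socle of dimension $\leqslant m$ embeds. Both arguments encode the same Jordan/structure-theorem content; yours is cleaner and generalizes immediately (no bookkeeping with cyclic blocks), while the paper's has the virtue of producing concrete coordinates in $S^m$, which matches the explicit flavor of the examples that follow (e.g.\ Example~\ref{Example:findVfromN}).
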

\begin{proof}It suffices to prove that any matrix   $X\in\mathbf{U}^o_{n,m}$ can be realized as a $T$-torsion $n$-flag(of $ {S^m} $) to which the standard $T$-action   is represented by $X$ (up to conjugacy). We show its construction below ---  Consider $V=\Fq ^{n}$ and the $\Fq $-linear endomorphism $T^X:~V\to V$ determined by $X$, i.e.
	$$T^X(b_1,\ldots, b_n)=(b_1,\ldots, b_n )X$$
	where $ (b_1,\ldots, b_n)$ is the standard basis of $V$. Because $X$ has rank $r\geqslant n-m$,  the kernel of $T^X$ is of dimension $l=n-r$ ($\leqslant m$). Let us take a basis $(c^{(1)}_1,\ldots, c^{(l)}_1)$ of $\Ker(T^X) $.
		As $X$ is nilpotent, one can extend $(c^{(1)}_1,\ldots, c^{(l)}_1)$ to a cyclic basis   of $V$, say
	$$ (c^{(1)}_1,\ldots, c^{(1)}_{k_1},c^{(2)}_1,\ldots, c^{(2)}_{k_2},\ldots, c^{(l)}_1,\ldots,c^{(l)}_{k_l})$$
	such that $T^X(c^{(i)}_1)=0$ and  $T^X(c^{(i)}_j)=c^{(i)}_{j-1}$. In other words, with respect to the cyclic basis, $T^X$ takes its canonical Jordan form $J$. Suppose further that the two bases   are related by
	$$(b_1,\ldots, b_n )=(c^{(1)}_1,\ldots, c^{(1)}_{k_1},c^{(2)}_1,\ldots, c^{(2)}_{k_2},\ldots, c^{(l)}_1,\ldots,c^{(l)}_{k_l})G,
	$$
	for some $G\in \mathrm{GL}(n;\Fq )$.
	Then it follows that $J=G X G^{-1}$.
	
	As we have $l\leqslant m$, we can take vectors in  ${S^m} $:
	$$\tilde{c}^{(1)}_1=(\frac{1}{T},0,\ldots,0), \tilde{c}^{(1)}_2=(\frac{1}{T^2},0,\ldots,0),\ldots,
	\tilde{c}^{(1)}_{k_1}=(\frac{1}{{T}^{k_1}},0,\ldots,0);$$
	$$\tilde{c}^{(2)}_1=(0,\frac{1}{T},0,\ldots,0), \tilde{c}^{(2)}_2=(0,\frac{1}{T^2},0,\ldots,0),\ldots,
	\tilde{c}^{(2)}_{k_2}=(0,\frac{1}{{T}^{k_2}},0,\ldots,0);$$
	$$\cdots \cdots$$
	$$\tilde{c}^{(l)}_1=(0,\ldots,0,\frac{1}{T},0,\ldots,0), \tilde{c}^{(l)}_2=(0,\ldots,0,\frac{1}{T^2},0,\ldots,0),\ldots,
	\tilde{c}^{(l)}_{k_l}=(0,\ldots,0,\frac{1}{{T}^{k_l}},0,\ldots,0).$$
	With respect to this basis, the $T$-action is represented by the matrix $J$.
	 Now, one finds vectors $\tilde{b}_i  $ in $S^{\phi}\cong {S^m} $ such that
	$$  (\tilde{b}_1,\ldots, \tilde{b}_n )=(\tilde{c}^{(1)}_1,\ldots, \tilde{c}^{(1)}_{k_1},\tilde{c}^{(2)}_1,\ldots, \tilde{c}^{(2)}_{k_2},\ldots, \tilde{c}^{(l)}_1,\ldots,\tilde{c}^{(l)}_{k_l})G.
	$$
	It is clear that with respect to $(\tilde{b}_1,\ldots, \tilde{b}_n )$, the $T$-action is represented by the matrix $X$.  In conclusion, the ordered basis $(\tilde{b}_1<\cdots< \tilde{b}_n )$ gives the desired   $T$-torsion $n$-flag.
\end{proof}

    \begin{example}\label{Example:findVfromN} Recall the matrix $R_{1,1,2,1}$ given as in Equation \eqref{Eqt:R1121example} which is of rank $2$. If we set $n=4$ and $m=3$, then $2\geqslant n-m$ is true and  there exists a $T$-torsion $4$-flag class $[\mathcal{V}]$ which corresponds to the conjugacy class $[R_{1,1,2,1}]$. In fact, we can take $\mathcal{V}=(\nu_1<\nu_2<\nu_3<\nu_4 )$ where the base vectors $\nu_i\in $ $S^3=( {S^1})^{\oplus 3}$ are given by
   	$$~\begin{cases}
   	\nu_1=(\frac{1}{T},0,0),\\ \nu_2=(\frac{1}{T^2},0,0) ,\\
   	\nu_3= (0,\frac{1}{T},0),\\\nu_4= (\frac{1}{T^3},\frac{1}{T^2},0)
   	\end{cases}$$
   	because we have
   	$$~\begin{cases}
   	T\nu_1=0,\\ T\nu_2=\nu_1 ,\\
   	T\nu_3= 0,\\T\nu_4= \nu_2+\nu_3.
   	\end{cases}$$
   	If we consider  $R_{1,2,3,4}$ which is a trivial $4\times 4$ matrix, i.e.,  of rank $0$, then  for $n=4$, $m=3$, we have $0< n-m$. Therefore, there is no $T$-torsion $4$-flag class  corresponding   to $[R_{1,2,3,4}]$.
   \end{example}

 \bigskip\subsection{The  tree of $T$-torsion flag classes}\label{subSec:treeflag}

We are in a position to introduce the core concept in this paper, the  \textbf{$T$-torsion  tree}. In essence, it is formed by $T$-torsion flag classes in  $S^m$   of all levels $n=1,2,\cdots$.

Let us fix the notation first. A $T$-torsion $n$-flag class  is denoted by
 $
\NODE_{i_1,\cdots, i_n}
 $
where  $(i_1,\cdots, i_n)$ is an  index. The indices are designed such that the parent of $\NODE_{i_1,\cdots, i_n}$ is   $\NODE_{i_1,\cdots, i_{n-1}}$.  

\begin{defn}
	The \textbf{$T$-torsion tree} of ${S^m}$, denoted by $\Ttorsiontree^m $,  is the rooted  tree   that consists of the following data:
	\begin{itemize}
		\item the level $1$ node   $  \NODE_{1}   $ (see Example \ref{Example:NODE1}) as the root of $\Ttorsiontree^m$;
		\item all $T$-torsion  flag classes   $  \NODE_{i_1,\cdots, i_n} $ in $S^m$ as level $n$ nodes, for    $n=2,3,\cdots$;
		\item all edges connecting parent $T$-torsion  flag classes $  \NODE_{i_1,\cdots, i_n} $ to their children $  \NODE_{i_1,\cdots, i_n,i_{n+1}} $.
	\end{itemize}
\end{defn}

According to Proposition \ref{Prop:Nnm-Sn}, each node $\NODE_{i_1,\cdots, i_n}$ of $\Ttorsiontree^m$ corresponds to a node  $  [R_{i_1,\cdots, i_n}] $ of the   tree $\matrixtree$ (of nilpotent upper-triangular matrices) whose rank  $\geqslant  n-m $, and vice versa. So \textbf{the $T$-torsion tree $\Ttorsiontree^m $ can be obtained by removing those  nodes of the   tree $\matrixtree$ whose level is $n$ and rank is less than $(n-m)$}.

We have drawn the $T$-torsion tree up to its fourth level in Figure \ref{fig:Ttree}. For $m\geqslant 4$,  the 24 nodes are connected as shown therein. If $m= 3$, there are 23 nodes because the last node $\NODE_{1,2,3,4}$ should be removed as $\Rconjugacyclass{1,2,3,4}$ has rank $0$, which is less than $(4-m=1)$ (cf. Proposition \ref{Prop:Nnm-Sn} and Example \ref{Example:findVfromN}). For the same reason, if $m=2$, there are only 16 nodes left (after removing nodes $\NODE_{1,2,3}$, $\NODE_{1,1,2,4}$, $\NODE_{1,2,1,3}$, $\NODE_{1,2,2,3}$, $\NODE_{1,2,3,1}$, $\NODE_{1,2,3,2}$, $\NODE_{1,2,3,3}$, and $\NODE_{1,2,3,4}$).

\begin{figure}[htbp]\caption{\textbf{The $T$-torsion tree $\Ttorsiontree^m$ up to its fourth level} (the number above each edge denotes the covering degree of the    node on the right end of the edge; see Definition \ref{Defn:coveringdegree}). For $m\geqslant 4$, the 24 nodes are shown. If $m=3$, the last one $\NODE_{1,2,3,4}$ should be removed. If $m=2$, 8 nodes should be removed ($\NODE_{1,2,3}$, $\NODE_{1,1,2,4}$, $\NODE_{1,2,1,3}$, $\NODE_{1,2,2,3}$, $\NODE_{1,2,3,1}$, $\NODE_{1,2,3,2}$, $\NODE_{1,2,3,3}$, and $\NODE_{1,2,3,4}$).}\label{fig:Ttree} 
\begin{tikzpicture}[grow=right, sloped]
\node[bag]{$\NODE_1$ }
child{
	node[bag]{$\NODE_{1,2}$ }
	child{
		node[bag]{$\NODE_{1,2,3}$ }
		child{
			node[bag]{$\NODE_{1,2,3,4}$ }
			edge from parent
			node[above] {$\scriptscriptstyle{ \Nqfrac{m-3} }$}
			node[below]{}
		}
		child{
			node[bag]{$\NODE_{1,2,3,3}$ }
			edge from parent
			node[above] {$ q^{m-1}$}
			node[below]{}
		}
		child{
			node[bag]{$\NODE_{1,2,3,2}$ }
			edge from parent
			node[above] {$ q^{m-2}$}
			node[below]{}
		}
		child{
			node[bag]{$\NODE_{1,2,3,1}$ }
			edge from parent
			node[above] {$ q^{m-3}$}
			node[below]{}
		}
		edge from parent
		node[above] {$\scriptscriptstyle{ \Nqfrac{m-2}} $}
		node[below]{}
	}
	child{
		node[bag]{$\NODE_{1,2,2}$ }
		child{
			node[bag]{$\NODE_{1,2,2,3}$ }
			edge from parent
			node[above] { $\scriptscriptstyle{ \Nqfrac{m-2}} $ }
			node[below]{}
		}
		child{
			node[bag]{$\NODE_{1,2,2,2}$ }
			edge from parent
			node[above] {$ q^{m-1}$}
			node[below]{}
		}
		child{
			node[bag]{$\NODE_{1,2,2,1}$ }
			edge from parent
			node[above] {$ q^{m-2}$}
			node[below]{}
		}
		edge from parent
		node[above] {$ q^{m-1}$}
		node[below]{}
	}
	child{
		node[bag]{$\NODE_{1,2,1}$ }
		child{
			node[bag]{$\NODE_{1,2,1,3}$ }
			edge from parent
			node[above] {  $\scriptscriptstyle{ \Nqfrac{m-2}} $ }
			node[below]{}
		}
		child{
			node[bag]{$\NODE_{1,2,1,2}$ }
			edge from parent
			node[above] {$ q^{m-1}$}
			node[below]{}
		}
		child{
			node[bag]{$\NODE_{1,2,1,1}$ }
			edge from parent
			node[above] {$ q^{m-2}$}
			node[below]{}
		}
		edge from parent
		node[above] {$ q^{m-2}$}
		node[below]{}
	}
	edge from parent
	node[above] {$\scriptscriptstyle{ \Nqfrac{m-1}}  $}
	node[below]{}
}
child{
	node[bag]{$\NODE_{1,1}$  }
	child{
		node[bag]{$\NODE_{1,1,2}$ }
		child{
			node[bag]{$\NODE_{1,1,2,4}$ }
			edge from parent
			node[above] {  $\scriptscriptstyle{ \Nqfrac{m-2}}  $}
			node[below]{}
		}
		child{
			node[bag]{$\NODE_{1,1,2,3}$ }
			edge from parent
			node[above] {$ q^{m-2}$}
			node[below]{}
		}
		child{
			node[bag]{$\NODE_{1,1,2,2}$ }
			edge from parent
			node[above] {$ q^{m-2}$}
			node[below]{}
		}
		child{
			node[bag]{$\NODE_{1,1,2,1}$ }
			edge from parent
			node[above] {$ q^{m-2}(q-1)$}
			node[below]{}
		}
		edge from parent
		node[above] {$\scriptscriptstyle{ \Nqfrac{m-1}}  $}
		node[below]{}
	}
	child{
		node[bag]{$\NODE_{1,1,1}$ }
		child{
			node[bag]{$\NODE_{1,1,1,2}$ }
			edge from parent
			node[above] {$\scriptscriptstyle{ \Nqfrac{m-1}}  $}
			node[below]{}
		}
		child{
			node[bag]{$\NODE_{1,1,1,1}$ }
			edge from parent
			node[above] {$ q^{m-1}$}
			node[below]{}
		}
		edge from parent
		node[above] {$ q^{m-1}$}
		node[below]{}
	}
	edge from parent
	node[above] {$ q^{m-1}$}
	node[below]{}
};
\end{tikzpicture}
\end{figure} 

\bigskip\subsection{Matrix and covering degrees of $T$-torsion flag classes}\label{subSec:matrixcoveringdegrees}

Recall that we have defined  matrix degrees of conjugacy classes, i.e., nodes of the tree of nilpotent upper-triangular matrices (cf. Definition \ref{Defn:matrixdegree}). We wish to find the analogous description of such degrees in terms of $T$-torsion flag classes, i.e., nodes of the $T$-torsion tree $\Ttorsiontree^m$.

 Let  $ [ \mathcal{V}]$ be a $T$-torsion $n$-flag class   represented by the $T$-torsion $n$-flag  $\mathcal{V} =\flag{ V_1\subset V_2\subset \cdots\subset V_{n } }$ in $S^m$.
 We also fix a child class $[\mathcal{V}']$ of $[\mathcal{V}]$. Consider the set of vectors $\nu_{n+1}$ that can generate $[\mathcal{V}']$ from $\mathcal{V}$, i.e.
 \begin{eqnarray*}
 	&&M(\mathcal{V} ;   [\mathcal{V}'] )\\
 	&:=&\{\nu_{n+1} \in {S^m} \setminus V_n \mbox{ s.t. }T\nu_{n+1}\in V_n \mbox{ and the }  (n+1)\mbox{-$T$-torsion flag } \\
 	&&\quad\flag{ V_1\subset V_2\subset \cdots\subset V_{n }\subset V_{n+1}}  \mbox{ represents }   [\mathcal{V}'], \mbox{ where }V_{n+1}=V_n\oplus \Fq \nu_{n+1} \}.
 \end{eqnarray*}
 Two elements $\nu_{n+1}$ and $\tilde{\nu}_{n+1}\in M(\mathcal{V} ;   [\mathcal{V}'] ) $ are said to be \textbf{equivalent} if there exists some nonzero constant $k\in \Fq^* $ such that
 $$T(\nu_{n+1}-k\tilde{\nu}_{n+1})\in TV_n,\mbox{ or }~~ \nu_{n+1}-k\tilde{\nu}_{n+1}\in T^{-1}(TV_n) .$$
 We denote by $ {M}  (\mathcal{V} ;  [\mathcal{V}'] )/_\sim $ the corresponding quotient set modulo equivalences.


 \begin{defn}\label{Def:matrixdegreeflaglcass} With notations above, we define the \textbf{matrix degree} of the $T$-torsion flag class $    [\mathcal{V}'] $ by
 	$$\matrixdeg_{      [\mathcal{V}'] }:= { \#  {M}( \mathcal{V} ;   [\mathcal{V}']) }/_\sim  .$$
 \end{defn}
 We remark that this definition does not depend on the choice of the $T$-torsion  flag $\mathcal{V} $ representing the parent $T$-torsion   flag class   of $[\mathcal{V}']$.
The following facts are easily seen.
 \begin{prop}\label{Prop:twomatrixdegreescoincides} If a $T$-torsion $(n+1)$-flag class $ [\mathcal{V}'] $   corresponds to the conjugacy class $[N']$   (according to Proposition \ref{Prop:Nnm-Sn}), then we have $$\matrixdeg_{      [\mathcal{V}'] }=\matrixdeg_{[N']}.$$
 \end{prop}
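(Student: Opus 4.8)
The plan is to establish a dictionary between the combinatorial data defining $\matrixdeg_{[N']}$ (Definition \ref{Defn:matrixdegree}, living in $\mathbf{U}^o_{n+1}$) and the data defining $\matrixdeg_{[\mathcal{V}']}$ (Definition \ref{Def:matrixdegreeflaglcass}, living in $S^m$), and then verify that this dictionary descends to a bijection between the two quotient sets. Fix a representative $T$-torsion $n$-flag $\mathcal{V}=\flag{V_1\subset\cdots\subset V_n}$ of the parent class $[\mathcal{V}]$, choose an ordered basis $(\nu_1,\ldots,\nu_n)$ of $\mathcal{V}$, and let $N\in\mathbf{U}^o_n$ be the matrix of the $T$-action with respect to this basis, so that $[N]=p_{n+1}[N']$ by Proposition \ref{Prop:Nnm-Sn}. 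A child vector $\nu_{n+1}\in M(\mathcal{V};[\mathcal{V}'])$ satisfies $T\nu_{n+1}\in V_n$, hence $T\nu_{n+1}=\sum_{i=1}^n b_i\nu_i$ for a uniquely determined column vector $b=(b_1,\ldots,b_n)^{\mathsf{T}}\in\Fq^{n\times 1}$; with respect to the basis $(\nu_1,\ldots,\nu_n,\nu_{n+1})$ of $V_{n+1}$ the $T$-action is exactly $\begin{pmatrix}N & b\\ 0 & 0\end{pmatrix}$. The condition that $\flag{V_1\subset\cdots\subset V_{n+1}}$ represents $[\mathcal{V}']$ translates, via the lemma preceding Proposition \ref{Prop:Nnm-Sn}, into the condition that $\begin{pmatrix}N & b\\ 0 & 0\end{pmatrix}$ is $\mathbf{U}^*_{n+1}$-conjugate to $N'$, i.e. that this matrix lies in $B(N;[N'])$. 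This gives a surjection $\nu_{n+1}\mapsto \begin{pmatrix}N & b\\ 0 & 0\end{pmatrix}$ from $M(\mathcal{V};[\mathcal{V}'])$ onto $B(N;[N'])$.

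Next I would check that the two equivalence relations match under this surjection. On the flag side, $\nu_{n+1}\sim\tilde\nu_{n+1}$ iff there is $k\in\Fq^*$ with $\nu_{n+1}-k\tilde\nu_{n+1}\in T^{-1}(TV_n)$; writing $T\nu_{n+1}=Nb$-data as $b$ and $T\tilde\nu_{n+1}$ as $\tilde b$, the membership $\nu_{n+1}-k\tilde\nu_{n+1}\in T^{-1}(TV_n)$ says $T(\nu_{n+1}-k\tilde\nu_{n+1})=\sum(b_i-k\tilde b_i)\nu_i\in TV_n$, i.e. $b-k\tilde b$ lies in the column span $\langle N\rangle$ of $N$, say $b-k\tilde b = Na$ for some $a\in\Fq^{n\times 1}$. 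But this is precisely the criterion (spelled out in the proof of the Proposition immediately above the statement, governing $\mathbf{U}^!_{n+1}$-conjugation of $\begin{pmatrix}N&b\\0&0\end{pmatrix}$ and $\begin{pmatrix}N&\tilde b\\0&0\end{pmatrix}$) for the two block matrices to be $\mathbf{U}^!_{n+1}$-conjugate. So the map on equivalence classes is well-defined; surjectivity is inherited, and injectivity follows because the equivalence relations correspond exactly. One subtlety to handle carefully: the fibre of $\nu_{n+1}\mapsto b$ is not a single point — any $\nu_{n+1}$ can be replaced by $\nu_{n+1}+w$ with $w\in V_n$ without changing $V_{n+1}$, and more generally the definition of $M(\mathcal{V};[\mathcal{V}'])$ only constrains $\nu_{n+1}$ modulo $V_n$ up to the span condition — but changing $\nu_{n+1}$ within $\nu_{n+1}+V_n$ changes $b$ to $b+Nw_{\mathrm{coord}}$, hence stays in the same $T^{-1}(TV_n)$-coset and the same $\mathbf{U}^!_{n+1}$-orbit, so the induced map on quotients is genuinely well-defined and a bijection.

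Finally, I would invoke the already-recorded fact that $\matrixdeg_{[N']}$ is independent of the choice of $N\in[N]$ and the already-noted fact (the remark after Definition \ref{Def:matrixdegreeflaglcass}) that $\matrixdeg_{[\mathcal{V}']}$ is independent of the choice of $\mathcal{V}$ representing the parent, so the constructed bijection between $M(\mathcal{V};[\mathcal{V}'])/_\sim$ and $B(N;[N'])/_\sim$ yields the equality of cardinalities $\matrixdeg_{[\mathcal{V}']}=\matrixdeg_{[N']}$. The main obstacle I anticipate is purely bookkeeping: keeping straight the passage between abstract endomorphisms of $V_{n+1}\subset S^m$ and their matrices, and in particular verifying cleanly that the flag-theoretic equivalence relation on $M(\mathcal{V};[\mathcal{V}'])$ corresponds bit-for-bit to $\mathbf{U}^!_{n+1}$-conjugacy of the associated block matrices — this is where one must be most careful that no conjugating freedom is lost or double-counted. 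Everything else is a direct translation through Proposition \ref{Prop:Nnm-Sn} and its supporting lemma.
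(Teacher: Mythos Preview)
Your argument is correct and is exactly the natural elaboration of what the paper intends: the paper gives no proof at all, merely listing this proposition among ``facts [that] are easily seen.'' Your dictionary via an ordered basis---sending $\nu_{n+1}$ to the last-column vector $b$ and matching the equivalence $\nu_{n+1}-k\tilde\nu_{n+1}\in T^{-1}(TV_n)$ with $b-k\tilde b\in\langle N\rangle$---is the expected translation.

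One small point you assert without justification is surjectivity of $M(\mathcal{V};[\mathcal{V}'])\to B(N;[N'])$. Given $b$, you need some $\nu_{n+1}\in T^{-1}\bigl(\sum b_i\nu_i\bigr)$ lying outside $V_n$. If $b\notin\langle N\rangle$ this is automatic; if $b\in\langle N\rangle$ then $\operatorname{rank}N'=\operatorname{rank}N=r$, and the hypothesis that $[N']$ comes from a flag class in $S^m$ forces $r\geq(n+1)-m$, hence $\dim\Ker T=m>n-r=\dim(\Ker T\cap V_n)$, so $\Ker T\not\subset V_n$ and a suitable preimage exists. This is the only place the ambient rank constraint from Proposition~\ref{Prop:Nnm-Sn} is actually used, so it is worth recording. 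Your ``subtlety'' paragraph about fibres is harmless but redundant: the case $\nu_{n+1}\mapsto b$, $\nu_{n+1}'\mapsto b$ is already covered by your equivalence check with $k=1$.
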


 \begin{prop}\label{Prop:MXYdegenerate}With the same settings as earlier, the following statements are equivalent:
 	\begin{itemize}
 		\item[(1)] The $T$-torsion $(n+1)$-flag class $      [\mathcal{V}'] $ is  $T$-vanishing;
 			\item[(2)] We have   $M(\mathcal{V};[\mathcal{V'}]) =  T^{-1}(TV_n) \backslash V_n$;
 			\item[(3)] There exists some  $\nu_{n+1}\in  M(\mathcal{V} ;  [\mathcal{V}'] )$  such that $T \nu_{n+1}=0$;  		
 			 		\item[(4)] The conjugacy class $[N']$ corresponding to $ [\mathcal{V}'] $ is  trivially extended (see Definition \ref{Defn:conjugacyclassdirectson}). 	\end{itemize}
 \end{prop}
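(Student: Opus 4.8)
\textbf{Proof plan for Proposition \ref{Prop:MXYdegenerate}.}
The plan is to establish the cycle of implications $(1)\Rightarrow(2)\Rightarrow(3)\Rightarrow(1)$ among the flag-theoretic conditions, and then to handle $(4)$ by translating the criterion in Proposition \ref{Prop:Nnm-Sn} into the language of ordered bases. Throughout I would fix an ordered basis $(\nu_1<\cdots<\nu_n)$ of $\mathcal{V}$ so that $V_i=\mathrm{Span}_{\Fq}\{\nu_1,\ldots,\nu_i\}$, and recall that $TV_n\subseteq V_{n-1}$, so that $T^{-1}(TV_n)\supseteq V_n$ always holds and the quotient $T^{-1}(TV_n)/V_n$ is where the equivalence relation defining $\matrixdeg$ lives.

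First, $(1)\Rightarrow(3)$ is essentially immediate: if $[\mathcal{V}']$ is $T$-vanishing, by definition there is a representative $\mathcal{V}'=(\nu_1'<\cdots<\nu_n'<\nu_{n+1}')$ with $T\nu_{n+1}'=0$; since this representative has the same first $n$ subspaces up to isomorphism, after transporting along the isomorphism I would obtain a generator $\nu_{n+1}\in M(\mathcal{V};[\mathcal{V}'])$ with $T\nu_{n+1}=0$ — here I need to be slightly careful that the isomorphism $\iota:V_n'\to V_n$ identifying the two flags intertwines $T$, so $T\nu_{n+1}=0$ is preserved. For $(3)\Rightarrow(2)$: if some $\nu_{n+1}\in M(\mathcal{V};[\mathcal{V}'])$ satisfies $T\nu_{n+1}=0$, then in particular $\nu_{n+1}\in T^{-1}(0)\subseteq T^{-1}(TV_n)$ and $\nu_{n+1}\notin V_n$; conversely any $\tilde\nu\in T^{-1}(TV_n)\setminus V_n$ differs from $\nu_{n+1}$ by adjusting within $T^{-1}(TV_n)$, and I would check that $\flag{V_1\subset\cdots\subset V_n\subset V_n\oplus\Fq\tilde\nu}$ represents the same class $[\mathcal{V}']$ by exhibiting the explicit $\Fq$-linear isomorphism fixing $V_n$ and sending $\nu_{n+1}\mapsto\tilde\nu$, which intertwines $T$ precisely because $T\nu_{n+1}=0=T(\tilde\nu - v)$ for the appropriate $v\in V_n$ — this uses $T\tilde\nu\in TV_n$. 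The reverse inclusion $M(\mathcal{V};[\mathcal{V}'])\subseteq T^{-1}(TV_n)\setminus V_n$ in $(2)$ is the substantive direction and follows by the same isomorphism-transport argument. Finally $(2)\Rightarrow(1)$ is direct: pick any $\nu_{n+1}\in T^{-1}(TV_n)\setminus V_n$, say with $T\nu_{n+1}=Tv$ for $v\in V_n$, and replace it by $\nu_{n+1}-v$, which still lies in $M$ by $(2)$ and is killed by $T$, giving a $T$-vanishing representative.

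For the equivalence with $(4)$, I would use Proposition \ref{Prop:Nnm-Sn} to pass to the matrix $N'\in\mathbf{U}^o_{n+1}$ representing $T$ on $V_{n+1}$ with respect to an ordered basis $(\nu_1,\ldots,\nu_n,\nu_{n+1})$. The last column of $N'$ records the coefficients of $T\nu_{n+1}$ in the basis; so $[\mathcal{V}']$ being $T$-vanishing — i.e., admitting an ordered basis whose last vector $\nu_{n+1}$ satisfies $T\nu_{n+1}=0$ — corresponds exactly to $[N']$ admitting a representative with zero last column, which is the definition of trivially extended (Definition \ref{Defn:conjugacyclassdirectson}). The only point requiring care is the matching of the equivalence relations: changing the ordered basis of $\mathcal{V}'$ corresponds to $\mathbf{U}^*_{n+1}$-conjugation of $N'$, and one must confirm that the freedom to choose $\nu_{n+1}$ modulo $V_n$ and to rescale it corresponds to conjugation by the subgroup relevant to ``trivially extended''; this is where I would invoke (the proof technique of) Proposition \ref{Prop:conjugacyclassdirectchild} showing such extensions are conjugate via $\mathbf{U}^!_{n+1}$. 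The main obstacle I anticipate is not any single hard computation but rather the bookkeeping in $(2)$: verifying that an arbitrary element of $T^{-1}(TV_n)\setminus V_n$, once corrected by a vector of $V_n$, actually generates the prescribed isomorphism class $[\mathcal{V}']$ and not merely some $T$-vanishing class — this requires producing the explicit flag isomorphism and checking it commutes with $T$ on the nose, using that $T$ annihilates the adjusted generator.
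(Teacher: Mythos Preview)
Your proposal is correct. The paper itself does not supply a proof of this proposition---it is introduced with ``The following facts are easily seen'' and stated without argument---so your write-up simply fills in the details the authors omit, via exactly the natural route (transport of the $T$-annihilated generator along a flag isomorphism for the cycle among (1)--(3), and the ordered-basis/matrix dictionary of Proposition~\ref{Prop:Nnm-Sn} for (4)). One cosmetic remark: you announce the cycle as $(1)\Rightarrow(2)\Rightarrow(3)\Rightarrow(1)$ but actually argue $(1)\Rightarrow(3)\Rightarrow(2)\Rightarrow(1)$; either cycle is fine, just make the exposition match.
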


 Therefore, according to Propositions \ref{Prop:twomatrixdegreescoincides} and \ref{Prop:conjugacyclassdirectchild}, the matrix degree of a  $T$-vanishing torsion  flag class is $1$.


We have another type of degree.
\begin{defn}\label{Defn:coveringdegree}
	For a $T$-torsion flag class $     [\mathcal{V}']    $ in $S^m$, we define its \textbf{covering degree} by $$\coveringdeg_{      [\mathcal{V}'] }:=\frac{ \#  [\mathcal{V}']  }{ \#  [\mathcal{V}] } ,$$
where  $[\mathcal{V}]$ is the parent $T$-torsion flag class of $[\mathcal{V}']$.
	\end{defn}
	Equivalently, one can fix a representative $\mathcal{V}  = \flag{V_1\subset V_2\subset \cdots \subset V_n}$ of    $ [\mathcal{V}]  $, and then the covering degree $\coveringdeg_{      [\mathcal{V}'] }$ is given by the number of    $(n+1)${-dimensional subspaces} $V_{n+1}\subset {S^m}$  such that $V_n\subsetneq V_{n+1}$ and the $T$-torsion $(n+1)$-flag $\flag{ V_1\subset V_2\subset \cdots\subset V_{n }\subset V_{n+1}}$ represents  $[\mathcal{V}']$.

	For $\nu_{n+1}$, $\tilde{\nu}_{n+1} \in M(\mathcal{V};[\mathcal{V'}])$, the direct sum $V_{n+1}=V_n \oplus \F_q\nu_{n+1}$ coincides with $\tilde{V}_{n+1}=V_n \oplus \F_q\tilde{v}_{n+1}$ if and only if there exists $k \in \F_q^*$ such that $\nu_{n+1}-k\tilde{\nu}_{n+1} \in V_n$. Therefore, $\coveringdeg_{[\mathcal{V}']}$ is  the number of orbits by the obvious $\F_q^* \times V_n$-action on $M(\mathcal{V};[\mathcal{V'}])$: 	
 	\begin{equation}\label{Eqt:cardinalityofcoveringdegree}
 		\coveringdeg_{ [\mathcal{V}'] } =
		\# (M(\mathcal{V};[\mathcal{V'}])/(\F_{q}^*\times V_{n}\mbox{-action})).
		\end{equation}

\begin{prop}\label{prop3.13}  Let  $    [\mathcal{V}]    $ be a $T$-torsion $n$-flag class  and suppose that it is of $T$-order $(n-r)$.
	For any child  class $     [\mathcal{V}']    $ of  $     [\mathcal{V}]    $, we have
the following statements:	\begin{enumerate}
		\item If $[\mathcal{V}']$ is  $T$-vanishing, then we have
		$$\coveringdeg_{      [\mathcal{V}'] }=\Nqfrac{m-n+r}=\frac{q^{m-n+r}-1}{q-1};
		$$
		\item If $[\mathcal{V}']$ is not $T$-vanishing, then we have
		$$\coveringdeg_{      [\mathcal{V}'] }=   q^{m-n+r}\cdot \matrixdeg_{       [\mathcal{V}'] }.
		$$
		
	\end{enumerate}
\end{prop}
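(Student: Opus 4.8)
The plan is to reduce everything to counting orbits of the $\F_q^*\times V_n$-action on $M(\mathcal{V};[\mathcal{V'}])$, using formula \eqref{Eqt:cardinalityofcoveringdegree}, and then to analyze the preimage $T^{-1}(V_n)$ inside $S^m$. First I would fix an ordered basis $(\nu_1,\ldots,\nu_n)$ of $\mathcal{V}$ and recall that, since $S^m$ has $T$ acting surjectively with $\Ker T$ of dimension $m$, the preimage $T^{-1}(V_n)$ is an $\F_q$-subspace of $S^m$ containing $V_n$, and $\dim_{\F_q} T^{-1}(V_n) = \dim_{\F_q} V_n + m = n+m$ (surjectivity gives exactly $m$ extra dimensions from $\Ker T$). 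A vector $\nu_{n+1}$ lies in $M(\mathcal{V};[\mathcal{V}'])$ only if $\nu_{n+1}\in T^{-1}(V_n)\setminus V_n$, so the relevant ambient set has $q^{n+m}-q^n$ elements; the subset $T^{-1}(TV_n)\setminus V_n$ (the vectors whose image lands in $TV_n\subseteq V_{n-1}$) has $\dim T^{-1}(TV_n) = \dim TV_n + m = (n-1-(r-?)) + m$; here I must be careful: the $T$-order of $\mathcal{V}$ being $(n-r)$ means $\dim_{\F_q}(TV_n) = n-(n-r) = r$, so $\dim T^{-1}(TV_n) = r+m$ and this subspace has $q^{r+m}$ elements, of which $q^n\cdot\frac{?}{}$... actually $V_n\cap T^{-1}(TV_n) = \{v\in V_n : Tv\in TV_n\} = V_n$ since $TV_n\subseteq TV_n$ trivially — wait, that gives $V_n\subseteq T^{-1}(TV_n)$ only if $r+m\geq n$, i.e. $m\geq n-r$, which is exactly the hypothesis $\mathcal{V}\in S^m$ via Proposition~\ref{Prop:Nnm-Sn}. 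So $V_n\subseteq T^{-1}(TV_n)$ and $\#(T^{-1}(TV_n)\setminus V_n) = q^{r+m}-q^n$.

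For case (1), when $[\mathcal{V}']$ is $T$-vanishing, Proposition~\ref{Prop:MXYdegenerate}(2) tells us $M(\mathcal{V};[\mathcal{V}']) = T^{-1}(TV_n)\setminus V_n$, a single $\F_q$-coset structure. I would then compute the number of $\F_q^*\times V_n$-orbits on this set: the group $V_n$ acts by translation and $\F_q^*$ by scaling, and since $V_n\subseteq T^{-1}(TV_n)$, the orbits correspond to nonzero lines in the quotient $T^{-1}(TV_n)/V_n$, which has dimension $(r+m)-n = m-n+r$. The number of such lines is $\frac{q^{m-n+r}-1}{q-1} = \Nqfrac{m-n+r}$, giving the claimed covering degree. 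This case is essentially a clean dimension count.

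For case (2), when $[\mathcal{V}']$ is not $T$-vanishing, I would partition $M(\mathcal{V};[\mathcal{V}'])$ according to the coset of $T^{-1}(TV_n)$ (equivalently, the value of $T\nu_{n+1}$ modulo $TV_n$ inside $V_n/TV_n$, or rather inside $V_{n-1}$ appropriately). The key point is that $M(\mathcal{V};[\mathcal{V}'])$ is stable under adding elements of $T^{-1}(TV_n)$: if $\nu_{n+1}\in M(\mathcal{V};[\mathcal{V}'])$ and $w\in T^{-1}(TV_n)$, then $\nu_{n+1}+w$ generates the same flag class since the matrix of $T$ on $V_n\oplus\F_q(\nu_{n+1}+w)$ is obtained from that on $V_n\oplus\F_q\nu_{n+1}$ by a $\mathbf{U}^!_{n+1}$-type change of basis (this is where the definition of matrix degree via $\mathbf{U}^!_{n+1}$-conjugation and the equivalence relation in Definition~\ref{Def:matrixdegreeflaglcass} enters). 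So the $\F_q^*\times V_n$-orbits on $M(\mathcal{V};[\mathcal{V}'])$ group into blocks, each block being a union of $T^{-1}(TV_n)$-cosets; the number of such cosets within one block that share the same equivalence class under $\sim$ (Definition~\ref{Def:matrixdegreeflaglcass}) is governed by $V_n$-translation, and the count of blocks is precisely $\matrixdeg_{[\mathcal{V}']}$. Concretely, I expect: number of $\F_q^*\times V_n$-orbits $= \matrixdeg_{[\mathcal{V}']}\cdot(\text{number of }V_n\text{-orbits inside one }\sim\text{-class}) = \matrixdeg_{[\mathcal{V}']}\cdot q^{m-n+r}$, since inside a single $\sim$-equivalence class the set is a $T^{-1}(TV_n)$-coset of size $q^{r+m}$ and the $\F_q^*\times V_n$ action has orbits of size $q^n(q-1)/(q-1)$... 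I need $q^{r+m}/q^n = q^{m-n+r}$ orbits per $\sim$-class after accounting for the scaling (the scaling $\F_q^*$ is already folded into the $\sim$-relation definition, so only $V_n$-translation remains to quotient by, contributing exactly $q^{m+r}/q^n = q^{m-n+r}$). Multiplying gives $\coveringdeg_{[\mathcal{V}']} = q^{m-n+r}\cdot\matrixdeg_{[\mathcal{V}']}$.

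The main obstacle will be bookkeeping the interplay between the two equivalence relations — the $\F_q^*\times V_n$-action defining $\coveringdeg$ and the finer (or coarser) $\sim$-relation defining $\matrixdeg$ — and verifying cleanly that $M(\mathcal{V};[\mathcal{V}'])$ is a union of full $T^{-1}(TV_n)$-cosets, so that the orbit count factors as a product. I would handle this by choosing a section: pick representatives $\nu_{n+1}^{(1)},\ldots,\nu_{n+1}^{(d)}$ (with $d=\matrixdeg_{[\mathcal{V}']}$) of the $\sim$-classes in $M(\mathcal{V};[\mathcal{V}'])/_\sim$, show each $\sim$-class equals $\F_q^*\cdot(\nu_{n+1}^{(i)}+T^{-1}(TV_n))$ in case (2) (using the non-$T$-vanishing hypothesis to ensure $T\nu_{n+1}^{(i)}\notin TV_n$, so the coset structure is ``rigid'' and scaling acts without extra collapse), and then count $V_n$-orbits in each. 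The non-$T$-vanishing condition is exactly what prevents the degeneration seen in case (1) and keeps the factor of $q^{m-n+r}$ rather than $\Nqfrac{m-n+r}$.
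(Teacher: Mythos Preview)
Your proposal is correct and follows essentially the same approach as the paper: both cases rest on identifying $M(\mathcal{V};[\mathcal{V}'])$ inside $T^{-1}(V_n)$, computing $\dim T^{-1}(TV_n)=m+r$, and (in case~(2)) exploiting that the $\sim$-equivalence defining $\matrixdeg$ is exactly the $\F_q^*\times T^{-1}(TV_n)$-action, so that the covering degree factors as $\matrixdeg_{[\mathcal{V}']}\cdot\#(T^{-1}(TV_n)/V_n)=\matrixdeg_{[\mathcal{V}']}\cdot q^{m-n+r}$. The paper states this factorization in one line via the chain of group actions $\F_q^*\times V_n\subset \F_q^*\times T^{-1}(TV_n)$; your version unpacks it through cosets and representatives, which is longer but equivalent (one small imprecision: a single $\sim$-class is not a $T^{-1}(TV_n)$-coset but the union of $q-1$ such cosets, so the cleanest justification is that both actions are free on $M(\mathcal{V};[\mathcal{V}'])$ in the non-$T$-vanishing case, whence orbit sizes are $(q-1)q^{m+r}$ and $(q-1)q^n$ respectively).
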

\begin{proof}Fix a representative $\mathcal{V}  = \flag{V_1\subset V_2\subset \cdots \subset V_n}$ of    $ [\mathcal{V}]  $.
	
(1) Since $[\mathcal{V}']$ is $T$-vanishing and according to  Proposition \ref{Prop:MXYdegenerate}, one can write $$M(\mathcal{V};[\mathcal{V'}]) =  T^{-1}(TV_n) \backslash V_n.$$
The dimension of $T^{-1}(TV_n)$ is clearly $(m+r)$.  Now we can compute $\coveringdeg_{[\mathcal{V}']}$ using Equation \eqref{Eqt:cardinalityofcoveringdegree}:
\begin{eqnarray*}
	\# (M(\mathcal{V};[\mathcal{V'}])/(\F_{q}^*\times V_{n}\mbox{-action}))
	= \frac{q^{m+r-n}-1}{q-1}.
\end{eqnarray*}
(2) Since $[\mathcal{V}']$ is not $T$-vanishing, one can define an $\F_{q}^*\times T^{-1}(TV_{n})$-action on $M(\mathcal{V};[\mathcal{V'}])$ in an obvious manner. Then, we can use Equation \eqref{Eqt:cardinalityofcoveringdegree} again to obtain:
\begin{eqnarray*}
\coveringdeg_{[\mathcal{V}']}	&=& 	\# (M(\mathcal{V};[\mathcal{V'}])/(\F_{q}^*\times V_{n}\mbox{-action}))\\
&=& 	\# (M(\mathcal{V};[\mathcal{V'}])/(\F_{q}^*\times T^{-1}(TV_{n})\mbox{-action}))
\cdot \# (T^{-1}(TV_{n})/V_n)\\
&=& \matrixdeg_{[\mathcal{V}']} \cdot q^{m+r-n}.
	\end{eqnarray*}

 \end{proof}

\begin{prop}
	 Let  $    [\mathcal{V}]    $ be a $T$-torsion $n$-flag class. Assume that it has totally $i$ child $T$-torsion  classes $      [\mathcal{V}^{(j)}] $, $j=1,\cdots,i$. Then we have
	 \begin{equation*}
	 	\sum_{j=1}^i \coveringdeg_{     [\mathcal{V}^{(j)}]  }=\Nqfrac{m}=\frac{q^m-1}{q-1}.
	 \end{equation*}
\end{prop}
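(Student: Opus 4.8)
The plan is to read $\sum_{j=1}^i \coveringdeg_{[\mathcal{V}^{(j)}]}$ as a single count --- the number of one-step $T$-torsion extensions of a fixed representative of $[\mathcal{V}]$ --- and then to evaluate that number by a dimension computation on a preimage under $T$.

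First I would fix a representative $\mathcal{V}=\flag{V_1\subset V_2\subset\cdots\subset V_n}$ of $[\mathcal{V}]$. By the reformulation of the covering degree recorded immediately after Definition \ref{Defn:coveringdegree}, $\coveringdeg_{[\mathcal{V}^{(j)}]}$ equals the number of $(n+1)$-dimensional subspaces $V_{n+1}\subset S^m$ with $V_n\subsetneq V_{n+1}$ such that $\flag{V_1\subset\cdots\subset V_n\subset V_{n+1}}$ is a $T$-torsion flag representing $[\mathcal{V}^{(j)}]$. Every such $V_{n+1}$ satisfies $TV_{n+1}\subseteq V_n$, that is $V_{n+1}\subseteq T^{-1}(V_n)$; conversely, any $(n+1)$-dimensional subspace $V_{n+1}$ with $V_n\subsetneq V_{n+1}\subseteq T^{-1}(V_n)$ gives a $T$-torsion $(n+1)$-flag extending $\mathcal{V}$, hence represents exactly one of the pairwise non-isomorphic child classes $[\mathcal{V}^{(1)}],\ldots,[\mathcal{V}^{(i)}]$ (these being, by the definition of the tree $\Ttorsiontree^m$, all the children of $[\mathcal{V}]$). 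Therefore the left-hand side of the asserted identity equals the cardinality of the set of $(n+1)$-dimensional subspaces $V_{n+1}$ with $V_n\subsetneq V_{n+1}\subseteq T^{-1}(V_n)$.

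Next I would perform the count. As recalled in Section \ref{subSec:Ttorsionflags}, the $T$-action on $S^m$ is a surjective $\Fq$-endomorphism with $\Ker T$ of dimension $m$; hence the exact sequence $0\to\Ker T\to T^{-1}(V_n)\to V_n\to 0$ yields $\dim_{\Fq}T^{-1}(V_n)=n+m$, so $T^{-1}(V_n)/V_n$ is an $m$-dimensional $\Fq$-vector space. The $(n+1)$-dimensional subspaces $V_{n+1}$ with $V_n\subsetneq V_{n+1}\subseteq T^{-1}(V_n)$ correspond bijectively, by the subspace correspondence theorem, to the one-dimensional subspaces of $T^{-1}(V_n)/V_n$, of which there are exactly $\frac{q^m-1}{q-1}=\Nqfrac{m}$. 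Combining this with the previous paragraph proves the claim.

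I do not anticipate a real obstacle: the only points requiring a line of justification are that distinct child classes are genuinely non-isomorphic (so each extension flag contributes to precisely one summand, and the count above is over a disjoint union) and that $\dim_{\Fq}\Ker T=m$, both of which are already in place. Equivalently, one could phrase the same bookkeeping through Equation \eqref{Eqt:cardinalityofcoveringdegree}, summing $\#\bigl(M(\mathcal{V};[\mathcal{V}^{(j)}])/(\F_q^*\times V_n\mbox{-action})\bigr)$ over $j$ and using the disjoint decomposition $\bigsqcup_{j=1}^i M(\mathcal{V};[\mathcal{V}^{(j)}])=T^{-1}(V_n)\setminus V_n$.
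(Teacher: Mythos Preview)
Your proof is correct and essentially the same as the paper's. The paper phrases the count via Equation \eqref{Eqt:cardinalityofcoveringdegree}, writing $\sum_j \coveringdeg_{[\mathcal{V}^{(j)}]}=\#\bigl((T^{-1}V_n\setminus V_n)/(\F_q^*\times V_n\text{-action})\bigr)=\frac{q^m-1}{q-1}$, which is exactly the alternative formulation you mention at the end; your primary argument via one-dimensional subspaces of $T^{-1}(V_n)/V_n$ is the equivalent subspace-counting reformulation given just after Definition \ref{Defn:coveringdegree}.
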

\begin{proof}
	By definition of the covering degree, we have
	\begin{eqnarray*}
		\sum_{j=1}^i \coveringdeg_{     [\mathcal{V}^{(j)}]  }&=&\#\left(\bigcup_{j=1}^{i}M(\mathcal{V},[\mathcal{V}^{(j)}])/(\F_{q}^*\times V_{n}\mbox{-action})\right)\\
		&=& \#\bigl((T^{-1}V_n \setminus V_n )/(\F_{q}^*\times V_{n}\mbox{-action})\bigr)=\frac{q^{m}-1}{q-1}.
	\end{eqnarray*}

  \end{proof}

\subsection{The table of nodes up to level $4$}\label{Sec:TheTable}~
 Throughout the paper,  letters  $r$, $s$, $t$, $x$, $y$, and $z$ denote arbitrary   constants  in $\Fq $ with $r\neq 0$, $s\neq 0$ and $t\neq 0$.
In the table below, we collect some information of the 24 nodes of the  $T$-torsion tree $\Ttorsiontree^m$   for $m\geqslant 4$. If $m=3$, the last line is removed. If $m=2$, then eight lines should be removed (see explanations in Figure \ref{fig:Ttree}). 


 \begin{supertabular}{|p{1cm}|p{2.4cm}|p{2.4cm}| p{7cm} |  }
 	\hline
 	Node 	&  $ \Trank $ ($T$-order)& $\Exp $ (exponent) & Formulas of  $T$ on base vectors ($T\nu_1=0$)    \\\hline
 	$\NODE_{1    }$ & 1&1  &        \\\hline
 	$\NODE_{1,1   }$ & 1&2  &  $T\nu_2=r \nu_1$     \\\hline
 	$\NODE_{1,2 }$&2&1&
 	$  T\nu_2=0$    \\\hline
 	
 	$\NODE_{1,1,1 }$ & 1&3  &  $T\nu_2=r \nu_1$,    $T\nu_3=x\nu_1+s\nu_2$    \\\hline
 	$\NODE_{1,1,2 }$ &2&2&  $T\nu_2=r \nu_1$,
 	$T\nu_3=x\nu_1$    \\\hline
 	$\NODE_{1,2,1 }$ &2&2&
 	$ T\nu_2=0 $,
 	$T\nu_3=r\nu_1 $   \\\hline
 	$\NODE_{1,2,2 }$&2&2&
 	$ T\nu_2=0$, $
 	T\nu_3=x\nu_1+s\nu_2 $    \\\hline
 	$\NODE_{1,2,3 }$&3&1&
 	$  T\nu_2=0$,
 	$T\nu_3=0 $   \\\hline
 	
 	$\NODE_{1,1,1,1}$ & 1&4  &  $T\nu_2=r \nu_1$,    $T\nu_3=x\nu_1+s\nu_2, \qquad\qquad\qquad $     $\qquad\qquad\qquad T\nu_4=y\nu_1+z\nu_2+t\nu_3$   \\\hline
 	$\NODE_{1,1,1,2}$ &2&3
 	&  $T\nu_2=r \nu_1$, $T\nu_3=x\nu_1+s\nu_2$, $T\nu_4=y\nu_1+z\nu_2$   \\\hline
 	$\NODE_{1,1,2,1}$ &2&3&  $T\nu_2=r \nu_1$,
 	$T\nu_3=x\nu_1$, $T\nu_4=y\nu_1+z\nu_2+t\nu_3$\ \   ($rz+tx\neq 0$)   \\\hline
 	$\NODE_{1,1,2,2}$ &2&3&   $T\nu_2=r \nu_1$,
 	$T\nu_3=x\nu_1$, $T\nu_4=y\nu_1+s\nu_2$   \\\hline
 	$\NODE_{1,1,2,3}$&2&2&
 	$ T\nu_2=r \nu_1$,
 	$T\nu_3=x\nu_1 $, $T\nu_4=y\nu_1-\frac{tx}{r}\nu_2+t\nu_3$   \\\hline
 	$\NODE_{1,1,2,4}$&3&2&
 	$ T\nu_2=r \nu_1$,
 	$T\nu_3=x\nu_1$, $T\nu_4=y\nu_1$ \\\hline
 	$\NODE_{1,2,1,1}$ &2&2&
 	$ T\nu_2=0 $,
 	$T\nu_3=r\nu_1 $, $T\nu_4=x\nu_1+s\nu_2$  \\\hline
 	$\NODE_{1,2,1,2}$ &2&3&
 	$  T\nu_2=0$,
 	$T\nu_3=r\nu_1 $, $T\nu_4=x\nu_1+y\nu_2+s\nu_3$  \\\hline
 	$\NODE_{1,2,1,3}$ &3&2&
 	$ T\nu_2=0$, $T\nu_3=r\nu_1 $, $T\nu_4=x\nu_1$  \\\hline
 	$\NODE_{1,2,2,1}$&2&2&
 	$ T\nu_2=0$, $
 	T\nu_3=x\nu_1+r\nu_2 $, $T\nu_4=s\nu_1+z\nu_2$ $~(xz\neq sr)$ \\\hline
 	$\NODE_{1,2,2,2}$&2&3&
 	$ T\nu_2=0$, $
 	T\nu_3=x\nu_1+r\nu_2, \qquad\qquad\qquad $   $\qquad\qquad\qquad T\nu_4=y\nu_1+z\nu_2+s\nu_3$   \\\hline
 	$\NODE_{1,2,2,3}$&3&2&
 	$  T\nu_2=0$, $
 	T\nu_3=x\nu_1+r\nu_2 $, $T\nu_4=\frac{xz}{r}\nu_1+z\nu_2$ \\\hline
 	$\NODE_{1,2,3,1}$&3&2&
 	$  T\nu_2=0$,
 	$T\nu_3=0 $, $T\nu_4=r\nu_1$    \\\hline
 	$\NODE_{1,2,3,2}$ &3&2& $  T\nu_2=0$,
 	$T\nu_3=0 $, $T\nu_4=x\nu_1+r\nu_2$  \\\hline
 	$\NODE_{1,2,3,3}$&3&2&$
 	T\nu_2=0$,
 	$T\nu_3=0 $, $T\nu_4=x\nu_1+y\nu_2+r\nu_3$ \\\hline
 	$\NODE_{1,2,3,4}$&4&1&$
 	T\nu_2=0$,
 	$T\nu_3=0 $, $T\nu_4=0$   \\\hline
 	
 \end{supertabular}

 \section{The standard correspondence between  arrays and flags}\label{Sec:Standardcorrespondence}

\subsection{Global notations}\label{subSec:31}
 First of all, we  make some   conventions and fix the  notations that will be used throughout the rest of this paper:
 \begin{enumerate}
 	\item As earlier,  $A=\Fq [T]$ denotes the polynomial ring over $\Fq $;
 	\item   $ L $ is a field containing $ \Fq  $; \item $ \iota : A \to L $ is an $ \mathbb{F}_{q} $-algebra homomorphism;
   	\item  Denote by $ L\set{\tau} $ the non-commutative $ L $-algebra which is generated by the $q$-Frobenius endomorphism $ \tau $ such that  $ \tau \cdot a = a^{q } \tau  $ for all $ a \in L$. We refer to the $L$-algebra $ L\set{\tau} $   as a \textbf{twisted polynomial ring} (also known as an Ore ring \cite{MR1503119});
 	\item Let $ \Lbar  $  be the   algebraic closure of $ L $.
 \end{enumerate}
\begin{defn}
	 A field $L$ subject to conditions (2) and (3) will be referred to as an \textbf{$A$-field}.
\end{defn}
 The   action of twisted polynomials on $\Lbar  $ is standard ---  Given   $ f = \sum_{i=0}^{d} g_i \tau^i \in L\set{\tau}  $, its action   $f: \Lbar  \to \Lbar  $ reads
 \[
       \mu \mapsto f(\mu):=\sum_{i=0}^{d} g_i \mu^{q^i}  .
 \]
 The kernel of   $f$  is defined and denoted by	$$\Ker (f):=\{\mu\in \Lbar  | f(\mu)=0\}, $$ which is a finite dimensional  $\Fq $-linear subspace of $\Lbar $.

 \subsection{Flags and the  standard correspondence}\label{Sec:flagsnotations}
 

 Let $n$ be a positive integer.
 We   study  \textbf{$n$-flags}   in $\Lbar$   that are of the form \begin{equation*}
 	\mathcal{V}  =\flag{ V_1\subset V_2\subset \cdots\subset V_n}
 \end{equation*}
 where each $V_i\subset \Lbar$  is a subspace of dimension $i$ (over $\Fq$).
   In contrast to   the $T$-torsion flags   in Section \ref{Sec:basic}, we   do not impose any additional constraints on what is here.

  
  To construct  a specific $n$-flag, we utilize an array  $(u_1, \ldots, u_n) \in (\Lbar^*)^{\times n} $ as \textit{parameters}, and let the $\Fq$-linear subspace $ V_i \subset \Lbar $   be defined by
 $$
 V_i:=\Ker ( \lambdau{u_i} \lambdau{u_{i-1}} \cdots\lambdau{u_1})  $$
   where  we set \begin{equation}\label{Eqt:lambdaui}
   \lambdau{u_i} :=  \tau - u_i\, \quad ( \in \Lbar\set{\tau} ). \end{equation}
   It is clear that
 $V_1\subset V_2\subset \cdots \subset V_n\subset \Lbar$ and each $V_i$ is of dimension $i$.
 Next, we show an important fact.
 \begin{prop}\label{Prop:Fromflagtoui}Using the above notations, the map $\Theta$ defined below is a bijection:
  	$$\Theta:~(\Lbar^*)^{\times n}\to \set{n\mbox{-flags in }\Lbar},\quad (u_1,\cdots, u_n)\mapsto \flag{ V_1\subset V_2\subset \cdots\subset V_n}.
 	$$
 \end{prop}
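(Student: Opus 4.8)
The plan is to construct an explicit inverse map $\Lambda$ and verify that $\Theta\circ\Lambda=\mathrm{id}$ and $\Lambda\circ\Theta=\mathrm{id}$; injectivity and surjectivity of $\Theta$ then follow at once. The key observation is that the parameters $(u_1,\dots,u_n)$ can be recovered recursively from the flag $\flag{V_1\subset\cdots\subset V_n}$. First I would treat the base case $n=1$: a one-dimensional subspace $V_1\subset\Lbar$ is $\Fq v$ for some $v\neq 0$, and $V_1=\Ker(\tau-u_1)$ forces $u_1=v^{q}/v=v^{q-1}$, which is independent of the choice of $v\in V_1\setminus\{0\}$ (replacing $v$ by $cv$ with $c\in\Fq^*$ changes nothing since $c^{q-1}=1$). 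This shows $u_1$ is determined by $V_1$, and conversely every nonzero $u_1\in\Lbar^*$ arises: the Artin--Schreier-type equation $x^{q-1}=u_1$ has a nonzero solution in $\Lbar$, giving a one-dimensional $\Ker(\tau-u_1)$.

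For the inductive step, suppose $\flag{V_1\subset\cdots\subset V_n}$ is an $n$-flag. I would first apply the additive polynomial $\lambdau{u_1}=\tau-u_1$ (with $u_1$ obtained from $V_1$ as above) to the whole flag: since $\lambdau{u_1}$ is $\Fq$-linear with kernel exactly $V_1$, it maps $V_i$ onto an $\Fq$-subspace $\lambdau{u_1}(V_i)$ of dimension $i-1$, and these still form a flag $\flag{\lambdau{u_1}(V_2)\subset\cdots\subset\lambdau{u_1}(V_n)}$ in $\Lbar$, now of length $n-1$. By induction this shorter flag corresponds to a unique array $(u_2,\dots,u_n)$, where $\lambdau{u_1}(V_{i})=\Ker(\lambdau{u_i}\cdots\lambdau{u_2})$. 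Pulling back along $\lambdau{u_1}$ recovers $V_i=\Ker(\lambdau{u_i}\cdots\lambdau{u_2}\lambdau{u_1})$, which is exactly the defining relation for $\Theta$. Running this argument in reverse shows that the map $\Lambda:\flag{V_1\subset\cdots\subset V_n}\mapsto(u_1,\dots,u_n)$ is well defined and is a two-sided inverse of $\Theta$. (One should check at each stage that $u_i\neq 0$: this holds because if $u_i=0$ the operator $\lambdau{u_i}=\tau$ has kernel $\{0\}$, so $\dim V_i$ would not exceed $\dim V_{i-1}$, contradicting strict inclusion — equivalently, $V_i\not\subset V_{i-1}$ forces a nonzero eigenvalue.)

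The main obstacle, and the step deserving the most care, is verifying that the recursively extracted $u_i$ is genuinely independent of all the auxiliary choices (the representative vectors, and implicitly the order in which one strips off factors) and that $\dim\lambdau{u_1}(V_i)=i-1$ exactly — i.e. that $\lambdau{u_1}$ does not collapse the flag further than expected. This rests on the fact that $\Ker\lambdau{u_1}=V_1\subset V_i$ has dimension exactly $1$, so the rank--nullity count is forced; the additivity and $\Fq$-linearity of $\tau-u_1$ is what makes $\lambdau{u_1}(V_i)$ a subspace of the right dimension. The surjectivity half (every array is hit) reduces to the repeated solvability of $x^{q}-u_i x = (\text{element of the previously built space})$ in the algebraically closed $\Lbar$, which is immediate since these are separable additive polynomials of positive degree. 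Assembling these pieces gives that $\Theta$ is a bijection with inverse $\Lambda$, as claimed.
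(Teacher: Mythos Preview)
Your proposal is correct and follows essentially the same approach as the paper: both construct the inverse $\Lambda$ by recursively extracting $u_i$ via $u_i=(\lambdau{u_{i-1}}\cdots\lambdau{u_1}(\nu_i))^{q-1}$ and then verify $\Theta\circ\Lambda=\mathrm{id}$ by induction on the flag depth. Your formulation---applying $\lambdau{u_1}$ to the whole flag and invoking the $(n-1)$-case---is just a cleaner inductive repackaging of the paper's direct computation with an ordered basis; the underlying mechanism (kernel of $\lambdau{u_1}$ has dimension exactly one, so the image flag drops by one step) is identical.
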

\begin{proof}
	  To establish this fact, it suffices to find the inverse map \begin{equation}
 \label{Eqt:Lambdafirstappear}
 \Lambda:~\set{n\mbox{-flags in }\Lbar}\to(\Lbar^*)^{\times n}\end{equation} of $\Theta$. The construction is  as follows.
 \begin{itemize}
 	\item[$\bullet$] For any $\mathcal{V}=(V_1\subset V_2\subset \cdots\subset V_n)\in \set{n\mbox{-flags in }\Lbar}$,   choose $\nu_i \in  V_i \setminus V_{i-1}$ ($i=1,\cdots, n$) so that $(\nu_1<\ldots<\nu_n)$ becomes an ordered  basis of  $\mathcal{V} $;
 	\item[$\bullet$] For  all  	$i=1,\ldots,n$, define $l_i$ and $u_i   \in \Lbar^* $ by:
 	\begin{enumerate}
 		\item Set
 		$l_1:=\nu_1$ and $u_1:={l}_1^{q-1}={\nu}_1^{q-1}$;
 		\item Define $    l_2:=\lambdau{u_1}(\nu_2)$ where $\lambdau{u_1}=\tau-u_1$, and define $u_2:={l}_2^{q-1}$;
 		\item Inductively, for $3\leqslant i \leqslant n$,  define $   {l}_i:=\lambdau{u_{i-1}}\cdots \lambdau{u_2}\lambdau{u_1}(\nu_i)$  where $\lambdau{u_{j}}:=\tau-u_{j}$ ($1\leqslant j\leqslant i-1$),  and set $u_i:={l}_i^{q-1}$.
 		
 	\end{enumerate}
 \end{itemize}

 An important feature of this procedure is that if we choose a different ordered bases $(\nu'_1,\ldots,\nu'_n)$ of $\mathcal{V}$, following the above recipe, one gets the same data $( u_1,\cdots,u_n )$. So we can define $$\Lambda(\mathcal{V}):=(u_1,\cdots,u_n) .$$

 We then prove that $\Theta\circ\Lambda(\mathcal{V})=\mathcal{V}$, i.e.,
 $
 V_i = \Ker ( \lambdau{u_i} \lambdau{u_{i-1}} \cdots\lambdau{u_1})
 $. Indeed, we can do  induction on $i$.
 First, $V_1=\Ker(\lambdau{u_1})$ is apparent. Assume that for some $i\geqslant 2$, $V_{i-1}= $ $\Ker ( \lambdau{u_{i-1}} \lambdau{i-2} \cdots\lambdau{u_1})$ is true. To see  $V_i=V_{i-1}\oplus \F_{q}\nu_i \subset  \Ker ( \lambdau{u_i} \lambdau{u_{i-1}} \cdots\lambdau{u_1})$, we only need to show  
 $\nu_i \in \Ker ( \lambdau{u_i} \lambdau{u_{i-1}} \cdots\lambdau{u_1})$. In fact, we have
 $$(\lambdau{u_i} \lambdau{u_{i-1}} \cdots\lambdau{u_1})(\nu_i)=  \lambdau{u_i}  (l_i)= l_i^{q}-u_il_i=0.$$
 Since  $   \Ker ( \lambdau{u_i} \lambdau{u_{i-1}} \cdots\lambdau{u_1}) $ is of dimension $i$, we must have  $V_{i}=\Ker ( \lambdau{u_i} \lambdau{u_{i-1}} \cdots\lambdau{u_1})$.


 We can also  verify the reversed relation  $\Lambda\circ\Theta(u_1,\cdots,u_n)=(u_1,\cdots,u_n)$ which is straightforward. So the map $\Lambda$ we just defined is   the desired inverse of $\Theta$. 
\end{proof}

\begin{defn}\label{corresponding}
	In the sequel, we refer to the one-to-one correspondence
	$$\Lambda: ~\set{n\mbox{-flags in }\Lbar}\rightleftarrows (\Lbar^*)^{\times n}:~\Theta$$
  
as the \textbf{standard correspondence}. It could also refer to any intermediate steps  that connect the   elements
 	\begin{equation}
\label{Eqt:standardconstruction}
	\begin{tikzcd}[column sep=tiny]
	\mathcal{V}=\left(V_1 \subset V_2 \subset \cdots \subset V_n\right)=\left(\nu_1<\ldots<\nu_n\right)\quad \ar[rr, mapsto] \ar[rrrr, "\Lambda",  bend left=30] &&\quad\left(l_1, \cdots, l_n\right)\quad \arrow[rr, mapsto]&& \quad\left(u_1, \cdots, u_n\right). \ar[llll,  "\Theta",  bend left=30]\\
	\end{tikzcd}
 	\end{equation}
\end{defn}

  \begin{defn}\label{Def:definedoverL} 	
 	A flag    $\mathcal{V}=(V_1\subset V_2\subset \cdots\subset V_n)$ in $\Lbar$   is said to be   \textbf{defined  over $L$ }if it is stable under the Galois action, i.e.    $\xi(V_i)=V_i$ holds for all $\xi\in\Aut(\bar L/L)$ and $i=1,\cdots, n$.
 	
 \end{defn}

 \begin{lem}\label{Lem:uiallinLstar}
 	A flag  $\mathcal{V}=(V_1\subset V_2\subset \cdots\subset V_n)$   in $\Lbar$ is  defined  over $L$ if and only if all entries in $(u_1, \ldots, u_n)=\Lambda(\mathcal{V})$ are in  $L^{*} $.	
 \end{lem}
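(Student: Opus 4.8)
The plan is to prove both directions by tracing how the Galois action interacts with the recursive construction in the standard correspondence \eqref{Eqt:standardconstruction}. Fix $\xi \in \Aut(\Lbar/L)$. The key observation is that $\xi$ is an $\Fq$-linear field automorphism that commutes with the $q$-Frobenius $\tau$, hence for any $u \in \Lbar^*$ we have $\xi \circ \lambdau{u} = \lambdau{\xi(u)} \circ \xi$ as additive operators on $\Lbar$; more generally $\xi\bigl(\lambdau{u_i}\cdots\lambdau{u_1}(\mu)\bigr) = \lambdau{\xi(u_i)}\cdots\lambdau{\xi(u_1)}(\xi(\mu))$. Consequently $\xi$ maps $V_i = \Ker(\lambdau{u_i}\cdots\lambdau{u_1})$ bijectively onto $\Ker(\lambdau{\xi(u_i)}\cdots\lambdau{\xi(u_1)})$, which by Proposition \ref{Prop:Fromflagtoui} is exactly the $i$-th subspace of the flag $\Theta(\xi(u_1),\ldots,\xi(u_n))$. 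In other words, $\xi(\mathcal{V}) = \Theta(\xi(u_1),\ldots,\xi(u_n))$, and since $\Theta$ is a bijection, $\xi(\mathcal{V}) = \mathcal{V}$ if and only if $(\xi(u_1),\ldots,\xi(u_n)) = (u_1,\ldots,u_n)$.

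From this equivalence both implications follow quickly. If all $u_i \in L^*$, then $\xi(u_i) = u_i$ for every $\xi \in \Aut(\Lbar/L)$, so $\xi(\mathcal{V}) = \mathcal{V}$ for all such $\xi$, i.e. $\mathcal{V}$ is defined over $L$. Conversely, if $\mathcal{V}$ is defined over $L$, then $\xi(\mathcal{V}) = \mathcal{V}$ for all $\xi$, hence $(\xi(u_1),\ldots,\xi(u_n)) = (u_1,\ldots,u_n)$ for all $\xi$, so each $u_i$ is fixed by $\Aut(\Lbar/L)$. Here one invokes that $L$ is the fixed field of $\Aut(\Lbar/L)$ acting on $\Lbar$ — valid when $L$ is perfect, and in the function-field setting of the paper one works with separable closures or notes that the $u_i = l_i^{q-1}$ are built from separable data, so the Galois-fixed elements indeed lie in $L$. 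Thus $u_i \in L^*$ for all $i$.

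The one genuinely delicate point — and the step I expect to be the main obstacle — is the claim that $\xi(\mathcal{V}) = \mathcal{V}$ forces each individual coordinate $u_i$ to be $\xi$-fixed, rather than merely the tuple being permuted or the subspaces being permuted among themselves. This is where one must lean on the \emph{rigidity} of the standard correspondence: the flag is an ordered chain $V_1 \subset \cdots \subset V_n$, and $\xi$ being inclusion-preserving and dimension-preserving must send $V_i$ to $V_i$ (no permutation is possible), so the recursive recipe applied to $\xi(\mathcal{V})$ reproduces $\xi(u_1),\ldots,\xi(u_n)$ in the same order. One should also double-check the base case carefully: $u_1 = \nu_1^{q-1}$ where $\nu_1$ spans $V_1$; if $\xi(V_1) = V_1$ then $\xi(\nu_1) = c\,\nu_1$ for some $c \in \Fq^*$, whence $\xi(u_1) = c^{q-1} u_1 = u_1$, and then one propagates this through the inductive definition of $l_i$ and $u_i$, using that $u_i = l_i^{q-1}$ is insensitive to scaling $l_i$ by $\Fq^*$. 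So the well-definedness feature of $\Lambda$ (independence of the choice of ordered basis) emphasized in the proof of Proposition \ref{Prop:Fromflagtoui} is precisely what makes the argument go through cleanly.

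Assembling these observations gives a short proof. I would structure it as: (i) record the commutation $\xi \circ \lambdau{u} = \lambdau{\xi(u)} \circ \xi$; (ii) deduce $\xi(\Theta(u_1,\ldots,u_n)) = \Theta(\xi(u_1),\ldots,\xi(u_n))$; (iii) use bijectivity of $\Theta$ (equivalently, injectivity of $\Lambda$) to convert ``$\mathcal{V}$ defined over $L$'' into ``each $u_i$ is $\Aut(\Lbar/L)$-fixed''; (iv) conclude via $L = \Lbar^{\Aut(\Lbar/L)}$ on the relevant (separable) elements. The converse direction is immediate from (ii) once (i) is in hand.
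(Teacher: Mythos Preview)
The paper does not actually give a proof: it says only ``We skip the proof as it is an easy induction.'' Your argument supplies exactly that induction, packaged via the clean global statement $\xi(\Theta(u_1,\ldots,u_n)) = \Theta(\xi(u_1),\ldots,\xi(u_n))$ together with the bijectivity of $\Theta$; this is correct and is the natural way to organize the step-by-step induction the paper alludes to. You also rightly isolate the one point the paper glosses over, namely that ``fixed by $\Aut(\Lbar/L)$ implies lying in $L$'' needs $L$ perfect or the $u_i$ separable over $L$ --- your remark that in the paper's applications the $u_i$ arise from torsion data (roots of separable additive polynomials) is the correct way to close that gap.
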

 We skip the proof  as it is an easy induction. Combining Proposition \ref{Prop:Fromflagtoui} and Lemma \ref{Lem:uiallinLstar}, we get the following fact.
 \begin{cor}\label{Cor:FromflagoverLtoui}The standard correspondence induces a pair of mutually inverse bijections 	$$\Lambda: ~\set{n\mbox{-flags in }\Lbar \mbox{ defined over }L}\rightleftarrows (L^*)^{\times n}:~\Theta.$$
 	
 	\end{cor}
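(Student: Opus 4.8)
The final statement to prove is Corollary \ref{Cor:FromflagoverLtoui}, which asserts that the standard correspondence restricts to a bijection between $n$-flags in $\Lbar$ defined over $L$ and $(L^*)^{\times n}$.

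\medskip

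The plan is to obtain this as an immediate consequence of the two preceding results. From Proposition \ref{Prop:Fromflagtoui} we already know that $\Lambda$ and $\Theta$ are mutually inverse bijections between the set of all $n$-flags in $\Lbar$ and $(\Lbar^*)^{\times n}$. Lemma \ref{Lem:uiallinLstar} characterizes, \emph{within} this correspondence, exactly which flags are defined over $L$: namely those whose image $\Lambda(\mathcal{V}) = (u_1,\ldots,u_n)$ lies in $(L^*)^{\times n}$. So the only thing to check is that restricting a bijection to a subset of the domain and its image under the bijection again yields a bijection. Concretely, I would argue: let $\mathcal{W} \subset \set{n\text{-flags in }\Lbar}$ be the subset of flags defined over $L$. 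By Lemma \ref{Lem:uiallinLstar}, $\Lambda(\mathcal{W}) = (L^*)^{\times n}$ and, conversely, $\Theta((L^*)^{\times n}) \subseteq \mathcal{W}$. Since $\Lambda$ and $\Theta$ are already inverse bijections on the larger sets, their restrictions to $\mathcal{W}$ and $(L^*)^{\times n}$ respectively are still mutually inverse, hence bijections.

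\medskip

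There is essentially no obstacle here — the corollary is a formal deduction once Lemma \ref{Lem:uiallinLstar} is in hand. The only point requiring a token of care is the biconditional in Lemma \ref{Lem:uiallinLstar}: one direction says a flag defined over $L$ has all $u_i \in L^*$, and the other says that if all $u_i \in L^*$ then $\Theta(u_1,\ldots,u_n)$ is a flag defined over $L$. Both directions are needed to conclude that $\mathcal{W}$ and $(L^*)^{\times n}$ correspond \emph{exactly} (not just that one maps into the other). Granting Lemma \ref{Lem:uiallinLstar} as stated in the excerpt, the proof is a one-line observation: restrict the bijections of Proposition \ref{Prop:Fromflagtoui} along the characterization of Lemma \ref{Lem:uiallinLstar}. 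I would write it in exactly that form, with no computation.
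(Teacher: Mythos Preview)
Your proposal is correct and matches the paper's approach exactly: the paper does not write out a proof but simply states that the corollary follows by combining Proposition~\ref{Prop:Fromflagtoui} and Lemma~\ref{Lem:uiallinLstar}, which is precisely the restriction argument you describe.
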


In the sequel,  we continue to use the global symbols $A$($=\Fq [T]$), $L$, $L\set{\tau}$ and so on, and the standard correspondence that we set up in this section.

 \section{Drinfeld modules and Drinfeld modular curves}\label{Sec:Drinfeldmodulescurves}
\bigskip

We first give a brief introduction to the notion of Drinfeld module, which was  introduced by Drinfeld in his celebrated work  \cite{Drinfeld1974}. For more in-depth studies, please see \cites{Drinfeld1974,Gekeler1986,Goss1996,TTAFF,MR4321964}. 

\subsection{Drinfeld modules}
 A $\textbf{Drinfeld module}$ over $L$  is  an $ \mathbb{F}_{q } $-algebra homomorphism
 \[\phi :  A \to L\set{\tau},\qquad a\mapsto \phi_a\,,\]
 satisfying the conditions
 \begin{enumerate}
 	\item  There exists an element $ a \in A $ such that $ \phi_ a \not = \iota (a )$; 
 	\item  The following diagram of $ \mathbb{F}_{q } $-algebra homomorphisms
 	\begin{equation*}
 	\begin{tikzcd}
 	A\arrow[rd,"\iota"] \arrow[r,"\phi"] & L\set{\tau}\arrow[d,"\partial_0"]\\
 	& L
 	\end{tikzcd}
 	\end{equation*}
 	is commutative, where $\partial_0$   is the standard point derivation:
 	\[
 	\partial_0:\quad L\set{\tau} \to L , \qquad f=\sum_{i=0}^{m} g_i \tau^i \mapsto g_0.
 	\]
 \end{enumerate}
 Drinfeld modules over $\Lbar$ are defined in the same fashion. For a Drinfeld module $\phi$ as above, the kernel of $\iota$, which is a prime ideal in $A$, is called   the \textbf{characteristic} of $\phi$.
 As $ A$ is the polynomial ring $\Fq [T] $,   a Drinfeld module $ \phi $ is uniquely determined by a twisted polynomial $\phi_T$ over $L$. We suppose that
 \begin{equation*}
 \phi_T =   g_m \tau^m +\cdots + g_2 \tau^2 + g_1 \tau + g_0\,,
 \end{equation*}
 where $ g_ m$  is not $0 $ for some integer $m>0$ and $g_0=\iota(T)$. The number $ m  $ is called the \textbf{rank} of   $ \phi $. 

 For every  $a\in A$, the kernel of $\phi_a $ is an  $ A $-submodule of $\Lbar $. In a special situation described below, the $A$-module structure of $\Ker (\phi_a)$ is explicit.

 \begin{lem}[{\cite{Gekeler1986}*{Proposition I.1.6}}]\label{Lem:moduleiso}
 	If  $a\in A$ is coprime to the characteristic of $\phi$, then
 	\[ \Ker ( \phi_{a} ) \cong  (A/(aA))^{\oplus m} \]
 	as $A$-modules.
 \end{lem}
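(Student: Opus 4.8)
The plan is to reduce the structure theorem for $\Ker(\phi_a)$ to the case $a = T^k$ by using the Chinese Remainder Theorem, and then handle the prime-power case by a counting argument. First I would recall that $L\{\tau\}$ acts on $\Lbar$, so $\phi$ makes $\Lbar$ into an $A$-module; for any nonzero $a \in A$, the set $\Ker(\phi_a)$ is then an $A/(aA)$-module, since $\phi_b$ sends $\Ker(\phi_a)$ to itself whenever we only care about the residue $b \bmod a$ (using commutativity of $A$ and hence of the image $\phi(A) \subset L\{\tau\}$). So the real content is: $\Ker(\phi_a)$ is free of rank $m$ over $A/(aA)$.

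The second step is to observe that $\phi_a = g_m^{q^{\deg a \cdot m}} \cdots$ has the shape $c\,\tau^{m \deg a} + \cdots + \iota(a)$ as a twisted polynomial, so as a separable additive polynomial in $\mu$ (separable precisely because its linear coefficient $\iota(a) \neq 0$, which is where the coprimality-to-the-characteristic hypothesis enters — $a \notin \Ker\iota$) it has exactly $q^{m \deg a}$ distinct roots in $\Lbar$. Hence $\#\Ker(\phi_a) = q^{m\deg a} = \#(A/(aA))^{\oplus m}$. Now factor $a = \prod_i p_i^{e_i}$ into primes of $A$; CRT gives $A/(aA) \cong \prod_i A/(p_i^{e_i})$ and correspondingly $\Ker(\phi_a) = \bigoplus_i \Ker(\phi_{p_i^{e_i}})$ as $A$-modules, so it suffices to treat $a = p^e$ a prime power, again coprime to the characteristic. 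Over the PID-quotient-friendly setting, by the structure theorem for finitely generated modules over the PID $A$, $\Ker(\phi_{p^e})$ is a direct sum of cyclic modules $A/(p^{f_j})$ with $1 \le f_j \le e$; the submodule killed by $p$ is $\Ker(\phi_p)$, which by the same separability count has cardinality $q^{m \deg p}$, hence is an $\F_q[p]/(p)$-vector space of dimension exactly $m$, forcing the number of cyclic summands to be exactly $m$. Comparing cardinalities, $\prod_j q^{f_j \deg p} = q^{m e \deg p}$ with $m$ factors each $f_j \le e$ forces every $f_j = e$, so $\Ker(\phi_{p^e}) \cong (A/(p^e))^{\oplus m}$, and reassembling via CRT yields $\Ker(\phi_a) \cong (A/(aA))^{\oplus m}$.

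The one genuine subtlety — the step I would flag as the main obstacle — is the very first reduction: why $\phi_a$ is a \emph{separable} polynomial exactly when $a$ is coprime to the characteristic. One must check that the constant term $g_0 = \iota(a)$ of the twisted polynomial $\phi_a$, which equals the coefficient of $\tau^0$, is nonzero iff $a \notin \Ker\iota$, and that a $\tau$-polynomial $\sum g_i\tau^i$ is separable as an $\F_q$-linear polynomial iff $g_0 \neq 0$ (its formal derivative is the constant $g_0$). Everything else — the module-over-$A/(aA)$ bookkeeping, CRT, and the structure theorem — is standard commutative algebra and I would not belabor it. I would also note for completeness that $\phi$ restricted to any finite extension need not see all $q^{m\deg a}$ roots, but the statement is about $\Lbar$, where separability guarantees all roots are present; this is implicit in working over the algebraic closure from the start.
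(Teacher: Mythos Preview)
The paper does not supply its own proof of this lemma; it is quoted as \cite{Gekeler1986}*{Proposition I.1.6} and used as a black box. Your argument is the standard one and is correct: separability of $\phi_a$ (from $\iota(a)\neq 0$) gives the right cardinality, CRT reduces to prime powers, and the structure theorem for finitely generated modules over the PID $A$ together with the count of $\Ker(\phi_p)$ pins down the invariant factors. This is essentially the proof one finds in the cited reference, so there is nothing to contrast.
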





 \begin{defn}
 	Two Drinfeld modules $\phi$ and $\psi$   over $L$  are said to be \textbf{isomorphic} over $\Lbar $, if   there exists an element $\lambda\in \Lbar ^*$ such that for all $a\in A$, the equation
 	\begin{equation}\label{Eq:defnisomorphic}
 		\lambda\phi_a=\psi_a\lambda
 	\end{equation}
 (which amounts to the condition  $\lambda\phi_T=\psi_T\lambda$)	holds in $\Lbar \set{\tau}$.
 \end{defn}



 \bigskip\subsection{Torsion flags of a Drinfeld module}\label{subSec:torsionflagdm}
 Now, we introduce the key concept  of this paper.
 \begin{defn}\label{Defn:phiTtorsionnflag}Let $\phi: A \to L\set{\tau}$   be a  Drinfeld module. A  \textbf{$\phi_T$-torsion $n$-flag} is an $n$-flag $\mathcal{V}=(V_1\subset V_2\subset \cdots\subset V_n)$   in $\Lbar$ such that
 	\begin{enumerate}
 		\item $\phi_T (V_1)=0$;
 		\item $\phi_T (V_i)\subseteq V_{i-1}$ for all $i=2,\cdots,n$.
 	\end{enumerate}
 \end{defn}

Suppose that the  Drinfeld module $\phi$ under consideration  is of rank $m$ and its characteristic is coprime to $T$ (in $A=\Fq[T]$). Let
 \begin{equation*}
 S^{\phi}:=  \varinjlim_n \Ker(\phi_{T^n})\subset \Lbar \end{equation*}
 be the $\phi_T$-divisible group   of $\phi$, which is an  $A $-submodule of $\Lbar$. 
 
 Note that    $\phi_T$-torsion $n$-flags in $\Lbar$ are sitting in $S^\phi$ indeed. 
  According to Lemma \ref{Lem:moduleiso},   we can find an isomorphism of   $A$-modules
 $$I^\phi:\quad S^{\phi} \stackrel{\sim }{\longrightarrow}  {S^m}.  $$  (See Equation \eqref{Eqt:Sm} for the definition of $S^m$.)  Note that the isomorphism $I^\phi$ is not unique. 
Because the two conditions in Definition \ref{Defn:phiTtorsionnflag}  imply that each $V_i$ is contained in $S^\phi$,  $I^\phi\mathcal{V}$ is a $T$-torsion  $n$-flag in  $  S^m$. So, $I^\phi $ establishes a one-to-one correspondence between $\phi_T$-torsion $n$-flags in $S^\phi$ and $T$-torsion $n$-flags in $S^m$.

We can also define  isomorphism of $\phi_T$-torsion flags   in the same fashion as in Definition \ref{isomorphism}. 
\begin{defn}\label{isomorphism2}
	Two $\phi_T$-torsion $n$-flags $\mathcal{V} =\flag{V_1\subset V_2\subset \cdots\subset V_n}$ and $\mathcal{W}=\flag{ W_1\subset W_2\subset \cdots\subset W_n}$ of ${S^{\phi}}$ are said to be \textbf{isomorphic} if there exists an $\Fq $-linear isomorphism
	$\iota: V_n\rightarrow W_n$
	such that
	\begin{enumerate}
		\item $\iota(V_i)=W_i$ for $i=1,\ldots, n$; and
		\item $\iota\circ \phi_{T}=\phi_{T}\circ \iota$.
	\end{enumerate}
	
\end{defn}

A  \textbf{$\phi_T$-torsion ($n$-)flag class} in ${S^\phi}$ is an isomorphic class of such $\phi_T$-torsion $n$-flags. As always, for a $\phi_T$-torsion $n$-flag in $S^\phi$, say  $\mathcal{V}$, the corresponding $\phi_T$-torsion  flag class is denoted by $[\mathcal{V}]$.

Next, let us fix  a $T$-torsion $n$-flag class in the $A$-module ${S^m}$, say $\NODE_{i_1,\cdots, i_n}$, namely,  a level $n$ node of the $T$-torsion tree.
 \begin{defn}   	
 	A $\phi_T$-torsion $n$-flag  $\mathcal{V} $ in $\Lbar$   is said to be \textbf{subordinate to}   $\NODE_{i_1,\cdots, i_n}$ if its image   $I^\phi\mathcal{V}$ in $S^m$ sent by $I^\phi\colon S^{\phi} \stackrel{\sim}{\rightarrow}  {S^m}$ represents  the given $T$-torsion $n$-flag class   $\NODE_{i_1,\cdots, i_n}$, i.e., $[I^\phi\mathcal{V}]=\NODE_{i_1,\cdots, i_n}$.
 	
 \end{defn}

 \begin{notation}\label{Notation:pair} For an $A$-field $L$ and a Drinfeld module $\phi$ over $L$,	denote by $ \mathcal{G}(\Lbar,\phi;\NODE_{i_1,\cdots, i_n} )$ the set of all $\phi_T$-torsion $n$-flags in $\Lbar$  which are    subordinate to    $ \NODE_{i_1,\cdots, i_n}$. 	Denote by $ \mathcal{G}(L,\phi;\NODE_{i_1,\cdots, i_n} )$ the set of all $\phi_T$-torsion $n$-flags in $\Lbar$  which are  defined  over $L$ (see Definition \ref{Lem:uiallinLstar}) and subordinate to    $ \NODE_{i_1,\cdots, i_n}$.

 \end{notation}

As $I^\phi$ is an isomorphism, it is clear that $\mathcal{V}$, an $\phi_T$-torsion $n$-flag in $ S^\phi $ that is subordinate to $\NODE_{i_1,\cdots, i_n}$, exists. So $ \mathcal{G}(\Lbar,\phi;\NODE_{i_1,\cdots, i_n} )$ is never empty; but note that $ \mathcal{G}(L,\phi;\NODE_{i_1,\cdots, i_n} )$ could be empty.

 \bigskip\subsection{$(m,j)$-type normalized Drinfeld modular curves}\label{Sec:Drfeldparameters}
 We make some important conventions in subsequent analysis:
 \begin{itemize}
 	\item We assume that $m\geqslant 3$ is an odd number; 
 	\item We only consider \textbf{$(m,j)$-type normalized
 	Drinfeld modules} over $L$, i.e., those of the form
 	\begin{equation} \label{Eq:normalizedtype}
 		\phi_{T} =  -\tau^m + g \tau ^j + 1,\qquad \mbox{ for some } g\in L.
 	\end{equation}
 	Here  $m>j $, and $ m$ and $ j$ are mutually coprime positive integers.  	
 \end{itemize}
\begin{notation}\label{Nota:DLmj}
	We denote by $  \mathcal{D}_L^{(m,j)} $ the set of     $(m,j)$-type  normalized Drinfeld modules.
\end{notation}

  Meanwhile, we have $\Fq(G)$, the function field of the modular curve that   parameterize    $(m,j)$-type normalized Drinfeld modules as in Equation \eqref{Eq:normalizedtype}, following the idea of \cite{Elkies2001}.  \textit{This  function field can also be notated as} ${\dot{\mathcal{F}}}^{(m,j)}_{(0)}$, according to our more general notation rules (see Notation \ref{notation:normalizedfunctionfield}).

 Recall that $\Nqfrac{m}:=\frac{q^m-1}{q-1}$. We call $ J(\phi): = g ^{\Nqfrac{m}} \in L$  the $ J $-\textbf{invariant} of the  Drinfeld module  $ \phi \in  \mathcal{D}_L^{(m,j)}  $ where $g\in L$ is the coefficient of $\tau^j$ in $\phi_T$ (see Equation \eqref{Eq:normalizedtype}).
 A well-known fact is that the isomorphism class of an elliptic curve is completely determined by its  j-invariant. A similar fact  for Drinfeld modules is the following   lemma.
 \begin{lem}[{\cite{Bassa2015}*{Section 4}}]\label{Lem:Isomorphic}
 	For two  $(m,j)$-type normalized Drinfeld modules   $ \phi$ and  $ \tilde{\phi} $   which are represented  by
 	\begin{equation*}
 		\phi_{T} = -\tau^m + g \tau ^j +1  \mbox{~and~}~ \tilde{\phi}_{T}  = -\tau^m + \tilde{g} \tau ^j +1,
 	\end{equation*}  respectively,
 	the following statements are equivalent:
 	\begin{enumerate}
 		\item The Drinfeld modules $\phi$ and $\tilde{\phi}$ are isomorphic over $\Lbar  $;
 		\item There exists some $ \lambda \in  \mathbb{F}_{q^m}^{* } $ such that $ g  =\tilde{g}  \lambda^ {q^j -1 } $;
 		\item The $J$-invariants of $\phi$ and $\tilde{\phi}$ coincide: $ J(\phi) = J(\tilde{\phi}) $.
 	\end{enumerate}
 \end{lem}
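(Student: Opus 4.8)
\textbf{Proof proposal for Lemma \ref{Lem:Isomorphic}.}

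The plan is to prove the chain of implications $(2)\Rightarrow(1)\Rightarrow(3)\Rightarrow(2)$, using only the definition of isomorphism of Drinfeld modules (Equation \eqref{Eq:defnisomorphic}) together with elementary manipulations in the twisted polynomial ring $\Lbar\set{\tau}$ and the structure of the multiplicative group $\Lbar^*$. The key observation throughout is that conjugating $\phi_T$ by a scalar $\lambda\in\Lbar^*$ rescales the coefficient of $\tau^i$ by $\lambda^{q^i-1}$, since $\lambda\tau^i = \tau^i\lambda^{q^i}$ forces $\lambda(g_i\tau^i)\lambda^{-1} = g_i\lambda^{1-q^i}\tau^i$ when we compute $\lambda\phi_T = \psi_T\lambda$ and read off coefficients.

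First I would prove $(2)\Rightarrow(1)$: if $g = \tilde g\lambda^{q^j-1}$ for some $\lambda\in\mathbb{F}_{q^m}^*$, then I claim $\lambda^{-1}$ (or $\lambda$, after fixing conventions) conjugates $\tilde\phi_T$ to $\phi_T$. The constant term $1$ is fixed because $\lambda^{q^0-1}=1$; the coefficient of $\tau^m$ is fixed because $\lambda^{q^m-1}=1$ by Lagrange's theorem in $\mathbb{F}_{q^m}^*$ — this is exactly where the hypothesis $\lambda\in\mathbb{F}_{q^m}^*$ (rather than merely $\lambda\in\Lbar^*$) is used; and the coefficient of $\tau^j$ transforms as required by hypothesis $(2)$. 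Next, $(1)\Rightarrow(3)$: if $\lambda\phi_T=\tilde\phi_T\lambda$ holds for some $\lambda\in\Lbar^*$, then comparing the $\tau^m$-coefficients gives $-\lambda^{q^m-1}\cdot(-1) = \ldots$, i.e. $\lambda^{q^m-1}=1$ forcing $\lambda\in\mathbb{F}_{q^m}^*$; comparing the $\tau^j$-coefficients then yields $g = \tilde g\lambda^{q^j-1}$, and raising both sides to the power $\Nqfrac{m}=\frac{q^m-1}{q-1}$ gives $g^{\Nqfrac{m}} = \tilde g^{\Nqfrac{m}}\lambda^{(q^j-1)\Nqfrac{m}}$; since $(q^j-1)\Nqfrac{m} = \frac{(q^j-1)(q^m-1)}{q-1}$ is a multiple of $q^m-1$ (because $q-1\mid q^j-1$), we get $\lambda^{(q^j-1)\Nqfrac{m}}=1$ and hence $J(\phi)=J(\tilde\phi)$.

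Finally $(3)\Rightarrow(2)$: assume $g^{\Nqfrac{m}} = \tilde g^{\Nqfrac{m}}$. If both $g,\tilde g$ are nonzero (the degenerate case $g=0$ being handled separately, where $J=0$ forces $\tilde g=0$ and one takes $\lambda=1$), then $(g/\tilde g)^{\Nqfrac{m}}=1$, so $g/\tilde g$ lies in the unique subgroup of order dividing $\Nqfrac{m}$ of $\Lbar^*$; I would then need to exhibit $\lambda\in\mathbb{F}_{q^m}^*$ with $\lambda^{q^j-1} = g/\tilde g$. Here I use that $\gcd(m,j)=1$: the map $x\mapsto x^{q^j-1}$ on the cyclic group $\mathbb{F}_{q^m}^*$ of order $q^m-1$ has image the subgroup of index $\gcd(q^j-1,q^m-1) = q^{\gcd(j,m)}-1 = q-1$, which is precisely the subgroup of elements of order dividing $\frac{q^m-1}{q-1}=\Nqfrac{m}$. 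Thus $g/\tilde g$, having $\Nqfrac{m}$-th power equal to $1$, lies in this image, and a suitable $\lambda$ exists. The main obstacle is this last step — correctly identifying the image of the $(q^j-1)$-power map on $\mathbb{F}_{q^m}^*$ and invoking $\gcd(q^j-1,q^m-1)=q^{\gcd(j,m)}-1$ — but since this is a standard fact about cyclic groups and the hypotheses $m>j$, $\gcd(m,j)=1$ are exactly what make it work, I expect it to go through cleanly; in any case the lemma is cited from \cite{Bassa2015}*{Section 4}, so one may alternatively defer to that reference.
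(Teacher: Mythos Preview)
Your proof is correct and follows the standard argument; the paper itself does not supply a proof but simply cites \cite{Bassa2015}*{Section 4}, so there is nothing to compare against. Your handling of $(3)\Rightarrow(2)$ via $\gcd(q^j-1,q^m-1)=q^{\gcd(j,m)}-1=q-1$ is exactly the right ingredient, and the degenerate case $g=0$ is correctly isolated.
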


 In other words, $L$-points of the affine line  whose   function field is $\Fq(J)$ parameterize    isomorphic classes of   $(m,j)$-type normalized Drinfeld modules.  To be consistent  with the notation  that we will use later, \textit{we denote this function field by} ${\mathcal{F}}^{(m,j)}_{(0)}$.

Next,  we proceed to  the primary subject for investigation in this paper --- certain   Drinfeld modular curves that are connected with a given $T$-torsion flag  class.   Let us   clarify such objects. 
Fix a $T$-torsion $n$-flag class $\NODE_{i_1,\cdots, i_n}$  of the $A$-module ${S^m}$.
What we give below is an   \textit{informal} definition of the notion of normalized Drinfeld modular curves (of $(m,j)$-type) related to the said flag class $\NODE_{i_1,\cdots, i_n}$.

\begin{defn}   \label{Def:normalizedDrinfeldmodularcurve}  An algebraic curve $C$ (over $\Fq$) is called  the $(m,j)$-type normalized Drinfeld modular curve subordinate to $\NODE_{i_1,\cdots, i_n}$ if for any $A$-field $L$, the $L$-points of $C$  parameterize
	pairs $(\phi, \mathcal{V})$ where $\phi\in  \mathcal{D}_L^{(m,j)} $ (see Notation \ref{Nota:DLmj}) and $\mathcal{V} \in \mathcal{G}(L,\phi;\NODE_{i_1,\cdots, i_n} )$ (see Notation \ref{Notation:pair}).   The notation of such a curve $C$ is $\dot{X}^{(m,j)}_{i_1,\cdots,i_{n}  }$.
	
\end{defn}

In the rest of this paper,  {we  assume that the curve}   $\dot{X}^{(m,j)}_{i_1,\cdots,i_{n}  }$ \textbf{in question exists and is geometrically irreducible}. We will verify this fact for the $(m,j)=(3,2)$-case in the next Section \ref{Sec:32flagclassfunctionfield}. (For those curves we reviewed in the introduction,   their existence are clear.)

\begin{notation}\label{notation:normalizedfunctionfield}
	We denote by
	$\dot{\mathcal{F}}^{(m,j)}_{i_1,\cdots,i_{n}}$  the function field of $\dot{X}^{(m,j)}_{i_1,\cdots,i_{n}  }$.
\end{notation}

Inspired by  the key reference  \cite{Bassa2015}, we show a method  to obtain  $\dot{X}^{(m,j)}_{i_1,\cdots,i_{n}  }$. Recall from Section \ref{Sec:flagsnotations} that we have established a one-one correspondence between   flags $\mathcal{V} =\{V_1\subset\cdots \subset V_n\}$  defined  over $L$ and arrays $(u_1,\cdots, u_n)\in (L^*)^{\times n}$ (Corollary \ref{Cor:FromflagoverLtoui}).
So, the basic idea is to      \textbf{use   parameters $(u_1,\cdots, u_n)\in (L^*)^{\times n}$ to  parameterize   pairs}
$ (\phi, \mathcal{V} )$ such that $\mathcal{V}=\Theta(u_1,\cdots, u_n)$.

We wish to find how $\phi$ is related with the parameters  $(u_1,\cdots, u_n)$.
Resume notations and the standard correspondence \eqref{Eqt:standardconstruction} that we used in Section \ref{Sec:flagsnotations}. In particular, we have $V_1=\Fq \nu_1=\Ker(\tau-u_1)$ and $u_1=\nu_1^{q-1}$.
As earlier, we assume that
$	\phi_{T} = - \tau^m + g   \tau ^j + 1$.
Since  it is required $ {\phi_T}(V_1)=0$, or $\nu_1 \in \Ker (\phi_T)$,  we can derive that
\begin{equation}\label{Eqt:gintermsofu1}
g=\frac{ \nu_1 ^{q^m -1 } -1 }{  \nu_1 ^{q^j -1}}=\frac{ u_1^{\Nqfrac{m} } -1 }{  u_1^{\Nqfrac{j}}}.
\end{equation}

So $g$ is determined only by $u_1$.
For this reason, we  parameterize    $\phi\in  \mathcal{D}_L^{(m,j)}$ by $u_1$ as follows:
\begin{equation}\label{Eqt:phiTui00}
\phi_T= \phiTu{u_1}: =  -\tau ^m + \frac{ u_1^{\Nqfrac{m} } -1 }{  u_1^{\Nqfrac{j}}} \tau ^j +1  .
\end{equation}


It remains to investigate under what condition the $n$-flag $\mathcal{V} =\{V_1\subset\cdots \subset V_n\}$ would be a $\phi_T$-torsion $n$-flag {subordinate to} the given flag class $\NODE_{i_1,\cdots, i_n}$. This is usually a very technical problem (depending on the specific data of $\NODE_{i_1,\cdots, i_n}$) and one ends up with  certain constraints on the  parameters $(u_1,\cdots, u_n)$, which intuitively give  the desired modular curve.  In the next Section   \ref{Sec:32flagclassfunctionfield}, we will give a  theoretical approach to     the modular curve subordinate to any node $\NODE_{i_1,\cdots, i_n}$ for the particular  $(m,j)=(3,2)$-case.


\bigskip
\subsection{$(m,j)$-type Drinfeld modular curves}
\label{Sec:mjtypeDrinfeldmodularcurves}
 
\begin{defn}\label{Def:isomorphicDrinfeldpairs}
	Consider  pairs
	$( \phi,\mathcal{V}  )$,
	where $\phi\in  \mathcal{D}_L^{(m,j)} $  and $\mathcal{V} \in  \mathcal{G}(L,\phi;\NODE_{i_1,\cdots, i_n} )$.  	
	Two such pairs  $(\phi,\mathcal{V} )$ and $(\tilde{\phi},\tilde{\mathcal{V}}  )$ are said to be equivalent   if    there exists some  $\chi\in \Lbar ^*$    such that
	\begin{itemize}
		\item[1)] $\chi \phi_T=\tilde{\phi}_T\chi  $ in $\Lbar \set{\tau}$ (i.e., $\phi$ and $\tilde{\phi}$ are isomorphic Drinfeld modules);
		\item[2)] 	$\chi V_i=\tilde{V_i}$ in $\Lbar $ { for } $i=1,\ldots,n  $,
		where $\mathcal{V} =(V_1\subset\cdots \subset V_n)$ and $\tilde{\mathcal{V}}  =(\tilde{V}_1\subset\cdots \subset \tilde{V}_n)$.
		
	\end{itemize}

\end{defn}

\begin{defn}[Drinfeld modular curve,  an informal definition]\label{Def:Drinfeldmodularcurve}
	An algebraic curve $C$ is called the  Drinfeld modular curve subordinate to $\NODE_{i_1,\cdots, i_n}$ if for all $A$-field $L$, the  $L$-points on $C$ parameterize      equivalent classes of pairs
	$ (\phi, \mathcal{V} )$,
	where $\phi\in  \mathcal{D}_L^{(m,j)} $  and $ \mathcal{V} \in  \mathcal{G}(L,\phi;\NODE_{i_1,\cdots, i_n} )$. The notation of such a curve is $X^{(m,j)}_{i_1,\cdots,i_{n}  }$.
\end{defn}

There is an obvious covering map  from $\dot{X}^{(m,j)}_{i_1,\cdots,i_{n}  }$ to $ {X}^{(m,j)}_{i_1,\cdots,i_{n}  }$ , sending $L$-points 	$ (\phi, \mathcal{V} )$ (on $\dot{X}^{(m,j)}_{i_1,\cdots,i_{n}  }$)  to its equivalent class 	$ [(\phi, \mathcal{V} )]$ (on $ {X}^{(m,j)}_{i_1,\cdots,i_{n}  }$). If the normalized modular curve $\dot{X}^{(m,j)}_{i_1,\cdots,i_{n}  }$   exists and is geometrically  irreducible, then so is the    modular curve $ {X}^{(m,j)}_{i_1,\cdots,i_{n}  }$.

\begin{notation}\label{notation:normalizedfunctionfield2}
	We denote by
	$ {\mathcal{F}}^{(m,j)}_{ i_1,\cdots, i_n }$  the function field of    ${X}^{(m,j)}_{ i_1,\cdots,i_{n}   }$.
\end{notation}

Now we examine how the equivalence relation defined by Definition  \ref{Def:isomorphicDrinfeldpairs} is characterized by parameters. Suppose that   an array   $(u_1,\cdots, u_n)\in (L^*)^{\times n}$ corresponds to $(\phi,\mathcal{V})$ where $ \phi\in \mathcal{D}_L^{(m,j)} $ is given by   $\phi_T:=\phiTu{u_1}$  and $\mathcal{V}:=\Theta(u_1,\cdots, u_n)$ is a $\phi_T$-torsion $n$-flag  {subordinate to} $\NODE_{i_1,\cdots, i_n}$. Suppose that $(\tilde{u}_1,\cdots, \tilde{u}_n)\in (L^*)^{\times n}$ corresponds to $(\tilde{\phi},\tilde{\mathcal{V}})$ in the same fashion.
If $\chi \in \Lbar ^*$  gives an isomorphism between the pairs  $(\phiTu{u_1},\mathcal{V} )$ and $(\phiTu{\tilde{u}_1},\tilde{\mathcal{V}}  )$ as described by Definition \ref{Def:isomorphicDrinfeldpairs}, then from the first condition $\chi\phi_T=\tilde{\phi}_T\chi$, one easily derives that $\chi^{q^m-1}=1$, i.e. $\chi\in \F_{q^m}^*\subset \Lbar^*$.

From the second condition, $\chi V_i=\tilde{V}_i$,  an ordered basis    $(\nu_1,\nu_2,\cdots,\nu_n)$ of $\mathcal{V}$ yields an ordered basis $(\tilde{\nu}_1,\tilde{\nu}_2,\cdots,\tilde{\nu}_n)$ of $\tilde{\mathcal{V}}$ where $\tilde{\nu}_i=\chi \nu_i$.  Recall Proposition \ref{Prop:Fromflagtoui} and we recover  $(\tilde{u}_1,\cdots, \tilde{u}_n)$ as follows. First, we have
$\tilde{u}_1=(\tilde{\nu}_1)^{q-1}=\chi^{q-1}\nu_1^{q-1}=\chi^{q-1}u_1$.
Then from $\tilde{l}_2=(\tau-\tilde{u}_1) \tilde{\nu}_2$ we get $\tilde{l}_2=\chi^{q} l _2$ and
$
\tilde{u}_2=(\tilde{l}_2)^{q-1}=\chi^{q(q-1) }l^{q-1}_2=\chi^{q(q-1) } u_2
$.
By an induction    we get
$$
\tilde{u}_3=\chi^{q^2(q-1) } u_3,~\ldots, ~\tilde{u}_n=\chi^{q^{n-1}(q-1) } u_n \,.
$$

This fact is reinterpreted by the following proposition.
\begin{prop}  The group $\F_{q^m}^*/ \F_q^*$ acts on the normalized Drinfeld modular curve $\dot{X}^{(m,j)}_{i_1,\cdots,i_{n}  }$ by
	\begin{equation}\label{Eqt:chiaction}\chi  (u_1,u_2,\cdots, u_n) := (\chi^{q-1}u_1,\chi^{q(q-1) }u_2,\cdots, \chi^{q^{n-1}(q-1) }  u_n),\quad \forall \chi\in \F_{q^m}^*.
	\end{equation}
\end{prop}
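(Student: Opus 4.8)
The plan is to verify that the formula \eqref{Eqt:chiaction} indeed defines a group action, and that it is well-defined on the curve $\dot{X}^{(m,j)}_{i_1,\cdots,i_{n}}$, i.e. it preserves the locus parameterizing pairs $(\phi,\mathcal{V})$ with $\phi\in\mathcal{D}_L^{(m,j)}$ and $\mathcal{V}\in\mathcal{G}(L,\phi;\NODE_{i_1,\cdots,i_n})$. The computation preceding the statement already does most of the work: given $\chi\in\F_{q^m}^*$ and a pair $(\phi,\mathcal{V})$ corresponding to parameters $(u_1,\ldots,u_n)=\Lambda(\mathcal{V})$, scaling the flag $\mathcal{V}$ by $\chi$ produces a new flag $\tilde{\mathcal{V}}=\chi\mathcal{V}$ whose ordered basis is $(\chi\nu_1,\ldots,\chi\nu_n)$, and running the recipe of Proposition \ref{Prop:Fromflagtoui} through the intermediate quantities $l_i$ gives $\tilde{u}_i=\chi^{q^{i-1}(q-1)}u_i$. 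So first I would record that $\tilde{\mathcal{V}}=\chi\mathcal{V}$ is again a flag defined over $L$ (its entries $\tilde u_i$ lie in $L^*$ since $\chi^{q-1}\in\F_q^*\subset L^*$, using Lemma \ref{Lem:uiallinLstar}), hence the map \eqref{Eqt:chiaction} sends $(L^*)^{\times n}$ to itself.

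Next I would check the three defining properties of a group action. The identity $\chi=1$ clearly acts trivially. For associativity, if $\chi,\chi'\in\F_{q^m}^*$ then applying $\chi$ then $\chi'$ multiplies $u_i$ by $\chi^{q^{i-1}(q-1)}(\chi')^{q^{i-1}(q-1)}=(\chi\chi')^{q^{i-1}(q-1)}$, which is exactly the action of $\chi\chi'$; this is immediate from the multiplicativity of the exponents. For the factorization through $\F_{q^m}^*/\F_q^*$, observe that if $\chi\in\F_q^*$ then $\chi^{q-1}=1$, so every exponent $\chi^{q^{i-1}(q-1)}=1$ and the action is trivial; hence it descends to the quotient group. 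This uses only that $\F_q^*$ has order $q-1$.

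It then remains to show the action preserves the curve, i.e. that $(\tilde u_1,\ldots,\tilde u_n)$ again parameterizes a valid pair. The Drinfeld module attached to $\tilde u_1=\chi^{q-1}u_1$ is $\phiTu{\tilde u_1}$; by the isomorphism criterion of Lemma \ref{Lem:Isomorphic} applied with $\lambda=\chi\in\F_{q^m}^*$, the modules $\phiTu{u_1}$ and $\phiTu{\tilde u_1}$ are isomorphic over $\Lbar$ via $\chi$, and one checks directly that $\chi\,\phiTu{u_1}=\phiTu{\tilde u_1}\,\chi$, so $\chi(\Ker\phiTu{u_1})=\Ker\phiTu{\tilde u_1}$ and more generally $\chi$ carries the $\phiTu{u_1}$-divisible group to the $\phiTu{\tilde u_1}$-divisible group compatibly with the $T$-action. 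Therefore $\tilde{\mathcal{V}}=\chi\mathcal{V}$ is a $\phiTu{\tilde u_1}$-torsion $n$-flag, and since $\chi$ intertwines $\phiTu{u_1}$ and $\phiTu{\tilde u_1}$ it induces an isomorphism of $\phi_T$-torsion flags in the sense of Definition \ref{isomorphism2}; hence $[\,I^{\tilde\phi}\tilde{\mathcal{V}}\,]=[\,I^{\phi}\mathcal{V}\,]=\NODE_{i_1,\cdots,i_n}$, so $\tilde{\mathcal{V}}\in\mathcal{G}(L,\tilde\phi;\NODE_{i_1,\cdots,i_n})$. I expect the only mildly delicate point to be this last compatibility: one must argue that conjugation by $\chi$ intertwines the chosen $A$-module isomorphisms $I^\phi$ and $I^{\tilde\phi}$ up to the ambiguity allowed in their definition, so that the subordination condition is genuinely preserved rather than merely the torsion-flag condition. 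All of this is routine once the intermediate identities $\tilde l_i=\chi^{q^{i-1}}l_i$ are in hand.
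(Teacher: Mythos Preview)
Your proposal is correct and follows the paper's approach: the paper gives no separate proof, stating the proposition merely as a reinterpretation of the computation immediately preceding it, and you have simply fleshed that computation out into a full verification of the group-action axioms and of the preservation of the subordination condition. One small slip to fix: the claim that $\chi^{q-1}\in\F_q^*$ is false in general (take $\chi$ a generator of $\F_{q^m}^*$), so the action need not preserve $L$-points unless $\F_{q^m}\subset L$; this does not affect the proposition, which concerns the action on the curve itself rather than on $L$-points for an arbitrary $A$-field $L$.
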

The action of $\F_{q^m}^*/ \F_q^*$   is transitive along each fiber of the covering $\dot{X}^{(m,j)}_{i_1,\cdots,i_{n}  }$ to $ {X}^{(m,j)}_{i_1,\cdots,i_{n}  }$.
Hence the field extension $ \dot{\mathcal{F}}^{(m,j)}_{i_1,\cdots,i_{n}} /  {\mathcal{F}}^{(m,j)}_{i_1,\cdots,i_{n}} $ is Galois with group $\F_{q^m}^*/ \F_q^*$, and $ {\mathcal{F}}^{(m,j)}_{i_1,\cdots,i_{n}}$ is the fixed field of group $\F_{q^m}^*/ \F_q^*$.


\begin{cor}
	The degree of ${ \dot{\mathcal{F}} }^{(m,j)}_{i_1,\cdots,i_{n}}/  { \mathcal{F} }^{(m,j)}_{i_1,\cdots,i_{n}}$ equals $ \Nqfrac{m}$.
	
\end{cor}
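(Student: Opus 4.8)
The statement to prove is that $[\dot{\mathcal{F}}^{(m,j)}_{i_1,\cdots,i_n} : \mathcal{F}^{(m,j)}_{i_1,\cdots,i_n}] = \Nqfrac{m}$. The plan is to combine the two facts established immediately before the corollary: first, that $\dot{\mathcal{F}}^{(m,j)}_{i_1,\cdots,i_n}/\mathcal{F}^{(m,j)}_{i_1,\cdots,i_n}$ is a Galois extension whose Galois group is $\F_{q^m}^*/\F_q^*$ (acting via \eqref{Eqt:chiaction}), and second, that for a Galois extension the degree equals the order of the Galois group. So the entire content reduces to computing $\#(\F_{q^m}^*/\F_q^*)$.

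First I would recall that $\F_{q^m}^*$ is a cyclic group of order $q^m-1$ and $\F_q^*$ is its unique subgroup of order $q-1$ (it is a subgroup because $\F_q \subseteq \F_{q^m}$, and $q-1 \mid q^m-1$). Hence the quotient group $\F_{q^m}^*/\F_q^*$ has order
$$\frac{q^m-1}{q-1} = \Nqfrac{m},$$
using the shorthand $\Nqfrac{l} := (q^l-1)/(q-1)$ introduced in the Notations. Then I would invoke the preceding paragraph: since the field extension $\dot{\mathcal{F}}^{(m,j)}_{i_1,\cdots,i_n}/\mathcal{F}^{(m,j)}_{i_1,\cdots,i_n}$ is Galois with group $\F_{q^m}^*/\F_q^*$ and $\mathcal{F}^{(m,j)}_{i_1,\cdots,i_n}$ is the fixed field of this group, the fundamental theorem of Galois theory gives
$$[\dot{\mathcal{F}}^{(m,j)}_{i_1,\cdots,i_n} : \mathcal{F}^{(m,j)}_{i_1,\cdots,i_n}] = \#(\F_{q^m}^*/\F_q^*) = \Nqfrac{m},$$
which is the claim.

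One small point worth addressing for rigour: the action \eqref{Eqt:chiaction} a priori is an action of $\F_{q^m}^*$, and one should check it is faithful after quotienting by $\F_q^*$ — i.e. that $\chi$ acts trivially on all coordinates $(u_1,\ldots,u_n)$ precisely when $\chi \in \F_q^*$. From $\chi^{q-1}u_1 = u_1$ with $u_1 \ne 0$ we get $\chi^{q-1}=1$, which already forces $\chi \in \F_q^*$; conversely every $\chi \in \F_q^*$ satisfies $\chi^{q^i(q-1)} = 1$ for all $i$ and so acts trivially. This confirms the quotient $\F_{q^m}^*/\F_q^*$ is exactly the Galois group, as asserted in the text, and no step here is a genuine obstacle — the corollary is a direct order count once the preceding Galois-theoretic paragraph is granted. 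The only thing to be careful about is not to double-count or to mis-state $\Nqfrac{m}$; the main "work" is purely the elementary identity $\#(\F_{q^m}^*/\F_q^*) = (q^m-1)/(q-1)$.
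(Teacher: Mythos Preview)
Your proof is correct and matches the paper's approach exactly: the paper states this corollary without proof, as it follows immediately from the preceding assertion that the extension is Galois with group $\F_{q^m}^*/\F_q^*$, together with the elementary count $\#(\F_{q^m}^*/\F_q^*)=(q^m-1)/(q-1)=\Nqfrac{m}$. Your faithfulness check is a nice touch but not strictly needed, since the paper has already asserted the Galois group is precisely $\F_{q^m}^*/\F_q^*$.
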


Next, we wish to express $ {\mathcal{F}}^{(m,j)}_{i_1,\cdots,i_{n}}$  using $ \dot{\mathcal{F}}^{(m,j)}_{i_1,\cdots,i_{n}}$. To this end, consider a series of functions in $ \dot{\mathcal{F}}^{(m,j)}_{i_1,\cdots,i_{n}}$ which are fixed by the $\F_{q^m}^*/ \F_q^*$-action:
\begin{equation}
\label{Eqt:wianduim-jcase}
w_1=u_1^{\Nqfrac{m}},~w_2=\frac{u_2}{u_1^{q}},~w_3=\frac{u_3}{u_2^{q}},~\ldots,~w_n=\frac{u_n}{u_{n-1}^{q}}.
\end{equation}

\begin{prop}\label{Prop:fromutow}
	For a function field $\Fq(u_1,\cdots,u_n)$ which is equipped with an action by the group $\F_{q^m}^*/ \F_q^*$ as in Equation \eqref{Eqt:chiaction}, the associated fixed field   is given by $\Fq(w_1,\cdots, w_n)$ where the generators  $w_1$, $\cdots$, $w_n$  are given as in Equation \eqref{Eqt:wianduim-jcase}.

\end{prop}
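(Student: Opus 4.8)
\textbf{Proof plan for Proposition \ref{Prop:fromutow}.}

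The plan is to show the two inclusions $\Fq(w_1,\ldots,w_n)\subseteq \Fq(u_1,\ldots,u_n)^{\F_{q^m}^*/\F_q^*}$ and the reverse one. The first inclusion is immediate: a direct substitution into \eqref{Eqt:chiaction} shows each $w_i$ is fixed by every $\chi\in\F_{q^m}^*$. Indeed $\chi\cdot w_1 = (\chi^{q-1}u_1)^{\Nqfrac{m}} = \chi^{q^m-1}u_1^{\Nqfrac{m}} = w_1$ since $\chi^{q^m-1}=1$, and for $i\geqslant 2$, $\chi\cdot w_i = \chi^{q^{i-1}(q-1)}u_i / (\chi^{q^{i-2}(q-1)}u_{i-1})^q = \chi^{q^{i-1}(q-1) - q^{i-1}(q-1)} w_i = w_i$. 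So $\Fq(w_1,\ldots,w_n)$ lies in the fixed field.

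For the reverse inclusion, the cleanest route is a degree count. First I would observe that $u_1,\ldots,u_n$ are algebraically independent over $\Fq$ (they are free parameters by Corollary \ref{Cor:FromflagoverLtoui}), hence $\Fq(u_1,\ldots,u_n)$ has transcendence degree $n$; likewise $w_1,\ldots,w_n$ are algebraically independent, since from \eqref{Eqt:wianduim-jcase} one can solve successively $u_1^{\Nqfrac m}=w_1$, then $u_2 = w_2 u_1^q$, \ldots, $u_n = w_n u_{n-1}^q$, expressing the $u_i$ as monomials in the $w_i$ and a root of $w_1$; so $\Fq(w_1,\ldots,w_n)$ also has transcendence degree $n$, and the extension $\Fq(u_1,\ldots,u_n)/\Fq(w_1,\ldots,w_n)$ is finite. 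Next I would compute its degree. From the relations just displayed, $u_1$ is a root of $X^{\Nqfrac m}-w_1\in\Fq(w_1,\ldots,w_n)[X]$, and once $u_1$ is adjoined all of $u_2,\ldots,u_n$ lie in $\Fq(w_1,\ldots,w_n)(u_1)$; hence $[\Fq(u_1,\ldots,u_n):\Fq(w_1,\ldots,w_n)]$ divides $\Nqfrac m$. On the other hand the group $\F_{q^m}^*/\F_q^*$ has order $\Nqfrac m$ and acts faithfully on $\Fq(u_1,\ldots,u_n)$ (the action on $u_1$ alone, $\chi\mapsto\chi^{q-1}u_1$, already has trivial kernel modulo $\F_q^*$ since $\gcd(q-1,q^m-1)=q-1$), so by Artin's theorem $[\Fq(u_1,\ldots,u_n):\Fq(u_1,\ldots,u_n)^{\F_{q^m}^*/\F_q^*}] = \Nqfrac m$. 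Combining with the first inclusion, $\Fq(w_1,\ldots,w_n)\subseteq \Fq(u_1,\ldots,u_n)^{\F_{q^m}^*/\F_q^*}$ and $[\Fq(u_1,\ldots,u_n):\Fq(w_1,\ldots,w_n)]\leqslant \Nqfrac m = [\Fq(u_1,\ldots,u_n):\Fq(u_1,\ldots,u_n)^{\F_{q^m}^*/\F_q^*}]$, forcing equality $\Fq(w_1,\ldots,w_n) = \Fq(u_1,\ldots,u_n)^{\F_{q^m}^*/\F_q^*}$.

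The main obstacle, such as it is, is making the degree bound $[\Fq(u_1,\ldots,u_n):\Fq(w_1,\ldots,w_n)]\leqslant\Nqfrac m$ rigorous: one must check that adjoining a single root of $X^{\Nqfrac m}-w_1$ to $\Fq(w_1,\ldots,w_n)$ really recovers $u_1$ (not merely $\zeta u_1$ for some root of unity $\zeta$), and that the tower $u_2=w_2u_1^q,\ldots$ then gives all remaining generators without further extension. This is handled by noting that the $w_i$ were \emph{defined} as those specific rational expressions in the $u_i$, so $\Fq(u_1,\ldots,u_n) = \Fq(w_1,\ldots,w_n)(u_1)$ holds as an equality of subfields of the fixed ambient field $\Fq(u_1,\ldots,u_n)$ once one verifies $u_1$ satisfies that degree-$\Nqfrac m$ polynomial over $\Fq(w_1,\ldots,w_n)$; the faithfulness of the group action then pins the degree down exactly, with no room for the extension to be smaller. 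Alternatively one can avoid the bound entirely by invoking that $\Fq(w_1,\ldots,w_n)$ and the fixed field have the same transcendence degree $n$ and the fixed field is a finite extension of $\Fq(w_1,\ldots,w_n)$ that is also \emph{purely inseparable-free} — but the Artin-theorem degree argument above is the most transparent and is what I would write up.
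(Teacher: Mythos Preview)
Your proof is correct and takes a different route from the paper. The paper argues by induction on $n$: the base case $n=1$ is the Kummer extension $\Fq(u_1)/\Fq(u_1^{\Nqfrac m})$ of degree $\Nqfrac m$; the inductive step uses $\Fq(u_1,\ldots,u_{k+1})=\Fq(u_1,\ldots,u_k)(w_{k+1})$, notes that $w_{k+1}$ is itself fixed, and concludes that any fixed element written as $\sum_i\alpha_i w_{k+1}^i$ with $\alpha_i\in\Fq(u_1,\ldots,u_k)$ must have each $\alpha_i$ fixed, hence in $\Fq(w_1,\ldots,w_k)$ by the inductive hypothesis. Your global degree count via Artin's theorem is more direct and sidesteps a small looseness in the paper's presentation (the uniqueness of the expansion in $w_{k+1}$ is not discussed there). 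The paper's inductive approach, on the other hand, mirrors the recursive build-up of the tower of modular curves, which is thematically consistent with the rest of the section.

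One remark: your invocation of algebraic independence of the $u_i$ via Corollary~\ref{Cor:FromflagoverLtoui} is unnecessary and, in the intended application to the one-dimensional modular curves $\dot X^{(m,j)}_{i_1,\ldots,i_n}$, actually false. Fortunately your argument does not need it: the degree bound $[\Fq(u_1,\ldots,u_n):\Fq(w_1,\ldots,w_n)]\leqslant\Nqfrac m$ follows directly from the equality of subfields $\Fq(u_1,\ldots,u_n)=\Fq(w_1,\ldots,w_n)(u_1)$ together with $u_1^{\Nqfrac m}=w_1$, with no transcendence-degree bookkeeping required.
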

\begin{proof}
	If $n=1$, the statement is clear. Since $\Nqfrac{m}$ and $q$ are coprime, the field extension $\Fq(u_1)/\Fq(w_1)$ is a Kummer extension.
	
	Then we prove  by induction on $n$. Suppose that   the statement is true for $n=k$. For the case $n=k+1$, we have
	$$\Fq(u_1,\cdots, u_k,u_{k+1})=\Fq(u_1,\cdots, u_k)(w_{k+1})$$
	where $w_{k+1}=\frac{u_{k+1}}{u_{k}^{q}}$. Suppose that  $y=\sum_{i=0}^{N}\alpha_i w_{k+1}^i$ is an element in $\Fq(u_1,\cdots, u_k,u_{k+1})$ where $\alpha_i\in  \Fq(u_1,\cdots, u_k)$,  and $y$ is fixed by the  $\F_{q^m}^*/ \F_q^*$-action. Then every coefficient $\alpha_i$ is also fixed by $\F_{q^m}^*/ \F_q^*$, and hence we have $\alpha_i\in  \Fq(w_1,\cdots, w_k)$.
	
\end{proof}



In conclusion, to find the function field   $ {\mathcal{F}}^{(m,j)}_{i_1,\cdots,i_{n}}$ of the  Drinfeld modular curve ${X}^{(m,j)}_{i_1,\cdots,i_{n}  }$, we need to follow some inductive steps:
\begin{enumerate}
	\item For $n=1$, $ {\mathcal{F}}^{(m,j)}_{1}$ is trivially the function field $\Fq(w_1)$.
	\item Suppose that $ {\mathcal{F}}^{(m,j)}_{i_1,\cdots,i_{n-1}}$ is already found. One uses the algebraic relations of $(u_1,\cdots,u_n)$ for $ \dot{\mathcal{F}}^{(m,j)}_{i_1,\cdots,i_{n}}$ to derive the minimal polynomial of $w_n$ in ${\mathcal{F}}^{(m,j)}_{i_1,\cdots,i_{n-1}}$. Then we have
	$ {\mathcal{F}}^{(m,j)}_{i_1,\cdots,i_{n}}=  {\mathcal{F}}^{(m,j)}_{i_1,\cdots,i_{n-1}}(w_n)$, the desired function field.
\end{enumerate}

Here is the main result of this section.

\begin{thm}\label{prop:coveringdegree1}For any child node  $\NODE_{i_1,\cdots, i_n,i_{n+1}}$ of   $\NODE_{i_1,\cdots, i_n}$,  both  degrees  of the field extensions  $  {\dot{\mathcal{F}}}^{(m,j)}_{i_1,\cdots,i_{n},i_{n+1}} /{\dot{\mathcal{F}}}^{(m,j)}_{i_1,\cdots,i_{n}}$ and   $ {{\mathcal{F}}}^{(m,j)}_{i_1,\cdots,i_{n},i_{n+1}} / {{\mathcal{F}}}^{(m,j)}_{i_1,\cdots,i_{n}}  $ are   equal to   the covering degree of    $\NODE_{i_1,\cdots, i_n, i_{n+1}}$, i.e.,
	$$[{ \mathcal{F} }^{(m,j)}_{i_1,\cdots,i_{n},i_{n+1}}:  { \mathcal{F} }^{(m,j)}_{i_1,\cdots,i_{n-1},i_{n}}]=[{ \dot{\mathcal{F}} }^{(m,j)}_{i_1,\cdots,i_{n},i_{n+1}}:  { \dot{\mathcal{F}} }^{(m,j)}_{i_1,\cdots,i_{n-1},i_{n}}]=\coveringdeg_{\NODE_{i_1,\cdots, i_n,i_{n+1}} }.$$

\end{thm}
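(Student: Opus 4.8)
The plan is to reduce the statement to a counting fact about $T$-torsion flags over the algebraically closed field $\Lbar$, and then transfer it to the function fields via the standard correspondence and the Galois descent already established. Concretely, I would work with the normalized modular curves and the parametrization $\phi_T=\phiTu{u_1}$, $\mathcal{V}=\Theta(u_1,\ldots,u_n)$ given in Section \ref{Sec:Drfeldparameters}. A point of $\dot{X}^{(m,j)}_{i_1,\cdots,i_n}$ over an algebraically closed $A$-field is a pair $(\phi,\mathcal{V})$ with $\mathcal{V}$ a $\phi_T$-torsion $n$-flag subordinate to $\NODE_{i_1,\cdots,i_n}$; lifting such a point to $\dot{X}^{(m,j)}_{i_1,\cdots,i_n,i_{n+1}}$ amounts to choosing a subspace $V_{n+1}\supsetneq V_n$ making the extended flag a $\phi_T$-torsion flag subordinate to the child node $\NODE_{i_1,\cdots,i_n,i_{n+1}}$. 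Since $\phi_T$ is coprime to the characteristic, Lemma \ref{Lem:moduleiso} gives $I^\phi\colon S^\phi\xrightarrow{\sim}S^m$, and under $I^\phi$ this counting problem is literally the one solved by Equation \eqref{Eqt:cardinalityofcoveringdegree}: the number of such $V_{n+1}$ is $\#\bigl(M(I^\phi\mathcal{V};[\text{child}])/(\F_q^*\times I^\phi V_n\text{-action})\bigr)=\coveringdeg_{\NODE_{i_1,\cdots,i_n,i_{n+1}}}$.

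Next I would upgrade this to the generic-degree statement for function fields. The key observation is that the covering map $\dot{X}^{(m,j)}_{i_1,\cdots,i_n,i_{n+1}}\to\dot{X}^{(m,j)}_{i_1,\cdots,i_n}$ is finite and, by the counting above, has constant fibre cardinality $\coveringdeg_{\NODE_{i_1,\cdots,i_n,i_{n+1}}}$ over every geometric point (the count $M(\mathcal{V};[\mathcal{V}'])/{\sim}$ depends only on the isomorphism class of the parent flag, as remarked after Definition \ref{Def:matrixdegreeflaglcass}, hence is independent of which $(\phi,\mathcal{V})$ we chose). A dominant finite morphism of geometrically irreducible curves whose geometric fibres all have the same cardinality $d$ is separable of degree $d$; equivalently, $[\dot{\mathcal{F}}^{(m,j)}_{i_1,\cdots,i_n,i_{n+1}}:\dot{\mathcal{F}}^{(m,j)}_{i_1,\cdots,i_n}]=d=\coveringdeg_{\NODE_{i_1,\cdots,i_n,i_{n+1}}}$. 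Concretely in coordinates: $\dot{\mathcal{F}}^{(m,j)}_{i_1,\cdots,i_n,i_{n+1}}=\dot{\mathcal{F}}^{(m,j)}_{i_1,\cdots,i_n}(u_{n+1})$, and the minimal polynomial of $u_{n+1}$ over $\dot{\mathcal{F}}^{(m,j)}_{i_1,\cdots,i_n}$ must have degree equal to the number of admissible values of $u_{n+1}$ extending a generic $(u_1,\ldots,u_n)$, which is exactly $\coveringdeg_{\NODE_{i_1,\cdots,i_n,i_{n+1}}}$ by the counting and Proposition \ref{Prop:Fromflagtoui} (the bijection $\Theta$ turns distinct admissible $V_{n+1}$ into distinct admissible $u_{n+1}$, using that $\Theta$ takes a choice of $\nu_{n+1}$ to $u_{n+1}=l_{n+1}^{q-1}$ and the $\F_q^*$-scaling is precisely the fibre of $u_{n+1}\mapsto\nu_{n+1}$).

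Finally, the equality of the two degrees $[\dot{\mathcal{F}}:\dot{\mathcal{F}}]$ and $[\mathcal{F}:\mathcal{F}]$ follows from the commuting square of covering maps and the Galois descent already proved: by the Corollary after Proposition \ref{Prop:fromutow}, both vertical extensions $\dot{\mathcal{F}}^{(m,j)}_{i_1,\cdots,i_n,i_{n+1}}/\mathcal{F}^{(m,j)}_{i_1,\cdots,i_n,i_{n+1}}$ and $\dot{\mathcal{F}}^{(m,j)}_{i_1,\cdots,i_n}/\mathcal{F}^{(m,j)}_{i_1,\cdots,i_n}$ have degree $\Nqfrac{m}$; by multiplicativity of degrees in the square,
$$[\dot{\mathcal{F}}^{(m,j)}_{i_1,\cdots,i_n,i_{n+1}}:\dot{\mathcal{F}}^{(m,j)}_{i_1,\cdots,i_n}]\cdot\Nqfrac{m}=[\dot{\mathcal{F}}^{(m,j)}_{i_1,\cdots,i_n,i_{n+1}}:\mathcal{F}^{(m,j)}_{i_1,\cdots,i_n}]=[\mathcal{F}^{(m,j)}_{i_1,\cdots,i_n,i_{n+1}}:\mathcal{F}^{(m,j)}_{i_1,\cdots,i_n}]\cdot\Nqfrac{m},$$
so the two horizontal degrees coincide. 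The main obstacle I anticipate is the constancy of the geometric fibre cardinality: I must rule out that the extended-flag count $\#\bigl(M(\mathcal{V};[\mathcal{V}'])/{\sim}\bigr)$ could drop at special points $(\phi,\mathcal{V})$ — equivalently, that the morphism is unramified with full fibres on a dense open — and also verify separability (no inseparability hiding in the $q$-power exponents appearing in $\phiTu{u_1}$ and in the definition of $u_i=l_i^{q-1}$). Both should follow because $M(\mathcal{V};[\mathcal{V}'])/{\sim}$ is computed purely from the $\Fq$-linear structure of the parent flag class, which is the fixed node $\NODE_{i_1,\cdots,i_n}$ for \emph{every} point of the curve, but making this rigorous (e.g.\ exhibiting an étale-locally constant family, or directly showing the relevant minimal polynomial is separable of the right degree by differentiating $\kappau{u_n}{X}$ in the $(3,2)$-case and its analogue in general) is where the real work lies.
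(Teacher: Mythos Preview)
Your proposal is correct and follows essentially the same idea as the paper: both arguments reduce to counting $T$-torsion flags. The paper's proof, however, is packaged more economically. Rather than counting fibres over a point of the \emph{parent} curve and then verifying constancy and separability, it computes both degrees over the common base $\dot{\mathcal{F}}^{(m,j)}_{(0)}=\Fq(G)$: by Definition~\ref{Def:normalizedDrinfeldmodularcurve}, $[\dot{\mathcal{F}}^{(m,j)}_{i_1,\cdots,i_n}:\dot{\mathcal{F}}^{(m,j)}_{(0)}]$ equals $\#\NODE_{i_1,\cdots,i_n}$ (the number of flags in the class), and likewise at level $n+1$; the ratio is then $\#\NODE_{i_1,\cdots,i_n,i_{n+1}}/\#\NODE_{i_1,\cdots,i_n}=\coveringdeg_{\NODE_{i_1,\cdots,i_n,i_{n+1}}}$ directly by Definition~\ref{Defn:coveringdegree}. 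This sidesteps the fibre-constancy and separability issues you flag at the end, since multiplicativity of degrees in a tower handles the bookkeeping automatically. Your Galois-square argument for the equality of the normalized and reduced degrees is exactly what the paper's ``the rest is clear'' is gesturing at.
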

\begin{proof}
	
	By Definition \ref{Def:normalizedDrinfeldmodularcurve}, the extension degree of ${\dot{\mathcal{F}}}^{(m,j)}_{i_1,\cdots, i_n}$ over ${\dot{\mathcal{F}}}^{(m,j)}_{0}$ equals the number of $T$-torsion flags in the class $\NODE_{i_1,\cdots, i_n}$. The rest is clear.
\end{proof}

\subsection{Some particular modular curves}
\begin{example}\label{Ex:1} Consider top nodes of the $T$-torsion tree $\Ttorsiontree^m$ (cf. Figure \ref{fig:Ttree}), and let us denote them by $\NODE_{1}$, $\NODE_{1,1}$, $\NODE_{1,1,1}$, $\NODE_{1,1,1,1}$, and so on. For notational simplicity, denote the topmost node of level $n$  as $\NODE_{1\times n}$. The corresponding matrix is of the form
	\[R_{1\times n} =
	\begin{array}{c@{\hspace{-5pt}}l}
	\left(\begin{array}{l|l}
	0 & I_{n-1}  \\
	\hline 0   & 0  		
	\end{array}\right)
	\begin{array}{l}
	\left. \rule{0mm}{2  mm} \right\} n-1 \\
	\left. \rule{0mm}{2 mm} \right\} 1.
	\end{array}
	\end{array}
	\]
	
	The associated (normalized and reduced) Drinfeld modular curves 	$\dot{X}^{(m,j)}_{1\times n  }$ and ${X}^{(m,j)}_{1\times n  }$
	are  studied in our previous work \cite{MR4203564}.
	These curves are exactly the recursive and good (BBGS) tower invented by  	
	Bassa, Beelen, Garcia, and Stichtenoth \cites{Bassa2015,A.Bassa2014}.
	
\end{example}

\begin{example}\label{Ex:2} Consider the nodes $\NODE_{1,2}$, $\NODE_{1,2,1}$, $\NODE_{1,2,1,1}$, $\NODE_{1,2,1,1,1}$, and so on. Let us denote the level $(n+2)$ one in this sequence by $\NODE_{1,2,1\times  n  }	$ which corresponds to the matrix
	
	\[R_{1,2,1\times n}=
	\begin{array}{c@{\hspace{-5pt}}l}
	\left(\begin{array}{ll|l}
	0 & 0 & I_n \\
	\hline 0 & 0 & 0 \\
	0 & 0 & 0
	\end{array}\right)
	\begin{array}{l}
	\left. \rule{0mm}{2.9 mm} \right\} n \\
	\left. \rule{0mm}{3.4 mm} \right\} 2.
	\end{array}
	\end{array}
	\]

	The associated Drinfeld modular curves 	$\dot{X}^{(3,2)}_{1,2,1\times  n   }$ and ${X}^{(3,2)}_{1,2,1\times n  }$
	are  studied   by  Anbar,  Bassa, and  Beelen \cite{Nurdagul2017}. Indeed,  they  constructed a certain tower of Drinfeld modular curves.
	This tower parameterizes
	Drinfeld modules associated with isogenies $\lambda_i$ ($i=1,2,\cdots$) subject to the property that the $T$-action on
	a  rank $2$ module   $\Ker(\lambda_i\circ \lambda_{i+1})  $ is trivial  (but not necessarily trivial on the kernel of $\lambda_{i+2} \circ \lambda_{i+1} \circ \lambda_i $). The present paper is   very much inspired by their idea.

\end{example}


\begin{example}Similar to the previous example,     consider the sequence of nodes $\NODE_{1}$, $\NODE_{1,2}$, $\cdots$, $\NODE_{1,2,\cdots,\kappa}$, $\NODE_{1,2,\cdots,\kappa,1}$,
	$\NODE_{1,2,\cdots,\kappa,1,1}$, $\cdots$,
	$ \NODE_{1,2,3,4,5,..., \kappa,1\times n}$,   and so on. According to Theorem  \ref{prop:coveringdegree1}, we can find  degrees  of field extensions one by one along the corresponding sequence of Drinfeld modular curves, which are exactly the covering degrees:   $\coveringdeg_{\NODE_{1,2}}$    $=\Nqfrac{ m-1}$,   $\coveringdeg_{\NODE_{1,2,3}}$   $=\Nqfrac{ m-2}$, $\cdots$, and       $\coveringdeg_{\NODE_{1,2,3,...,\kappa}}$   $=\Nqfrac{ m-\kappa+1}$. Moreover, we have  $\coveringdeg_{\NODE_{1,2,3,4,5,..., \kappa,1\times n}}$    $=q^{m-\kappa}$ for all $n\geqslant 1$.
	
\end{example}

 \section{ $(3,2)$-type normalized Drinfeld modular curves}\label{Sec:32flagclassfunctionfield}
 In this section, we   study  $(3,2)$-type normalized Drinfeld modular curves by investigating their function fields.  Let  $\Knaught :=\Fq(G)$ be the special rational function field in the variable $G$.   Also, let the morphism $\iota: A(=\Fq[T])\to \Knaught $ be determined by $T\mapsto 1$. 
 Moreover, we fix a special $(3,2)$-type normalized Drinfeld module $\Basic$ (over $\Knaught $) defined by
 $$\BasicT =  -\tau^3 + G \tau ^2 + 1 .$$  
 
 We   define a series of subfields $$\Knaught \subset {K_1}\subset \cdots \subset {K_n}\subset {K_{n+1}}\subset \cdots \subset \Knaughtbar $$ for all $n \geqslant 1$ by setting  ${K_n}$ to be  \textit{the subfield generated by $\Ker(\Basic_{T^n})$ over $\Knaught  $}. It is easy to see that each ${K_n}/\Knaught $ is a Galois extension.

\subsection{Structure of a $\BasicT$-torsion flag}\label{Section6.1}
 

 Next, we consider $\BasicT$-torsion  $n$-flags  in $\Knaughtbar $. We have established in Section \ref{Sec:basic} the one-to-one correspondence between $\BasicT$-torsion  $n$-flags  in $\Knaughtbar $ and $T$-torsion  $n$-flags  in $S^3$. If we fix a $T$-torsion $n$-flag class $\NODE_{i_1,\cdots, i_n}$ (in $S^3$), namely a level $n$ node of the $T$-torsion tree $\Ttorsiontree^3$, we can find   ${\mathcal{V}}=\left(V_1 \subset V_2 \subset \cdots \subset V_n\right) $, a  $\BasicT$-Torsion $n$-flag in $\Knaughtbar $  {subordinate to} $\NODE_{i_1,\cdots, i_n}$. 
Via the  standard correspondence (Section \ref{Sec:flagsnotations})   
we have the associated data  $$(u_1,\ldots,u_n) = \Lambda({\mathcal{V}})\in (\Knaughtbar ^*)^{\times n}.$$ By the defining equations of $u_i$ given after Equation \eqref{Eqt:Lambdafirstappear}, we have $u_i\in K_i$. 

\begin{defn}
	   Consider the $A$-field $\Knaughtbar$ and let $u\in \Knaughtbar^*$ be given. The following polynomial       parameterized by $u$ is called the \textbf{modular polynomial}:
\begin{eqnarray}\label{Eqt:kappauT}
\kappau{u}{X}&:=&X^{q^2+q+1}+ \frac{1-u^{q^2+q+1}}{u^{q^2+q}}    X^{q+1}-1 \ \in  \Knaughtbar[X].
\end{eqnarray}

\end{defn} We elect to call $\kappausingle{u}$ the  {modular polynomial} for its significant role played in Drinfeld modular curves subordinate to every node of $\Ttorsiontree^3$, to be elaborated by  Theorem \ref{Thm:NodeToFactor}.

  Before studying the specific curve problem, we note that  ${\mathcal{V}}   $ being a $\BasicT$-torsion  flag  in $\Knaughtbar$     already implies certain constraints on the corresponding data $(u_1,\ldots,u_n) = \Lambda({\mathcal{V}})\in (\Knaughtbar ^*)^{\times n}$, as stated in the following theorem.

\begin{thm}\label{Thm:nflagcondition0}Given $n\geqslant 2$ and $(u_1,\ldots,u_n)  \in (\Knaughtbar ^*)^{\times n}$, let 	  $\mathcal{V}=\Theta(u_1,\cdots, u_n)   $ be the associated $n$-flag in $\Knaughtbar$. Then $\mathcal{V}$ is a $\BasicT$-torsion  flag  if and only if
	\begin{equation}\label{Eqt:kappauuiuiplus101}
	G=\frac{ u_1^{q^2 + q +1 } -1 }{  u_1^{q+1}}
	\end{equation}
	and
	\begin{equation}
	\label{Eqt:kappauuiuiplus100}
	\kappau{u_1}{u_{2}}=\kappau{u_2}{u_{3}}=\cdots=\kappau{u_{n-1}}{u_{n}}=0.\end{equation}
\end{thm}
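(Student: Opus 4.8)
The plan is to prove the equivalence by induction on $n$, the engine being a degree‑one isogeny that removes one step of the flag at the cost of changing the coefficient inside the $(3,2)$‑normalized family (in the spirit of the isogeny chains used in \cite{Nurdagul2017} and \cite{MR4203564}). To make the induction self‑contained I prove the assertion for \emph{every} $(3,2)$‑normalized Drinfeld module $\Phi_T^{(g)}=-\tau^3+g\tau^2+1$ over $\Knaughtbar$ and every $(u_1,\ldots,u_n)\in(\Knaughtbar^*)^{\times n}$: the flag $\Theta(u_1,\ldots,u_n)$ is $\Phi_T^{(g)}$‑torsion if and only if $g=(u_1^{q^2+q+1}-1)/u_1^{q+1}$ and $\kappau{u_1}{u_2}=\cdots=\kappau{u_{n-1}}{u_n}=0$ (for $n=1$ there are no $\kappa$‑conditions); the theorem is the case $g=G$. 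The starting point is the eigenvector identity: for $u\in\Knaughtbar^*$ and $\mu$ spanning $\Ker(\tau-u)$, so $\mu^{q}=u\mu$, $\mu^{q^2}=u^{q+1}\mu$, $\mu^{q^3}=u^{q^2+q+1}\mu$, one has $\Phi_T^{(h)}(\mu)=(h\,u^{q+1}-u^{q^2+q+1}+1)\,\mu$ for every $h\in\Knaughtbar$. Since $V_1=\Ker(\tau-u_1)$, the requirement $\Phi_T^{(g)}(V_1)=0$ — part (1) of the definition of a $\Phi_T^{(g)}$‑torsion flag, forced whenever $\Theta(u_1,\ldots,u_n)$ is one, and forced by the right‑hand side of the claimed equivalence — is exactly $g=(u_1^{q^2+q+1}-1)/u_1^{q+1}$, the $(m,j)=(3,2)$ instance of \eqref{Eqt:gintermsofu1}, i.e.\ \eqref{Eqt:kappauuiuiplus101} when $g=G$. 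So we may assume this relation from now on.

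Granting it, the eigenvalue on $V_1$ is $0$, so $\lambdau{u_1}\,\Phi_T^{(g)}$ is right‑divisible by $\lambdau{u_1}=\tau-u_1$, and a direct comparison of coefficients in $\Knaughtbar\{\tau\}$ gives
\[
\lambdau{u_1}\,\Phi_T^{(g)}\;=\;\Phi_T^{(g')}\,\lambdau{u_1},\qquad g':=u_1-u_1^{-q^2-q},
\]
the quotient being again $(3,2)$‑normalized (its $\tau^1$‑coefficient is automatically $0$ once the division is exact). The key bookkeeping is then that $\dfrac{1-u_1^{q^2+q+1}}{u_1^{q^2+q}}=u_1^{-q^2-q}-u_1=-g'$, so the modular polynomial is $\kappau{u_1}{X}=X^{q^2+q+1}-g'X^{q+1}-1$; hence, applying the eigenvector identity to $\Phi_T^{(g')}$ and $u=u_2$,
\[
\kappau{u_1}{u_2}=0\ \iff\ g'=\frac{u_2^{q^2+q+1}-1}{u_2^{q+1}}\ \iff\ \Phi_T^{(g')}\bigl(\Ker(\tau-u_2)\bigr)=0 .
\]
In other words $\kappau{u_1}{u_2}=0$ is precisely the level‑one condition for $\Phi_T^{(g')}$ with parameter $u_2$.

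Now fix the level and write $\mathcal{V}=\Theta(u_1,\ldots,u_n)=(V_1\subset\cdots\subset V_n)$ and $\mathcal{V}^{(2)}:=\Theta(u_2,\ldots,u_n)=(W_1\subset\cdots\subset W_{n-1})$; by the construction of $\Theta$, $V_i=\Ker(\lambdau{u_i}\cdots\lambdau{u_1})$ and $W_i=\Ker(\lambdau{u_{i+1}}\cdots\lambdau{u_2})$. Since $V_1=\Ker\lambdau{u_1}\subseteq V_i$, a dimension count gives $\lambdau{u_1}(V_1)=0$ and $\lambdau{u_1}(V_{i+1})=W_i$ for $1\le i\le n-1$. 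Combining with the isogeny identity, $\lambdau{u_1}\bigl(\Phi_T^{(g)}(V_{i+1})\bigr)=\Phi_T^{(g')}(W_i)$, and therefore $\Phi_T^{(g)}(V_{i+1})\subseteq V_i$ iff $\Phi_T^{(g')}(W_i)\subseteq W_{i-1}$ (with $W_0:=0$): for the nontrivial implication use that the $\lambdau{u_1}$‑preimage of $W_{i-1}=\lambdau{u_1}(V_i)$ is $V_i+\Ker\lambdau{u_1}=V_i$. Letting $i$ run from $1$ to $n-1$, we conclude that, under the level‑one relation, $\mathcal{V}$ is $\Phi_T^{(g)}$‑torsion iff $\mathcal{V}^{(2)}$ is $\Phi_T^{(g')}$‑torsion. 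By the inductive hypothesis applied to $(u_2,\ldots,u_n)$ and $\Phi_T^{(g')}$, the latter holds iff $g'=(u_2^{q^2+q+1}-1)/u_2^{q+1}$ — equivalently $\kappau{u_1}{u_2}=0$, by the previous paragraph — and $\kappau{u_2}{u_3}=\cdots=\kappau{u_{n-1}}{u_n}=0$. Together with $g=(u_1^{q^2+q+1}-1)/u_1^{q+1}$, this is exactly \eqref{Eqt:kappauuiuiplus101} and \eqref{Eqt:kappauuiuiplus100}; the base case $n=1$ is the eigenvector identity itself, so the induction is complete.

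The main obstacle is the second paragraph: one must verify that conjugating $\BasicT$ by the degree‑one twisted polynomial $\lambdau{u_1}$ stays in the $(3,2)$‑normalized family, identify the new coefficient $g'=u_1-u_1^{-q^2-q}$ correctly, and then recognize — after clearing denominators — that the modular polynomial $\kappau{u_1}{X}$ is nothing but $X^{q^2+q+1}-g'X^{q+1}-1$, so that $\kappau{u_1}{u_2}=0$ encodes the level‑one constraint for the next module. That is where the precise shape of $\kappa^{(u)}$ is produced rather than merely used; the remaining steps are the eigenvector identity and elementary linear algebra of flags under $\lambdau{u_1}$.
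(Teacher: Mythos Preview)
Your proof is correct and shares the paper's engine: the conjugation identity $\lambdau{u_1}\,\Phi_T^{(g)}=\Phi_T^{(g')}\,\lambdau{u_1}$ (your $\Phi_T^{(g')}$ with $g'=u_1-u_1^{-q^2-q}$ is exactly the paper's $\corepoly{u_1}$, so this is Lemma~\ref{kappa}) together with the recognition that $\kappau{u_1}{u_2}=0$ is the level-one condition for that next module (Proposition~\ref{Prop:lambda2phiTu2=phiTu3lambda2}). You package it as a clean outer induction on $n$, reducing the $n$-flag for $\Phi_T^{(g)}$ to the $(n{-}1)$-flag for $\Phi_T^{(g')}$, while the paper keeps the base module fixed, pushes the composite $\lambdau{u_{i-1}}\cdots\lambdau{u_1}$ through to $\corepoly{u_{i-1}}$, and then invokes the explicit identity $\kappau{u}{l^{q-1}}=-l\,\corepoly{u}(l)$; the content is the same.
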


We need some preparations to facilitate the proof. First, let us consider the following $(3,2)$-type Drinfeld modules $\phinoTu{u}$ and $\psinoTu{u}$ (over $\Knaughtbar $) parameterized by $u\in \Knaughtbar^*$:
\begin{equation}\label{Eqt:phiTui0}
\phiTu{u}: =  -\tau ^3 + \frac{ u^{q^2 + q +1 } -1 }{  u^{q+1}} \tau ^2 +1  ;
\end{equation}   \begin{equation}
\label{Eqt:psiTu0}
\corepoly{u } :=-\tau^3+ \frac{ u ^{q^2+q+1}-1   }{u ^{q^2+q}} \tau^2+1 .\end{equation}

 We then have an obvious identity
\begin{equation}
\label{Eqt:kappau=corepoly}
\kappau{u}{l^{q-1}}= -l\corepoly{u }(l ),\quad \forall l\in \Knaughtbar.
\end{equation}

Recall  that we have introduced
$$\lambdau{u} := \tau - u\ \in  \Knaughtbar\set{\tau}  $$(cf. Equation \eqref{Eqt:lambdaui}).
 The following lemma is needed, and can be verified directly.

\begin{lem}[\cite{Bassa2015}]\label{kappa}
	The element  $\lambdau{u }=\tau-u $ is an isogeny from $\phinoTu{u}$ to $\psinoTu{u} $, i.e.
	$$\lambdau{u }\phiTu{u }=\corepoly{u }\lambdau{u }   .$$
\end{lem}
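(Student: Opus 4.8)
The plan is to verify the single relation $\lambdau{u}\phiTu{u}=\corepoly{u}\lambdau{u}$ by a direct computation in the twisted polynomial ring $\Knaughtbar\set{\tau}$. This suffices to conclude that $\lambdau{u}=\tau-u$ is an isogeny $\phinoTu{u}\to\psinoTu{u}$: the set of $a\in A$ for which $\lambdau{u}\,\phinoTu{u}_a=\psinoTu{u}_a\,\lambdau{u}$ is an $\Fq$-subalgebra of $A=\Fq[T]$ — it is visibly closed under sums, and closed under products because $\phinoTu{u}$ and $\psinoTu{u}$ are ring homomorphisms and $\lambdau{u}$ is carried across from the left: $\lambdau{u}\phinoTu{u}_a\phinoTu{u}_b=\psinoTu{u}_a\lambdau{u}\phinoTu{u}_b=\psinoTu{u}_a\psinoTu{u}_b\lambdau{u}$. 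Hence once this subalgebra contains $T$ it is all of $A$, and $\lambdau{u}$ intertwines $\phinoTu{u}$ with $\psinoTu{u}$.

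For the computation, abbreviate $\phiTu{u}=-\tau^3+a\tau^2+1$ and $\corepoly{u}=-\tau^3+b\tau^2+1$ with $a=\frac{u^{q^2+q+1}-1}{u^{q+1}}$ and $b=\frac{u^{q^2+q+1}-1}{u^{q^2+q}}$. Using the commutation rule $\tau c=c^{q}\tau$ for scalars $c\in\Knaughtbar$, expanding the two products gives
\[
\lambdau{u}\phiTu{u}=-\tau^4+(a^{q}+u)\tau^3-ua\,\tau^2+\tau-u,\qquad
\corepoly{u}\lambdau{u}=-\tau^4+(u^{q^3}+b)\tau^3-bu^{q^2}\tau^2+\tau-u.
\]
The coefficients of $\tau^4$, $\tau$, and $1$ agree automatically, so the lemma is equivalent to the two scalar identities $a^{q}+u=u^{q^3}+b$ (from $\tau^3$) and $ua=bu^{q^2}$ (from $\tau^2$).

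Both are one-line manipulations of rational functions in $u$. For the $\tau^2$-coefficient, cancelling one factor $u$ from numerator and denominator of $ua$, and $u^{q^2}$ from those of $bu^{q^2}$, gives $ua=\frac{u^{q^2+q+1}-1}{u^{q}}=bu^{q^2}$. For the $\tau^3$-coefficient, raising the fraction defining $a$ to the $q$-th power yields $a^{q}=\frac{u^{q^3+q^2+q}-1}{u^{q^2+q}}$, so $a^{q}-b=\frac{u^{q^3+q^2+q}-u^{q^2+q+1}}{u^{q^2+q}}=\frac{u^{q^2+q}\bigl(u^{q^3}-u\bigr)}{u^{q^2+q}}=u^{q^3}-u$, as required. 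I do not expect any genuine obstacle; the only point demanding care is the noncommutativity of $\Knaughtbar\set{\tau}$, namely applying the correct Frobenius twist to each scalar when it passes a power of $\tau$. This is exactly the direct verification referenced from \cite{Bassa2015}.
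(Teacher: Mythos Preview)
Your proof is correct and is precisely the ``direct verification'' the paper alludes to (the paper itself gives no argument beyond citing \cite{Bassa2015} and remarking that the identity can be checked directly). Your coefficient-by-coefficient expansion in $\Knaughtbar\{\tau\}$ is accurate, and the two scalar identities you isolate are verified correctly.
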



The next proposition can be directly verified as well.

\begin{prop}\label{Prop:lambda2phiTu2=phiTu3lambda2} Let $u$ and $v$ be in $\Knaughtbar^*$. The following statements are equivalent:
	\begin{enumerate}
		\item The equation $\kappau{u}{v}=0$ holds   in $\Knaughtbar$;
		\item The Drinfeld modules $\psinoTu{u}$ and $\phinoTu{v}$ are one and the same;
		\item The element  $\lambdau{u }=\tau-u $ is an isogeny from $\phinoTu{u}$ to $\phinoTu{v}$, i.e.,
		\begin{equation}
		\label{Eqt:lambdaiphiTui}
		\lambdau{u } \phiTu{u}=\phiTu{v}\lambdau{u }\,.
		\end{equation}
		\item The element  $\lambdau{v }=\tau-v $ is an isogeny from $\psinoTu{u}$ to $\psinoTu{v}$, i.e.,\begin{equation}
		\label{Eqt:lambdauvcorepolyu}
		\lambdau{v}\corepoly{u} =  \corepoly{v}\lambdau{v}  .\end{equation}
		
	\end{enumerate}

\end{prop}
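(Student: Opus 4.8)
The plan is to treat statement (2) --- the honest equality $\corepoly{u}=\phiTu{v}$ of twisted polynomials --- as the hub, and show it equivalent to each of (1), (3), (4) separately.

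First I would dispatch (1) $\Leftrightarrow$ (2). Both $\corepoly{u}$ and $\phiTu{v}$ are $(3,2)$-type normalized Drinfeld modules, of the shape $-\tau^3+(\ast)\tau^2+1$, so they coincide exactly when their $\tau^2$-coefficients agree, i.e. when
\[
\frac{u^{q^2+q+1}-1}{u^{q^2+q}}=\frac{v^{q^2+q+1}-1}{v^{q+1}}.
\]
Clearing the nonzero denominators $u^{q^2+q}$ and $v^{q+1}$ and expanding turns this into the identity $u^{q^2+q}v^{q^2+q+1}-u^{q^2+q}=u^{q^2+q+1}v^{q+1}-v^{q+1}$, which is precisely what one gets by multiplying $\kappau{u}{v}=v^{q^2+q+1}+\tfrac{1-u^{q^2+q+1}}{u^{q^2+q}}v^{q+1}-1$ through by $u^{q^2+q}$ and setting it to zero. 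So (1) and (2) are literally the same equation.

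Next I would do (2) $\Leftrightarrow$ (3) and (2) $\Leftrightarrow$ (4) together, both via Lemma \ref{kappa} plus one-sided cancellation. For (3): by Lemma \ref{kappa} one has $\lambdau{u}\phiTu{u}=\corepoly{u}\lambdau{u}$ in $\Knaughtbar\set{\tau}$, hence $\lambdau{u}\phiTu{u}=\phiTu{v}\lambdau{u}$ iff $\corepoly{u}\lambdau{u}=\phiTu{v}\lambdau{u}$; since $\Knaughtbar\set{\tau}$ is a twisted polynomial ring over a field it has no zero divisors, and $\lambdau{u}=\tau-u\neq 0$, so cancelling $\lambdau{u}$ on the right gives $\corepoly{u}=\phiTu{v}$, i.e. (2). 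For (4): applying Lemma \ref{kappa} with $v$ in place of $u$ gives $\lambdau{v}\phiTu{v}=\corepoly{v}\lambdau{v}$, so $\lambdau{v}\corepoly{u}=\corepoly{v}\lambdau{v}$ iff $\lambdau{v}\corepoly{u}=\lambdau{v}\phiTu{v}$, and cancelling the nonzero $\lambdau{v}$ on the left again yields $\corepoly{u}=\phiTu{v}$.

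There is no deep difficulty here; the proof is essentially bookkeeping. The two points that need a little care are: verifying the explicit algebraic identity in the (1) $\Leftrightarrow$ (2) step, keeping the exponents $q^2+q+1$, $q^2+q$, $q+1$ straight and clearing denominators correctly; and invoking the absence of zero divisors in $\Knaughtbar\set{\tau}$ to justify one-sided cancellation of $\lambdau{u}$ and $\lambdau{v}$ --- the one place where noncommutativity could mislead, but since $\Knaughtbar$ is a field the cancellation is legitimate on either side. I expect the second of these to be the only genuine subtlety worth spelling out.
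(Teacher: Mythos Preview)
Your proposal is correct and matches the paper's own treatment: the paper simply states that the proposition ``can be directly verified as well'' and gives no further details. Your hub-and-spoke argument through (2), using Lemma~\ref{kappa} plus cancellation in the Ore domain $\Knaughtbar\{\tau\}$, is exactly the kind of direct verification intended, and the algebra in your (1)~$\Leftrightarrow$~(2) step is accurate.
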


We are in a position to prove the previous main theorem.
\begin{proof}[Proof of Theorem \ref{Thm:nflagcondition0}] Suppose that, under the standard correspondence (Definition \ref{corresponding}), we have $$\mathcal{V}=\left(V_1 \subset V_2 \subset \cdots \subset V_n\right)=\left(\nu_1<\ldots<\nu_n\right)  \xleftrightharpoons[\Lambda]{\Theta} (u_1,\ldots,u_n).  $$
	
	Since $u_1= \nu_1^{q-1}$, it is easy to see that $\BasicT(V_1)=0$, i.e. $\BasicT(\nu_1)=0$, if and only if Equation \eqref{Eqt:kappauuiuiplus101} holds.
	 Note that under this condition we have    $\BasicT=\phiTu{u_1}$. 
So, it remains to   show that under  Condition \eqref{Eqt:kappauuiuiplus101},  $ \phiTu{u_1}(V_i)\subset V_{i-1}$ holds for all $i=2,3,\cdots, n$ if and only if \eqref{Eqt:kappauuiuiplus100} holds.
	
	\textit{Sufficiency}: Suppose that \eqref{Eqt:kappauuiuiplus100} holds. By  Lemma \ref{kappa}  and Equation \eqref{Eqt:lambdauvcorepolyu}, we have
	$$\lambdau{u_{i-1}}\cdots \lambdau{u_1}(\phiTu{u_1}(\nu_i))=\corepoly{u_{i-1}}{} \lambdau{u_{i-1}}\cdots \lambdau{u_1}(\nu_i)=\corepoly{u_{i-1}}{}(l_i)\equalbyreason{\mbox{\eqref{Eqt:kappau=corepoly}}}-\frac{1}{l_{i}}\kappau{u_{i-1}}{u_i}=0.$$
	Therefore, we see that $\phiTu{u_1}(\nu_i) \in V_{i-1}=\Ker (   \lambdau{u_{i-1}} \cdots\lambdau{u_1})$ and hence $\mathcal{V}$ is a $T$-torsion $n$-flag in  $S^{\phi}$.
	
	\textit{Necessity}: 	Assume  that  we have $\phiTu{u_1}(V_i)\subset V_{i-1}$ for all $i=2,3,\cdots, n$. Since $\phiTu{u_1}(\nu_2)\in V_1$, we get
	$$\kappau{u_1}{u_2} \equalbyreason{\mbox{\eqref{Eqt:kappau=corepoly}}} -l_2 \corepoly{u_1}(l_2) =-l_2\corepoly{u_1}\lambdau{u_1}(\nu_2) \equalbyreason{\mbox{Lemma \ref{kappa}}}-l_2\lambdau{u_1}\phiTu{u_1}(\nu_2)=0.$$  Suppose that $\kappau{u_{j-1}}{u_{j}}=0$ is true  for all $2\leqslant j \leqslant i$. Then since $\phiTu{u_1}(\nu_{i+1}) \in V_i$, we have
	\begin{eqnarray*}&&\kappau{u_{i}}{u_{i+1}}\equalbyreason{\mbox{\eqref{Eqt:kappau=corepoly}}} -l_{i+1}\corepoly{u_i}(l_{i+1}) \\
		&&\quad\quad=-l_{i+1} \corepoly{u_i}\lambdau{u_i}\ldots\lambdau{u_1}(\nu_{i+1})\\&&\quad\quad
		=
		-l_{i+1} \lambdau{u_i}\ldots\lambdau{u_1}\phiTu{u_1}(\nu_{i+1})\quad
		\mbox{(by  \eqref{Eqt:lambdauvcorepolyu} and  Lemma \ref{kappa})}\\
		&&\quad\quad=0.
	\end{eqnarray*}
	Therefore,   we have proved $\kappau{u_i}{u_{i+1}}=0$. The induction goes forward until $i=n-1$.
\end{proof}

 \subsection{The function field and minimal polynomial of a $\BasicT$-torsion flag} \label{subSec:L0characteristics}
 Let 
 ${\mathcal{V}}  $ be a $\BasicT$-torsion $n$-flag in $\Knaughtbar $   subordinate to the $T$-torsion flag class   $\NODE_{i_1,\cdots, i_n}$ and  
    $(u_1,\ldots,u_n) = \Lambda({\mathcal{V}})$ be as before. We construct a function field associated to ${\mathcal{V}}$  by
 \begin{equation}\label{Eqt:mathcalF} 	 
 \mathcal{F}_{{\mathcal{V}}} := \Knaught  (u_1,\ldots,u_n)  \quad (\subset {K_n}),	 
 \end{equation} 
 which is the core object of study in this section. According to Equation \eqref{Eqt:kappauuiuiplus101}, we can also write 
 \begin{equation}\label{Eqt:mathcalF2} 	 
 \mathcal{F}_{{\mathcal{V}}}  =\Fq(u_1,\ldots,u_n).	 
 \end{equation} 
  We   call       $\mathcal{F}_\mathcal{V}$ the ($(3,2)$-type) \textbf{function field}   of   $\mathcal{V}$     or the associated flag class  $\NODE_{i_1,\cdots, i_n}$. The reason will be explained by the subsequent Proposition \ref{prop:isomorphismFVbetaV}.
 
 In the meantime, we define a map   \begin{equation}
 \label{Eqt:mathcalP}
 \mathcal{P} :\set{\BasicT\mbox{-torsion } n \mbox{ flags in }\Knaughtbar}\rightarrow {K_n}\,,\quad {\mathcal{V}}  \mapsto    u_n\end{equation}   which is clearly well-defined.

 Here and in the sequel, we    denote  the parent $\BasicT$-torsion $(n-1)$-flag of ${\mathcal{V}}$ by ${\mathcal{V}_{n-1}}$ which is made of sub spaces $  (V_1\subset V_2\subset \cdots \subset V_{n-1})$.    Define a set \begin{equation}
 \label{Eqt:MVVdiamond}
 M({\mathcal{V}},{\mathcal{V}_{n-1}}):= \{ \mbox{child }  \BasicT \mbox{-torsion }  n \mbox{-flags of }  {\mathcal{V}_{n-1}} \mbox{ which are isomorphic to } {\mathcal{V}}.\}\end{equation}

We now construct a polynomial $\minimalpoly{\mathcal{V}}\in {K_n}[X]$ using elements in $M({\mathcal{V}},{\mathcal{V}_{n-1}})$:
 \begin{equation}\label{beta}
 \minimalpoly{\mathcal{V}}(X)  := \prod_{\mathcal{V}' \in M({\mathcal{V}},{\mathcal{V}_{n-1}})}(X-\mathcal{P}(\mathcal{V}')) .
 \end{equation}
 We  call       $\minimalpoly{\mathcal{V}}$ the ($(3,2)$-type) \textbf{minimal polynomial}  of   $\mathcal{V}$     or the associated flag class  $\NODE_{i_1,\cdots, i_n}$. The reason will be explained by   Propositions \ref{prop:isomorphismFVbetaV} and    \ref{Prop:main}.  An evident fact is that,  if $\mathcal{V}$ and $\mathcal{W}$ are isomorphic $\BasicT$-torsion flags and share the same parent flag, then their corresponding minimal polynomials are identical: $\minimalpoly{\mathcal{V}}=\minimalpoly{\mathcal{W}}$.
 
 \begin{example}
 	\label{Example:n=1}
 	If $n=1$, then our convention is $\mathcal{V}_0=\set{0}$ and  	
 	$\mathcal{F}_{{\mathcal{V}_{0}}}=\Knaught=\Fq(G)$. It is also clear that we have $$\mathcal{F}_{{\mathcal{V}}}=K_0(u_1)=\Fq(u_1).$$ Moreover, $M(\mathcal{V} ,\mathcal{V}_{0})$ consists of all  $\BasicT$-torsion $1$-flags.  In this case the minimal polynomial of $u_1$ is \begin{equation}\label{Eqt:betaV1}\minimalpoly{\mathcal{V}}(X)= X^{q^2+q+1}-GX^{q+1}-1.\end{equation}
 	In fact, for any $\mathcal{V}' = (\nu'_1) \in M(\mathcal{V},\mathcal{V}_{0})$, we have $\BasicT(\nu'_1) = 0$, i.e., $$(-\tau^{3}+G\tau^2+1)(\nu'_1) = -(\nu'_1)^{q^3}+G(\nu'_1)^{q^2}+\nu'_1 = 0.$$   
 	Since $\mathcal{P}(\mathcal{V}') = (\nu'_1)^{q-1}$ and $\nu'_1 \neq 0$, it is easy to get $$-(\mathcal{P}(\mathcal{V}'))^{q^2+q+1}+G (\mathcal{P}(\mathcal{V}'))^{q+1}+1 = 0 ,$$ i.e., $\mathcal{P}(\mathcal{V}')$ is a root of the polynomial $X^{q^2+q+1}-G X^{q+1}-1$. Moreover, by counting the number of elements in $M(\mathcal{V},\mathcal{V}_{0})$, which is $(q^2+q+1)$, we see that $\minimalpoly{\mathcal{V}}$ must be of the form \eqref{Eqt:betaV1}. 
 \end{example}
 
 Our definition of $\mathcal{F}_{\mathcal{V}}$ and $\minimalpoly{\mathcal{V}}$ depends on the choice of ${\mathcal{V}}\in \set{\BasicT\mbox{-torsion } n \mbox{ flags in }\Knaughtbar}$. Of course, one may choose a different  $\BasicT$-torsion flag which is also subordinate to $\NODE_{i_1,\cdots, i_n}$. The next proposition explains how the resulting data are related.
 \begin{prop} 
 	\label{prop:isomorphismFVbetaV}   	Let  ${\mathcal{V}}$ and  ${\mathcal{W}}$ be two $\BasicT$-torsion $n$-flag classes in $ \Knaughtbar  $.  Let $\mathcal{F}_{\mathcal{V}}:=\Fq  (u_1,\ldots,u_n)$ and $\mathcal{F}_{\mathcal{W}}:=\Fq  (w_1,\ldots,w_n)$ be the corresponding function fields where  $(u_1,\ldots,u_n) = \Lambda({\mathcal{V}})$ and $(w_{1},\ldots,w_{n}) = \Lambda(\mathcal{W} )$ ($\in (\Knaughtbar ^*)^{\times n}$). Let $\minimalpoly{\mathcal{V}}$ and $\minimalpoly{\mathcal{W}}$ ($\in {K_n}[X]$) be the associated minimal polynomials of $\mathcal{V}$ and $\mathcal{W}$, respectively. 
  	If $\mathcal{V}$ and $\mathcal{W}$ 
 	are both subordinate  to $\NODE_{i_1,\cdots, i_n}$, then  there exists an isomorphism $\psi: \mathcal{F}_{\mathcal{V}}\to \mathcal{F}_{\mathcal{W}}$ of function fields  such that \begin{itemize}
 		\item[(1)] $\psi(u_{i}) = w_{i}$ for all $i = 1,\ldots,n$;
 		\item[(2)] $\psi(\minimalpoly{\mathcal{V}})=\minimalpoly{\mathcal{W}}$.
 	\end{itemize}
 \end{prop}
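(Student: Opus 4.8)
The plan is to construct $\psi$ by tracking how the standard correspondence transforms under an isomorphism of $\BasicT$-torsion flag classes. Since $\mathcal{V}$ and $\mathcal{W}$ are both subordinate to $\NODE_{i_1,\cdots,i_n}$, their images $I^{\Basic}\mathcal{V}$ and $I^{\Basic}\mathcal{W}$ in $S^3$ represent the same $T$-torsion flag class; hence there is an $\Fq$-linear isomorphism between them commuting with the $T$-action. My first step is to upgrade this to an $\Fq$-linear isomorphism $\sigma\colon V_n \to W_n$ (where $\mathcal{V}=(V_1\subset\cdots\subset V_n)$, $\mathcal{W}=(W_1\subset\cdots\subset W_n)$ inside $\Knaughtbar$) with $\sigma(V_i)=W_i$ and $\sigma\circ\BasicT = \BasicT\circ\sigma$; this is immediate from Definition \ref{isomorphism2} together with the one-to-one correspondence induced by $I^{\Basic}$ discussed in Section \ref{subSec:torsionflagdm}. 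The key point to extract is: \emph{$\sigma$ is $q$-semilinear in the sense compatible with $\tau$}, i.e. because $\BasicT(\nu_1)=0$ forces $\nu_1,\sigma(\nu_1)$ to be roots of the same additive polynomial $\BasicT$, the ratio $\sigma(\nu_1)/\nu_1$ lies in a controlled set; more importantly, $\sigma$ need not commute with $\tau$, so I must instead argue at the level of the parameters $u_i$ directly.

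Second, I would compute the effect of $\sigma$ on the data $(u_1,\dots,u_n)=\Lambda(\mathcal{V})$. Pick an ordered basis $(\nu_1<\cdots<\nu_n)$ of $\mathcal{V}$; then $(\sigma\nu_1<\cdots<\sigma\nu_n)$ is an ordered basis of $\mathcal{W}$. Now $\Lambda$ is computed by the recipe after \eqref{Eqt:Lambdafirstappear}: $l_1=\nu_1$, $u_1=l_1^{q-1}$, and $l_i = \lambdau{u_{i-1}}\cdots\lambdau{u_1}(\nu_i)$, $u_i=l_i^{q-1}$. The crucial algebraic input is Proposition \ref{Prop:lambda2phiTu2=phiTu3lambda2} (equivalently Lemma \ref{kappa} and Theorem \ref{Thm:nflagcondition0}): because $\mathcal{V}$ is a $\BasicT$-torsion flag, the $u_i$ satisfy $\kappau{u_{i-1}}{u_i}=0$, and the maps $\lambdau{u_i}$ intertwine the Drinfeld modules $\phiTu{u_i}$. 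Since $\sigma$ commutes with $\BasicT=\phiTu{u_1}$, one shows inductively that $\sigma(V_i)=W_i$ together with the intertwining relations forces $w_i = \zeta_i\, u_i$ for suitable roots of unity $\zeta_i\in\Fqbar^*$; but more carefully, since $\sigma$ is $\Fq$-linear (not $\Fqbar$-linear), the scalar relating $\sigma(\nu_i)$ to $l_i$-type expressions must be an $\Fq$-rational function of the generators, and by Corollary \ref{Cor:FromflagoverLtoui} applied over the field $\mathcal{F}_{\mathcal{V}}$ the whole picture descends: the assignment $u_i\mapsto w_i$ extends to a field isomorphism $\psi\colon \mathcal{F}_{\mathcal{V}}\to\mathcal{F}_{\mathcal{W}}$ preserving the $G$-structure since both satisfy \eqref{Eqt:kappauuiuiplus101}. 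This gives (1).

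Third, for (2), I would observe that $\minimalpoly{\mathcal{V}}$ is defined in \eqref{beta} purely in terms of the set $M(\mathcal{V},\mathcal{V}_{n-1})$ of child flags of $\mathcal{V}_{n-1}$ isomorphic to $\mathcal{V}$, and the map $\mathcal{P}$ of \eqref{Eqt:mathcalP}. The isomorphism $\sigma$ (extended appropriately, or rather its restriction $\sigma|_{V_{n-1}}\colon V_{n-1}\to W_{n-1}$) induces a bijection $M(\mathcal{V},\mathcal{V}_{n-1})\to M(\mathcal{W},\mathcal{W}_{n-1})$, $\mathcal{V}'\mapsto \sigma(\mathcal{V}')$, because "being a child $\BasicT$-torsion flag isomorphic to $\mathcal{V}$" is an isomorphism-invariant condition. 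Under this bijection, the roots $\mathcal{P}(\mathcal{V}')$ of $\minimalpoly{\mathcal{V}}$ map to the roots $\mathcal{P}(\sigma(\mathcal{V}'))$ of $\minimalpoly{\mathcal{W}}$, and by the parameter computation in step two these roots transform exactly by $\psi$. Since both polynomials are monic with these prescribed root sets (and equal degree $\#M = \#M(\mathcal{W},\mathcal{W}_{n-1})$, which is an isomorphism invariant, being the covering degree $\coveringdeg_{\NODE_{i_1,\cdots,i_n}}$ by Theorem \ref{prop:coveringdegree1} and Definition \ref{Defn:coveringdegree}), we conclude $\psi(\minimalpoly{\mathcal{V}}) = \minimalpoly{\mathcal{W}}$.

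**Main obstacle.** The delicate point is step two: verifying that the $\Fq$-linear isomorphism $\sigma$ between flags inside $\Knaughtbar$ — which does \emph{not} a priori commute with the Frobenius $\tau$ — nonetheless induces a genuine field isomorphism sending $u_i\mapsto w_i$. The resolution is that the $u_i$ are not arbitrary: they are constrained by $\kappau{u_{i-1}}{u_i}=0$, and these polynomial relations, being defined over $\Fq(G)$, mean $\mathcal{F}_{\mathcal{V}}$ and $\mathcal{F}_{\mathcal{W}}$ are both quotients of the same iterated-extension presentation $\Fq(G)[X_1,\dots,X_n]/(\minimalpoly{\mathcal{V}_1}(X_1),\dots)$; the fact that $\mathcal{V}$ and $\mathcal{W}$ are subordinate to the \emph{same} node means they correspond to the \emph{same} irreducible component of this presentation at each level (using Theorem \ref{Thm:NodeToFactor} that child nodes correspond bijectively to irreducible factors of the modular polynomial), so the presentations are literally identical and $\psi$ is the induced identification. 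Making the link between "$\sigma$ exists as an abstract flag isomorphism" and "$\mathcal{V},\mathcal{W}$ hit the same factor at every level" is where the real content lies, and it should be handled by an induction on $n$ invoking Theorem \ref{Thm:NodeToFactor} of the same section.
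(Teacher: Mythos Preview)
Your plan has a genuine gap at exactly the point you flag as the ``main obstacle,'' and your proposed resolution does not close it. An $\Fq$-linear isomorphism $\sigma\colon V_n\to W_n$ commuting with $\BasicT$ is just a linear map between finite-dimensional subspaces of $\Knaughtbar$; it has no reason to respect multiplication or Frobenius, so there is no mechanism by which it induces a field homomorphism $\mathcal{F}_{\mathcal{V}}\to\mathcal{F}_{\mathcal{W}}$. Your suggestion that $w_i=\zeta_i u_i$ for roots of unity is not correct in general, and invoking Corollary~\ref{Cor:FromflagoverLtoui} does not help: that corollary identifies flags defined over $L$ with tuples in $(L^*)^n$, it does not produce field isomorphisms between distinct subfields. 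Your fallback via Theorem~\ref{Thm:NodeToFactor} is circular: that theorem is proved after Propositions~\ref{prop:isomorphismFVbetaV} and~\ref{Prop:main}, and its content (that the $\minimalpoly{\mathcal{V}_{n+1}^{(i)}}$ are the irreducible factors) depends on Proposition~\ref{Prop:main}, whose proof in turn rests on the same machinery needed here.

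The paper takes a completely different route. The key input is Theorem~\ref{Lem:generalmjconjecture}: the restriction map $\rho_n\colon \Gal(K_n/\Knaught)\to \Aut_A(\Ker(\Basic_{T^n}))$ is an \emph{isomorphism}. Granting this, Lemma~\ref{extend0} shows that your abstract $\sigma$ (after extension to all of $\Ker(\Basic_{T^n})$, which is the content of $(1)\Rightarrow(2)$ there) is the restriction of a genuine Galois automorphism $f\in\Gal(K_n/\Knaught)$, and moreover $f(u_i)=w_i$. Now $\psi:=f|_{\mathcal{F}_{\mathcal{V}}}$ is a field isomorphism onto $\mathcal{F}_{\mathcal{W}}$ with property~(1) for free; for~(2), $f$ carries the set $M(\mathcal{V},\mathcal{V}_{n-1})$ bijectively to $M(\mathcal{W},\mathcal{W}_{n-1})$ and sends each root $\mathcal{P}(\mathcal{V}')$ to $\mathcal{P}(f(\mathcal{V}'))$, so $\psi(\minimalpoly{\mathcal{V}})=\minimalpoly{\mathcal{W}}$. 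The hard work is entirely in Theorem~\ref{Lem:generalmjconjecture}, proved by an explicit degree count for $n=1$ (Lemma~\ref{Lem:rho1iso}) and induction; this is precisely the step that converts a module-theoretic isomorphism into a field-theoretic one, and it is what your proposal is missing.
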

 The  above fact confirms that the definition of $\mathcal{F}_\mathcal{V}$ in Equation \eqref{Eqt:mathcalF} (or \eqref{Eqt:mathcalF2}) is, up to isomorphisms, solely determined by the isomorphism class of $\mathcal{V}$, or the corresponding $T$-torsion $n$-flag class $\NODE_{i_1,\cdots, i_n}$ to which $\mathcal{V}$ is subordinate (so is the minimal polynomial $\minimalpoly{\mathcal{V}}$). 

 Moreover, we have another  proposition which claims that the coefficients of $\minimalpoly{\mathcal{V}}$ are  indeed in $\mathcal{F}_{{\mathcal{V}_{n-1}}}= \Fq  (u_1,\ldots,u_{n-1})$ ($\subset {K_{n-1}}$).
 \begin{prop}\label{Prop:main} With notations as above, $\minimalpoly{\mathcal{V}}$ is the minimal polynomial of $ u_n = \mathcal{P} (\mathcal{V})$ over $\mathcal{F}_{\mathcal{V}_{n-1}}$. Consequently,  
 		  $\minimalpoly{\mathcal{V}}$ has coefficient in $\mathcal{F}_{{\mathcal{V}_{n-1}}} $ and 
		    is irreducible over $\mathcal{F}_{{\mathcal{V}_{n-1}}}$.

 \end{prop}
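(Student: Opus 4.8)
The plan is to establish simultaneously that $\minimalpoly{\mathcal{V}}\in\mathcal{F}_{\mathcal{V}_{n-1}}[X]$ and that it is the minimal polynomial of $u_n=\mathcal{P}(\mathcal{V})$ over $\mathcal{F}_{\mathcal{V}_{n-1}}$ (irreducibility is then automatic). Write $F:=\mathcal{F}_{\mathcal{V}_{n-1}}=\Fq(u_1,\ldots,u_{n-1})$ and $\Gamma:=\Aut(\Knaughtbar/F)$, and work with the action of $\Gamma$ on $\BasicT$-torsion flags. First I would isolate two naturality facts. Since $u_1,\ldots,u_{n-1}\in F$, every $\sigma\in\Gamma$ fixes each $V_i=\Ker(\lambdau{u_i}\cdots\lambdau{u_1})$ with $i\le n-1$, hence fixes $\mathcal{V}_{n-1}$; and since $\sigma$ commutes with $\BasicT$ (whose coefficients $-1,G,1$ lie in $\Knaught\subset F$) and with $\tau$, it sends a child $\BasicT$-torsion flag $\mathcal{V}'=(V_1\subset\cdots\subset V_{n-1}\subset V_n')$ of $\mathcal{V}_{n-1}$ to the child flag $\sigma\mathcal{V}'=(V_1\subset\cdots\subset V_{n-1}\subset\sigma V_n')$, with $\sigma|_{V_n'}$ an $\Fq$-isomorphism fixing $V_1,\ldots,V_{n-1}$ and commuting with $\BasicT$; by Definition \ref{isomorphism2} this makes $\mathcal{V}'\cong\sigma\mathcal{V}'$, so $\sigma$ maps $M(\mathcal{V},\mathcal{V}_{n-1})$ into itself. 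Moreover, by naturality of the construction $\Lambda$ (different ordered bases yield the same entries, and $\sigma$ commutes with each $\lambdau{u_i}$), one has $\mathcal{P}(\sigma\mathcal{V}')=\sigma(\mathcal{P}(\mathcal{V}'))$. Combining these, $\sigma$ permutes the roots of $\minimalpoly{\mathcal{V}}$, hence $\sigma(\minimalpoly{\mathcal{V}})=\minimalpoly{\mathcal{V}}$.

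Next I would upgrade this to $\minimalpoly{\mathcal{V}}\in F[X]$. The roots of $\minimalpoly{\mathcal{V}}$ are pairwise distinct, since the assignment $V_n'\mapsto\mathcal{P}(\mathcal{V}')=\bigl((\lambdau{u_{n-1}}\cdots\lambdau{u_1})\nu_n'\bigr)^{q-1}$ is a bijection from the $n$-dimensional extensions $V_n'\supsetneq V_{n-1}$ onto $\Knaughtbar^*$ (this is precisely $\Theta$ with the first $n-1$ entries frozen), so distinct flags in $M(\mathcal{V},\mathcal{V}_{n-1})$ have distinct images. Each such image satisfies $\kappau{u_{n-1}}{X}=0$ by Theorem \ref{Thm:nflagcondition0}, and a short computation shows $\kappau{u_{n-1}}{X}\in F[X]$ is separable (its formal derivative is $X^q\bigl(X^{q^2}+\tfrac{1-u_{n-1}^{q^2+q+1}}{u_{n-1}^{q^2+q}}\bigr)$, which has no root in common with $\kappau{u_{n-1}}{X}$). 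Hence each root of $\minimalpoly{\mathcal{V}}$, and therefore each of its coefficients, is separable over $F$; but the coefficients are also fixed by $\Gamma$, so they lie in the purely inseparable closure of $F$. Being both separable and purely inseparable over $F$, they lie in $F$, so $\minimalpoly{\mathcal{V}}\in F[X]$ and is separable.

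Now $\mathcal{V}$ is itself a child flag of $\mathcal{V}_{n-1}$ isomorphic to $\mathcal{V}$, so $\mathcal{V}\in M(\mathcal{V},\mathcal{V}_{n-1})$ and $\minimalpoly{\mathcal{V}}(u_n)=0$; thus the minimal polynomial of $u_n$ over $F$ divides $\minimalpoly{\mathcal{V}}$. Equality is equivalent to the assertion that $\Gamma$ acts \emph{transitively} on $M(\mathcal{V},\mathcal{V}_{n-1})$: transitivity forces $\sigma(u_n)=\mathcal{P}(\sigma\mathcal{V})$ to range over all roots, so the minimal polynomial of $u_n$ has each of them as a root. This transitivity is the main obstacle, and I would attack it by transporting along the $A$-module isomorphism $I^\phi\colon S^{\phi}\xrightarrow{\sim}S^3$: the set $M(\mathcal{V},\mathcal{V}_{n-1})$ corresponds to the set of $T$-torsion $n$-flags of class $\NODE_{i_1,\ldots,i_n}$ extending the fixed $(n-1)$-flag $I^\phi\mathcal{V}_{n-1}$, on which the stabilizer of $I^\phi\mathcal{V}_{n-1}$ inside $\Aut_A(\Ker(\Basic_{T^n}))$ acts transitively by an elementary parabolic-orbit argument. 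What then remains is to verify that the image of $\Gamma$ inside $\Aut_A(\Ker(\Basic_{T^n}))$ is large enough that its intersection with this stabilizer is still transitive — i.e. an appropriate surjectivity (big-monodromy) statement for the mod-$T^n$ Galois representation attached to the generic normalized $(3,2)$-type Drinfeld module $\Basic$ over $\Knaught$.

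Finally, the base case $n=1$ of this inductive scheme is immediate and self-contained: $\minimalpoly{\mathcal{V}}(X)=X^{q^2+q+1}-GX^{q+1}-1$ (Example \ref{Example:n=1}) is irreducible over $\Knaught=\Fq(G)$, because $G=(u_1^{q^2+q+1}-1)/u_1^{q+1}$ exhibits $G$ as a rational function of $u_1$ of degree exactly $q^2+q+1$, forcing $[\Fq(u_1):\Fq(G)]=q^2+q+1=\deg\minimalpoly{\mathcal{V}}$. For $n\ge 2$ the transitivity propagates from the monodromy input above, and for the $23$ nodes listed in Section \ref{Sec:Normalized32things} it is moreover confirmed directly by the explicit formulas for the $\minimalpoly{\mathcal{V}}$ there.
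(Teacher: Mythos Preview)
Your approach is essentially the paper's: both reduce the problem to showing that $\Gal(K_n/\mathcal{F}_{\mathcal{V}_{n-1}})$ acts transitively on $M(\mathcal{V},\mathcal{V}_{n-1})$ via $\mathcal{V}'\mapsto\sigma\mathcal{V}'$, so that $\minimalpoly{\mathcal{V}}$ is exactly the product of $(X-u_n')$ over the Galois orbit of $u_n$ (this is the content of Corollary~\ref{root}), after which standard Galois theory gives both $\minimalpoly{\mathcal{V}}\in\mathcal{F}_{\mathcal{V}_{n-1}}[X]$ and irreducibility at once.

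The ``big-monodromy'' input you correctly isolate as the crux is exactly Theorem~\ref{Lem:generalmjconjecture}: the mod-$T^n$ representation $\rho_n\colon\Gal(K_n/\Knaught)\to\Aut_A(\Ker\Basic_{T^n})$ is an \emph{isomorphism}. The paper does not leave this as an assumption; it proves the base case $n=1$ (Lemma~\ref{Lem:rho1iso}) by an explicit chain of extensions showing $[K_1:\Knaught]=(q^3-1)(q^3-q)(q^3-q^2)=\#\GL_3(\Fq)$, and then inducts via a Five-Lemma argument comparing $\Gal(K_i/K_{i-1})$ with the kernel of $\GL_3(A/T^iA)\to\GL_3(A/T^{i-1}A)$. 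With this in hand your parabolic-orbit transitivity argument goes through verbatim (it is the $(1)\Rightarrow(2)\Rightarrow(3)$ chain in Lemma~\ref{extend0}). One simplification: once $K_n/\Knaught$ is Galois, so is $K_n/\mathcal{F}_{\mathcal{V}_{n-1}}$, and the orbit product automatically lands in $\mathcal{F}_{\mathcal{V}_{n-1}}[X]$; your separate separability/purely-inseparable argument is then unnecessary.
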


 \subsection{Proof of the  two propositions} We need some preparatory works  before we give  the  proofs of Propositions \ref{prop:isomorphismFVbetaV} and  \ref{Prop:main}.
 
 \subsubsection{A preparatory theorem} First, since the Galois action of $\Gal({K_n}/\Knaught  )$ on $\Knaught  $ and $\BasicT$ commutes, one can prove that  
 for any $f \in \Gal({K_n}/\Knaught  )$, the restriction map $\rho_n (f):=f|_{\Ker(\Basic_{T^n})}$ is an automorphism of the $A$-module $\Ker(\Basic_{T^n})$.
 In this way we obtain a homomorphism   $\rho_n:~ \Gal({K_n}/\Knaught  )\to \Aut_{A}(\Ker(\Basic_{T^n}))$, and it   is obviously  injective. Indeed, we have the following quite nontrivial   fact.
 
 
 \begin{thm}\label{Lem:generalmjconjecture} All the maps $\rho_n:~ \Gal({K_n}/\Knaught  )\to \Aut_{A}(\Ker(\Basic_{T^n})) $ for $n\geqslant 1$ are isomorphisms of groups.
 \end{thm}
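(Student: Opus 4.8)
The plan is to show that $\rho_n$ is surjective by a combination of a degree count and the explicit modular structure. The map $\rho_n$ is injective by construction (an element of $\Gal(K_n/K_0)$ fixing $\Ker(\Phi_{T^n})$ pointwise fixes $K_n$ by definition of $K_n$), so it suffices to prove $[\,K_n : K_0\,] \geqslant \#\Aut_A(\Ker(\Phi_{T^n}))$. By Lemma \ref{Lem:moduleiso}, $\Ker(\Phi_{T^n}) \cong (A/T^nA)^{\oplus 3}$ as an $A$-module (the characteristic of $\Phi^{(G)}$ is coprime to $T$, since $\iota(T)=1$), so $\Aut_A(\Ker(\Phi_{T^n})) \cong \GL_3(A/T^nA)$, whose order is computable: $\#\GL_3(A/T^nA) = q^{9(n-1)}\cdot\#\GL_3(\F_q)$. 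So I need $[\,K_n:K_0\,] = q^{9(n-1)}(q^3-1)(q^3-q)(q^3-q^2)$.

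First I would handle the base case $n=1$: $K_1/K_0$ is generated by $\Ker(\Phi_T^{(G)})$, and the $q^3-1$ nonzero torsion points are the nonzero roots of $-X^{q^3}+GX^{q^2}+X$; dividing by $X$ and substituting $X = Y^{1/(q-1)}$ one sees (cf. Example \ref{Example:n=1} and Equation \eqref{Eqt:betaV1}) that the associated $u$-values are roots of the degree-$(q^2+q+1)$ polynomial $X^{q^2+q+1}-GX^{q+1}-1$, which is irreducible over $\F_q(G)$ (Eisenstein-type / the $J$-line argument from \cite{Bassa2015}); then $K_1$ is obtained by adjoining all roots, and a direct Galois-theoretic analysis gives $[\,K_1:K_0\,] = \#\GL_3(\F_q)$. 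For the inductive step, I would use the filtration $\Ker(\Phi_{T}^{(G)}) \subset \Ker(\Phi_{T^2}^{(G)}) \subset \cdots$ and the fact that $\Phi_T^{(G)}$ induces surjections $\Ker(\Phi_{T^{k+1}}^{(G)}) \twoheadrightarrow \Ker(\Phi_{T^k}^{(G)})$ with kernel $\Ker(\Phi_T^{(G)})$. The kernel of $\rho_{k+1}$ restricted to $\Gal(K_{k+1}/K_k)$ embeds into $\Hom_A(\Ker(\Phi_{T^k}^{(G)}), \Ker(\Phi_T^{(G)})) \cong \Mat_3(\F_q)$ (a "level-raising" unipotent piece), and I must show this embedding is onto, i.e. $[\,K_{k+1}:K_k\,] = q^9$. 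This is where the explicit structure of the tower enters: the new generators are the $T^{k+1}$-division points lying above a fixed $T^k$-division point, and their minimal polynomials over $K_k$ are governed by the modular polynomial $\kappa^{(u)}$ and, more precisely, the polynomials $\mathfrak{m}_{\mathcal{V}}$ attached to child flags (Theorem \ref{Thm:NodeToFactor}); summing the covering degrees over all child nodes of a given node (the last Proposition of Section \ref{subSec:matrixcoveringdegrees}, giving $\mathrm{Q}(m)=\frac{q^m-1}{q-1}$ with $m=3$) and iterating over a maximal chain of flags of level $k+1$ lying over one of level $k$ should reassemble the full factor $q^9$.

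The main obstacle will be the surjectivity of the level-raising map $\Gal(K_{k+1}/K_k) \hookrightarrow \Mat_3(\F_q)$, equivalently the claim that the $T^{k+1}$-torsion is "as large as possible" over $K_k$ — there is no formal reason for this in general, and it must be extracted from the geometry of $\Phi^{(G)}$, i.e. from the fact that $G$ is a transcendental parameter and the reduction behaviour of $\Phi^{(G)}$ at suitable places of $\F_q(G)$ is non-degenerate. Concretely I would look for a place of $K_0=\F_q(G)$ (for instance, a place where $G$ specializes so that the Drinfeld module has stable reduction of a controlled type, as in the Tate-uniformization picture used by Gekeler and in \cite{MR4203564}) at which the inertia acts on $\Ker(\Phi_{T^{k+1}}^{(G)})$ through a subgroup of $\GL_3(A/T^{k+1}A)$ whose image together with the already-known $\GL_3(\F_q)$ at level $1$ generates everything; a standard group-theoretic lemma (a subgroup of $\GL_3(A/T^{k+1}A)$ surjecting onto $\GL_3(\F_q)$ and containing a nontrivial "transvection" in each level-raising layer is the whole group) then finishes it. An alternative, perhaps cleaner, route is to prove the equality $[\,K_{k+1}:K_k\,]=q^9$ directly by showing the $q^9$ relevant division points generate $q^9$ distinct conjugates, using the explicit recursive equations $\kappa^{(u_j)}(u_{j+1})=0$ of Theorem \ref{Thm:nflagcondition0} and an irreducibility/separability analysis of the resulting tower — this is the approach most consistent with the rest of Section \ref{Sec:32flagclassfunctionfield}, and I would carry the details there. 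Either way, once $[\,K_n:K_0\,]=\#\GL_3(A/T^nA)$ is established for all $n$, injectivity of $\rho_n$ upgrades automatically to bijectivity.
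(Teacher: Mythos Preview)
Your overall architecture matches the paper's: $\rho_n$ is injective by construction, so the task is to show $[K_n:K_0]=\#\GL_3(A/T^nA)$, handled by induction on $n$; the base case $[K_1:K_0]=\#\GL_3(\F_q)$ is obtained by explicitly building $K_1=K_0(\alpha_1,\alpha_2,\alpha_3)$ one generator at a time (this is Lemma~\ref{Lem:rho1iso}, using Eisenstein, Kummer, and Artin--Schreier steps), and the inductive step reduces, via the short exact sequence $1\to\{I_3+T^kX\}\to\GL_3(A/T^{k+1}A)\to\GL_3(A/T^kA)\to1$ and the Five Lemma, to the single equality $[K_{k+1}:K_k]=q^9$.

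The gap is exactly where you locate it, and you do not close it. More seriously, your proposed route through the minimal polynomials $\mathfrak m_{\mathcal V}$ and Theorem~\ref{Thm:NodeToFactor} is \emph{circular}: the irreducibility of $\mathfrak m_{\mathcal V}$ (Proposition~\ref{Prop:main}) rests on Corollary~\ref{root} and Lemma~\ref{extend0}, and the proof of Lemma~\ref{extend0} explicitly invokes Theorem~\ref{Lem:generalmjconjecture} to produce the required Galois element. Likewise, the identification of covering degrees with field-extension degrees (Theorem~\ref{prop:coveringdegree1}) presupposes the irreducibility established in Section~\ref{Sec:32flagclassfunctionfield}, so that path is also unavailable at this point. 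Your inertia/Tate-uniformization sketch is a legitimate independent strategy in principle, but you do not carry it out, and it is not what the paper does.

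The paper's actual inductive step is more elementary than either of your suggestions and uses no flag machinery. One picks a basis $(\alpha_1,\alpha_2,\alpha_3)$ of $\Ker(\Phi^{(G)}_{T^{k+1}})$, sets $\beta_j=\Phi^{(G)}_T(\alpha_j)\in\Ker(\Phi^{(G)}_{T^{k}})\subset K_k$, and argues that each of the three successive extensions
\[
K_k\ \subset\ K_k(\alpha_1)\ \subset\ K_k(\alpha_1,\alpha_2)\ \subset\ K_k(\alpha_1,\alpha_2,\alpha_3)=K_{k+1}
\]
is an Artin--Schreier-type extension of degree $q^3$ with minimal polynomial $\Phi^{(G)}_T(X)-\beta_j$, giving $[K_{k+1}:K_k]=(q^3)^3=q^9$ directly. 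That is the computation you should reconstruct; the modular-polynomial tower $\kappa^{(u_j)}(u_{j+1})=0$ governs the $u$-variables (which are $(q-1)$-st powers of the actual torsion coordinates) and sits logically downstream of this theorem, not upstream.
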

 To prove  it,   we do induction on $n$ and start with  the following lemma. 
 
 \begin{lem}\label{Lem:rho1iso} The map $\rho_1:~ \Gal({K_1}/\Knaught  )\to \Aut_{A}(\Ker(\BasicT)) $ is an isomorphism.	 
 \end{lem}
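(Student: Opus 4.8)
\textbf{Plan for the proof of Lemma \ref{Lem:rho1iso}.}

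The map $\rho_1$ is already known to be injective, so the plan is to show surjectivity, which is equivalent to proving $[\mathrm{K}_1:\mathrm{K}_0] = \#\Aut_A(\Ker(\BasicT))$. First I would identify the target group: since $\iota(T)=1\neq 0$, the characteristic of $\Basic$ is coprime to $T$, so by Lemma \ref{Lem:moduleiso} we have $\Ker(\BasicT)\cong (A/TA)^{\oplus 3}\cong \Fq^{\oplus 3}$ as an $A$-module (with $T$ acting as $0$), hence $\Aut_A(\Ker(\BasicT))\cong \GL(3,\Fq)$, a group of order $(q^3-1)(q^3-q)(q^3-q^2)$. So it suffices to show $[\mathrm{K}_1:\mathrm{K}_0]$ equals this number.

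To compute $[\mathrm{K}_1:\mathrm{K}_0]$, I would analyze the splitting field of $\BasicT(X) = -X^{q^3}+GX^{q^2}+X$ over $\Knaught=\Fq(G)$. Dividing out the trivial root, the nonzero roots are the roots of $-X^{q^3-1}+GX^{q^2-1}+1$; setting $u = X^{q-1}$ does not quite linearize things, so instead I would argue directly via the tower structure already set up. The field $\mathrm{K}_1$ is generated by a full $\Fq$-basis $\nu_1,\nu_2,\nu_3$ of $\Ker(\BasicT)$, and the standard correspondence of Section \ref{Sec:flagsnotations} applied to the flag $(\Fq\nu_1 \subset \Fq\nu_1\oplus\Fq\nu_2 \subset \Ker\BasicT)$ gives parameters $(u_1,u_2,u_3)$; by Example \ref{Example:n=1} (or directly Theorem \ref{Thm:nflagcondition0}) $u_1$ has minimal polynomial $X^{q^2+q+1}-GX^{q+1}-1$ over $\Knaught$, which is of degree $\Nqfrac{3}=q^2+q+1$ and is irreducible (it defines the geometrically irreducible curve $\dot X^{(3,2)}_{(0)}$, equivalently one checks irreducibility over $\overline{\Fq}(G)$ directly). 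Then I would compute the degrees $[\mathcal{F}_{\mathcal{V}_2}:\mathcal{F}_{\mathcal{V}_1}]$ and $[\mathcal{F}_{\mathcal{V}}:\mathcal{F}_{\mathcal{V}_2}]$ via the minimal polynomials $\minimalpoly{\mathcal{V}_2},\minimalpoly{\mathcal{V}}$ from Proposition \ref{Prop:main}, and also account for the further extension needed to adjoin all of $\Ker(\BasicT)$ rather than just a flag — i.e. the difference between $\mathcal{F}_{\mathcal{V}}$ and $\mathrm{K}_1$ itself. Multiplying these degrees should reproduce $\#\GL(3,\Fq)$, with the flag-stabilizer (Borel) contributions and the quotient $\GL(3,\Fq)/B$ contributions matching the covering degrees $\coveringdeg_{\NODE_{1,2}}=\Nqfrac{2}$, $\coveringdeg_{\NODE_{1,2,3}}=\Nqfrac{1}=1$ recorded in Figure \ref{fig:Ttree}.

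The main obstacle will be the \emph{lower bound} $[\mathrm{K}_1:\mathrm{K}_0]\geqslant \#\GL(3,\Fq)$, i.e. genuinely controlling the Galois group rather than just bounding it above; equivalently, proving the irreducibility statements that feed into the degree count (in particular that $\minimalpoly{\mathcal{V}}$ remains irreducible over the larger field $\mathcal{F}_{\mathcal{V}_{n-1}}$ in this base case $n=1,2,3$). I expect this to be handled by an explicit ramification argument over $\Fq(G)$: one exhibits a place of $\Knaught$ (say $G=\infty$, or a suitable specialization) where $\BasicT$ has good reduction with a prescribed cycle type on the torsion, forcing the Galois group to contain enough elements, combined with the fact that $\rho_1$ is injective to pin it down. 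An alternative, cleaner route is to invoke the known modularity/irreducibility results for the BBGS tower from \cite{Bassa2015} and \cite{Nurdagul2017} for the $(3,2)$ case, which already establish that the relevant function-field extensions have the expected degree; then surjectivity of $\rho_1$ follows by comparing cardinalities. I would present whichever of these is shortest and defer the heavier computations, as the excerpt suggests the authors do.
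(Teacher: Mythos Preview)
Your overall strategy---reduce to showing $[K_1:K_0]=\#\GL_3(\Fq)=(q^3-1)(q^3-q)(q^3-q^2)$---is exactly the paper's. The problem is that your proposed execution is circular. Proposition \ref{Prop:main} (irreducibility of $\minimalpoly{\mathcal{V}}$ over $\mathcal{F}_{\mathcal{V}_{n-1}}$) is proved via Corollary \ref{root} and Lemma \ref{extend0}, which in turn rely on Theorem \ref{Lem:generalmjconjecture}; and Lemma \ref{Lem:rho1iso} is precisely the base case of the induction that establishes Theorem \ref{Lem:generalmjconjecture}. Likewise, the covering degrees in Theorem \ref{prop:coveringdegree1} and Figure \ref{fig:Ttree} presuppose the geometric irreducibility of the modular curves, which for the $(3,2)$-case is only secured in Section \ref{Sec:32flagclassfunctionfield} \emph{after} this lemma. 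So neither the minimal polynomials $\minimalpoly{\mathcal{V}_j}$ (beyond the explicit $n=1$ computation of Example \ref{Example:n=1}) nor the tree's covering degrees are available to you here.

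The paper instead computes $[K_1:K_0]$ by a direct, self-contained tower argument. Pick an $\Fq$-basis $\alpha_1,\alpha_2,\alpha_3$ of $\Ker(\BasicT)$ and climb $K_0\subset K_0(\alpha_1)\subset K_0(\alpha_1,\alpha_2)\subset K_1$. The first step has degree $q^3-1$: solving $\BasicT(\alpha_1)=0$ for $G$ shows $K_0(\alpha_1)=\Fq(\alpha_1)$ is rational, and the degree is read off as $\max(q^3-1,\,q^2-1)$. For the second step one factors $\BasicT=\bigl(\tau^2+z_1^{-(q+1)}\tau+z_1^{-1}\bigr)(\tau-z_1)$ over $K_0(\alpha_1)$ with $z_1=\alpha_1^{q-1}$; adjoining $\alpha_2$ then decomposes into an Eisenstein extension of degree $q+1$ (for $z_2=((\tau-z_1)\alpha_2)^{q-1}$), a Kummer extension of degree $q-1$, and an Artin-Schreier extension of degree $q$, giving $(q+1)(q-1)q=q^3-q$. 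The third step is handled similarly after a further factorization of $\BasicT$ over $K_0(\alpha_1,\alpha_2)$, yielding $(q-1)q^2=q^3-q^2$. No ramification/specialization analysis and no appeal to the BBGS literature is used; the argument is entirely elementary.
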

 \begin{proof} 
 	According to Lemma \ref{Lem:moduleiso}, we have an isomorphism of $A$-modules \begin{equation}\label{Eqt:mathcalE}\mathcal{E} :~  \Ker(\Phi_{T }) \to (A/T A)^{\oplus 3}\cong \Fq^{\oplus 3} .\end{equation}  So we find a basis  $(\alpha_1,\alpha_2,\alpha_3)$ of $\Ker(\Phi_{T })$.
 	Then from $\BasicT(\alpha_1) = 0$ we get \begin{equation}
 	\label{Eqt:ginalpha1}
 	G = \frac{-\alpha_1^{q^3-1}-1}{\alpha_1^{q^2-1}} \end{equation}
 	and thus $\Knaught  (\alpha_1) = \Fq(G)(\alpha_1) = \F_q(\alpha_1)$. By  \cite{TTAFF}*{Corollary 4.1.2}, we have $[\Knaught  (\alpha_1):\Knaught  ] = [\F_q(\alpha_1):\Fq(G)] = \max\{q^3-1,q^2-1\}=q^3-1$. 
 	
 	Next, we consider the field extension $\Knaught  (\alpha_1,\alpha_2)$ over $\Knaught  (\alpha_1)$. Take $z_1 = \alpha_1^{q-1}$ and $z_2 = ((\tau-z_1)(\alpha_2))^{q-1}$ ($\in \Knaught  (\alpha_1,\alpha_2) $). We can rewrite $\BasicT$ using \eqref{Eqt:ginalpha1}:
 	$$\BasicT = \left(\tau^2+\frac{1}{z_1 ^{q+1}}\tau+\frac{1}{z_1 }\right)(\tau-z_1 ) \in \Knaught  (\alpha_1)\{\tau\}.$$

 	Then from 
 	$\BasicT(\alpha_2) = 0$ we derive 
 	\begin{equation}\label{Eqt:z1z2}  z_2  ^{q+1}+\frac{z_2  }{z_1 ^{q+1}}+\frac{1}{z_1 } = 0.\end{equation}
 	By Eisenstein's criterion, $f(X) := X^{q+1}+\frac{1}{z_{1}^{q+1}}X+\frac{1}{z_1 }$ (in $\Knaught  (\alpha_1)[X]$) is irreducible and the minimal polynomial of $z_2  $, and $[\Knaught  (\alpha_1,z_2 ):\Knaught  (\alpha_1)] = q+1$.
 	
 	Now, consider $t_2 = (\tau-z_1 )(\alpha_2)$ ($\in \Knaught  (\alpha_1,z_2)$). It is   routine  to  check that $\Knaught  (\alpha_1,z_2 ,t_2)$ over $\Knaught  (\alpha_1,z_2 )$ is a Kummer extension with degree $(q-1)$ and $\Knaught  (\alpha_1,\alpha_2)$ over $\Knaught  (\alpha_1,z_2 ,t_2)$ is an Artin-Schreier extension with degree $q$. Hence, we get $[\Knaught  (\alpha_1,\alpha_2):\Knaught  (\alpha_1)]=(q+1)(q-1)q = q^3-q$.

 	By Equation \eqref{Eqt:z1z2}, we can rewrite
 	$$\BasicT = (\tau - \frac{1}{z_1 z_2  })(\tau-z_2  )(\tau-z_1 ) \in \Knaught  (\alpha_1,\alpha_2)\{\tau\}.$$ So it follows from $\BasicT(\alpha_3) = 0$ that 
 	$$  z_3  -\frac{1}{z_1 z_2  } = 0 $$
 	where $z_3   = ((\tau-z_1 )(\tau-z_2  )(\alpha_3))^{q-1}$;  
 	and hence $\Knaught  (\alpha_1,\alpha_2,z_3  ) = \Knaught  (\alpha_1,\alpha_2)$.

 	Then for $t_3 = (\tau-z_2  )(\tau-z_1 )(\alpha_3)$ ($\in \Knaught  (\alpha_1,\alpha_2)$),   $\Knaught  (\alpha_1,\alpha_2,t_3)$ over $\Knaught  (\alpha_1,\alpha_2)$ is a Kummer extension with degree $q-1$ and $\Knaught  (\alpha_1,\alpha_2,\alpha_3)$ over $\Knaught  (\alpha_1,\alpha_2,t_3)$ is an Artin-Schreier extension with degree $q^2$. Hence, we get $[\Knaught  (\alpha_1,\alpha_2,\alpha_3):\Knaught  (\alpha_1,\alpha_2)]=(q-1)q^2 = q^3-q^2$. 
 	
 	In summary, for ${K_1} = \Knaught  (\alpha_1,\alpha_2,\alpha_3)$, 
 	we have $[{K_1}:\Knaught  ] = (q^3-1)(q^3-q)(q^3-q^2)$. Since ${K_1}/\Knaught  $ is Galois, we have $\#(\Gal({K_1}/\Knaught  )) = [{K_1}:\Knaught  ] = (q^3-1)(q^3-q)(q^3-q^2)$, i.e., number of elements in $ \GL_3(\Fq)\cong \Aut_{A}(A/T A)^{\oplus 3}\cong \Aut_{A}(\Ker(\BasicT))$. This proves that  that $\rho_1$ is an isomorphism.
 	
 \end{proof}
 We then prove the previous important theorem.
 \begin{proof}
 	[Proof of Theorem \ref{Lem:generalmjconjecture}]  
 	Suppose that the statement is proved for $n=i-1$ and we show it holds as well for $n=i$.
 	
 	Again according to Lemma \ref{Lem:moduleiso}, we have $\Ker(\Basic_{T^i }) \cong (A/T^i A)^{\oplus 3}$ as $A$-modules, and we have the  natural isomorphism $\Aut_{A}(A/T^i A)^{\oplus 3} $  $\cong  \GL_3(A/T^iA)$. So we directly regard $\rho_i$ as a morphism    $\Gal({K_{i}}/\Knaught  ) \to$ $   \GL_3(A/T^iA)$.
 	
 	Consider the following commutative diagram between exact sequences with all arrows naturally defined: 
 	\begin{equation*}
 	\xymatrix{
 		1\ar[r]&\Gal({K_{i}}/{K_{i-1}})\ar[d]^{\Psi}\ar@{^{(}->}[r] &\Gal({K_{i}}/\Knaught )\ar[d]^{\rho_{i}}\ar[r]^{Q}& \Gal({K_{i-1}}/\Knaught )\ar[d]^{\rho_{i-1}}\ar[r]&1 \\
 		1\ar[r]& K \ar@{^{(}->}[r]  & \GL_3(A/T^iA) \ar[r]^{Q'}&\GL_3(A/T^{i-1}A) \ar[r]&1	.
 	}
 	\end{equation*}
 	Here  the maps $Q$, $Q'$ and the group $K$ are defined as follows: \begin{itemize}
 		\item For all $f \in \Gal({K_{i}}/\Knaught  )$  we define $Q(f) := f|_{{K_{i-1}}}$;
 		\item For all $M\in \GL_3(A/T^{i}A)$, we define $Q'(M) := M \mod T^{i-1}$;
 		\item $K$ consists of matrices of the form $I_3+T^{i-1}X$ where $X\in \Fq^{3\times 3}$ is  arbitrary. Clearly, $K$ is an abelian subgroup in $\GL_3(A/T^{i}A)$.
 	\end{itemize}

 	We can find a basis  $(\alpha_1,\alpha_2,\alpha_3)$ of $\Ker(\Basic_{T^i })$, and hence     ${K_{i}} = \Knaught  (\alpha_1,\alpha_2,\alpha_3)$. By setting $\beta_i = \BasicT(\alpha_i)$ ($\in \Ker(\Basic_{T^{i-1} })$), we also have  	${K_{i-1}} = \Knaught  (\beta_1,\beta_2,\beta_3)$.   
 	Clearly, one has
 	$$[{K_{i}}:{K_{i-1}}] = [{K_{i-1}}(\alpha_1):{K_{i-1}}][{K_{i-1}}(\alpha_1,\alpha_2 ):{K_{i-1}}(\alpha_1 )] [{K_{i-1}}(\alpha_1,\alpha_2,\alpha_3):{K_{i-1}}(\alpha_1, \alpha_{2})].$$

 	The three field extensions $ {K_{i-1}}(\alpha_1)$ $/{K_{i-1}}$, $ {K_{i-1}}(\alpha_1,\alpha_2 )$ $/{K_{i-1}}(\alpha_1 )$, and ${K_{i-1}}(\alpha_1,\alpha_2,\alpha_3)$ $/{K_{i-1}}(\alpha_1, \alpha_{2}) $  are  Artin-Schreier extensions with minimal polynomials $\BasicT(X)-\beta_1$, $\BasicT(X)-\beta_2$, and $\BasicT(X)-\beta_3$, respectively. Therefore, the degree of extension of ${K_{i}}$ over ${K_{i-1}}$ is $q^{9}$. Moreover, since ${K_{i}}$ over $\Knaught  $ is Galois, we have $\#(\Gal({K_{i}}/{K_{i-1}})) = [{K_{i}}:{K_{i-1}}] = q^{9}$ which coincides with $ \#K$.       It is   easy to see that $\Psi$ is injective, and hence it must be an isomorphism. Then by the induction assumption ($\rho_{i-1}$ is an isomorphism) and the standard Five-Lemma, we know that $\rho_i  $ is   an isomorphism, as desired. 
 	
 \end{proof} 
 
 
 \subsubsection{Isomorphisms of torsion flags} Second, we state another important lemma.
 
 \begin{lem}\label{extend0}
 	Suppose that  ${\mathcal{V}}= (V_1,\ldots,V_n)$ and $\mathcal{W} = (W_1,\ldots,W_n)$   are two $\BasicT${-torsion } $n$-flags in $\Knaughtbar $   and their parameters   $(u_1,\ldots,u_n) = \Lambda({\mathcal{V}})$ and $(w_{1},\ldots,w_{n}) = \Lambda(\mathcal{W} )$ ($\in (\Knaughtbar ^*)^{\times n}$) are  given. The following statements are equivalent.
 	\begin{itemize}
 		\item[(1)] The $\BasicT$-torsion $n$-flags $\mathcal{V}$ and $\mathcal{W}$ are isomorphic;
 		\item[(2)] There exists an automorphism $\sigma$ of the $A$-module $\Ker(\Basic_{T^n})$ such that $\sigma(V_i)=W_i$ for all $i = 1,\ldots,n$;
 		\item[(3)]  There exists $f \in \operatorname{Gal}({K_{n}}/\Knaught  )$ such that $f(u_{i}) = w_{i}$ for all $i = 1,\ldots,n$.
 	\end{itemize}
 \end{lem}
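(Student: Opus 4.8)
The plan is to prove the cyclic chain of implications $(3)\Rightarrow(1)\Rightarrow(2)\Rightarrow(3)$. The implication $(2)\Rightarrow(1)$ is in any case immediate and can be recorded first: if $\sigma\in\Aut_A(\Ker(\Basic_{T^n}))$ satisfies $\sigma(V_i)=W_i$ for every $i$, then $\sigma|_{V_n}\colon V_n\to W_n$ is an $\Fq$-linear bijection carrying each $V_i$ onto $W_i$ and commuting with $\BasicT$ (the $A$-module structure being given by $\BasicT$ and $\sigma$ being $A$-linear), hence an isomorphism of $\BasicT$-torsion flags in the sense of Definition \ref{isomorphism2}; this yields $(1)$.

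For $(3)\Rightarrow(1)$ and $(2)\Rightarrow(3)$ the guiding observation is that the standard correspondence $\Theta$ of Definition \ref{corresponding} is equivariant for field automorphisms: any field automorphism $f$ of $\Knaughtbar$ fixing $\Knaught$ commutes with $\tau$ and with $\BasicT$, so $f\bigl(\Theta(u_1,\dots,u_n)\bigr)=\Theta\bigl(f(u_1),\dots,f(u_n)\bigr)$, where $f$ acts componentwise on a flag. Thus for $(3)\Rightarrow(1)$: given $f\in\Gal(K_n/\Knaught)$ with $f(u_i)=w_i$, equivariance gives $f(V_i)=W_i$ for all $i$, and $f|_{V_n}$ is the desired flag isomorphism. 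For $(2)\Rightarrow(3)$: by Theorem \ref{Lem:generalmjconjecture} the map $\rho_n$ is an isomorphism, so I would set $f:=\rho_n^{-1}(\sigma)\in\Gal(K_n/\Knaught)$; since each $V_i\subseteq\Ker(\Basic_{T^n})$ one has $f(V_i)=\sigma(V_i)=W_i$, hence $\Theta(f(u_1),\dots,f(u_n))=\mathcal W=\Theta(w_1,\dots,w_n)$, and injectivity of $\Theta$ (Proposition \ref{Prop:Fromflagtoui}) forces $f(u_i)=w_i$ for all $i$.

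The genuinely hard step is $(1)\Rightarrow(2)$: upgrading an isomorphism of the two internal flags to an automorphism of the ambient module $M:=\Ker(\Basic_{T^n})$, which by Lemma \ref{Lem:moduleiso} is free of rank $3$ over the chain ring $R:=A/T^nA=\Fq[T]/(T^n)$. Starting from a flag isomorphism $\iota\colon V_n\to W_n$ (which is automatically $R$-linear, as it is $\Fq$-linear and intertwines the $T$-action $\BasicT$), I would regard $V_n$ and $W_n$ as $R$-submodules of $M\cong R^{3}$ and invoke the elementary divisor (Smith normal form) theorem over the chain ring $R$: there are $R$-bases $(f_1,f_2,f_3)$ and $(g_1,g_2,g_3)$ of $M$ and exponents $k_1,k_2,k_3$ with $V_n=\bigoplus_i T^{k_i}Rf_i$ and $W_n=\bigoplus_i T^{k_i}Rg_i$, the exponents agreeing after reordering because $V_n\cong W_n$. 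Then $f_i\mapsto g_i$ defines $\tau_0\in\Aut_A(M)$ with $\tau_0(V_n)=W_n$, and $h:=\iota\circ(\tau_0|_{V_n})^{-1}$ is an $A$-automorphism of the submodule $W_n$. The crux is the purely module-theoretic fact that every $A$-automorphism of $W_n$ extends to an $A$-automorphism $\tilde h$ of $M$; I expect to prove this by lifting the matrix entries of $h$ through the chain-ring structure of $R$ and checking invertibility modulo $T$. Granting it, $\sigma:=\tilde h\circ\tau_0\in\Aut_A(M)$ restricts to $\iota$ on $V_n$, so $\sigma(V_i)=\iota(V_i)=W_i$ for all $i$, which establishes $(2)$. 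The only delicate ingredient in the whole argument is this extension lemma for automorphisms of submodules of a free module over $\Fq[T]/(T^n)$; once Theorem \ref{Lem:generalmjconjecture} is available, everything else is formal.
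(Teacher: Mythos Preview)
Your proposal is correct and follows the same cycle $(1)\Rightarrow(2)\Rightarrow(3)\Rightarrow(1)$ as the paper, with the same reliance on Theorem~\ref{Lem:generalmjconjecture} for the step $(2)\Rightarrow(3)$. Your equivariance observation for the standard correspondence under Galois action is exactly what the paper unpacks by explicit induction on $i$ in both $(2)\Rightarrow(3)$ and $(3)\Rightarrow(1)$; your packaging is cleaner but the content is identical.

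For $(1)\Rightarrow(2)$ the paper does not phrase things as Smith normal form plus an abstract extension lemma. Instead it transports $V_n$, $W_n$, and $\iota$ into $(A/T^nA)^{\oplus 3}$, chooses a cyclic basis $(c_j^{(i)})$ of $V_n'$ adapted to the $T$-action (Jordan form), pushes it forward by $\iota$ to a cyclic basis $(d_j^{(i)})$ of $W_n'$, and then extends \emph{each} of these separately to a full cyclic $\Fq$-basis of the ambient module by repeatedly choosing $T$-preimages; the automorphism $\sigma'$ is then defined by matching the two extended bases. This construction is precisely a hands-on proof of the extension lemma you isolate as the delicate ingredient, so your ``lift the matrix entries and check invertibility mod $T$'' sketch and the paper's argument are two descriptions of the same phenomenon. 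The paper's version has the virtue of being fully explicit and self-contained; yours makes clearer which structural fact about modules over $\Fq[T]/(T^n)$ is actually being used.
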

 \begin{proof}
 	$(1) \Rightarrow (2)$: Again, we use the isomorphism \eqref{Eqt:mathcalE} of $A$-modules $\mathcal{E}$: $\Ker(\Basic_{T^n})$   $\to$  $(A/T^nA)^{\oplus 3}$. Via $\mathcal{E}$, $\mathcal{V}$ and $\mathcal{W}$ are sent to $T$-torsion $n$-flags in $(A/T^nA)^{\oplus 3}$, denoted by $\mathcal{V}'=(V'_1,\ldots,V'_n)$ and $\mathcal{W}'=(W'_1,\ldots,W'_n)$ respectively.  The isomorphism $\tau: V_n\to W_n$ is transferred to  an isomorphism   $\tau': V'_n$ $\to$ $W'_n$ (of $A$-modules). It suffices to prove that there exists an automorphism $\sigma'$ of the $A$-module $(A/T^nA)^{\oplus 3}$ such that $\sigma'|_{V_n'}=\tau'$.

 	We suppose that  $\mathcal{V}' = (\nu_1<\ldots<\nu_n)$ and $\mu_i = \tau'(\nu_i)$, and thus  $\mathcal{W}' = (\mu_1<\ldots<\mu_n)$. 	
 	Let us use a matrix $N\in \mathbf{U}^o_n$ to represent the $T$-action on $\mathcal{V}'$:
 	$$T(\nu_1,\ldots,\nu_n) = (\nu_1,\ldots,\nu_n)N. $$
 	Clearly, the $T$-action on $\mathcal{W}'$ is represented by $N$ as well
 	$$T(\mu_1,\ldots,\mu_n) =   (\mu_1,\ldots,\mu_n)N.$$

 	First, since $N$ is nilpotent, we can find a cyclic basis $(c^{(1)}_1,\ldots, c^{(1)}_{k_1}, \ldots, c^{(l)}_1,\ldots,c^{(l)}_{k_l})$ of $V'_n$ such that   $Tc_{1}^{(i)} = 0$  and $Tc_{j}^{(i)}= c_{j-1}^{(i)}$ ($1\leqslant l\leqslant 3$, $  k_1+\cdots+k_l=n$). Write $J\in \mathbf{U}^o_n$ (the Jordan form of $N$) for this relation:
 	$T(c^{(1)}_1,\ldots, c^{(1)}_{k_1}\ldots, c^{(l)}_1,\ldots,c^{(l)}_{k_l}) = (c^{(1)}_1,\ldots, c^{(1)}_{k_1},\ldots, c^{(l)}_1,\ldots,c^{(l)}_{k_l})J$; and suppose that   $J = Z^{-1}NZ$ for $Z\in \mathrm{GL}(n;\Fq )$. In other words,we have $$ (c^{(1)}_1,\ldots, c^{(1)}_{k_1}, \ldots, c^{(l)}_1,\ldots,c^{(l)}_{k_l}) = (\nu_1,\ldots,\nu_n)Z.$$
 	
 	The second step is to  extend this cyclic basis to an $\F_q$-basis of $(A/T^nA)^{\oplus 3}$:
 	\begin{itemize}
 		\item From $Tc_1^{(i)} = 0$, we have $c_1^{(i)} \in (T^{n-1}A/T^nA)^{\oplus 3}$.  Extend $(c_1^{(1)},\ldots,c_1^{(l)})$ to an $\F_q$-basis   $(c_1^{(1)},\ldots,c_1^{(l)},c_1^{(l+1)},\ldots,c_1^{(3)})$ of $(T^{n-1}A/T^nA)^{\oplus 3}$;
 		\item For any $l+1\leqslant i\leqslant 3$, find  $c^{(i)}_2\in T^{-1}(c_1^{(i)}) $ ($\in (T^{n-2}A/T^nA)^{\oplus 3} $), $c^{(i)}_3\in T^{-1}(c_2^{(i)}) $ ($\in (T^{n-3}A/T^nA)^{\oplus 3} $), $\cdots$, 	and 	$c_n^{(i)}   \in T^{-1}(c_{n-1}^{(i)})$ ($\in ( A/T^nA)^{\oplus 3} $);
 		\item For any $1\leqslant i\leqslant l$, the original vector $c_{k_i}^{(i)}$   sits in $(T^{n-k_i}A/T^nA)^{\oplus 3}$ because $T^{k_i}c_{k_i}^{(i)} = 0$. So we are able to find new vectors $c_{k_i+1}^{(i)}$, $c_{k_i+2}^{(i)}$, $\cdots$, $c_{n}^{(i)}$ such that $T c_{j}^{(i)}=c_{j-1}^{(i)}$ hold for all $2\leqslant j \leqslant n$.
 	\end{itemize}
 	It is easily seen that  
 	$$(c^{(1)}_1,\ldots, c^{(1)}_{n},c^{(2)}_1,\ldots, c^{(2)}_{n},  c^{(3)}_1,\ldots,c^{(3)}_{n})$$
 	forms a cyclic $\F_q$-basis of $(A/T^nA)^{\oplus 3}$ with $Tc_1^{(i)} = 0$ and $Tc_j^{(i)} = c_{j-1}^{(i)}$, for all $i = 1,\ldots,3$, $j = 1,\ldots,n$.

 	In the meantime, $$(d^{(1)}_1,\ldots, d^{(1)}_{k_1}, \ldots,    d^{(l)}_1,\ldots,d^{(l)}_{k_l}) := (\mu_1,\ldots,\mu_n)Z$$ is a cyclic basis of $W_n'$. So, in the same manner, we are able to   extend it to a cyclic $\F_q$-basis $$(d^{(1)}_1,\ldots, d^{(1)}_{n},d^{(2)}_1,\ldots, d^{(2)}_{n},  d^{(3)}_1,\ldots,d^{(3)}_{n})$$ of $(A/T^nA)^{\oplus 3}$.

 	Finally, we let $\sigma'$ be the $\F_q$-linear endomorphism of $(A/T^nA)^{\oplus 3}$ such that $\sigma'(c_j^{(i)}) = d_j^{(i)}$. This $\sigma'$ is the desired map. 
 	
 	$(2)\Rightarrow(3)$: In this proof,  we adopt the following notations   according to the standard correspondence \eqref{Eqt:standardconstruction}:
 	$$
 	\mathcal{V}= (V_1\subset V_2 \subset \cdots \subset V_{n})=(\nu_1<\ldots<\nu_n)\mapsto (l_1,\cdots,l_n)\mapsto (u_1,\cdots,u_n),	
 	$$
 	and
 	$$\mathcal{W} = (W_1\subset W_2 \subset \cdots \subset W_{n})=(\mu_1<\ldots<\mu_n)
 	\mapsto (z_1,\cdots,z_n)\mapsto (w_1,\cdots,w_n).
 	$$	 
 	Since (according to Theorem \ref{Lem:generalmjconjecture}) $\Gal({K_{n}}/\Knaught  )$ is isomorphic to $\Aut_A(\Ker(\Phi_{T^{n}}))$, we can take $f: = \rho_n  ^{-1}(\sigma)$. In other words, $f \in \Gal({K_{n}}/\Knaught  )$ satisfies $f|_{\Ker(\Phi_{T^{n}})} = \sigma$. 
 	We then  prove that $f(u_i) = w_{i}$ for $i = 1,\ldots,n$ by induction on $i$.

 	\begin{enumerate}
 		\item[(a)] 	The $i = 1$ case is simple:    $\sigma(V_1) = W_1$ implies that $\sigma(\nu_1) = r\mu_1$ for some $r \in \F_q^*$, and hence  $\sigma(\nu_1)^{q-1} = (r\mu_1)^{q-1}$, or $f(u_1) = w_1$. 
 		\item[(b)] Suppose that	$f(u_i) = w_{i}$ for all $i = 1,\ldots,n-1$ are   true.  As
 		$$W_n = \sigma(V_n) = \sigma(V_{n-1} + \F_q\nu_n)  = W_{n-1} + \F_q(\sigma(\nu_n)),$$
 		it follows that $\mu_n = \mu+r \sigma(\nu_n)$, where $\mu \in W_{n-1}$ and $r \in \F_q^{*}$. Therefore, one computes
 		\begin{align*}
 		z_n & = \lambda^{(w_{n-1})} \cdots \lambda^{(w_{2})} \lambda^{(w_1)}\left(\mu_n\right) \\
 		& = \lambda^{(w_{n-1})} \cdots \lambda^{(w_{2})} \lambda^{(w_{1})}\left(\mu+r\sigma(\nu_n)\right)\\	
 		& = \lambda^{(w_{n-1})} \cdots \lambda^{(w_{2})} \lambda^{(w_{1})}\left(r\sigma(\nu_n)\right)\qquad (W_{n-1} = \Ker(\lambda^{(w_{n-1})} \cdots \lambda^{(w_{2})} \lambda^{(w_{1})}))\\	
 		& = \lambda^{f(u_{n-1})} \cdots \lambda^{f(u_{2})} \lambda^{f(u_{1})}\left(r\sigma(\nu_n)\right) \qquad \text{(induction hypothesis)}\\
 		& = f(r\lambda^{\left(u_{n-1}\right)} \cdots \lambda^{\left(u_2\right)} \lambda^{\left(u_1\right)}\left(\nu_n\right)) = rf(l_n).
 		\end{align*}
 		Taking $(q-1)$-th power one gets $w_n=f(u_n)$, as required.
 		
 	\end{enumerate}
 	
 	$(3)\Rightarrow (1)$: If $f \in \operatorname{Gal}({K_{n}}/\Knaught  )$ is given which satisfies $f(u_i) = w_{i}$,     then $\BasicT \circ f = f \circ \BasicT$ (since $f(G)=G$). In the following, we show by induction on $i$ that   $f(V_i)=W_i$, for all $i = 1,\ldots,n$. This $f$ gives the isomorphism of $\mathcal{V}$ and $\mathcal{W}$.
 	\begin{enumerate}
 		\item[(a)] For the $i = 1$ case, as $u_1 = \nu_1^{q-1}$ and $w_1 = \mu_1^{q-1}$,  so $f(u_1) = w_1$ implies  $(f(\nu_1))^{q-1} = \mu_1^{q-1}$. Hence, we have $f(\nu_1) = r\mu_1$, for some $r\in \F_q^*$, and $f(V_1) =   W_1$.
 		\item[(b)] 	Assume that  $f(V_i)=W_i$ is proved for all $i = 1,\ldots,n-1$. Recall that we have  $l_n=\lambda^{\left(u_{n-1}\right)} \cdots \lambda^{\left(u_2\right)} \lambda^{\left(u_1\right)}\left(\nu_n\right)$. So  by  $f(u_{i}) = w_{i}$ one gets
 		$$f(l_n) = f(\lambda^{\left(u_{n-1}\right)} \cdots \lambda^{\left(u_2\right)} \lambda^{\left(u_1\right)}\left(\nu_n\right)) = \lambda^{\left(w_{n-1}\right)} \cdots \lambda^{\left(w_{2}\right)} \lambda^{\left(w_1\right)}\left(f(\nu_n)\right).$$
 		From $f(u_n) = w_{n}$, $u_n=l_n^{q-1}$ and $w_{n}=z_n^{q-1}$,  we can write $f(l_n) = rz_n$ for some  $r \in \F_q^*$.
 		Then using 	
 		$z_n =\lambda^{\left(w_{n-1}\right)} \cdots \lambda^{\left(w_{2}\right)} \lambda^{\left(w_{1}\right)}\left(\mu_n\right)$ we see that    $$ f(\nu_n)-r\mu_n \in \Ker(\lambda^{\left(w_{n-1}\right)} \cdots \lambda^{\left(w_{2}\right)} \lambda^{\left(w_1\right)})=W_{n-1} .$$ 
 		This proves the desired	relation	$$W_n = W_{n-1}+ \F_q\mu_n =    f(V_{n-1}) + \F_q(f(\nu_n)) = f(V_n).$$

 	\end{enumerate}
 \end{proof}

 \begin{cor}\label{root}
 	Let   ${\mathcal{V}} $ be a $\BasicT$-torsion $n$-flag   in $\Knaughtbar $. Let  $\mathcal{P}$ and $M(\mathcal{V};{\mathcal{V}_{n-1}})$ be defined as in Equations \eqref{Eqt:mathcalP} and \eqref{Eqt:MVVdiamond}, respectively. Set $u_n = \mathcal{P}(\mathcal{V})$. Then we have $$\{\mathcal{P}(\mathcal{V}'):\mathcal{V}' \in M(\mathcal{V};{\mathcal{V}_{n-1}})\}=  \{f(u_{n}):f \in \Gal({K_{n}}/\mathcal{F}_{{\mathcal{V}_{n-1}}})\}.$$
 \end{cor}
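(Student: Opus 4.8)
The plan is to establish the set equality in Corollary \ref{root} by a double-inclusion argument, using Lemma \ref{extend0} (in particular the equivalence $(1)\Leftrightarrow(3)$) as the main engine, together with the observation that $\mathcal{P}$ only records the last parameter $u_n$ of the standard correspondence. First I would unwind the definitions: by \eqref{Eqt:MVVdiamond}, an element $\mathcal{V}' \in M(\mathcal{V};{\mathcal{V}_{n-1}})$ is a $\BasicT$-torsion $n$-flag whose parent is exactly ${\mathcal{V}_{n-1}}$ and which is isomorphic to $\mathcal{V}$; writing $(u'_1,\ldots,u'_n) = \Lambda(\mathcal{V}')$, the parent condition forces $(u'_1,\ldots,u'_{n-1}) = (u_1,\ldots,u_{n-1})$ by Proposition \ref{Prop:Fromflagtoui} (the standard correspondence restricts compatibly to parent flags), and $\mathcal{P}(\mathcal{V}') = u'_n$. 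So the left-hand set consists precisely of the last coordinates $u'_n$ of the parameter arrays of those flags isomorphic to $\mathcal{V}$ and sharing the first $n-1$ parameters with $\mathcal{V}$.

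For the inclusion ``$\subseteq$'': given such a $\mathcal{V}'$, apply Lemma \ref{extend0} $(1)\Rightarrow(3)$ to get $f \in \Gal({K_n}/\Knaught)$ with $f(u_i) = u'_i$ for all $i = 1,\ldots,n$. Since $u'_i = u_i$ for $i \leqslant n-1$, such an $f$ fixes $u_1,\ldots,u_{n-1}$, hence fixes $\mathcal{F}_{{\mathcal{V}_{n-1}}} = \Fq(u_1,\ldots,u_{n-1})$ pointwise, so $f \in \Gal({K_n}/\mathcal{F}_{{\mathcal{V}_{n-1}}})$, and $\mathcal{P}(\mathcal{V}') = u'_n = f(u_n)$ lies in the right-hand set. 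For the reverse inclusion ``$\supseteq$'': take any $f \in \Gal({K_n}/\mathcal{F}_{{\mathcal{V}_{n-1}}})$ and set $w_i := f(u_i)$; since $f$ fixes $\mathcal{F}_{{\mathcal{V}_{n-1}}}$, we have $w_i = u_i$ for $i \leqslant n-1$. Let $\mathcal{W} := \Theta(w_1,\ldots,w_n)$ be the associated $n$-flag (which is a $\BasicT$-torsion flag by Theorem \ref{Thm:nflagcondition0}, since $f$ preserves the relations \eqref{Eqt:kappauuiuiplus101}--\eqref{Eqt:kappauuiuiplus100}). By Lemma \ref{extend0} $(3)\Rightarrow(1)$, $\mathcal{W}$ is isomorphic to $\mathcal{V}$; and since $(w_1,\ldots,w_{n-1}) = (u_1,\ldots,u_{n-1})$, the parent of $\mathcal{W}$ is $\Theta(u_1,\ldots,u_{n-1}) = {\mathcal{V}_{n-1}}$. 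Hence $\mathcal{W} \in M(\mathcal{V};{\mathcal{V}_{n-1}})$ and $f(u_n) = w_n = \mathcal{P}(\mathcal{W})$ lies in the left-hand set.

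The argument is essentially a bookkeeping exercise once Lemma \ref{extend0} and Theorem \ref{Thm:nflagcondition0} are in hand, so I do not anticipate a serious obstacle; the one point that needs a little care is verifying that the parent flag of $\Theta(w_1,\ldots,w_n)$ is genuinely ${\mathcal{V}_{n-1}}$ — i.e., that the standard correspondence $\Theta$ is compatible with passing to the parent, so that agreement of the first $n-1$ parameters is equivalent to agreement of the first $n-1$ subspaces. This follows directly from the inductive construction of $V_i = \Ker(\lambdau{u_i}\cdots\lambdau{u_1})$ in the proof of Proposition \ref{Prop:Fromflagtoui}, since $V_i$ depends only on $u_1,\ldots,u_i$. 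A secondary subtlety is making sure we are consistently working with flags versus flag classes: $M(\mathcal{V};{\mathcal{V}_{n-1}})$ is a set of \emph{flags}, and $\mathcal{P}$ is well-defined on flags, so no passage to isomorphism classes is needed on the left, while on the right the $f$'s range over an honest Galois group — both sides are therefore concrete subsets of ${K_n}$ and the equality is literal.
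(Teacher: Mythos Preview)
Your proposal is correct and follows essentially the same approach as the paper's proof: both directions are handled via Lemma \ref{extend0}, with the ``$\subseteq$'' inclusion identical and the ``$\supseteq$'' inclusion differing only cosmetically (the paper builds $\mathcal{V}'$ by setting $\nu_n' = f(\nu_n)$ in an ordered basis and then observes $\Lambda(\mathcal{V}') = (u_1,\ldots,u_{n-1},u_n')$, whereas you go through $\Theta$ directly, which amounts to the same thing). The two subtleties you flag---compatibility of $\Theta$ with passing to parents, and the flag-versus-flag-class distinction---are exactly the right points to check and are handled the same way in the paper.
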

 \begin{proof}
 	On the one hand, let $u'_{n} \in \{\mathcal{P}(\mathcal{V}'):\mathcal{V}' \in M(\mathcal{V};{\mathcal{V}_{n-1}})\}$ be given, and we wish to find an $f \in \operatorname{Gal}({K_{n}}/\Knaught  )$ such that $ f(u_n)=u_n'$. Indeed, we can assume that $\mathcal{V}' \in M(\mathcal{V};{\mathcal{V}_{n-1}})$ is subject to  $u_n' = \mathcal{P}(\mathcal{V}')$. 	
 	Since $\mathcal{V}'$ is isomorphic to $\mathcal{V}$, by Lemma \ref{extend0}, there exists $f \in \operatorname{Gal}({K_{n}}/\Knaught  )$, such that $f(u_{i}) = u_{i}$, for $i = 1,\ldots,n-1$ and $f(u_n)=u'_n$, where $(u_1,\ldots,u_{n-1},u_{n}) = \Lambda(\mathcal{V})$ and $(u_1,\ldots,u_{n-1},u'_{n}) = \Lambda(\mathcal{V}')$ (as $\mathcal{V}$ and $\mathcal{V}'$ have the same parent flag).  Therefore,   $\mathcal{F}_{{\mathcal{V}_{n-1}}}$ is fixed by $f$, i.e., $f \in \Gal({K_{n}}/\mathcal{F}_{{\mathcal{V}_{n-1}}})$. 
 	
 	On the other hand, given any $f \in \Gal({K_{n}}/\mathcal{F}_{{\mathcal{V}_{n-1}}})$ and  let $u_n'=f(u_n)  $, we need to construct  $\mathcal{V}' \in M(\mathcal{V},{\mathcal{V}_{n-1}})$ satisfying $\mathcal{P}(\mathcal{V}')=u_n'$. The method is in fact a routine --- We start from the data    
 	$\mathcal{V} = (\nu_1<\cdots<\nu_n)$ {and } $(u_1,\ldots,u_n) = \Lambda(\mathcal{V}) $, and let $\nu_{n}' = f(\nu_{n})$.   Construct an $n$-flag $\mathcal{V}' = (\nu_1<\ldots<\nu_{n-1}<\nu_{n}')$. 
 	It is easy to get $\Lambda(\mathcal{V}') = (u_1,\ldots,u_{n-1},u_n')$ by definition of $\Lambda$ and $\mathcal{V}'$ is $\BasicT$-torsion. Also, as  $f(u_i) = u_i$, for $i = 1,\ldots,n-1$ and $f(u_n) = u_{n}'$,   by Lemma \ref{extend0}, $\mathcal{V}'$ is isomorphic to $\mathcal{V}$. Therefore, we have  $\mathcal{V}' \in M(\mathcal{V},{\mathcal{V}_{n-1}})$,  as required.
 \end{proof}

 \subsubsection{Proofs of Proposition \ref{prop:isomorphismFVbetaV} and  \ref{Prop:main}} Finally, we finish the desired proofs of our main propositions.   The first Proposition  \ref{prop:isomorphismFVbetaV} is a  direct consequence of Lemma \ref{extend0}.  
 To prove the second Proposition \ref{Prop:main},  we use Corollary \ref{root} and the definition of $\minimalpoly{\mathcal{V}}$ to rewrite 
 $$\minimalpoly{\mathcal{V}}(X) = \prod_{u_n' \in \{f(u_{n}):f \in \Gal({K_{n}}/\mathcal{F}_{{\mathcal{V}_{n-1}}})\}}  (X-u_n').$$
 By standard Galois theory, we have $\minimalpoly{\mathcal{V}} \in \mathcal{F}_{{\mathcal{V}_{n-1}}}[X]$ and it is irreducible over $\mathcal{F}_{{\mathcal{V}_{n-1}}}$. Since $u_n$ is a root of $\minimalpoly{\mathcal{V}}$, the   statement is  evident now.

 
 \subsection{The $g$-evaluation map}
  Now consider a general $A$-field $L$. Let $g\in \Lbar$ be a fixed element. Define the $g$-evaluation map $$\iotag :~\Fq[G]\to \Fq(g) ,   \quad f(G)\mapsto f(g) .$$
 	Let $\mathcal{O}$ be the \textit{integral closure} of $\Fq[G]$ in $\Knaughtbar=\overline{\Fq(G)}$. We can extend $\iotag $   to a homomorphism  (not necessarily unique) from $\mathcal{O}$ to $\overline{\F_q(g)}$ which is also denoted by $\iotag $ and fitting  the following commutative diagram:
 \begin{equation}\label{Eqt:iotagdiagram}
 \xymatrix{
 	\Fq[G] \ar@{^{(}->}[d]^{i} \ar[r]^{\iotag }  & \F_q(g)\ar@{^{(}->}[d]^{i}  \\
 	 	\mathcal{O}\ar[r]^{\iotag } &\overline{\F_q(g)}  .
 	 }
 \end{equation}
 Similar to the Drinfeld module $\BasicT  =  -\tau^3 + G \tau ^2 + 1 $ (over $\Knaught $),   we shall  consider
 \begin{equation}
 \label{Eqt:phignaught}
 \phignaught =-\tau^3 + g  \tau ^2 + 1 ,  
 \end{equation}
  a normalized $(3,2)$-type Drinfeld module over $\Lbar$ determined by $g$. Then clearly,   
  $\iotag$ intertwines $\BasicT$ and $\phignaught$, and hence   $\iotag(\Ker(\BasicT))\subset \Ker(\phignaught)$. Moreover, for any nonzero $x \in \Ker(\BasicT)$, we have
 $ -x^{q^3-1}+Gx^{q^2-1}+1 =0$, which implies that $\iotag(x)\in\Ker\phignaught$ satisfies $-\iotag(x)^{q^3-1}+g\iotag(x)^{q^2-1}+1 = 0 $, and thus $\iotag(x)$ is not zero.   Hence, $\iotag: ~\Ker\BasicT\to \Ker\phignaught$ is an injection. By   Lemma \ref{Lem:moduleiso}, it is indeed an isomorphism of $A$-modules. For the same reason, we see that   for every $i\geqslant 1$,      \begin{equation}
   \label{Eqt:iotagbijective}\iotag:~\Ker(\Basic_{T^i}) \to \Ker(\phignaughtnoT_{T^i}) 
   \end{equation}  
    is   an isomorphism of $A$-modules.

  \subsection{The affine curve of a $\BasicT$-torsion flag and its $L$-points} 
 \label{Section6.5}

 Suppose that we have a $\BasicT$-torsion $n$-flag $\mathcal{V} = (V_1\subset\ldots \subset V_n)$ in $\Knaughtbar $ which is subordinate to $\NODE_{i_1,\cdots, i_n}$. Write $\mathcal{V}_k = (V_1\subset \ldots \subset V_k)$ ($k=1,\cdots, n-1$) for ancestor flags of $\mathcal{V}_n:=\mathcal{V}$. One could treat $\mathcal{V}_0$ as the trivial vector space $\set{0}$.

 Define $\OV{0}=\Fq[G]$ ($\subset \mathcal{F}_{\mathcal{V}_0}=\Knaught=\Fq(G)$). As usual, we denote the integral closure of $\Fq[G]$ in $\Knaughtbar $ by $\mathcal{O}$.  In what follows, we let $\mathcal{O}_{\mathcal{V}_j}$ be the \textit{integral closure} of   $\F_q[G]$ in $\mathcal{F}_{\mathcal{V}_j}$ (i.e. $\mathcal{O}_{\mathcal{V}_j}=\mathcal{O}\cap \mathcal{F}_{\mathcal{V}_j}$). So one has the obvious inclusions $\Ker\Basic_{T^j}\subset \mathcal{O}_{\mathcal{V}_j}\subset \mathcal{F}_{\mathcal{V}_j}\subset K_j$.

  The next proposition refines the  result of Proposition \ref{Prop:main}.  Recall   that $\minimalpoly{\mathcal{V}_j}\in \mathcal{F}_{{\mathcal{V}_{j-1}}}[X]$ is the minimal polynomial of $\mathcal{V}_j$ ($j=1,\cdots,n$).
 
 \begin{prop}\label{Prop:betainOV}   We have   	
 	$\minimalpoly{\mathcal{V}_j} \in \OV{j-1}[X]$ where $\mathcal{V}_{j-1} = (V_1\subset \ldots \subset V_{j-1})$ is the parent  flag of $\mathcal{V}_j$.
 \end{prop}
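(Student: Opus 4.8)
The plan is to prove this by induction on $j$, upgrading the conclusion of Proposition \ref{Prop:main} (which says $\minimalpoly{\mathcal{V}_j}\in \mathcal{F}_{\mathcal{V}_{j-1}}[X]$ is the minimal polynomial of $u_j$ over $\mathcal{F}_{\mathcal{V}_{j-1}}$) to the statement that its coefficients actually lie in the integral closure $\OV{j-1}=\mathcal{O}\cap\mathcal{F}_{\mathcal{V}_{j-1}}$. The key observation is the standard fact that a monic polynomial over a field whose coefficients are the elementary symmetric functions of elements that are integral over a subring $R$ must have its coefficients in $R$ (provided $R$ is integrally closed in something containing those elements, or simply because sums and products of integral elements are integral). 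So the whole task reduces to showing that every root of $\minimalpoly{\mathcal{V}_j}$ is integral over $\Fq[G]$, i.e. lies in $\mathcal{O}$.

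First I would recall from the definition \eqref{beta} that $\minimalpoly{\mathcal{V}_j}(X)=\prod_{\mathcal{W}\in M(\mathcal{V}_j,\mathcal{V}_{j-1})}(X-\mathcal{P}(\mathcal{W}))$, and by Corollary \ref{root} the set of roots is exactly $\{f(u_j):f\in\Gal(K_j/\mathcal{F}_{\mathcal{V}_{j-1}})\}$. Since $\mathcal{O}$ is a $\Gal(K_j/\Knaught)$-stable subring of $\Knaughtbar$ (the integral closure of $\Fq[G]$ is preserved by all field automorphisms fixing $\Fq[G]$), it suffices to show the single element $u_j\in\mathcal{O}$, because then all its Galois conjugates are in $\mathcal{O}$ too, hence so are the elementary symmetric functions, and these symmetric functions already lie in $\mathcal{F}_{\mathcal{V}_{j-1}}$ by Proposition \ref{Prop:main}; therefore they lie in $\mathcal{O}\cap\mathcal{F}_{\mathcal{V}_{j-1}}=\OV{j-1}$. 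So the heart of the matter is: $u_j$ is integral over $\Fq[G]$.

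To see that $u_j\in\mathcal{O}$, I would trace back through the standard correspondence. Recall $u_1=\nu_1^{q-1}$ where $\nu_1\in\Ker(\BasicT)$, and by Example \ref{Example:n=1} (or \eqref{Eqt:betaV1}) $u_1$ satisfies the monic equation $X^{q^2+q+1}-GX^{q+1}-1=0$ over $\Fq[G]$, so $u_1\in\mathcal{O}$; moreover $\nu_1$ itself is integral over $\Fq[G]$ since it satisfies $\BasicT(\nu_1)=-\nu_1^{q^3}+G\nu_1^{q^2}+\nu_1=0$, a monic polynomial identity after dividing by $-1$. Inductively, $l_j=\lambdau{u_{j-1}}\cdots\lambdau{u_1}(\nu_j)=(\tau-u_{j-1})\cdots(\tau-u_1)(\nu_j)$ is a polynomial expression in $\nu_j$ and the $u_i$ ($i<j$) with $\Fq$-coefficients, where $\nu_j\in\Ker(\Basic_{T^j})$ satisfies $\Basic_{T^j}(\nu_j)=0$ — again a monic relation over $\Fq[G]$ after normalizing the leading coefficient — so $\nu_j\in\mathcal{O}$; combined with $u_i\in\mathcal{O}$ for $i<j$ (induction hypothesis) and the fact that $\mathcal{O}$ is a ring, we get $l_j\in\mathcal{O}$, hence $u_j=l_j^{q-1}\in\mathcal{O}$. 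Alternatively one can use Equation \eqref{Eqt:kappauuiuiplus100}: $\kappau{u_{j-1}}{u_j}=0$ is, after clearing the single denominator $u_{j-1}^{q^2+q}$ (which is a unit in $\mathcal{O}[u_{j-1}^{-1}]$) — and noting $u_{j-1}$ is a unit times an integral element, or better, multiplying through to get a genuinely monic relation — an integral dependence of $u_j$ on $u_{j-1}$ and $G$; this is slightly more delicate because of the denominator, so I would favor the $\nu_j$-based argument.

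The main obstacle, such as it is, is bookkeeping: making sure that the Galois-conjugate roots really do generate a monic polynomial with coefficients in the \emph{integral closure} rather than merely in $\mathcal{F}_{\mathcal{V}_{j-1}}$, and being careful that the modular polynomial $\kappausingle{u}$ has a denominator so one cannot naively read off integrality from \eqref{Eqt:kappauuiuiplus100}; the clean route is to exhibit each $u_j$ as an element of the ring $\mathcal{O}$ directly via the twisted-polynomial expressions and the torsion relations $\Basic_{T^j}(\nu_j)=0$, then invoke Galois-stability of $\mathcal{O}$ and Proposition \ref{Prop:main}. None of the individual steps is hard; the proof is essentially a packaging of "roots integral $\Rightarrow$ coefficients integral" together with the explicit description of the roots from Corollary \ref{root}.
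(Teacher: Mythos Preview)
Your proposal is correct and follows essentially the same approach as the paper: show that every root of $\minimalpoly{\mathcal{V}_j}$ lies in $\mathcal{O}$ (because each $\nu_i\in\Ker(\Basic_{T^n})$ is integral over $\Fq[G]$, hence so is $u_j'=\mathcal{P}(\mathcal{V}')$), whence the coefficients lie in $\mathcal{O}\cap\mathcal{F}_{\mathcal{V}_{j-1}}=\OV{j-1}$ by Proposition~\ref{Prop:main}. The only cosmetic difference is that the paper works directly with the defining set $M(\mathcal{V},\mathcal{V}_{n-1})$ rather than passing through Corollary~\ref{root} and Galois-stability of $\mathcal{O}$, but the substance is identical.
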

 \begin{proof}It suffices to prove the $j=n$ case. 
 	For any $\mathcal{V}' = (\nu_1<\ldots<\nu_{n-1}<\nu'_n) \in M({\mathcal{V}},{\mathcal{V}_{n-1}})$, we have $\nu_1,\ldots,\nu_{n-1},\nu'_n \in \Ker(\Basic_{T^n})\subset K_n$. Therefore, the corresponding $u'_n=\mathcal{P}(\mathcal{V}')$   is integral over $\Fq[G]$.  We concluded that for any $\mathcal{V}'  \in M({\mathcal{V}},{\mathcal{V}_{n-1}})$ one has $\mathcal{P}(\mathcal{V}')\in \mathcal{O}$, and hence   $\minimalpoly{\mathcal{V}}\in \mathcal{O}[X]$ by its definition.
 	
 	So, by Proposition \ref{Prop:main}, we have  $\minimalpoly{\mathcal{V}}\in \mathcal{O}[X]\cap \mathcal{F}_{\mathcal{V}_{n-1}}[X]$, which implies that $\minimalpoly{\mathcal{V}}\in \OV{n-1}[X]$. 
 \end{proof}
 

 We now introduce an important affine curve associated with $\mathcal{V}$ such that its   function field is $\mathcal{F}_{\mathcal{V}}$.
   
 \begin{defn}\label{defn:affinecurveCV}With notations as earlier, define $\mathcal{C}_{\mathcal{V}}$ to be an 
 	affine curve  
 	over $\Fq$  
 in variables $(G,X_1,\ldots,X_n)$ with defining equations
 $$
 \minimalpoly{\mathcal{V}_1} (X_1)=0,\quad \minimalpoly{\mathcal{V}_2} (X_2)=0, \quad \cdots,\quad \minimalpoly{\mathcal{V}_n} (X_n)=0.
 $$  
 
\end{defn}
  
 \smallskip
 
Next, consider   an arbitrary $A$-field, say $L$.     We give an inductive characterization of $\Lbar$ (or $L$)-points of $\mathcal{C}_{\mathcal{V}}$: 
 \begin{itemize}
 	\item First, suppose that $g\in \Lbar$ is given. We can defined the $g$-evaluation map $\iotag :$ $\OV{0}(=\Fq[G])\to \Fq(g)$. Recall from Proposition \ref{Prop:betainOV} that we have   $\minimalpoly{\mathcal{V}_1}\in \OV{0}[X]$.  Indeed, according to Equation \eqref{Eqt:betaV1}, we have $\minimalpoly{\mathcal{V}_1}=X^{q^2+q+1}-GX^{q+1}-1$. So, one can do $g$-evaluation to coefficients of $\minimalpoly{\mathcal{V}_1}$, obtaining $\minimalpoly{\mathcal{V}_1}^{(g)}=X^{q^2+q+1}-gX^{q+1}-1\in \Fq(g)[X]$.
 	\item Second, suppose that $x_1\in \Lbar^*$ is subject to $\minimalpoly{\mathcal{V}_1}^{(g)}(x_1)=0$. Then we can define a $(g,x_1)$-evaluation map $\OV{1}\to \overline{\Fq(g)}$ ($\subset \Lbar$). Recall here that $\OV{1}$ is the integral closure of $\OV{0}=\Fq[G]$ in $\mathcal{F}_{\mathcal{V}_1}$. Since $\mathcal{F}_{\mathcal{V}_1}$ is defined to be $\Fq(G,u_1)$, and $\minimalpoly{\mathcal{V}_{1}}$ is the minimal polynomial of $u_1$ in $\mathcal{F}_{\mathcal{V}_0}=\Knaught=\Fq(G)$ (Proposition \ref{Prop:main}), 
 	the relation $\minimalpoly{\mathcal{V}_1}^{(g)}(x_1)=0$   ensures a well-defined map, the $(g,x_1)$-evaluation $\OV{1} \to \overline{\Fq(g)}$ mapping $G\mapsto g$ and $u_1\mapsto x_1$.  From Proposition \ref{Prop:betainOV} again,  we have   $\minimalpoly{\mathcal{V}_2}\in \OV{1}[X]$, and hence  one can do $(g,x_1)$-evaluation to coefficients of $\minimalpoly{\mathcal{V}_2}$, obtaining $\minimalpoly{\mathcal{V}_2}^{(g,x_1)}\in \overline{\Fq(g)}[X]$.
 	\item Suppose that   $x_{2}\in \Lbar^*$ is subject to $\minimalpoly{\mathcal{V}_2}^{(g,x_1)}(x_{2 })=0$. Then one can define the $(g,x_1, x_{2})$-evaluation map $\OV{2}\to \overline{\Fq(g)}$. Thus from $\minimalpoly{\mathcal{V}_{3}}\in \OV{2}[X]$ we obtain $\minimalpoly{\mathcal{V}_{3}}^{(g,x_1, x_{2} )}\in \overline{\Fq(g)}[X]$.
 	
 	\item Inductively, suppose that $\minimalpoly{\mathcal{V}_i}^{(g,x_1,\cdots,x_{i-1})}\in \overline{\Fq(g)}[X]$ is obtained. Then for any $x_{i}\in \Lbar^*$ subject to $\minimalpoly{\mathcal{V}_i}^{(g,x_1,\cdots,x_{i-1})}(x_{i })=0$, one can define the $(g,x_1,\cdots,x_{i-1},x_{i})$-evaluation map $\OV{i}\to \overline{\Fq(g)}$. Thus from $\minimalpoly{\mathcal{V}_{i+1}}\in \OV{i}[X]$ we obtain $\minimalpoly{\mathcal{V}_{i+1}}^{(g,x_1,\cdots,x_{i-1},x_i)}\in \overline{\Fq(g)}[X]$.
 	
 	\item Following these  steps, we started from $ g\in \Lbar$ and selected elements    $x_1$, $\cdots, x_n$ (all in $\Lbar^*$) such that  $\minimalpoly{\mathcal{V}_n}^{(g,x_1,\cdots,x_{n-1})}(x_n)=0$.
 \end{itemize}
 
 In the sequel, by saying  an \textbf{$\Lbar$-point} of $\mathcal{C}_{\mathcal{V}}$, we mean   an array $(g,x_1,\cdots,x_n)\in \Lbar\times (\Lbar^*)^{\times n}$ satisfying the equations
 	\begin{eqnarray*}
 		\minimalpoly{\mathcal{V}_1}^{(g)}(x_1)&=&0;\\
 		\minimalpoly{\mathcal{V}_2}^{(g,x_1) }(x_2)&=&0;\\
 		&\cdots&\\
 		\minimalpoly{\mathcal{V}_n}^{(g,x_1,\cdots,x_{n-1})}(x_n)&=&0.
 	\end{eqnarray*} An \textbf{$L$-point} of $\mathcal{C}_{\mathcal{V}}$ is an array $(g,x_1,\cdots,x_n)\in L\times (L^*)^{\times n}$ satisfying the same requirements.
 
  The set of $\Lbar$ (resp. $L$)-points of $\mathcal{C}_{\mathcal{V}}$ is denoted by $\mathcal{C}_{\mathcal{V}}(\Lbar)$ (resp. $\mathcal{C}_{\mathcal{V}}(L)$). For a fixed element $g $,  we also denote by
\begin{equation}
\label{Eqt:CgV}
\mathcal{C}^{(g)}_{\mathcal{V}}(\Lbar): =\set{(x_1,\cdots,x_n)\in (\Lbar^*)^{\times n} ~|~(g,x_1,\cdots,x_n) \mbox{ is an } \Lbar \mbox{-point of }\mathcal{C}_{\mathcal{V}}} \end{equation}
the set of  $\Lbar$-points of $\mathcal{C}_{\mathcal{V}}$ leading with $g$. We define   $\mathcal{C}^{(g)}_{\mathcal{V}}(L):=\mathcal{C}^{(g)}_{\mathcal{V}}(\Lbar)\cap (L^*)^{\times n}$.
 
 \subsection{From $\BasicT$-torsion flags to  $\Knaughtbar $-points} Particularly, if the $A$-field $L=\Knaughtbar $, then according to Proposition \ref{Prop:main}, the array $(G,u_1,\cdots,u_n)$ is certainly a $\Knaughtbar $-point of $\mathcal{C}_{\mathcal{V}}$. More generally, we could consider the set of all    $\Knaughtbar $-points of $\mathcal{C}_{\mathcal{V}}$ leading with $G$:
 $$
 \mathcal{C}_{\mathcal{V}}^{( G )}(\Knaughtbar )=\set{(w_1,\cdots,w_n)\in (\Knaughtbar^*)^{\times n} ~|~(G,w_1,\cdots,w_n) \mbox{ is a } \Knaughtbar \mbox{-point of }\mathcal{C}_{\mathcal{V}}}.
 $$
 
 Recall   the notation 
 $\mathcal{G}(\overline{\Knaught},\Basic;\NODE_{i_1,\cdots, i_n})$, referring to the set of all $\BasicT$-torsion $n$-flags in $\overline{\Knaught}$ which are subordinate to    $ \NODE_{i_1,\cdots, i_n}$
 (see Notation \ref{Notation:pair}).  
  Also recall the standard correspondence map $\Lambda:  \set{n\mbox{-flags in }\Knaughtbar}\to ~(\Knaughtbar^*)^{\times n}$ presented in Section \ref{Sec:flagsnotations}.

   \begin{prop}\label{Prop:overlineLambda} 
 The map  $\overline{\Lambda}:$ 
  $\mathcal{G}(\overline{\Knaught},\Basic;\NODE_{i_1,\cdots, i_n})$ $\to$ $\mathcal{C}_{\mathcal{V}}^{( G )}(\Knaughtbar )$ defined by  
 $\mathcal{W}\mapsto \Lambda(\mathcal{W})$  
    is a bijection. \end{prop}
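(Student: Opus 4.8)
The plan is to show $\overline{\Lambda}$ is well-defined, injective, and surjective, reducing all three to facts already proved. For well-definedness, note that if $\mathcal{W}\in\mathcal{G}(\overline{\Knaught},\Basic;\NODE_{i_1,\cdots,i_n})$ then $\mathcal{W}$ is a $\BasicT$-torsion $n$-flag subordinate to $\NODE_{i_1,\cdots,i_n}$, hence by Proposition~\ref{prop:isomorphismFVbetaV} there is an isomorphism $\psi\colon\mathcal{F}_{\mathcal{V}}\to\mathcal{F}_{\mathcal{W}}$ carrying $u_i\mapsto w_i$ and $\minimalpoly{\mathcal{V}_j}\mapsto\minimalpoly{\mathcal{W}_j}$, where $(w_1,\ldots,w_n)=\Lambda(\mathcal{W})$. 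By Proposition~\ref{Prop:main} applied to $\mathcal{W}$, each $w_j$ is a root of $\minimalpoly{\mathcal{W}_j}\in\mathcal{F}_{\mathcal{W}_{j-1}}[X]$; transporting back through $\psi$ (which fixes $G$), the tuple $(w_1,\ldots,w_n)$ satisfies $\minimalpoly{\mathcal{V}_1}^{(G)}(w_1)=0$, $\minimalpoly{\mathcal{V}_2}^{(G,w_1)}(w_2)=0$, and so on — i.e.\ $(G,w_1,\ldots,w_n)$ is a $\Knaughtbar$-point of $\mathcal{C}_{\mathcal{V}}$. (Alternatively, and more directly, since $\mathcal{W}$ being $\BasicT$-torsion forces $\kappau{w_{i-1}}{w_i}=0$ by Theorem~\ref{Thm:nflagcondition0}, and the $\minimalpoly{\mathcal{W}_j}$ divide the appropriate polynomials, one gets membership in $\mathcal{C}_{\mathcal{V}}^{(G)}(\Knaughtbar)$.) Thus $\overline{\Lambda}$ lands in $\mathcal{C}_{\mathcal{V}}^{(G)}(\Knaughtbar)$.

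For injectivity, observe that $\overline{\Lambda}$ is a restriction of the global standard-correspondence map $\Lambda\colon\{n\text{-flags in }\Knaughtbar\}\to(\Knaughtbar^*)^{\times n}$, which is a bijection by Proposition~\ref{Prop:Fromflagtoui}; hence $\overline{\Lambda}$ is automatically injective on its domain.

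For surjectivity, let $(w_1,\ldots,w_n)\in\mathcal{C}_{\mathcal{V}}^{(G)}(\Knaughtbar)$ be given, and set $\mathcal{W}:=\Theta(w_1,\ldots,w_n)$, the unique $n$-flag with $\Lambda(\mathcal{W})=(w_1,\ldots,w_n)$. I must show (i) $\mathcal{W}$ is a $\BasicT$-torsion flag, and (ii) $\mathcal{W}$ is subordinate to $\NODE_{i_1,\cdots,i_n}$. For (i): the defining equations of $\mathcal{C}_{\mathcal{V}}$ say $\minimalpoly{\mathcal{V}_j}^{(G,w_1,\ldots,w_{j-1})}(w_j)=0$; since $\minimalpoly{\mathcal{V}_1}=X^{q^2+q+1}-GX^{q+1}-1$ this gives Equation~\eqref{Eqt:kappauuiuiplus101}, i.e.\ $G=\frac{w_1^{q^2+q+1}-1}{w_1^{q+1}}$, and one checks $\minimalpoly{\mathcal{V}_j}(X)\mid\kappau{w_{j-1}}{X}$ (or at least that roots of $\minimalpoly{\mathcal{V}_j}$ are roots of $\kappausingle{w_{j-1}}$) — this follows because the roots of $\minimalpoly{\mathcal{V}_j}$ are, by Corollary~\ref{root} and Proposition~\ref{Prop:main}, exactly the Galois conjugates $f(u_j)$ over $\mathcal{F}_{\mathcal{V}_{j-1}}$, all of which satisfy $\kappau{u_{j-1}}{u_j}=0$ by Theorem~\ref{Thm:nflagcondition0}, and applying the field isomorphism from Proposition~\ref{prop:isomorphismFVbetaV}. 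Hence $\kappau{w_{j-1}}{w_j}=0$ for all $j$, and Theorem~\ref{Thm:nflagcondition0} yields that $\mathcal{W}$ is $\BasicT$-torsion. For (ii): I would argue by induction on $n$, using that $(w_1,\ldots,w_{n-1})\in\mathcal{C}_{\mathcal{V}_{n-1}}^{(G)}(\Knaughtbar)$ gives a $\BasicT$-torsion flag $\mathcal{W}_{n-1}$ subordinate to $\NODE_{i_1,\cdots,i_{n-1}}$, and then $w_n$ being a root of $\minimalpoly{\mathcal{V}_n}^{(G,w_1,\ldots,w_{n-1})}$ — which via the isomorphism $\mathcal{F}_{\mathcal{V}_{n-1}}\cong\mathcal{F}_{\mathcal{W}_{n-1}}$ is $\minimalpoly{\mathcal{W}_n}$ — forces $w_n\in\{f(\mathcal{P}(\mathcal{W}_{n-1}')):\ldots\}$, so by Corollary~\ref{root} the flag $\mathcal{W}$ lies in $M(\mathcal{W}',\mathcal{W}_{n-1})$ for a flag $\mathcal{W}'$ subordinate to $\NODE_{i_1,\cdots,i_n}$, hence $\mathcal{W}$ itself is subordinate to $\NODE_{i_1,\cdots,i_n}$.

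The main obstacle I expect is step (ii) of surjectivity: knowing that $w_n$ is a root of (the evaluation of) $\minimalpoly{\mathcal{V}_n}$ only pins down the $\Knaughtbar$-point up to the Galois orbit, and one must carefully leverage Corollary~\ref{root} together with the isomorphism of Proposition~\ref{prop:isomorphismFVbetaV} to conclude that \emph{every} such root corresponds to a flag in the correct isomorphism class $\NODE_{i_1,\cdots,i_n}$, rather than some other child class of $\NODE_{i_1,\cdots,i_{n-1}}$. This is precisely where the fact that $\minimalpoly{\mathcal{V}_n}$ is \emph{irreducible} over $\mathcal{F}_{\mathcal{V}_{n-1}}$ (Proposition~\ref{Prop:main}) does the work: distinct child classes give distinct, hence coprime, minimal polynomials (as in Theorem~\ref{Thm:NodeToFactor}), so a root of $\minimalpoly{\mathcal{V}_n}$ cannot be the parameter of a flag in a different class. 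The rest is bookkeeping with the standard correspondence and the evaluation maps.
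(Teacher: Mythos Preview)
Your proposal is correct and follows essentially the same approach as the paper's proof: well-definedness via Proposition~\ref{prop:isomorphismFVbetaV}, injectivity from the global bijectivity of $\Lambda$, and surjectivity by induction on $n$ using Proposition~\ref{prop:isomorphismFVbetaV} to identify $\minimalpoly{\mathcal{V}_n}^{(G;w_1,\ldots,w_{n-1})}$ with $\minimalpoly{\mathcal{W}_n}$ and then the definition of $\minimalpoly{\mathcal{W}_n}$ (equivalently Corollary~\ref{root}) to locate a child flag in $M(\mathcal{W}_n,\mathcal{W}_{n-1})$ with parameter $w_n$. The only difference is that your separate step~(i) (showing $\Theta(w_1,\ldots,w_n)$ is $\BasicT$-torsion via Theorem~\ref{Thm:nflagcondition0}) is not needed in the paper's argument, since the preimage is constructed directly as an element of $M(\mathcal{W}_n,\mathcal{W}_{n-1})$ and is therefore automatically $\BasicT$-torsion; your step~(i) is correct but redundant.
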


\begin{proof} First of all, we show that $\overline{\Lambda}$ is well-defined. That is, if $\mathcal{W}$ is a  $\BasicT$-torsion $n$-flags   subordinate to    $ \NODE_{i_1,\cdots, i_n}$, then for $\Lambda(\mathcal{W})=(w_1,\cdots, w_n)$, $(G,w_1,\cdots,w_n)$   is indeed a   $\Knaughtbar${-point of } $\mathcal{C}_{\mathcal{V}}$. In fact, we have $\minimalpoly{\mathcal{W}_i} (w_i)=0$ for all $i = 1,\ldots,n$. 	
	Then by Proposition \ref{prop:isomorphismFVbetaV}, we have an isomorphism $\psi: \mathcal{F}_{\mathcal{V}}\to \mathcal{F}_{\mathcal{W}}$ of function fields  such that    $\psi(u_{i}) = w_{i}$  and  $\psi(\minimalpoly{\mathcal{V}_i})=\minimalpoly{\mathcal{W}_i}$ for all $i$, i.e. $\minimalpoly{\mathcal{V}_i}^{(G;w_1,\cdots, w_{i-1})}=\minimalpoly{\mathcal{W}_i}$. We thus have  $\minimalpoly{\mathcal{V}_i}^{(G;w_1,\cdots, w_{i-1})}(w_i)=0$, as required.
	
	Since $\Lambda$ is injective, so is $\overline{\Lambda}$. It remains to show that $\overline{\Lambda}$ is surjective. 
	We prove it by induction on $n$. 	
	Starting with the $n=1$ case, we have $\mathcal{G}(\overline{\Knaught},\Basic;\NODE_{ 1})= M(\mathcal{V} ,\mathcal{V}_{0})$ formed by  all  $\BasicT$-torsion $1$-flags. In the meantime, $\mathcal{C}_{\mathcal{V}}^{( G )}(\Knaughtbar )$ is the set of roots of $\minimalpoly{\mathcal{V}}$. As $n=1$, the map $\overline{\Lambda}$ now coincides with $\mathcal{P}$ (defined by Equation \eqref{Eqt:mathcalP}), and by definition of $\minimalpoly{\mathcal{V}}$, $\overline{\Lambda}$ is a bijection.

	Suppose that this lemma holds true for the parent flag  of $\mathcal{V}$ which we denote by $\mathcal{V}_{n-1}$ and is subordinate to $\NODE_{i_1,\cdots, i_{n-1}}$. So, given any  $\Knaughtbar$-point $(G,w_1,\cdots,w_n)$ of   $\mathcal{C}_{\mathcal{V}}$, we are able find $\mathcal{W}_{n-1}\in  \mathcal{G}(\overline{\Knaught},\Basic;\NODE_{i_1,\cdots, i_{n-1}})$ such that $\overline{\Lambda}(\mathcal{W}_{n-1})=\Lambda(\mathcal{W}_{n-1})=(w_1,\cdots,w_{n-1})$. Pick any $\mathcal{W}_{n}\in  \mathcal{G}(\overline{\Knaught},\Basic;\NODE_{i_1,\cdots, i_{n}})$ whose parent flag is $\mathcal{W}_{n-1}$. Since $\mathcal{W}_{n}$ is isomorphic to $\mathcal{V}_{n}=\mathcal{V} $, we have $\minimalpoly{\mathcal{V}_n}^{(G;w_1,\cdots, w_{n-1})}=\minimalpoly{\mathcal{W}_n}$ (by Proposition \ref{prop:isomorphismFVbetaV} again).
	It follows from $\minimalpoly{\mathcal{V}_n}^{(G;w_1,\cdots, w_{n-1})}(w_n)=0$ that $w_n$ is indeed a root of $\minimalpoly{\mathcal{W}_n}$. Thus by the construction of $\minimalpoly{\mathcal{W}_n}$, there exists some $\mathcal{W}'_n\in 
	M({\mathcal{W}_n},{\mathcal{W}_{n-1}})$ (see \eqref{Eqt:MVVdiamond} for this notation) such that $\mathcal{P}(\mathcal{W}'_n)=w_n$, i.e., $\Lambda(\mathcal{W}'_{n})=(w_1,\cdots,w_n)$. This $\mathcal{W}'_n$ certainly belongs to $   \mathcal{G}(\overline{\Knaught},\Basic;\NODE_{i_1,\cdots, i_{n}})$, and is the desired inverse-image of $(w_1,\cdots,w_n)$ by $\overline{\Lambda}$. 
\end{proof}

  \subsection{From $\phignaught$-torsion flags to  $\Lbar $-points}
  Let $L$ be an $A$-field and $g\in \Lbar$.   
  Recall the Drinfeld module $\phignaughtnoT$ defined by Equation \eqref{Eqt:phignaught}. 
 Following  Notation \ref{Notation:pair},  we use 
 $\mathcal{G}(\Lbar,\phignaughtnoT ;\NODE_{i_1,\cdots, i_n})$  to denote the set of all $\phignaught$-torsion $n$-flags in $\Lbar$ which are subordinate to    $ \NODE_{i_1,\cdots, i_n}$. Recall that $\mathcal{C}_{\mathcal{V}}^{( g )}(\Lbar )$ stands for the set of $\Lbar$-points of $\mathcal{C}_{\mathcal{V}}$ leading with $g$ (see Equation \eqref{Eqt:CgV}).
  \begin{prop}\label{Prop:underlineLambda}
 	The map $\underline{\Lambda}:$ $\mathcal{G}(\Lbar,\phignaughtnoT ;\NODE_{i_1,\cdots, i_n})$ $\to$ $\mathcal{C}_{\mathcal{V}}^{( g )}(\Lbar )$ defined by 
 	$\mathcal{W}\mapsto \Lambda(\mathcal{W})$  	is a bijection. Here     $\Lambda$ is the standard correspondence map $   \set{n\mbox{-flags in }\Lbar}\to ~(\Lbar^*)^{\times n}$ (see Section \ref{Sec:flagsnotations}).

 \end{prop}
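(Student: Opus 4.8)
The plan is to deduce the statement from its ``universal'' counterpart, Proposition~\ref{Prop:overlineLambda}, by transporting along the evaluation homomorphism $\iotag$ and then invoking a dimension count. Since the standard correspondence $\Lambda$ is already a bijection on $n$-flags in $\Lbar$ (Proposition~\ref{Prop:Fromflagtoui}), $\underline{\Lambda}$ is automatically injective; it therefore suffices to establish (i) that the finite sets $\mathcal{G}(\Lbar,\phignaughtnoT;\NODE_{i_1,\cdots,i_n})$ and $\mathcal{C}^{(g)}_{\mathcal{V}}(\Lbar)$ have equal cardinality, and (ii) that the image of $\underline{\Lambda}$ contains $\mathcal{C}^{(g)}_{\mathcal{V}}(\Lbar)$. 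Indeed, (i) and (ii) together with injectivity force the image of $\underline{\Lambda}$ to be exactly $\mathcal{C}^{(g)}_{\mathcal{V}}(\Lbar)$, which yields both well-definedness and bijectivity at once.

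\textbf{The count (step (i)).} The characteristic of $\phignaughtnoT$ is the ideal $(T-1)$, coprime to $T$, so Lemma~\ref{Lem:moduleiso} provides an isomorphism $I^{\phignaughtnoT}\colon S^{\phignaughtnoT}\xrightarrow{\sim}S^3$; it identifies $\mathcal{G}(\Lbar,\phignaughtnoT;\NODE_{i_1,\cdots,i_n})$ with the set of $T$-torsion $n$-flags of $S^3$ in the class $\NODE_{i_1,\cdots,i_n}$, so $\#\mathcal{G}(\Lbar,\phignaughtnoT;\NODE_{i_1,\cdots,i_n})=\#\NODE_{i_1,\cdots,i_n}$ regardless of $g$. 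For the other side the plan is to evaluate $\#\mathcal{C}^{(g)}_{\mathcal{V}}(\Lbar)$ inductively along $\mathcal{V}_1,\ldots,\mathcal{V}_n$: by Proposition~\ref{Prop:main} each $\minimalpoly{\mathcal{V}_i}$ is the minimal polynomial of $u_i$ over $\mathcal{F}_{\mathcal{V}_{i-1}}$, hence divides the modular polynomial $\kappau{u_{i-1}}{X}$ (which kills $u_i$ by Theorem~\ref{Thm:nflagcondition0}); since $\minimalpoly{\mathcal{V}_i}$ is monic with coefficients in $\OV{i-1}$ (Proposition~\ref{Prop:betainOV}) and $\kappau{u_{i-1}}{X}$ also lies in $\OV{i-1}[X]$ (the $u_j$ being units there), this divisibility holds in $\OV{i-1}[X]$. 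A short derivative computation shows $\kappau{u}{X}$ is separable with nonzero constant term for every $u\neq0$, so after specialising to any $\Lbar$-point $(g,x_1,\cdots,x_{i-1})$ of $\mathcal{C}_{\mathcal{V}_{i-1}}$ the polynomial $\minimalpoly{\mathcal{V}_i}^{(g,x_1,\cdots,x_{i-1})}$ is a monic divisor of the separable polynomial $\kappau{x_{i-1}}{X}$, hence has exactly $\deg\minimalpoly{\mathcal{V}_i}$ distinct nonzero roots in $\Lbar$. Running this from $i=1$, where $\minimalpoly{\mathcal{V}_1}=X^{q^2+q+1}-GX^{q+1}-1$ specialises to a separable polynomial for every $g$, gives $\#\mathcal{C}^{(g)}_{\mathcal{V}}(\Lbar)=\prod_{i=1}^{n}\deg\minimalpoly{\mathcal{V}_i}$. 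The identical computation over $\Knaughtbar$ gives $\#\mathcal{C}^{(G)}_{\mathcal{V}}(\Knaughtbar)=\prod_{i=1}^{n}\deg\minimalpoly{\mathcal{V}_i}$, and by Proposition~\ref{Prop:overlineLambda} together with the $I^{\Basic}$-identification this equals $\#\NODE_{i_1,\cdots,i_n}$. Therefore $\#\mathcal{C}^{(g)}_{\mathcal{V}}(\Lbar)=\#\NODE_{i_1,\cdots,i_n}=\#\mathcal{G}(\Lbar,\phignaughtnoT;\NODE_{i_1,\cdots,i_n})$.

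\textbf{The lift (step (ii)).} Given $(g,x_1,\cdots,x_n)\in\mathcal{C}^{(g)}_{\mathcal{V}}(\Lbar)$, the associated evaluation homomorphisms produce a ring homomorphism $\OV{n}\to\Lbar$ sending $G\mapsto g$ and $u_i\mapsto x_i$; extend it to $\tilde\pi\colon\mathcal{O}\to\Lbar$ (possible since $\mathcal{O}$ is integral over $\OV{n}$ and $\Lbar$ is algebraically closed). As in the discussion preceding Equation~\eqref{Eqt:iotagbijective}, $\tilde\pi(G)=g$ forces $\tilde\pi$ to intertwine $\BasicT$ and $\phignaught$ and to restrict to an $A$-module isomorphism $\Ker(\Basic_{T^i})\xrightarrow{\sim}\Ker(\phignaughtnoT_{T^i})$ for all $i$. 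Then $\mathcal{W}:=\tilde\pi(\mathcal{V})$ (apply $\tilde\pi$ to each subspace of $\mathcal{V}$) is a $\phignaught$-torsion $n$-flag in $\Lbar$; it is again subordinate to $\NODE_{i_1,\cdots,i_n}$ because $I^{\phignaughtnoT}\circ\tilde\pi\circ(I^{\Basic})^{-1}$ is an $A$-automorphism of $S^3$ carrying $I^{\Basic}(\mathcal{V})$ to $I^{\phignaughtnoT}(\mathcal{W})$ and $A$-automorphisms preserve isomorphism classes of $T$-torsion flags; and since $\tilde\pi$ commutes with the Frobenius $\tau$, it commutes with the recursive recipe that defines $\Lambda$, so $\Lambda(\mathcal{W})=(\tilde\pi(u_1),\ldots,\tilde\pi(u_n))=(x_1,\cdots,x_n)$. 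Hence $(x_1,\cdots,x_n)=\underline{\Lambda}(\mathcal{W})$ lies in the image, which is (ii); combining the three steps finishes the proof.

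\textbf{Expected main obstacle.} I expect the delicate point to be the lift: one must be certain that the evaluation attached to an $\Lbar$-point of $\mathcal{C}_{\mathcal{V}}$ genuinely extends to a homomorphism on all of $\mathcal{O}$ and that this extension stays bijective on each finite module $\Ker(\Basic_{T^i})$ --- exactly the content of Equation~\eqref{Eqt:iotagbijective}, which rests only on algebraic closedness of $\Lbar$ and Lemma~\ref{Lem:moduleiso} --- together with the separability of the modular polynomial $\kappau{u}{X}$ in the count, which is what prevents the number of $\Lbar$-points of $\mathcal{C}_{\mathcal{V}}$ from dropping at special values of $g$. A longer alternative would be to mimic the inductive proof of Proposition~\ref{Prop:overlineLambda} verbatim for $\phignaught$ over $\Lbar$, proving $\phignaught$-analogues of Propositions~\ref{prop:isomorphismFVbetaV} and~\ref{Prop:main} and matching minimal polynomials through $\iotag$; the counting route is shorter precisely because it avoids those analogues.
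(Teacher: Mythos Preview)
Your proof is correct, and at its core it rests on the same mechanism as the paper's: transporting the universal bijection $\overline{\Lambda}$ (Proposition~\ref{Prop:overlineLambda}) through the evaluation $\iotag$. However, the organization differs. The paper fixes a single $\iotag\colon\mathcal{O}\to\Lbar$ from the outset and packages everything into one commutative square
\[
\begin{tikzcd}
\mathcal{G}(\Knaughtbar,\Basic;\NODE_{i_1,\cdots,i_n})\arrow[r,"S_g"]\arrow[d,"\overline{\Lambda}"'] & \mathcal{G}(\Lbar,\phignaughtnoT;\NODE_{i_1,\cdots,i_n})\arrow[d,"\underline{\Lambda}"] \\
\mathcal{C}_{\mathcal{V}}^{(G)}(\Knaughtbar)\arrow[r,"s_g"] & \mathcal{C}_{\mathcal{V}}^{(g)}(\Lbar)
\end{tikzcd}
\]
in which $S_g$ and $\overline{\Lambda}$ are already bijections; writing $\underline{\Lambda}=s_g\circ\overline{\Lambda}\circ S_g^{-1}$ simultaneously handles well-definedness and reduces everything to showing $s_g$ is bijective. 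For that the paper only needs the trivial inequality $\#\mathcal{C}_{\mathcal{V}}^{(g)}(\Lbar)\leqslant\#\mathcal{C}_{\mathcal{V}}^{(G)}(\Knaughtbar)$ together with injectivity of $s_g$ (inherited from $\underline{\Lambda}$).

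By contrast, you split the work: your step~(i) replaces the one-line inequality by an explicit separability argument (divisibility by the modular polynomial in $\OV{i-1}[X]$) to compute $\#\mathcal{C}_{\mathcal{V}}^{(g)}(\Lbar)$ exactly, and your step~(ii) reconstructs, point by point, what the paper's global map $S_g$ does once. Both routes are valid; the paper's diagram-chase is shorter and avoids your explicit separability check, while your count has the mild advantage of making transparent \emph{why} the cardinality does not drop upon specialization, a point the paper leaves implicit in the inequality.
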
 
\begin{proof}
	Let us establish the following commutative diagram:
	\begin{equation}\label{BigDiagrame}
	\xymatrix{
		\mathcal{G}(\overline{\Knaught},\Basic;\NODE_{i_1,\cdots, i_n}) \ar[d]_{\overline{\Lambda}} \ar[r]^{S_g} &  \mathcal{G}(\Lbar,\phignaughtnoT ;\NODE_{i_1,\cdots, i_n}) \ar[d]_{\underline{\Lambda}} \\
		\mathcal{C}_{\mathcal{V}}^{( G )}(\Knaughtbar )  \ar[r]^{s_g}  & \mathcal{C}_{\mathcal{V}}^{(g)}(\overline{L}),
	}
	\end{equation}
	where  \begin{itemize}
		\item $\overline{\Lambda}$ is the bijection given by Proposition \ref{Prop:overlineLambda};
		\item $S_g$ is induced by the isomorphism $\iotag$ in Equation \eqref{Eqt:iotagbijective};
		\item $s_g$ is induced by the map $\iotag$ in Diagram \eqref{Eqt:iotagdiagram}:  $$(w_1,\cdots,w_n)\mapsto (\iotag(w_1),\cdots,\iotag(w_n)).$$
	\end{itemize}
In this diagram,   {the map $s_g$ is well defined}, primarily due to $$0=\iotag(\minimalpoly{\mathcal{V}_i}^{(G;w_1,\cdots, w_{i-1})}(w_i))=\minimalpoly{\mathcal{V}_i}^{(g;\iotag(w_1),\cdots, \iotag(w_{i-1}))}(\iotag(w_i)). $$ Also, 
  {Diagram \eqref{BigDiagrame} is commutative}, because $S_g$ is clearly a bijection, and it follows from the definition of $\underline{\Lambda}$ that $\underline{\Lambda}=s_g\circ \overline{\Lambda}\circ S_g^{-1}$. The map  $\underline{\Lambda}$ is injective (because $\Lambda$ is injective) implies that $s_g$ must be an injection.
Since the cardinality of $\mathcal{C}_{\mathcal{V}}^{(g)}(\overline{L})$ can not exceed that of  $\mathcal{C}_{\mathcal{V}}^{(G)}( {\Knaughtbar })$, $s_g$ is indeed a  bijection, and so is  $\underline{\Lambda}$.   
\end{proof}

 \subsection{The two main theorems}\label{Sec:L0case32typeDrinfeldmodularcurves}
 We are ready to state our  main theorems. The first one   presents $(3,2)$-type normalized Drinfeld modular curves in terms of the function fields we have used.  
 	Let    $\NODE_{i_1,\cdots, i_n}$ be a $T$-torsion flag class (in $ S^3$). Take  
any $\BasicT$-torsion flag   $\mathcal{V} = (V_1\subset\ldots \subset V_n)$ (in $\Knaughtbar $) which is    {subordinate to}   $\NODE_{i_1,\cdots, i_n}$.

 \begin{thm}\label{Thm:thehardisomorphism}
 	The $(3,2)$-type normalized Drinfeld modular curve $\dot{X}^{(3,2)}_{i_1,\cdots,i_{n}  }$ over $\Fq$ is given by the affine curve $\mathcal{C}_{\mathcal{V}}$ (see Definition \ref{defn:affinecurveCV}). 
 	Consequently,   the function field    of  $\dot{X}^{(3,2)}_{i_1,\cdots,i_{n}  }$		is isomorphic to   $\mathcal{F}_{\mathcal{V}}$, i.e., 	
 	$$\dot{\mathcal{F}}^{(3,2)} _{i_1,\cdots,i_{n}  }\cong \mathcal{F}_{\mathcal{V}}.$$ 
 	
 	\end{thm}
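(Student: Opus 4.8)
The plan is to prove the stronger statement that, for every $A$-field $L$, the set of $L$-points of $\dot{X}^{(3,2)}_{i_1,\cdots,i_{n}}$ --- i.e. the pairs $(\phi,\mathcal{W})$ with $\phi\in\mathcal{D}_L^{(3,2)}$ and $\mathcal{W}\in\mathcal{G}(L,\phi;\NODE_{i_1,\cdots, i_n})$ --- is in natural bijection with the set $\mathcal{C}_{\mathcal{V}}(L)$ of $L$-points of the affine curve $\mathcal{C}_{\mathcal{V}}$ of Definition \ref{defn:affinecurveCV}, the bijection being given by $\F_q$-rational formulas; the isomorphism $\dot{\mathcal{F}}^{(3,2)}_{i_1,\cdots,i_{n}}\cong\mathcal{F}_{\mathcal{V}}$ then drops out from geometric irreducibility.

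First I would unwind what the datum $(\phi,\mathcal{W})$ really is. A normalized $(3,2)$-type Drinfeld module is determined by the single coefficient $g\in L$ of $\tau^2$ in $\phi_T$, so $\phi=\phignaughtnoT$ with twisted polynomial $\phignaught$. Let $(u_1,\ldots,u_n)=\Lambda(\mathcal{W})$ be the parameters attached to $\mathcal{W}$ by the standard correspondence; by Lemma \ref{Lem:uiallinLstar} and Corollary \ref{Cor:FromflagoverLtoui} these all lie in $L^{*}$ precisely because $\mathcal{W}$ is defined over $L$, and $\mathcal{W}=\Theta(u_1,\ldots,u_n)$ is recovered from them. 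The condition $\phi_T(V_1)=0$ built into "$\mathcal{W}$ is a $\phi_T$-torsion flag", together with $u_1=\nu_1^{q-1}$ and Equation \eqref{Eqt:gintermsofu1} specialized to $(m,j)=(3,2)$, forces $g=\frac{u_1^{q^2+q+1}-1}{u_1^{q+1}}$. By Example \ref{Example:n=1} (Equation \eqref{Eqt:betaV1}) this is exactly the first defining equation $\minimalpoly{\mathcal{V}_1}^{(g)}(u_1)=0$ of $\mathcal{C}_{\mathcal{V}}$. Hence $(\phi,\mathcal{W})\mapsto(g;u_1,\ldots,u_n)$ is injective, and $\phi$ is not free data --- it is pinned down by $u_1$.

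It then remains to match, for fixed $g\in L$, the set $\mathcal{G}(L,\phignaughtnoT;\NODE_{i_1,\cdots, i_n})$ with the tuples $(u_1,\ldots,u_n)$ which, together with $g$, satisfy the remaining defining equations of $\mathcal{C}_{\mathcal{V}}$. Over $\Lbar$ this is precisely Proposition \ref{Prop:underlineLambda}: the map $\underline{\Lambda}\colon\mathcal{G}(\Lbar,\phignaughtnoT;\NODE_{i_1,\cdots, i_n})\to\mathcal{C}^{(g)}_{\mathcal{V}}(\Lbar)$, $\mathcal{W}\mapsto\Lambda(\mathcal{W})$, is a bijection; intersecting both sides with their $L$-rational parts --- flags defined over $L$ on the left, $(L^{*})^{\times n}$ on the right, matched again by Lemma \ref{Lem:uiallinLstar} --- yields $\mathcal{G}(L,\phignaughtnoT;\NODE_{i_1,\cdots, i_n})\cong\mathcal{C}^{(g)}_{\mathcal{V}}(L)$. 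Splicing with the previous paragraph gives, for every $A$-field $L$,
\[
\{\,L\text{-points of }\dot{X}^{(3,2)}_{i_1,\cdots,i_{n}}\,\}\ \longrightarrow\ \mathcal{C}_{\mathcal{V}}(L),\qquad (\phi,\mathcal{W})\longmapsto (g;\Lambda(\mathcal{W})).
\]
Since $\Lambda$ is independent of the auxiliary choice of ordered basis and the only other ingredient (producing $g$ from $u_1$ via \eqref{Eqt:gintermsofu1}) is an $\F_q$-rational formula, this bijection is compatible with $\mathbb F_q$-algebra extensions, i.e. functorial in $L$.

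Finally I would conclude. An induction on $j$ using that $\minimalpoly{\mathcal{V}_j}$ is monic with coefficients integral over $\F_q[G]$ (Proposition \ref{Prop:betainOV}) and irreducible over $\mathcal{F}_{\mathcal{V}_{j-1}}$ (Proposition \ref{Prop:main}) shows that $\mathcal{C}_{\mathcal{V}}$ is an irreducible affine curve with function field $\mathcal{F}_{\mathcal{V}_n}=\mathcal{F}_{\mathcal{V}}$; its geometric irreducibility follows either from the same induction or, by transport of structure along the functorial bijection just built, from the assumed geometric irreducibility of $\dot{X}^{(3,2)}_{i_1,\cdots,i_{n}}$. Two geometrically irreducible affine curves over $\F_q$ with the same functor of points on $A$-fields are birationally $\F_q$-isomorphic --- concretely, evaluating the bijection at $L=\dot{\mathcal{F}}^{(3,2)}_{i_1,\cdots,i_{n}}$ and at $L=\mathcal{F}_{\mathcal{V}}$ produces mutually inverse dominant $\F_q$-morphisms --- so their function fields agree, giving $\dot{\mathcal{F}}^{(3,2)}_{i_1,\cdots,i_{n}}\cong\mathcal{F}_{\mathcal{V}}$ and identifying $\dot{X}^{(3,2)}_{i_1,\cdots,i_{n}}$ with $\mathcal{C}_{\mathcal{V}}$. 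The one genuinely delicate step is the unwinding in the second paragraph --- recognizing that the Drinfeld module $\phi$ in the moduli pair is determined by $u_1$, so that the "extra" first coordinate of $\mathcal{C}_{\mathcal{V}}$ is forced by the equation $\minimalpoly{\mathcal{V}_1}^{(g)}(u_1)=0$, and translating "defined over $L$" cleanly into "$u_i\in L$"; everything downstream rests on Proposition \ref{Prop:underlineLambda} and on standard facts about geometrically irreducible curves being determined by their function fields.
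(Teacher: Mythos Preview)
Your proposal is correct and follows essentially the same approach as the paper's own proof: reduce to a fibrewise bijection $\mathcal{G}(L,\phignaughtnoT;\NODE_{i_1,\cdots,i_n})\cong\mathcal{C}^{(g)}_{\mathcal{V}}(L)$, obtain it over $\Lbar$ from Proposition~\ref{Prop:underlineLambda}, and descend to $L$ via Corollary~\ref{Cor:FromflagoverLtoui} (your Lemma~\ref{Lem:uiallinLstar}). Your write-up is in fact more explicit than the paper's two-line argument---you spell out how $g$ is recovered from $u_1$ through $\minimalpoly{\mathcal{V}_1}$ and you address the function-field identification and irreducibility, which the paper leaves implicit.
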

 \begin{proof}
 	 It amounts to establish a one-to-one correspondence between $\mathcal{G}(L,\phignaughtnoT ;\NODE_{i_1,\cdots, i_n})$ (see Notation \ref{Notation:pair}) and $\mathcal{C}_{\mathcal{V}}^{( g )}(L )=\mathcal{C}^{(g)}_{\mathcal{V}}(\Lbar)\cap (L^*)^{\times n}$, for any $A$-field $L$ and $g\in L$. Now Proposition \ref{Prop:underlineLambda} gives a bijection  $\underline{\Lambda}$ from $\mathcal{G}(\Lbar,\phignaughtnoT ;\NODE_{i_1,\cdots, i_n})$ $\to$ $\mathcal{C}_{\mathcal{V}}^{( g )}(\Lbar )$. The conclusion follows  immediately from Corollary \ref{Cor:FromflagoverLtoui}.
 \end{proof}

  We state our second theorem. Assume that   $\NODE_{i_1,\cdots, i_n}$ has totally $k$ child nodes $\NODE_{i_1,\cdots, i_n,1}$, $\NODE_{i_1,\cdots, i_n,2}$, $\cdots$, and $\NODE_{i_1,\cdots, i_n,k}$. Suppose that $ {\mathcal{V}_{n+1}^{(1)}}$, $\cdots$, $ {\mathcal{V}_{n+1}^{(k)}}$ are the child $\BasicT$-torsion flags of $\mathcal{V}_n=\mathcal{V}$ subordinate to $\NODE_{i_1,\cdots, i_n,1}$, $\NODE_{i_1,\cdots, i_n,2}$, $\cdots$, and $\NODE_{i_1,\cdots, i_n,k}$, respectively.

    \begin{thm}\label{Thm:NodeToFactor} [The Factorization  Theorem] 
    		Let   $\minimalpoly{\mathcal{V}_{n+1}^{(i)}}\in \mathcal{F}_{\mathcal{V}}[X]$ be the minimal polynomial of $\mathcal{V}_{n+1}^{(i)}$, for $i=1,\cdots,k$. Then these polynomials are mutually exclusive and   irreducible factors of 
    	the modular polynomial $\kappa^{(u_n)}$ (see Equation \eqref{Eqt:kappauT}), i.e., the following  decomposition holds in $  \mathcal{F}_{\mathcal{V}}[X] $  	
    	:
     	\begin{equation}\label{Eqt:kappadecompose}
     	     	\kappa^{(u_n)} =\minimalpoly{\mathcal{V}_{n+1}^{(1)}}  ~\minimalpoly{\mathcal{V}_{n+1}^{(2)}} ~ \cdots ~\minimalpoly{\mathcal{V}_{n+1}^{(k)}} .
    	    	\end{equation}

    \end{thm}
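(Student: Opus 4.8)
The plan is to realize both sides of \eqref{Eqt:kappadecompose} as products of distinct linear factors over $\Knaughtbar$ indexed by the child $\BasicT$-torsion flags of $\mathcal{V}$, and then to descend the identity to $\mathcal{F}_{\mathcal{V}}[X]$. Write $(u_1,\ldots,u_n)=\Lambda(\mathcal{V})$. First I would establish the bijection
\[
\{\text{child }\BasicT\text{-torsion }(n{+}1)\text{-flags of }\mathcal{V}\}\;\longleftrightarrow\;\{\text{roots of }\kappau{u_n}{X}\text{ in }\Knaughtbar\},\qquad \mathcal{V}'\mapsto \mathcal{P}(\mathcal{V}')=u_{n+1}.
\]
Given such a $\mathcal{V}'$ with $\Lambda(\mathcal{V}')=(u_1,\ldots,u_n,u_{n+1})$, Theorem~\ref{Thm:nflagcondition0} applied at level $n{+}1$ (using that $\mathcal{V}$ is already $\BasicT$-torsion, so \eqref{Eqt:kappauuiuiplus101} and the first equations of \eqref{Eqt:kappauuiuiplus100} hold automatically) forces $\kappau{u_n}{u_{n+1}}=0$. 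Conversely, since $\kappau{u_n}{X}$ has nonzero constant term $-1$, every root $v$ lies in $\Knaughtbar^{*}$, so $\Theta(u_1,\ldots,u_n,v)$ is a genuine $(n{+}1)$-flag extending $\mathcal{V}$, and the same theorem shows it is $\BasicT$-torsion; by Proposition~\ref{Prop:Fromflagtoui} the assignments $v\mapsto\Theta(u_1,\ldots,u_n,v)$ and $\mathcal{V}'\mapsto\mathcal{P}(\mathcal{V}')$ are mutually inverse on these sets, and in particular $\mathcal{P}$ is injective on the children of $\mathcal{V}$.

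Next I would record that $\kappau{u_n}{X}$ is separable. Working in the characteristic $p$ of $\Knaughtbar$, the integers $q+1$ and $q^2+q+1$ reduce to $1$, so
\[
\tfrac{d}{dX}\kappau{u}{X}=X^{q}\Bigl(X^{q^2}+\frac{1-u^{q^2+q+1}}{u^{q^2+q}}\Bigr);
\]
a common root $\alpha\neq0$ of $\kappau{u}{X}$ and its derivative would satisfy $\alpha^{q^2}=-\frac{1-u^{q^2+q+1}}{u^{q^2+q}}$, whence $\kappau{u}{\alpha}=\alpha^{q+1}\bigl(\alpha^{q^2}+\frac{1-u^{q^2+q+1}}{u^{q^2+q}}\bigr)-1=-1\neq0$, a contradiction. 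Hence $\kappau{u_n}{X}$ has exactly $q^2+q+1$ distinct roots, all in $\Knaughtbar^{*}$, so by the bijection above $\kappau{u_n}{X}=\prod_{\mathcal{V}'}\bigl(X-\mathcal{P}(\mathcal{V}')\bigr)$, the product being over all child $\BasicT$-torsion flags $\mathcal{V}'$ of $\mathcal{V}$. Now each such $\mathcal{V}'$ has $\mathcal{V}$ as its parent flag, so its isomorphism class is one of the child nodes $\NODE_{i_1,\cdots,i_n,1},\ldots,\NODE_{i_1,\cdots,i_n,k}$, and $\mathcal{V}'$ lies in $M(\mathcal{V}_{n+1}^{(i)},\mathcal{V})$ in the sense of \eqref{Eqt:MVVdiamond} precisely when it is subordinate to $\NODE_{i_1,\cdots,i_n,i}$; therefore the children of $\mathcal{V}$ decompose as the disjoint union $\bigsqcup_{i=1}^{k}M(\mathcal{V}_{n+1}^{(i)},\mathcal{V})$. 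Grouping the linear factors accordingly and comparing with the definition \eqref{beta} of $\minimalpoly{\mathcal{V}_{n+1}^{(i)}}$ gives $\kappau{u_n}{X}=\prod_{i=1}^{k}\minimalpoly{\mathcal{V}_{n+1}^{(i)}}(X)$ as an identity in $\Knaughtbar[X]$.

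Finally I would transfer and conclude. By Proposition~\ref{Prop:main} each $\minimalpoly{\mathcal{V}_{n+1}^{(i)}}$ lies in $\mathcal{F}_{\mathcal{V}}[X]$ and is irreducible over $\mathcal{F}_{\mathcal{V}}$, being the minimal polynomial of $\mathcal{P}(\mathcal{V}_{n+1}^{(i)})$ over $\mathcal{F}_{\mathcal{V}}$; moreover $\kappau{u_n}{X}\in\mathcal{F}_{\mathcal{V}}[X]$ since its coefficients are rational functions of $u_n$. Hence the identity just obtained, already valid in $\Knaughtbar[X]$, holds in $\mathcal{F}_{\mathcal{V}}[X]$, which is \eqref{Eqt:kappadecompose}. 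Pairwise distinctness of the factors is immediate: if two of them coincided they would share a root, but their root sets $\{\mathcal{P}(\mathcal{W}):\mathcal{W}\in M(\mathcal{V}_{n+1}^{(i)},\mathcal{V})\}$ are pairwise disjoint and nonempty (each contains $\mathcal{P}(\mathcal{V}_{n+1}^{(i)})$). The one-to-one correspondence between monic irreducible factors of $\kappau{u_n}{X}$ and child nodes of $\NODE_{i_1,\cdots,i_n}$ then follows, since the $\minimalpoly{\mathcal{V}_{n+1}^{(i)}}$ are exactly the distinct monic irreducible factors in the factorization. The step needing the most care is the bookkeeping around the definitions — identifying ``child $\BasicT$-torsion flag of $\mathcal{V}$ subordinate to $\NODE_{i_1,\cdots,i_n,i}$'' with membership in $M(\mathcal{V}_{n+1}^{(i)},\mathcal{V})$, and checking that $\mathcal{P}$ is injective on the children of $\mathcal{V}$ so that the grouped product is genuinely squarefree; the separability computation is short, and the remainder is formal once Theorem~\ref{Thm:nflagcondition0} and Proposition~\ref{Prop:main} are in hand.
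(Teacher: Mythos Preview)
Your proposal is correct and follows essentially the same approach as the paper's proof: use Theorem~\ref{Thm:nflagcondition0} to identify the roots of $\kappa^{(u_n)}$ with the children of $\mathcal{V}$, group the linear factors by child node via the definition \eqref{beta} of $\minimalpoly{\mathcal{V}_{n+1}^{(i)}}$, and invoke separability of $\kappa^{(u_n)}$ for distinctness. You are more explicit than the paper on two points it only asserts: the separability of $\kappa^{(u_n)}$ (the paper simply says ``by its construction'') and the appeal to Proposition~\ref{Prop:main} for irreducibility and membership in $\mathcal{F}_{\mathcal{V}}[X]$; both additions are correct and improve the exposition.
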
So, this theorem reveals a relation between  nodes of the $T$-torsion  tree $\Ttorsiontree^3$  and    the modular polynomial --- There is a one-to-one correspondence between child nodes of $\NODE_{i_1,\cdots, i_n}$ and  monic irreducible factors of the modular polynomial $\kappausingle{u_n}$.  

    \begin{proof}      	Suppose that $ \mathcal{V}  =\Theta(u_1,u_2,\cdots,u_n)$. 
    	Any child $\BasicT$-torsion flag of $\mathcal{V}$ is given by $\Theta(u_1,u_2,\cdots,u_n,u_{n+1})$ 
    	where $ u_{n+1}$ is a root of $\kappa^{(u_n)}$ (Theorem \ref{Thm:nflagcondition0}). 	Thus, we can write 
    	\begin{eqnarray*} 
    		\kappau{u_n}{X}&=& (X- u_{n+1}^{(1)})(X- u_{n+1}^{(2)})\cdots (X- u_{n+1}^{(q^2+q+1)}).
    	\end{eqnarray*}
    	Here $ u_{n+1}^{(i)}$, $i=1,\cdots, q^2+q+1$, are all roots of $\kappa^{(u_n)}$.
     Then the decomposition \eqref{Eqt:kappadecompose} follows by the definition of each $\minimalpoly{\mathcal{V}_{n+1}^{(i)}}$ (cf. Equation \eqref{beta}). Since  $\kappausingle{u_n}$ has no multiple roots (by its construction in Equation \eqref{Eqt:kappauT}), these $\minimalpoly{\mathcal{V}_{n+1}^{(i)}}$ are mutually  different.


    \end{proof}

     \begin{remark}\label{Rmk:32tomj}
     	More generally,  it is natural to expect  that our results in this section  are also true for general  $(m,j)$-type Drinfeld module (over $\Knaught $)
    $$\BasicT  =  -\tau^m + G \tau ^j + 1 .$$ (As usual, we require $m\geqslant 3$ and $1\leqslant j\leqslant m-1$ are coprime.) Why we only claim the $(m,j)=(3,2)$-case? In fact, our method to prove Lemma \ref{Lem:rho1iso} heavily relies on the specific values of $m=3$ and $j=2$. Therefore, we cannot easily extend our method which is valid in the $(m,j)=(3,2)$-case to general $(m,j)$-cases.   However, if Lemma \ref{Lem:rho1iso} holds true for an $(m,j)$-type Drinfeld module as defined above, then Theorem \ref{Lem:generalmjconjecture} and all other statements can also be proved. For a further related fact regarding Theorem \ref{Lem:generalmjconjecture}, please refer to \cite{MR3425215}*{Theorem 6} (known as the generalized iteration conjecture); see also \cite{MR1816069}*{$\S$ 19}, and the original theorem by Moore \cites{MR1557441,MR1467085}.
\end{remark}


 \section{A list of   $(3,2)$-type normalized Drinfeld modular curves up to level $4$}\label{Sec:Normalized32things}
 
 In this part,  we consider  the 23 nodes $\NODE_{i_1,\cdots, i_n}$ of   the $T$-torsion tree $\Ttorsiontree^3$ with $1\leqslant n\leqslant 4$. By choosing arbitrary $\BasicT$-torsion flags $\mathcal{V}_{i_1,\cdots,i_{n}}$ subordinate to $\NODE_{i_1,\cdots, i_n}$,  we present results of the $(3,2)$-type normalized Drinfeld modular curves  $\dot{X}^{(3,2)}_{i_1,\cdots,i_{n}  }$ (abbreviated to $\dot{X}_{i_1,\cdots,i_{n}  }$) in terms of the associated minimal polynomials $\minimalpoly{\mathcal{V}_{i_1,\cdots,i_{n}}}$ (abbreviated to $\minimalpoly{i_1,\cdots,i_{n}}$) and    function fields $\mathcal{F}^{(3,2)}_{i_1,\cdots,i_{n}  }$ (abbreviated to $\mathcal{F}_{i_1,\cdots,i_{n}  }$).    Let us employ the conventions and symbols from the previous sections.
 Consider the function field $\Knaught :=\Fq(G)$ and its closure $\Knaughtbar $. In the rest of this section, we only consider the $A$-field $L:= \Knaughtbar $.

 We need to introduce more notations. Define two polynomials  $\betausingle{u},    \gammausingle{u}\in L[X]$  parameterized by $u$:
 \begin{eqnarray}\label{Eqt:betauT0}
 \betau{u}{X}&:=&X^{q+1}+\frac{X}{u ^{q+1}} +\frac{1}{u },
 \\\label{Eqt:gammauT0}
 \mbox{and ~}~	\gammau{u}{X}&:=&X( \betau{u }{X} )^{q-1}-u=X( X^{q+1}+\frac{X}{u ^{q+1}} +\frac{1}{u } )^{q-1}-u \,.
 \end{eqnarray}
 
 Meanwhile, we  define two polynomials parameterized by $u$ and $v$:
 \begin{eqnarray*}
 	\etau{u,v}{X}&:=&X-\frac{1}{uv},\\
 	\xiu{u,v}{X}&:=&X\big(\etau{u,v}{X}\big)^{q-1}-u=X(X-\frac{1}{uv})^{q-1}-u\,.
 \end{eqnarray*}

 Now, we specifically state {normalized Drinfeld modular curves  } up to level $n=4$ (omitting the specific steps to obtain them). We start from $n=1$, i.e. the  root node  $\NODE_1$ of   $\Ttorsiontree^3$. According to Theorem \ref{Thm:nflagcondition0} and Example \ref{Example:n=1}, we have $$\dot{\mathcal{F}}_{1}:= \Fq(G)(u_1)=\Fq(u_1),\quad (\mbox{with }~ 	G=\frac{ u_1^{q^2 + q +1 } -1 }{  u_1^{q+1}}
 ). $$   Here the notation $u_1$ stands for an indeterminate, which should not be confused with the original parameter $u_1\in L^*$ of $\BasicT$-torsion $1$-flags.
  
 
 For $n=2,3,4$,	the function fields for normalized Drinfeld modular curves of  $(3,2)$-type up of level $n$ of the $T$-torsion tree $\Ttorsiontree^3$ are as described in terms of their minimal polynomials --- 
 	\begin{itemize}
 		\item[$\bullet$] At $\NODE_{1,1   }$, the minimal polynomial is given by  $\minimalpoly{1,1}=\gammausingle{u_1} $, and the function field  $\dot{\mathcal{F}}_{ 1,1} =\dot{\mathcal{F}}_{1}(u_2)$ is determined by
 		$$u_2( u_2^{q+1}+\frac{u_2}{u_1^{q+1}} +\frac{1}{u_1} )^{q-1}=u_1   \quad (\mbox{i.e.~}~~  \gammau{u_1}{u_2}=0).$$
 		\item[$\bullet$] At $\NODE_{1,2 }$, the minimal polynomial is given by  $\minimalpoly{1,2}=\betausingle{u_1} $, and the function field  $\dot{\mathcal{F}}_{ 1,2} =\dot{\mathcal{F}}_{1}(u_2)$ is determined by
 		$$u_2^{q+1}+\frac{u_2}{u_1^{q+1}} +\frac{1}{u_1}=0   \quad (\mbox{i.e.~}~~  \betau{u_1}{u_2}=0  ).$$
 		\item[$\bullet$] At $\NODE_{1,1,1 }$, the minimal polynomial is given by  $\minimalpoly{1,1,1}=\gammausingle{u_2} $, and the function field  $\dot{\mathcal{F}}_{ 1,1,1} =\dot{\mathcal{F}}_{1,1}(u_3)$ is determined by
 		$$u_3( u_3^{q+1}+\frac{u_3}{u_2^{q+1}} +\frac{1}{u_2} )^{q-1}=u_2 \quad  (\mbox{i.e.~}~~  \gammau{u_2}{u_3}=0  ).$$
 		\item[$\bullet$] At $\NODE_{1,1,2 }$, the minimal polynomial is given by  $\minimalpoly{1,1,2}=\betausingle{u_2} $, and the function field  $\dot{\mathcal{F}}_{ 1,1,2} =\dot{\mathcal{F}}_{1,1}(u_3)$ is determined by $$u_3^{q+1}+\frac{u_3}{u_2^{q+1}} +\frac{1}{u_2}=0 \quad (\mbox{i.e.~}~~
 		\betau{u_2}{u_3}=0 ).$$
 		\item[$\bullet$] At $\NODE_{1,2,1 }$, the minimal polynomial is given by  $\minimalpoly{1,2,1}=\xiusingle{u_1,u_2} $, and the function field  $\dot{\mathcal{F}}_{ 1,2,1} =\dot{\mathcal{F}}_{1,2}(u_3)$ is determined by
 		$$ u_3(u_3 -\frac{1}{u_1 u_2})^{q-1}=u_1  \quad(\mbox{i.e.~}~~   \xiu{u_1,u_{2}}{u_{3}}=0 ). $$
 		\item[$\bullet$] At $\NODE_{1,2,2 }$, the minimal polynomial is given by  $\minimalpoly{1,2,2}=\gammausingle{u_2} $, and the function field  $\dot{\mathcal{F}}_{ 1,2,2} =\dot{\mathcal{F}}_{1,2}(u_3)$ is determined by
 		$$u_3(u_3^{q+1}+\frac{u_3}{u_2^{q+1}}+\frac{1}{u_2})^{q-1}=u_2   \quad (\mbox{i.e.~}~~   \gammau{u_2}{u_3}=0 ). $$
 		\item[$\bullet$] At $\NODE_{1,2,3 }$, the minimal polynomial is given by  $\minimalpoly{1,2,3}=\etausingle{u_1,u_2} $, and the function field  $\dot{\mathcal{F}}_{ 1,2,3} =\dot{\mathcal{F}}_{1,2}(u_3)
 		$ is determined by   $$u_3 =\frac{1}{u_1 u_2}  \quad(\mbox{i.e.~}~~   \etau{u_1,u_{2}}{u_{3}}=0 ), $$ and thus $\dot{\mathcal{F}}_{1,2,3}=\dot{\mathcal{F}}_{1,2}$.
 		\item[$\bullet$] At $\NODE_{1,1,1,1}$, the minimal polynomial is given by  $\minimalpoly{1,1,1,1}=\gammausingle{u_3} $, and the function field  $\dot{\mathcal{F}}_{ 1,1,1,1} =\dot{\mathcal{F}}_{1,1,1}(u_4)$ is determined by $$u_4(u_4^{q+1}+\frac{u_4}{u_3^{q+1}}+\frac{1}{u_3})^{q-1}=u_3   \quad (\mbox{i.e.~}~~   \gammau{u_3}{u_4}=0 ). $$
 		\item[$\bullet$] At $\NODE_{1,1,1,2}$, the minimal polynomial is given by  $\minimalpoly{1,1,1,2}=\betausingle{u_3} $, and the function field  $\dot{\mathcal{F}}_{ 1,1,1,2} =\dot{\mathcal{F}}_{1,1,1}(u_4)$ is determined by
 		$$  u_4^{q+1}+\frac{u_4}{u_3^{q+1}}+\frac{1}{u_3}=0 \quad (\mbox{i.e.~}~~\quad  \betau{u_3}{u_4}=0 ). $$
 		\item[$\bullet$] At $\NODE_{1,1,2,1}$, the minimal polynomial is given by  $\minimalpoly{1,1,2,1}=\gammausingle{u_3} -\iota^{(u_1,u_2,u_3)} $    
 		where
 		$$\iota^{(u_1,u_2,u_3)}(X):= X \left( X - \frac{1}{u_2 u_3}  \right)^{q-1}\bigl(X \left( X - \frac{1}{u_2 u_3}  \right)^{q-1}-A\bigr)^{q-1}-Q,$$
 		with  $A=\frac{u_1^q({U}-1)}{1-U^{q+1} }$ , $Q=\frac{u_1^{q^2}}{(1- U^{q+1})^{q-1}}$, and  $U=u_1u_2u_3$; the function field  $\dot{\mathcal{F}}_{ 1,1,2,1} =\dot{\mathcal{F}}_{1,1,2}(u_4)$ is determined by   $	\gammau{u_3}{u_4}=\iota^{(u_1,u_2,u_3)}(u_4)$.  
 		\item[$\bullet$] At $\NODE_{1,1,2,2}$, the minimal polynomial is given by  $\minimalpoly{1,1,2,2}=\xiusingle{u_2,u_3} $, and the function field  $\dot{\mathcal{F}}_{ 1,1,2,2} =\dot{\mathcal{F}}_{1,1,2}(u_4)$ is determined by
 		$$u_4(u_4-\frac{1}{u_2u_3})^{q-1}=u_2  \quad (\mbox{i.e.~}~~  \xiu{u_2,u_{3}}{u_{4}}=0 ). $$
 		\item[$\bullet$] At $\NODE_{1,1,2,3}$, the minimal polynomial is given by  $$\minimalpoly{1,1,2,3}(X)=X \left( X - \frac{1}{u_2 u_3}  \right)^{q-1}- \frac{u_1^q({u_1u_2u_3}-1)}{1-(u_1u_2u_3)^{q+1} }.$$  The function field  $\dot{\mathcal{F}}_{ 1,1,2,3} =\dot{\mathcal{F}}_{1,1,2}(u_4)$ is determined  a solution $u_4$ to $\minimalpoly{1,1,2,3}(u_4)=0$.
 		\item[$\bullet$] At $\NODE_{1,1,2,4}$, the minimal polynomial is given by  $\minimalpoly{1,1,2,4} =\etausingle{u_2,u_3} $, and the function field  $\dot{\mathcal{F}}_{ 1,1,2,4} =\dot{\mathcal{F}}_{1,1,2}(u_4)$ is determined by   $$u_4=\frac{1}{u_2u_3}  \quad (\mbox{i.e.~}~~  \etau{u_2,u_{3}}{u_{4}}=0 ),$$ and thus $\dot{\mathcal{F}}_{1,1,2,4}=\dot{\mathcal{F}}_{1,1,2}$.
 		\item[$\bullet$] At $\NODE_{1,2,1,1}$, the minimal polynomial is given by  $\minimalpoly{1,2,1,1}=\xiusingle{u_2,u_3} $, and the function field  $\dot{\mathcal{F}}_{ 1,2,1,1} =\dot{\mathcal{F}}_{1,2,1}(u_4)$ is determined by   $$u_4(u_4-\frac{1}{u_2u_3})^{q-1}=u_2 \quad (\mbox{i.e.~}~~  \xiu{u_2,u_{3}}{u_{4}}=0 ). $$
 		\item[$\bullet$] At $\NODE_{1,2,1,2}$, the minimal polynomial is given by  $\minimalpoly{1,2,1,2}=\gammausingle{u_3} $, and the function field  $\dot{\mathcal{F}}_{ 1,2,1,2 } =\dot{\mathcal{F}}_{1,2,1}(u_4)$ is determined by
 		$$ u_4(u_4^{q+1}+\frac{u_4}{u_3^{q+1}}+\frac{1}{u_3})^{q-1}=u_3  \quad
 		(\mbox{i.e.~}~~  \gammau{u_3}{u_4}=0 ). $$
 		\item[$\bullet$] At $\NODE_{1,2,1,3}$, the minimal polynomial is given by  $\minimalpoly{1,2,1,3}=\etausingle{u_2,u_3} $, and the function field  $\dot{\mathcal{F}}_{ 1,2,1,3} =\dot{\mathcal{F}}_{1,2,1}(u_4)$ is determined by  $$u_4=\frac{1}{u_2u_3}  \quad (\mbox{i.e.~}~~  \etau{u_2,u_{3}}{u_{4}}=0 ),$$
 		and thus $\dot{\mathcal{F}}_{ 1,2,1,3}=\dot{\mathcal{F}}_{ 1,2,1}$.
 		
 		\item[$\bullet$] At $\NODE_{1,2,2,1}$, the minimal polynomial is given by  $$\minimalpoly{1,2,2,1}(X)=X\Bigl(X- (u_3-\frac{1}{u_1u_2})^{q-1} \frac{1}{u_1u_2}\Bigr)^{q-1}-u_1 \Bigl(u_1-(\frac{1}{u_1u_2}-u_3)^{q-1} u_3\Bigr)^{q-1} .$$ The function field  $\dot{\mathcal{F}}_{ 1,2,2,1} =\dot{\mathcal{F}}_{1,2,2}(u_4)$ is determined by $u_4$ subject to $\minimalpoly{1,2,2,1}(u_4)=0$.
 		\item[$\bullet$] At $\NODE_{1,2,2,2}$, the minimal polynomial is given by  $\minimalpoly{1,2,2,3}=\gammausingle{u_3} $, and the function field  $\dot{\mathcal{F}}_{ 1,2,2,2} =\dot{\mathcal{F}}_{1,2,2}(u_4)$ is determined by   $$u_4(u_4^{q+1}+\frac{u_4}{u_3^{q+1}}+\frac{1}{u_3})^{q-1}=u_3  \quad
 		(\mbox{i.e.~}~~  \gammau{u_3}{u_4}=0 ).$$
 		\item[$\bullet$] At $\NODE_{1,2,2,3}$, the minimal polynomial is given by  $$\minimalpoly{1,2,2,3}(X)=X-(u_3-\frac{1}{u_1u_2})^{q-1} \frac{1}{u_1u_2} .$$ Hence the function field  $\dot{\mathcal{F}}_{ 1,2,2,3} =\dot{\mathcal{F}}_{1,2,2}(u_4) $ is determined by $u_4=(u_3-\frac{1}{u_1u_2})^{q-1} \frac{1}{u_1u_2}$, and thus  $\dot{\mathcal{F}}_{ 1,2,2,3}=\dot{\mathcal{F}}_{1,2,2}$.
 		\item[$\bullet$] At $\NODE_{1,2,3,1}$, the minimal polynomial is given by  $\minimalpoly{1,2,3,1}=\etausingle{u_2,u_3} $, and the function field  $\dot{\mathcal{F}}_{ 1,2,3,1} =\dot{\mathcal{F}}_{1,2,3}(u_4)=\dot{\mathcal{F}}_{1,2}(u_4)$ is determined by
 		$$u_4=\frac{1}{u_2u_3}  \quad (\mbox{i.e.~}~~  \etau{u_2,u_{3}}{u_{4}}=0 ),$$
 		and thus $ \dot{\mathcal{F}}_{1,2,3,1}=\dot{\mathcal{F}}_{1,2,3}=\dot{\mathcal{F}}_{1,2}$.
 		\item[$\bullet$] At $\NODE_{1,2,3,2}$, the minimal polynomial is given by  $\minimalpoly{1,2,3,2}=\xiusingle{u_2,u_3} $, and the function field  $\dot{\mathcal{F}}_{ 1,2,3,2} =\dot{\mathcal{F}}_{1,2,3}(u_4)=\dot{\mathcal{F}}_{1,2}(u_4)$ is determined by    $$u_4(u_4-\frac{1}{u_2u_3})^{q-1}=u_2  \quad (\mbox{i.e.~}~~  \xiu{u_2,u_{3}}{u_{4}}=0 ).$$
 		\item[$\bullet$] At $\NODE_{1,2,3,3}$, the minimal polynomial is given by  $\minimalpoly{1,2,3,3}=\gammausingle{u_3} $, and the function field  $\dot{\mathcal{F}}_{ 1,2,3,3} =\dot{\mathcal{F}}_{1,2,3}(u_4)=\dot{\mathcal{F}}_{1,2}(u_4)$ is determined by   $$ u_4(u_4^{q+1}+\frac{u_4}{u_3^{q+1}}+\frac{1}{u_3})^{q-1}=u_3  \quad
 		(\mbox{i.e.~}~~  \gammau{u_3}{u_4}=0 ).$$
 	\end{itemize}

 	
 Finally, let us	    examine Theorem \ref{Thm:NodeToFactor} at the following nodes: 
 \begin{enumerate}
 	\item At $\NODE_{1    }$, it is direct to check that the modular polynomial  	 
 	 $\kappausingle{u_1} $ decomposes (in $\dot{\mathcal{F}}_{1}[X]$): \begin{eqnarray*}
 		\kappau{u_1}{X}&=&\minimalpoly{1,1}(X)\minimalpoly{1,2}(X)=\gammau{u_1}{X}\betau{u_1}{X}\\
 		&=&\Bigl(X\big( X^{q+1}+\frac{X}{u_1^{q+1}} +\frac{1}{u_1} \big)^{q-1}-u_1\Bigr) ( X^{q+1}+\frac{X}{u_1^{q+1}} +\frac{1}{u_1} ).
 	\end{eqnarray*}
 	 \item At $\NODE_{1,1    }$, the modular polynomial   decomposes (in $\dot{\mathcal{F}}_{1,1}[X]$): $$\kappausingle{u_2} =\minimalpoly{1,1,1} \minimalpoly{1,1,2} = \gammausingle{u_2} \betausingle{u_2}. $$
 	\item At $\NODE_{1,2}$, we have the decomposition in $\dot{\mathcal{F}}_{1,2}[X]$:\begin{eqnarray*}\kappausingle{u_2}
 		&=&\minimalpoly{1,2,1} \minimalpoly{1,2,2} \minimalpoly{1,2,3} =\xiusingle{u_1,u_{2}} \gammausingle{u_2} \etausingle{u_1,u_{2}}.  
 	\end{eqnarray*}
 
 	\item At   $\NODE_{1,1,2}$ and in $\dot{\mathcal{F}}_{1,1,2}[X]$  we have \begin{eqnarray*}
 	\kappausingle{u_3} &=&    \minimalpoly{1,1,2,1}\minimalpoly{1,1,2,2}\minimalpoly{1,1,2,3}\minimalpoly{1,1,2,4}\\ &=&\gammausingle{u_3} \xiusingle{u_2,u_{3}} \etausingle{u_2,u_{3}} .
 \end{eqnarray*}
 Here, we have used the  decomposition $$\gammausingle{u_3} =\minimalpoly{1,1,2,1}\minimalpoly{1,1,2,3}$$    which  is nontrivial and can be verified by a lengthy computation. 
 
 	\item At $\NODE_{1,1,1}$ and in $\dot{\mathcal{F}}_{1,1,1}[X]$, we have
 	\begin{equation}\nonumber
 	\kappausingle{u_3} =\minimalpoly{1,1,1,1}\minimalpoly{1,1,1,2} = \gammausingle{u_3} \betausingle{u_3}    .\end{equation}
 	
 		\item At $\NODE_{1,2,2}$ and in $\dot{\mathcal{F}}_{1,2,2}[X]$, we have
 	\begin{equation}\nonumber
 	\kappausingle{u_3} =\minimalpoly{1,2,2,1}\minimalpoly{1,2,2,2}\minimalpoly{1,2,2,3}= \betausingle{u_3}  \gammausingle{u_3}  .\end{equation}
 	Here, we can prove the decomposition
 	\begin{equation*} 
 	\betausingle{u_3} = \minimalpoly{1,2,2,1}\minimalpoly{1,2,2,3}
 	\end{equation*}  which is also   nontrivial.

 	\item At $\NODE_{1,2,3}$, we can    decomposes: $$\kappausingle{u_3} =\minimalpoly{1,2,3,1}\minimalpoly{1,2,3,2}\minimalpoly{1,2,3,3}= \etausingle{u_2,u_{3}} \xiusingle{u_2,u_{3}} \gammausingle{u_3} $$
 in $\dot{\mathcal{F}}_{1,2,3}[X]$. A similar decomposition happens at $\NODE_{1,2,1}$.

 \end{enumerate}	
These minimal polynomials and normalized Drinfeld modular curves, discussed abstractly in the previous section, will be further analyzed with more details in our upcoming work. It should be noted that the processes to obtain them are complex and rely heavily on the specific nodes being considered. Moreover, these results   can be extended to construct Drinfeld modular curves through the use of Proposition \ref{Prop:fromutow} and the method discussed subsequently.


 \appendix
 \section{List of matrices}\label{Sec:listofmatrices}
 In this part, we list all conjugacy classes of $4\times 4$ upper-triangular matrices. Each of the matrix $R_{a,b,c,d}$ below is a representatives of the corresponding conjugacy class. The whole class it represents is also shown.
 
 \begin{enumerate}
 	\item[$\bullet$]
 	The first one is
 	$$
 	R_{1,1,1,1}= \left(  \begin{array}{cccc}0 & 1 &0 &0 \\0 & 0 & 1&0 \\0 & 0 & 0&1\\0 &0&0&0 \end{array} \right)  \sim  \left(  \begin{array}{cccc}0 & r &x &y \\0 & 0 & s&z \\0 & 0 & 0&t\\0 &0&0&0 \end{array} \right)  . $$
 	It has rank $3$, and  exponent  $4$.  Here and in the sequel,  letters  $r$, $s$, $t$, $x$, $y$, and $z$ denote arbitrary   constants  in $\Fq $ with $r\neq 0$, $s\neq 0$ and $t\neq 0$.
 	\item[$\bullet$]
 	$$  R_{1,1,1,2}= \left(  \begin{array}{cccc}0 & 1 &0 &0 \\0 & 0 & 1&0 \\0 & 0 & 0&0\\0 &0&0&0 \end{array} \right)  \sim  \left(  \begin{array}{cccc}0 & r &x &y \\0 & 0 & s&z \\0 & 0 & 0&0\\0 &0&0&0 \end{array} \right)  .$$
 	(Rank $=2$, Exponent $=3$) \item[$\bullet$]
 	$$  R_{1,1,2,1}= \left(  \begin{array}{cccc}0 & 1 &0 &0 \\0 & 0 & 0&1 \\0 & 0 & 0&1\\0 &0&0&0 \end{array} \right)  \sim  \left(  \begin{array}{cccc}0 & r &x &y \\0 & 0 & 0&z \\0 & 0 & 0&t\\0 &0&0&0 \end{array} \right)   \quad  (\mbox{ subject to } rz+tx\neq 0).
 	$$
 	(Rank $=2$, Exponent $=3$)
 	\item[$\bullet$]
 	$$
 	R_{1,1,2,2}= \left(  \begin{array}{cccc}0 & 1 &0 &0 \\0 & 0 & 0&1 \\0 & 0 & 0&0\\0 &0&0&0 \end{array} \right)      \sim  \left(  \begin{array}{cccc}0 & r &x &y \\0 & 0 & 0&s \\0 & 0 & 0&0\\0 &0&0&0 \end{array} \right)  .$$
 	(Rank $=2$, Exponent $=3$)
 	\item[$\bullet$]
 	$$  R_{1,1,2,3}= \left(  \begin{array}{cccc}0 & 1 &0 &0 \\0 & 0 & 0&0 \\0 & 0 & 0&1\\0 &0&0&0 \end{array} \right)  \sim  \left(  \begin{array}{cccc}0 & r &x &y \\0 & 0 & 0& -\frac{tx}{r} \\0 & 0 & 0&t\\0 &0&0&0 \end{array} \right)   .$$ 
 	(Rank $=2$, Exponent $=2$)
 	\item[$\bullet$]
 	$$  R_{1,1,2,4}= \left(  \begin{array}{cccc}0 & 1 &0 &0 \\0 & 0 & 0&0 \\0 & 0 & 0&0\\0 &0&0&0 \end{array} \right)  \sim  \left(  \begin{array}{cccc}0 & r &x &y \\0 & 0 & 0&0 \\0 & 0 & 0& 0\\0 &0&0&0 \end{array} \right)  .
 	$$
 	(Rank $= 1$, Exponent $=2$)

 	\item[$\bullet$]
 	$$
 	R_{1,2,1,1}= \left(  \begin{array}{cccc}0 & 0 &1 &0 \\0 & 0 & 0&1 \\0 & 0 & 0&0\\0 &0&0&0 \end{array} \right)  \sim  \left(  \begin{array}{cccc}0 & 0 &r &x \\0 & 0 & 0&s \\0 & 0 & 0&0\\0 &0&0&0 \end{array} \right)  .$$
 	(Rank $=2$, Exponent $=2$)
 	
 	\item[$\bullet$]
 	$$  R_{1,2,1,2}= \left(  \begin{array}{cccc}0 & 0 &1 &0 \\0 & 0 & 0&0 \\0 & 0 & 0&1\\0 &0&0&0 \end{array} \right)  \sim  \left(  \begin{array}{cccc}0 & 0 &r &x \\0 & 0 & 0&y \\0 & 0 & 0&s\\0 &0&0&0 \end{array} \right)   .$$
 	(Rank $=2$, Exponent $=3$)

 	\item[$\bullet$]
 	$$  R_{1,2,1,3}= \left(  \begin{array}{cccc}0 & 0 &1 &0 \\0 & 0 & 0&0 \\0 & 0 & 0&0\\0 &0&0&0 \end{array} \right)   \sim  \left(  \begin{array}{cccc}0 & 0 &r &x \\0 & 0 & 0&0 \\0 & 0 & 0&0\\0 &0&0&0 \end{array} \right) .
 	$$
 	(Rank $=1$, Exponent $=2$)

 	\item[$\bullet$]
 	$$
 	R_{1,2,2,1}= \left(  \begin{array}{cccc}0 & 0 &0 &1 \\0 & 0 & 1&0 \\0 & 0 & 0&0\\0 &0&0&0 \end{array} \right)  \sim  \left(  \begin{array}{cccc}0 & 0 &x &s \\0 & 0 & r&z \\0 & 0 & 0&0\\0 &0&0&0 \end{array} \right)  \quad ~(\mbox{ subject to }xz\neq sr).$$
 	(Rank $=2$, Exponent $=2$)
 	
 	\item[$\bullet$]
 	$$  R_{1,2,2,2}= \left(  \begin{array}{cccc}0 & 0 &0 &0 \\0 & 0 & 1&0 \\0 & 0 & 0&1\\0 &0&0&0 \end{array} \right)  \sim  \left(  \begin{array}{cccc}0 & 0 &x & y \\0 & 0 & r&z \\0 & 0 & 0&s\\0 &0&0&0 \end{array} \right)  .$$
 	(Rank $=2$, Exponent $=3$)
 	\item[$\bullet$]
 	$$  R_{1,2,2,3}= \left(  \begin{array}{cccc}0 & 0 &0 &0 \\0 & 0 & 1&0 \\0 & 0 & 0&0\\0 &0&0&0 \end{array} \right)  \sim  \left(  \begin{array}{cccc}0 & 0 &x &\frac{xz}{r}\\0 & 0 & r&z \\0 & 0 & 0&0\\0 &0&0&0 \end{array} \right)  .
 	$$
 	(Rank $=1$, Exponent $=2$)
 	
 	\item[$\bullet$]
 	$$
 	R_{1,2,3,1}= \left(  \begin{array}{cccc}0 & 0 &0 &1 \\0 & 0 & 0&0 \\0 & 0 & 0&0\\0 &0&0&0 \end{array} \right)  \sim  \left(  \begin{array}{cccc}0 & 0 &0 &r \\0 & 0 & 0&0 \\0 & 0 & 0&0\\0 &0&0&0 \end{array} \right)  .$$
 	(Rank $=1$, Exponent $=2$)

 	\item[$\bullet$]
 	$$  R_{1,2,3,2}= \left(  \begin{array}{cccc}0 & 0 &0 &0 \\0 & 0 & 0&1 \\0 & 0 & 0&0\\0 &0&0&0 \end{array} \right)   \sim  \left(  \begin{array}{cccc}0 & 0 &0 &x \\0 & 0 & 0&r \\0 & 0 & 0&0\\0 &0&0&0 \end{array} \right) .$$
 	(Rank $=1$, Exponent $=2$)
 	
 	\item[$\bullet$]
 	$$  R_{1,2,3,3}= \left(  \begin{array}{cccc}0 & 0 &0 &0 \\0 & 0 & 0&0 \\0 & 0 & 0&1\\0 &0&0&0 \end{array} \right)  \sim  \left(  \begin{array}{cccc}0 & 0 &0 &x \\0 & 0 & 0&y \\0 & 0 & 0&r\\0 &0&0&0 \end{array} \right)  .
 	$$
 	(Rank $=1$, Exponent $=2$)
 	
 	\item[$\bullet$]
 	$$
 	R_{1,2,3,4}= \left(  \begin{array}{cccc}0 & 0 &0 &0 \\0 & 0 & 0&0 \\0 & 0 & 0&0\\0 &0&0&0 \end{array} \right)   . $$(Rank $=0$, Exponent $=1$)
 \end{enumerate}

 \bibliographystyle{90}
 \bibliography{papers}

\end{document}